\DeclareMathOperator{\Stab}{Stab}
\DeclareMathOperator{\Spec}{Spec}
\DeclareMathOperator{\pr}{pr}
\DeclareMathOperator{\id}{id}
\DeclareMathOperator{\sgn}{sgn}
\DeclareMathOperator{\Hom}{Hom}
\DeclareMathOperator{\sHom}{\mathcal{H}\textit{om}}
\DeclareMathOperator{\Ext}{Ext}
\DeclareMathOperator{\Coh}{Coh}
\DeclareMathOperator{\QCoh}{QCoh}
\DeclareMathOperator{\sExt}{\mathcal{E}\textit{xt}}
\DeclareMathOperator{\D}{D}
\DeclareMathOperator{\Ho}{H}
\DeclareMathOperator{\Inf}{Inf}
\DeclareMathOperator{\res}{res}
\DeclareMathOperator{\glob}{\Gamma}
\DeclareMathOperator{\codim}{codim}
\DeclareMathOperator{\Pic}{Pic}
\DeclareMathOperator{\im}{im}
\DeclareMathOperator{\Map}{Map}
\DeclareMathOperator{\K}{K}
\DeclareMathOperator{\GVS}{GVS}
\DeclareMathOperator{\SVS}{SVS}
\DeclareMathOperator{\Ve}{Vec}
\newcommand{\sym}{\mathfrak S}
\newcommand{\C}{\mathbbm C}
\newcommand{\N}{\mathbbm N}
\newcommand{\Z}{\mathbbm Z}
\newcommand{\F}{\mathcal F}
\newcommand{\E}{\mathcal E}
\newcommand{\Fb}{\mathcal F^\bullet}
\newcommand{\Eb}{\mathcal E^\bullet}
\newcommand{\Gb}{\mathcal G^\bullet}
\newcommand{\dv}{\mathbbm v}
\newcommand{\reg}{\mathcal O}
\newcommand{\I}{\mathcal I}
\newcommand{\n}{[n]}
\newcommand{\eps}{\varepsilon}
\renewcommand{\theta}{\vartheta}
\renewcommand{\rho}{\varrho}
\renewcommand{\phi}{\varphi}
\renewcommand{\_}{\underline{\,\,\,\,}}
\newtheorem{theorem}{Theorem}[section]
\newtheorem{prop}[theorem]{Proposition}
\newtheorem{lemma}[theorem]{Lemma}
\newtheorem{cor}[theorem]{Corollary}
\theoremstyle{definition}
\newtheorem{defin}[theorem]{Definition}
\newtheorem{remark}[theorem]{Remark}
\begin{document}
\title[Tensor products of tautological bundles under the BKRH equivalence]{Tensor products of tautological bundles under the Bridgeland--King--Reid--Haiman equivalence}
\author{Andreas Krug}
\address{Universit\" at Bonn}
\email{akrug@math.uni-bonn.de}
\begin{abstract}
We give a description of the image of tensor products of tautological bundles on Hilbert schemes of points on surfaces under the Bridgeland--King--Reid--Haiman equivalence. Using this, some new formulas for cohomological invariants of these bundles are obtained. In particular, we give formulas for the Euler characteristic of arbitrary tensor products on the Hilbert scheme of two points and of triple tensor products in general.  
\end{abstract}
\maketitle


%
\section{Introduction}
For every smooth quasi-projective surface $X$ over $\C$ there is a series of associated higher dimensional smooth varieties namely the \textit{Hilbert schemes of $n$ points on $X$} for $n\in\N$. They are the fine moduli spaces $X^{[n]}$ of zero dimensional subschemes of length $n$ of $X$.
Thus, there is a universal family $\Xi$ together  with its projections
\[X\xleftarrow{\pr_X}\Xi\xrightarrow{\pr_{X^{[n]}}} X^{[n]}\,.\]
Using this, one can associate to every coherent sheaf $F$ on $X$ the so called \textit{tautological sheaf} $F^{[n]}$ on each $X^{[n]}$ given by  
\[F^{[n]}:=\pr_{X^{[n]}*}\pr_X^* F\,.\]
It is well known (see \cite{Fog}) that the Hilbert scheme $X^{[n]}$ of $n$ points on $X$ is a resolution of the singularities of $S^nX=X^n/\sym_n$ via the \textit{Hilbert--Chow morphism}
\[\mu\colon X^{[n]}\to S^nX\quad,\quad \xi\mapsto\sum_{x\in\xi}\ell(\xi,x)\cdot x\,.\]
For every line bundle $L$ on $X$ the line bundle $L^{\boxtimes n}\in \Pic(X^n)$ descends to the line bundle $(L^{\boxtimes n})^{\sym_n}$ on $S^nX$. Thus for every $L\in\Pic(X)$ there is the \textit{determinant line bundle} on $X^{[n]}$ given by
\[\mathcal D_L:=\mu^*((L^{\boxtimes n})^{\sym_n})\,.\]
One goal in studying Hilbert schemes of points is to find formulas expressing the invariants of $X^{[n]}$ in terms of the invariants of the surface $X$. This includes the invariants of the induced sheaves defined above. There are already some results in this area.
For example, in \cite{Leh} there is a formula for the Chern classes of $F^{[n]}$ in terms of those of $F$ in the case that $F$ is a line bundle. In \cite{BNW} the existence of
universal formulas, i.e. formulas independent of the surface $X$, expressing the characteristic classes of any tautological sheaf in terms of the characteristic classes of $F$ is shown and those formulas are computed  in some cases.
Furthermore Danila (\cite{Dan}, \cite{Danglob}, \cite{Dandual}) and Scala (\cite{Sca1}, \cite{Sca2}) proved formulas for the cohomology of tautological sheaves, determinant line bundles, and some natural constructions (tensor, wedge, and symmetric products) of these. In particular, in \cite{Sca1} there is the formula
\begin{align} \label{Scaformula}\Ho^*(X^{[n]},F^{[n]}\otimes \mathcal D_L)\cong \Ho^*(F\otimes L)\otimes S^{n-1}\Ho^*(L)\end{align}
for the cohomology of a tautological sheaf twisted by a determinant line bundle.
We will use and further develop Scala's approach of \cite{Sca1} and \cite{Sca2} which in turn uses the \textit{Bridgeland--King--Reid--Haiman equivalence} (see \cite{BKR} and \cite{Hai}). It is an equivalence
\[\Phi:=\Phi_{\reg_{I^nX}}^{X^{[n]}\to X^n}\colon \D^b(X^{[n]})\xrightarrow\simeq \D^b_{\sym_n}(X^n)\]
between the bounded derived category of $X^{[n]}$ and the bounded derived category of $\sym_n$-equivariant sheaves on $X^n$
(for details about equivariant derived categories see e.g. \cite{BL} or \cite[Section 3]{Kru}). 
The equivalence is given by the Fourier--Mukai transform with kernel the structural sheaf of the \textit{isospectral Hilbert scheme} $I^nX:=(X^{[n]}\times_{S^nX} X^n)_{\text{red}}$.
It induces natural isomorphisms 
\[\Ext^*_{X^{[n]}}(\Eb,\Fb)\cong \sym_n\Ext^*_{X^n}(\Phi(\Eb),\Phi(\Fb))=\Ext^*_{X^n}(\Phi(\Eb),\Phi(\Fb))^{\sym_n}\,.\]
Furthermore, there is a natural isomorphism $R\mu_*\F^\bullet\simeq \Phi(\F^\bullet)^{\sym_n}$ (see \cite{Sca1}) which yields
\begin{align}\label{mucoh}\Ho^*(X^{[n]},\F^\bullet)\cong \Ho^*(S^nX,\Phi(\F^\bullet)^{\sym_n})\cong \Ho^*(X^n,\Phi(\F^\bullet))^{\sym_n}\,.\end{align}
So instead of computing the cohomology and extension groups of constructions of tautological sheaves on $X^{[n]}$ directly, the approach is to compute them for the image of these sheaves under the Bridgeland--King--Reid--Haiman equivalence.  
In order to do this we need a good description of $\Phi(F^{[n]})\in \D^b_{\sym_n}(X^n)$ for $F^{[n]}$ a tautological sheaf. This was provided by Scala in \cite{Sca1} and \cite{Sca2}. He showed that $\Phi(F^{[n]})$ is always concentrated in degree zero. This means that we can replace $\Phi$ by its non-derived version $p_*q^*$ where $p$ and $q$ are the projections from $I^nX$ to $X^n$ and $X^{[n]}$ respectively, i.e. we have $\Phi(F^{[n]})\simeq p_*q^*(F^{[n]})$. Moreover, he gave for $p_*q^*(F^{[n]})$ a right resolution $C^\bullet_F$. This is a $\sym_n$-equivariant complex associated to $F$ concentrated in non-negative degrees whose terms are \textit{good} sheaves.
For us a good $\sym_n$-equivariant sheaf on $X^n$ is a sheaf which is constructed out of sheaves on the surface $X$ in a not too complicated way.
In particular, it should be possible to give a formula for its ($\sym_n$-invariant) cohomology in terms of the cohomology of sheaves on $X$.
For example the degree zero term of the complex $C^\bullet_F$ is
$C^0_F=\bigoplus_{i=1}^n \pr_i^* F$. Note that if $F$ is locally free $C^0_F$ is, too. Its cohomology is by the K\"unneth formula given by
\[\Ho^*(X^n,C^0_F)=\left( \Ho^*(F)\otimes \Ho^*(\reg_X)^{\otimes n-1} \right)^{\oplus n}\,.\]
The $\sym_n$-invariants of the cohomology can be computed as
\[\Ho^*(X^n,C^0_F)^{\sym_n}=\Ho^*(F)\otimes S^{n-1}\Ho^*(\reg_X)\]
(For the proof of Scala's formula (\ref{Scaformula}) in the case $L=\reg_X$ it only remains to show that the invariants of $C^p_F$ for $p\ge 1$ vanish).
Let now $E_1,\dots,E_k$ be locally free sheaves on $X$. The associated tautological sheaves $E_i^{[n]}$ on $X^{[n]}$ are again locally free and hence called \textit{tautological bundles}. In \cite{Sca1} it is shown that again
\[\Phi(E_1^{[n]}\otimes \dots\otimes E_k^{[n]})\simeq p_*q^*(E_1^{[n]}\otimes \dots\otimes E_k^{[n]})\,.\]
Furthermore, a description of $p_*q^*(\otimes_i E_i^{[n]})$ as the $E^{0,0}_\infty$ term of a certain spectral sequence is given. We will give a more concrete description of $p_*q^*(E_1^{[n]}\otimes\dots\otimes E_k^{[n]})$ as a subsheaf of $K_0:=C^0_{E_1}\otimes \dots\otimes C^0_{E_k}$ as follows: We construct successively $\sym_n$-equivariant morphisms
$\phi_\ell\colon K_{\ell-1}\to T_\ell$ for $\ell=1,\dots,k$, where $K_\ell:=\ker{\phi_\ell}$ and the $T_\ell$ are good sheaves given by
\[
 T_\ell=\bigoplus_{(M;i,j;a)}\left( S^{\ell-1}\Omega_X\otimes \bigotimes_{t\in M} E_t\right)_{ij}\otimes \bigotimes_{t\in[k]\setminus M}\pr_{a(t)}^*E_t\,.
\]
The sum is taken over all tuples $(M;i,j;a)$ with $M\subset [k]:=\{1,\dots,k\}$, $|M|=\ell$, $i,j\in [n]$, $i\neq j$, and $a\colon [k]\setminus M\to [n]$.
The  functor $(\_)_{ij}$ is the composition $\iota_{ij*}p_{ij}^*$, where $\iota_{ij}\colon \Delta_{ij}\to X^n$ is the inclusion of the pairwise diagonal and
$p_{ij}\colon \Delta_{ij}\to X$ is the restriction of the projection $\pr_i\colon X^n\to X$ respectively of $\pr_j\colon X^n\to X$.
Then we show that $K_k=p_*q^*(\otimes_{i=1}^kE_i^{[n]})$. If the exact sequences
\[0\to K_{\ell}\to K_{\ell-1}\to T_\ell\]
for $\ell=1,\dots,k$ were also exact with a zero on the right, this result would yield directly a description of the cohomology of $E_1^{[n]}\otimes \dots\otimes E_k^{[n]}$
via long exact sequences and an explicit formula for its Euler characteristic. Since this is not the case, we have to enlarge the sequences to exact sequences with a zero on the right or at least do the same with the sequences
\[0\to K_{\ell}^{\sym_n}\to K_{\ell-1}^{\sym_n}\to T_\ell^{\sym_n}\]
of invariants on $S^nX$.
The latter also yields the cohomological invariants since by (\ref{mucoh}) we have $\Ho^*(X^{[n]},\otimes_i E_i^{[n]})\cong \Ho^*(S^nX, K_k^{\sym_n})$.
We are able to get the following results:
\begin{itemize}
\item A formula for the cohomology of tensor products of tautological sheaves in the maximal cohomological degree $2n$. 
 \item For $E_1,\dots,E_k$ locally free sheaves on a projective surface $X$ and $k\le n$ the  formula
\[\Ho^0(X^{[n]},E_1^{[n]}\otimes \dots\otimes E_k^{[n]})\cong \Ho^0(E_1)\otimes\dots \otimes \Ho^0(E_k)\,.\]
\item For $E_1,\dots,E_k$ locally free sheaves on $X$ and arbitrary $k$ long exact sequences
\[0\to K_\ell\to K_{\ell-1}\to T_\ell\to T_\ell^1\to\dots\to T_\ell^{k-\ell}\to 0\]
on $X^2$ with good sheaves $T_\ell^i$. This yields a description via long exact sequences of the cohomology and an explicit formula for the Euler characteristic of $E_1^{[2]}\otimes\dots\otimes E_k^{[2]}$ and more general the Euler bicharacteristics between two different tensor products of tautological sheaves on $X^{[2]}$.
\item Similar long exact sequences for arbitrary $n$ over the open subset $X^n_{**}$ consisting of points $(x_1,\dots,x_n)$ where at most two $x_i$ coincide.
\item For $E_1,E_2,E_3$ locally free sheaves on $X$ long exact sequences on $S^nX$ whose kernels converge to $K_3^{\sym_n}$.  
This yields a description via long exact sequences of the cohomology and an explicit formula for the Euler characteristic of $E_1^{[n]}\otimes E_2^{[n]}\otimes E_3^{[n]}$.
\end{itemize}
Using that for every $\F^\bullet\in \D^b(X^{[n]})$ and $L$ a line bundle on $X$ we have
\[\Phi(\F^\bullet\otimes \mathcal D_L)\simeq \Phi(\F^\bullet)\otimes L^{\boxtimes n}\,,\]
we can generalise the results from products of tautological bundles to products of tautological bundles twisted by a determinant line bundle by simply tensoring the exact sequences on $X^n$ by $L^{\boxtimes n}$ respectively tensoring the exact sequences on $S^nX$ by $(L^{\boxtimes n})^{\sym_n}$.
Also, we can generalise the results on the Euler characteristics form tautological bundles to arbitrary tautological objects. We will not state the final results in this introduction since they are presented in a compact form in subsection \ref{gen}.  
\vspace{0.5cm}

\textbf{Acknowledgements}: Most of the content of this article is also part of the authors PhD thesis. The author wants to thank his adviser Marc Nieper-Wi\ss kirchen for his support. The article was finished during the authors stay at the SFB Transregio 45 in Bonn.



%
\section{Notations and conventions}\label{not}
\begin{enumerate}
\item On a scheme $X$ the sheaf-Hom functor is denoted by $\sHom_{\reg_X}$, $\sHom_X$ or just $\sHom$.
We write $(\_)^\vee=\sHom(\_,\reg_X)$ for the operation of taking the dual of a sheaf and $(\_)^\dv=R\sHom(\_,\reg_X)$ for the derived dual.
\item\label{v} For a finite vector space $V$ we will write $v:=\dim V$.
\item An empty tensor, wedge or symmetric product of sheaves on $X$ is the 
sheaf $\reg_X$.
\item Given a product of schemes $X=\prod_{i\in I} X_i$ we will denote by $\pr_i$, $\pr_{X_i}$, $p_i$, or $p_{X_i}$ the projection $X\to X_i$. 
\item In formulas with enumerations putting the sign $\,\hat{}\,$ over an element means that this element is omitted. For example $\{1,\dots,\hat 3,\dots,5\}$ denotes the  set $\{1,2,4,5\}$. 
\item For a local section $s$ of a sheaf $\F$ we will often write $s\in \F$.
\item For a direct sum $V=\oplus_{i\in I} V_i$ of vector spaces or sheaves we will write interchangeably $V_i$ and  $V(i)$ for the summands. For an element respectively local section $s\in V$ we will write $s(i)$ or $s_i$ for its component in $V(i)$. 
We denote the components of a morphism $\psi\colon Z\to V$ by $\psi(i)\colon Z\to V(i)$. 
Let $W=\oplus_{j\in J} W_j$ be an other direct sum and $\phi\colon V\to W$ a morphism. We will denote the components of $\phi$ by \[\phi(i\to j)\phi(i,j)\colon V(i)\to W(j)\,.\] 
\item Let $\iota\colon Z\to X$ be a closed embedding of schemes and let $F\in \QCoh(X)$ be a quasi-coherent sheaf on $X$. The symbol $F_{\mid Z}$ will sometimes denote the sheaf $\iota^*F\in \QCoh(Z)$ and at other times the sheaf $\iota_*\iota^*F\in\QCoh(X)$. The restriction morphism \[F\to F_{\mid Z}=\iota_*\iota^*F\] is the unit of the adjunction $(\iota^*, \iota_*)$. The image of a section $s\in F$ under this morphism is denoted by $s_{\mid Z}$. 
\item For any finite set $M$ the symmetric group $\sym_M$ is the group of bijections of $M$. 
Note that we have $\sym_{\emptyset}\cong 1$. 
For two positive integers $n<m$ we use the notation
\[[n]=\{1,2,\dots,n\}\quad,\quad  [n,m]=\{n,n+1,\dots,m\}\,.\]
If $n>m$ we set $[n,m]:=\emptyset$. We interpret the symmetric group $\sym_n$  as the group acting on $[n]$, i.e.
$\sym_n=\sym_{[n]}$.
For any subset $I\subset \n$ we denote by $\bar I=\n\setminus I$ its complement in $\n$.
For better readability we sometimes write
$\overline{\sym_I}$ instead of $\sym_{\bar I}$.
\item We will write multi-indices mostly in the form of maps, i.e. for two positive integers $n,k\in \N$ we denote multi-indices with $k$ values between $1$ and $n$ rather as 
elements of $\Map([k],[n])$ than as elements of $[n]^k$. But sometimes we will switch between the notations and write a multi-index $a\colon [k]\to [n]$ in the form $a=(a(1),\dots,a(k))$ or $a=(a_1,\dots,a_k)$. For two maps $a\colon M\to K$ and $b\colon N\to K$ with disjoint domains we write $a\uplus b\colon M\coprod N\to K$ for the induced map on the union. If $N=\{i\}$ consists of only one element we will also write $a\uplus b=(a,i\mapsto b(i))$. For $x\in K$ we write 
$\underline x\colon M\to K$ for the map which is constantly $x$.  
For a multi-index $a\colon M\to \{i<j\}$ with a totally ordered codomain consisting of two elements we introduce the sign
\[\eps_a:=(-1)^{\#a^{-1}(\{j\})}\,.\]
For the preimage sets of one element $i$ in the codomain of $a$ we will often write  for short $a^{-1}(i)$ instead of $a^{-1}(\{i\})$.
\item Putting the symbol PF over an isomorphism sign means that the isomorphism is given by projection formula.
\end{enumerate}

\section{Image of tautological sheaves under the Bridgeland--King--Reid--Haiman equivalence}\label{bkr}
\subsection{The Bridgeland--King--Reid--Haiman equivalence}
From now on let $X$ be a smooth quasi-projective surface. For $n\in \N$ we denote by $X^{[n]}$ the \textit{Hilbert scheme of $n$ points on the surface $X$} which is the fine moduli space of zero-dimensional subschemes $\xi$ of $X$ of length
$\ell(\xi):=h^0(\xi,\reg_\xi)=n$. 
The \textit{isospectral Hilbert scheme} is given by $I^nX:=(X^{[n]}\times_{S^nX}X^n)_{\text{red}}$. Here, the fibre product is defined via the Hilbert-Chow morphism $\mu\colon X^{[n]}\to S^nX$ and the quotient morphism $\pi\colon X^n\to S^nX=X^n/\sym_n$. 
We will use the Bridgeland--King--Reid--Haiman equivalence in order to compute the cohomology of certain sheaves on $X^{[n]}$.  
\begin{theorem}[\cite{BKR}, \cite{Hai}]\label{equi}
The Fourier-Mukai Transform
 \[\Phi:=\Phi^{X^{[n]}\to X^n}_{\reg_{I^nX}}\colon \D^b(X^{[n]})\to \D^b_{\mathfrak S_n}(X^n)\]
is an equivalence of triangulated categories. 
\end{theorem}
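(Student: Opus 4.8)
The plan is to recognise this as the Bridgeland--King--Reid theorem specialised to the pair $(X^n,\sym_n)$ and to verify its hypotheses in the present geometry, isolating the one deep surface-specific ingredient supplied by Haiman. First I would record the formal features of $\Phi$. Writing $p\colon I^nX\to X^n$ and $q\colon I^nX\to X^{[n]}$ for the two projections, the kernel $\reg_{I^nX}$ is a coherent sheaf on $X^{[n]}\times X^n$; since $X^{[n]}$ is smooth every coherent sheaf is perfect over the $X^{[n]}$-factor, and since $p$ is proper (being the base change of the proper Hilbert--Chow morphism $\mu$ along the finite morphism $\pi$), the transform $\Phi\simeq Rp_*\circ Lq^*$ is a well-defined exact functor $\D^b(X^{[n]})\to\D^b_{\sym_n}(X^n)$ admitting both a left and a right adjoint, each given by a transform along the transposed kernel twisted by the relevant relative dualizing complex.

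Second, I would reduce full faithfulness to a computation on point objects. The structure sheaves $\reg_\xi$ of the closed points $\xi\in X^{[n]}$ form a spanning class of $\D^b(X^{[n]})$, so by the criterion of Bridgeland--King--Reid it suffices to check that the equivariant groups $\Hom^i_{X^n}(\Phi\reg_{\xi_1},\Phi\reg_{\xi_2})^{\sym_n}$ vanish whenever $\xi_1\neq\xi_2$, vanish for $i\notin[0,2n]$ when $\xi_1=\xi_2$, and are one-dimensional when $\xi_1=\xi_2$ and $i=0$. Since $\Phi\reg_\xi$ is supported on $p(q^{-1}(\xi))$, the reduced fibre of $\pi$ over $\mu(\xi)$, these conditions are governed by the way the fibres of $\mu$ meet, and Bridgeland--King--Reid show that they hold as soon as the fibre-dimension estimate
\[
\dim\bigl(X^{[n]}\times_{S^nX}X^{[n]}\bigr)\le\dim X^{[n]}+1=2n+1
\]
is satisfied.

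Third comes the geometric input, which I would split into crepancy and the dimension condition. Crepancy of $\mu$ holds precisely because $X$ is a \emph{surface}: at a point of $X^n$ fixed by a transposition the generator swaps two copies of the even-dimensional tangent space $T_xX$ and so acts trivially on the fibre of $\omega_{X^n}=\bigotimes_i\pr_i^*\omega_X$, whence $\omega_{X^n}$ is locally trivial as a $\sym_n$-sheaf and $\mu$ acquires no discrepancy. The dimension estimate, by contrast, is the genuinely hard point for $n\ge 2$, where $\dim X^n=2n$ lies far outside the range accessible to the elementary arguments of \cite{BKR}; its validity for all $n$ rests on Haiman's theorem that the isospectral Hilbert scheme $I^nX$ is normal, Cohen--Macaulay and Gorenstein. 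As these properties are local on $S^nX$ they may be checked for $X=\C^2$, which is exactly the outcome of Haiman's polygraph analysis. This is the step I expect to be the main obstacle, and the one I would simply import from \cite{Hai}.

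Finally I would upgrade full faithfulness to an equivalence. The trivialisation of $\omega_{X^n}$ as a $\sym_n$-sheaf means the relative dualizing complex of $p$ is trivial up to the shift $[2n]$, so that the left and right adjoints of $\Phi$ coincide up to this shift and the left and right orthogonals of its image therefore agree. As $X$ is connected the category $\D^b_{\sym_n}(X^n)$ is indecomposable, and a fully faithful functor out of the nonzero category $\D^b(X^{[n]})$ whose two adjoints coincide in this way is necessarily essentially surjective. Hence $\Phi$ is an equivalence.
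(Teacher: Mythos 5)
The paper does not prove this statement: Theorem \ref{equi} is imported verbatim from \cite{BKR} and \cite{Hai}, so there is no internal argument to compare yours against. Your sketch is a broadly faithful account of how the result is established in those references -- the Fourier--Mukai formalism with adjoints via the transposed kernel, Bridgeland's criterion (fully faithful on a spanning class plus agreement of the two adjoints plus indecomposability of the target gives an equivalence), crepancy of $\mu$ from the triviality of the $\sym_n$-linearization of $\omega_{X^n}$ at fixed points (which uses $\dim X=2$), and the fibre-dimension condition $\dim(X^{[n]}\times_{S^nX}X^{[n]})\le 2n+1$. One correction of emphasis: that dimension estimate is actually the \emph{elementary} part -- by Brian\c{c}on's theorem the punctual Hilbert scheme over $n\cdot x$ has dimension $n-1$, and summing over the strata of $S^nX$ gives $\dim(X^{[n]}\times_{S^nX}X^{[n]})=2n$. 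Haiman's Gorenstein/Cohen--Macaulay theorem for the polygraph enters at a different point: it is what identifies $X^{[n]}$ with $\sym_n\text{-Hilb}(X^n)$ and $I^nX$ with the universal family, finite and flat over $X^{[n]}$, so that the kernel $\reg_{I^nX}$ in the statement really is the one to which the Bridgeland--King--Reid machine applies. With that attribution adjusted, your outline matches the standard proof.
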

The Bridgeland--King--Reid equivalence can be used to compute the extension groups of objects in the derived category of the Hilbert scheme.
\begin{cor}\label{bkrext}
 Let $\Fb,\Gb\in\D^b(X^{[n]})$. Then 
\[\Ext^i_{X^{[n]}}(\Fb,\Gb)\cong \sym_n\Ext^i_{X^n}(\Phi(\Fb),\Phi(\Gb))\quad\text{for all $i\in\Z$}\,.\] 
\end{cor}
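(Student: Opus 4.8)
The statement is a purely formal consequence of Theorem \ref{equi}, so the plan is to invoke the general principle that an exact equivalence of triangulated categories induces isomorphisms on all graded morphism spaces, and then to unwind what the equivariant $\Ext$ on the right-hand side means. The only genuine content is the bookkeeping with the equivariant derived category; everything else is automatic.

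First I would reduce $\Ext$ to $\Hom$ via the shift functor. In any triangulated category $\mathcal T$ one has $\Ext^i_{\mathcal T}(A,B)=\Hom_{\mathcal T}(A,B[i])$, so on the source side
\[\Ext^i_{X^{[n]}}(\Fb,\Gb)=\Hom_{\D^b(X^{[n]})}(\Fb,\Gb[i])\,.\]
Next, since $\Phi$ is an equivalence of triangulated categories by Theorem \ref{equi}, it is in particular fully faithful and commutes with the shift functor, whence $\Phi(\Gb[i])\cong\Phi(\Gb)[i]$ and therefore
\[\Hom_{\D^b(X^{[n]})}(\Fb,\Gb[i])\xrightarrow{\ \sim\ }\Hom_{\D^b_{\sym_n}(X^n)}(\Phi(\Fb),\Phi(\Gb)[i])\,.\]

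It then remains to identify this last group with the equivariant $\Ext$-group appearing in the statement. By the construction of the equivariant derived category $\D^b_{\sym_n}(X^n)$ (see \cite{BL} or \cite[Section 3]{Kru}), the morphisms between two equivariant objects are precisely the $\sym_n$-invariant morphisms of the underlying non-equivariant objects, and the shift functor of $\D^b_{\sym_n}(X^n)$ is compatible with that of $\D^b(X^n)$ under the forgetful functor. Combining these gives
\[\Hom_{\D^b_{\sym_n}(X^n)}(\Phi(\Fb),\Phi(\Gb)[i])=\Ext^i_{X^n}(\Phi(\Fb),\Phi(\Gb))^{\sym_n}=\sym_n\Ext^i_{X^n}(\Phi(\Fb),\Phi(\Gb))\,,\]
which is exactly the asserted isomorphism.

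I do not expect any real obstacle here: the three displayed isomorphisms are formal, and the proof is essentially the observation that $\Phi$ is an equivalence together with the definition of equivariant $\Ext$ as the $\sym_n$-invariants of ordinary $\Ext$. The single point that warrants a sentence of justification, rather than a calculation, is the last identification — that $\Hom$ in $\D^b_{\sym_n}(X^n)$ computes the invariants and that the shift is compatible under the forgetful functor — which is a standard property of equivariant derived categories rather than something specific to the present setting.
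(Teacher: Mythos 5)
Your proof is correct and follows exactly the same chain of isomorphisms as the paper: reduce $\Ext^i$ to $\Hom(\,\cdot\,,\,\cdot\,[i])$, apply full faithfulness of $\Phi$ together with its compatibility with the shift, and identify $\Hom$ in $\D^b_{\sym_n}(X^n)$ with the $\sym_n$-invariants of the non-equivariant $\Ext$. No differences worth noting.
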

\begin{proof}
Using the last corollary we indeed have
\[ 
\begin{aligned} \Ext^i_{X^{[n]}}(\Fb,\Gb)\cong \Hom_{\D^b(X^{[n]})}(\Fb,\Gb[i])
 &\cong \Hom_{\D^b_{\sym_n}(X^n)}(\Phi(\Fb),\Phi(\Gb[i]))\\
&\cong \Hom_{\D^b_{\sym_n}(X^n)}(\Phi(\Fb),\Phi(\Gb)[i])\\
&\cong \sym_n\Ext^i_{X^n}(\Phi(\Fb),\Phi(\Gb))\,.
\end{aligned}
\]
\end{proof}
We will abbreviate the functor $[\_]^{\sym_n}\circ\pi_*\colon \D^b_{\sym_n}(X^n)\to\D^b(S^nX)$ by $[\_]^{\sym_n}$. Note that $\pi_*$ indeed does not need to be derived since $\pi$ is finite.
\begin{prop}[\cite{Sca1}]\label{mubkr}
For every $\F^\bullet\in \D^b(X^{[n]})$ there is a natural isomorphism 
\[R\mu_*\F^\bullet\simeq [\Phi(\F^\bullet)]^{\sym_n}\,.\]
Furthermore there are natural isomorphisms
\[\Ho^*(X^{[n]},\F^\bullet)\cong \Ho^*(S^nX,\Phi(\F^\bullet)^{\sym_n})\cong \Ho^*(X^n,\Phi(F^\bullet))^{\sym_n}\,.\]
\end{prop}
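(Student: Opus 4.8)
The plan is to reduce everything to the single structural isomorphism $R\mu_*\F^\bullet\simeq[\Phi(\F^\bullet)]^{\sym_n}$; the two cohomology isomorphisms will then follow formally by applying global sections. All of the geometry is organised around the commutative square
\[
\begin{CD}
I^nX @>p>> X^n\\
@VqVV @VV\pi V\\
X^{[n]} @>\mu>> S^nX
\end{CD}
\]
in which $\pi p=\mu q$. I would first record the description of $\Phi$ in terms of $p$ and $q$: since $\reg_{I^nX}=j_*\reg_{I^nX}$ for the closed embedding $j\colon I^nX\hookrightarrow X^{[n]}\times X^n$, and the two projections restrict along $j$ to $q$ and $p$, the projection formula identifies the Fourier--Mukai transform with $Rp_*\circ Lq^*$.

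First I would establish the structural isomorphism. Using $\pi p=\mu q$, the finiteness of $\pi$ (so $R\pi_*=\pi_*$), and functoriality of derived push-forward,
\[
[\Phi(\F^\bullet)]^{\sym_n}=(\pi_*Rp_*Lq^*\F^\bullet)^{\sym_n}=(R\mu_*Rq_*Lq^*\F^\bullet)^{\sym_n}.
\]
The projection formula for $q$ then gives $Rq_*Lq^*\F^\bullet\simeq\F^\bullet\otimes^L Rq_*\reg_{I^nX}$, an isomorphism in $\D^b_{\sym_n}(X^{[n]})$ in which $\F^\bullet$ carries the trivial action: the group acts only through $I^nX$, and $q$ is $\sym_n$-invariant. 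Because we work over $\C$, taking $\sym_n$-invariants is exact — it is the image of the averaging idempotent — so it commutes with the additive functor $R\mu_*$ and passes through the tensor factor $\F^\bullet$, on which the action is trivial. Hence
\[
[\Phi(\F^\bullet)]^{\sym_n}\simeq R\mu_*\bigl(\F^\bullet\otimes^L(Rq_*\reg_{I^nX})^{\sym_n}\bigr),
\]
and the whole statement is reduced to the key claim $(Rq_*\reg_{I^nX})^{\sym_n}\simeq\reg_{X^{[n]}}$.

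This key claim is the part I expect to be the real obstacle. The first observation is that $q$ is finite, being the restriction to the closed subscheme $I^nX$ of the base change of the finite morphism $\pi$ along $\mu$; hence $Rq_*\reg_{I^nX}=q_*\reg_{I^nX}$ is a coherent sheaf of $\reg_{X^{[n]}}$-algebras concentrated in degree $0$. The unit of adjunction supplies a canonical $\sym_n$-invariant map $\reg_{X^{[n]}}\to(q_*\reg_{I^nX})^{\sym_n}$, and over the dense open locus $U\subseteq X^{[n]}$ of reduced subschemes $q$ restricts to an $\sym_n$-torsor, so there $q_*\reg_{I^nX}$ is the regular representation and its invariants are exactly $\reg_U$; thus the map is an isomorphism over $U$. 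To promote this to all of $X^{[n]}$ I would use that $X^{[n]}$ is smooth, hence normal: $(q_*\reg_{I^nX})^{\sym_n}$ is a finite, generically trivial, torsion-free $\reg_{X^{[n]}}$-algebra, so it embeds in the sheaf of rational functions as elements integral over $\reg_{X^{[n]}}$, and normality forces equality. (Alternatively one may invoke Haiman's theorem that $I^nX$ is normal and Cohen--Macaulay and that $q$ is flat with $q_*\reg_{I^nX}$ fibrewise the regular representation, which yields the claim at once.) Granting it, $[\Phi(\F^\bullet)]^{\sym_n}\simeq R\mu_*(\F^\bullet\otimes^L\reg_{X^{[n]}})=R\mu_*\F^\bullet$.

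Finally the two cohomology isomorphisms are formal. Applying $R\glob(S^nX,-)$ to the structural isomorphism and using $R\glob(X^{[n]},-)\simeq R\glob(S^nX,R\mu_*(-))$ gives $\Ho^*(X^{[n]},\F^\bullet)\cong\Ho^*(S^nX,[\Phi(\F^\bullet)]^{\sym_n})$. For the second isomorphism, $\pi$ is finite, so $R\glob(S^nX,\pi_*\Phi(\F^\bullet))\simeq R\glob(X^n,\Phi(\F^\bullet))$; and since taking $\sym_n$-invariants is exact and the base $S^nX$ carries the trivial action, it commutes with $R\glob$, which together give $\Ho^*(S^nX,[\Phi(\F^\bullet)]^{\sym_n})\cong\Ho^*(X^n,\Phi(\F^\bullet))^{\sym_n}$. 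Naturality in $\F^\bullet$ holds throughout, since every identification used — the commutativity/base-change identity, the projection formula, and the averaging projector — is itself natural.
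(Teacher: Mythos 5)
Your proof is correct and follows essentially the same route as the paper: reduce to the identity $(q_*\reg_{I^nX})^{\sym_n}\simeq\reg_{X^{[n]}}$ via $\pi p=\mu q$, the projection formula along $q$, and exactness of taking $\sym_n$-invariants in characteristic zero. The only difference is that the paper simply cites this identity from Scala (who deduces it from $q$ being the $\sym_n$-quotient morphism), whereas you supply a normality argument for it; both are fine.
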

\begin{proof}
 The morphisms $q$ is the $\sym_n$-quotient morphism (see \cite[section 1.5]{Sca1}). Thus, $(q_*\reg_{I^nX})^{\sym_n}=\reg_{X^{[n]}}$. Using this, we get 
\begin{align*}
 \Phi(\F^\bullet)^{\sym_n}\simeq[\pi_*\circ Rp_*\circ q^* \F^\bullet]^{\sym_n}\simeq [R\mu_*\circ q_*\circ q^* \F^\bullet]^{\sym_n}&\simeq R\mu_*[q_*\circ q^* \F^\bullet]^{\sym_n}\\&\overset{PF}\simeq R\mu_* [\F^\bullet\otimes^L q_*\reg_{I^nX}]^{\sym_n}\\&\overset{\ref{tensinv}}\simeq R\mu_* (\F^\bullet\otimes^L [q_*\reg_{I^nX}]^{\sym_n})\\&\simeq R\mu_* \F^\bullet\,.
\end{align*}
Now indeed $\Ho^*(X^{[n]},\F^\bullet)\cong \Ho^*(S^nX,R\mu_*\F^\bullet)\cong \Ho^*(S^nX, \Phi(\F^\bullet)^{\sym_n})$.
The last isomorphisms of the second assertion is due to the fact that $[\_]^{\sym_n}$ is an exact functor. 
\end{proof}
\subsection{Tautological sheaves}
\begin{defin}
We define the \textit{tautological functor for sheaves} as
\[(\_)^{[n]}:=\pr_{X^{[n]}*}(\reg_\Xi\otimes \pr_X^*(\_))\colon\Coh(X)\to\Coh(X^{[n]})\,.\]
For a sheaf $F\in\Coh(X)$ we call its image $F^{[n]}$ under this functor the \textit{tautological sheaf associated with $F$}. 
In \cite[Proposition 2.3]{Sca2} it is shown that the functor $(\_)^{[n]}$ is exact. Thus, it induces the \textit{tautological functor for objects}
$(\_)^{[n]}\colon \D^b(X)\to \D^b(X^{[n]})$.  
For an object $F^\bullet\in\D^b(X)$ the \textit{tautological object associated to
$F^\bullet$} is $(F^\bullet)^{[n]}$.
\end{defin}
\begin{remark}\label{tautexact}
The tautological functor for objects is isomorphic to the Fourier-Mukai transform with kernel the structural sheaf of the universal family, i.e.
$(F^\bullet)^{[n]}\simeq \Phi^{X\to X^{[n]}}_{\reg_\Xi}(F^\bullet)$ for every $F^\bullet\in \D^b(X)$.
\end{remark}
\begin{remark}
If $F$ is locally free of rank $k$ the \textit{tautological bundle} $F^{[n]}$ is
locally free of rank $k\cdot n$ with fibers
$F^{[n]}([\xi])=\glob(\xi,F_{\mid \xi})$ since $\pr_{X^{[n]}}\colon \Xi\to X^{[n]}$ is flat and finite of degree $n$.
\end{remark}             
\subsection{The complex $C^\bullet$}\label{ccomplex}
We use the notation from appendix \ref{combi}. 
To any coherent sheaf $F$ on $X$ we associate a $\sym_n$-equivariant
complex $C_F^\bullet$ of sheaves on $X^n$ as follows.
We set 
\[C^0_F=\bigoplus_{i=1}^n
p_i^*F\quad, \quad C^p_F=\bigoplus_{I\subset[n]\,,\,\mid I\mid =p+1} F_I\quad\text{for
$0< p< n$}\quad,\quad C^p_F=0 \quad\text{else}\,.\]
Let $s=(s_I)_{\mid I\mid= p+1}$ be a local section of $C^p_F$. We define the $\sym_n$-linearization of $C^p_F$ by
\[\lambda_{\sigma}(s)_I:=\eps_{\sigma,I}\cdot\sigma_*(s_{\sigma^{-1}(I)})\,,\]
where $\sigma_*$ is the flat base change isomorphism from the following diagram with $p_I\circ \sigma =p_{\sigma^{-1}(I)}$
\[\xymatrix{
\Delta_{\sigma^{-1}(I)} \ar^\sigma[r] \ar_{\iota_{\sigma^{-1}(I)}}[d]  &   \Delta_I\ar^{p_I}[r] \ar_{\iota_I}[d] &   X   \\
X^n    \ar^\sigma[r]         &         X^n  \,.   &            
}
\]
This gives also a $\sym_n$-linearization of $C^0_F$ using the convention $F_{\{i\}}:=p_i^*F$ and $\Delta_{\{i\}}:=X^n$.
Finally, we define the differentials $d^p\colon C^p_F\to C^{p+1}_F$ by the formula
\[d^p(s)_J:=\sum_{i\in J} \eps_{i,J}\cdot s_{J\setminus\{i\}\mid _{\Delta_J}}\,.\]
As one can check using lemma \ref{signlemma}, $C^\bullet_F$ is indeed an $\sym_n$-equivariant complex. 
\subsection{Polygraphs and the image of tautological sheaves under $\Phi$}\label{pol}
\begin{theorem}[\cite{Sca2}]\label{Sca}
\begin{enumerate}
 \item 
For every $F\in\Coh(F)$, the object $\Phi(F^{[n]})$ is cohomologically concentrated in degree zero. Furthermore, the complex $C^\bullet_F$ is a right resolution of $p_*q^*(F^{[n]})$. Hence, in $\D^b_{\sym_n}(X^n)$ there are the isomorphisms   
\[\Phi(F^{[n]})\simeq p_*q^*F^{[n]}\simeq C^\bullet_F\,.\]
\item
For every collection $E_1,\dots,E_k\in \Coh(X)$ of locally free sheaves on $X$, the object $\Phi(E_1^{[n]}\otimes \dots\otimes E_k^{[n]})$ is cohomologically concentrated in degree zero. Furthermore, there is a natural $\sym_n$-equivariant surjection 
\[\alpha\colon p_*q^*E_1^{[n]}\otimes \dots\otimes p_*q^*E_k^{[n]}\to p_*q^*(E_1^{[n]}\otimes \dots \otimes E_k^{[n]})\]
whose kernel is the torsion subsheaf. Hence, in $\D^b_{\sym_n}(X^n)$ there are the isomorphisms   
\[\Phi\left(\bigotimes_{i=1}^kE_i^{[n]}\right)\simeq p_*q^*\left(\bigotimes_{i=1}^kE_i^{[n]}\right)\simeq \bigotimes_{i=1}^kp_*q^*E_i^{[n]}/\text{torsion}\,.\]
\end{enumerate}
\end{theorem}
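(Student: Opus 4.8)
The plan is to compute $\Phi(F^{[n]})$ as a single Fourier--Mukai transform and read off both assertions from the geometry of the resulting kernel. By Remark \ref{tautexact} the tautological functor is itself the transform $\Phi^{X\to X^{[n]}}_{\reg_\Xi}$, so $\Phi(F^{[n]})=\Phi^{X^{[n]}\to X^n}_{\reg_{I^nX}}\circ\Phi^{X\to X^{[n]}}_{\reg_\Xi}(F)=\Phi^{X\to X^n}_{\mathcal M}(F)$, where $\mathcal M$ is the convolution of the two kernels, a complex on $X\times X^n$. Since $\Xi$ is finite and flat over $X^{[n]}$, the $\otimes^L$ defining $\mathcal M$ reduces to an ordinary tensor product, so $\mathcal M\simeq R\pr_{13*}\reg_{\Xi\times_{X^{[n]}}I^nX}$, where $\pr_{13}$ is the projection to $X\times X^n$. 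Tracing the incidence conditions, the image of $\Xi\times_{X^{[n]}}I^nX$ in $X\times X^n$ is the union $\bigcup_{i=1}^n\Gamma_i$ of the graphs $\Gamma_i=\{\,x=y_i\,\}$ of the projections $\pr_i\colon X^n\to X$; for $I\subset[n]$ the partial intersection $\Gamma_I:=\bigcap_{i\in I}\Gamma_i$ is smooth of codimension $2|I|$ and is carried isomorphically onto the diagonal $\Delta_I$ by $\pr_{X^n}$, the pullback $\pr_X^*F$ corresponding under this isomorphism to $p_I^*F$. In particular $\pr_{X^n}$ is finite on $\bigcup_i\Gamma_i$, and every $\Gamma_I$ is flat over $X$ via $\pr_X$.

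The hard part is to show that $\mathcal M$ is concentrated in degree zero, i.e.\ that $R^{>0}\pr_{13*}\reg_{\Xi\times_{X^{[n]}}I^nX}=0$, equivalently $R^{>0}p_*q^*F^{[n]}=0$. This is genuinely nontrivial: although $q$ is finite and flat --- by Haiman's theorem \cite{Hai} the isospectral Hilbert scheme $I^nX$ is Cohen--Macaulay of dimension $2n$, so $q$ is flat onto the smooth $X^{[n]}$ by miracle flatness and hence $\Phi=Rp_*\circ q^*$ --- the morphism $p$ is only projective, and its fibres over the deep diagonals are punctual Hilbert schemes of positive dimension, so the vanishing cannot be read off from fibre dimension alone. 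Here I would invoke the full strength of Haiman's results on $I^nX$ (normality, the Cohen--Macaulay and Gorenstein properties, and the polygraph freeness underlying the $n!$--theorem): localising over $S^nX$ and using that the Hilbert--Chow morphism is \'etale-locally a product of punctual factors, one reduces to a neighbourhood of the small diagonal and checks the vanishing against Haiman's explicit description of $p_*\reg_{I^nX}$. This is the main obstacle; everything downstream is formal.

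Granting the concentration, I expect $\mathcal M\cong\reg_{\bigcup_i\Gamma_i}$, and the resolution then falls out of the Mayer--Vietoris (\v{C}ech) complex of this union of smooth subvarieties meeting in the expected codimensions,
\[
0\to\reg_{\bigcup_i\Gamma_i}\to\bigoplus_i\reg_{\Gamma_i}\to\bigoplus_{|I|=2}\reg_{\Gamma_I}\to\cdots,
\]
whose exactness is a local Koszul computation. All sheaves occurring are flat over $X$, so twisting by $\pr_X^*F$ preserves exactness for every coherent $F$; applying the exact functor $\pr_{X^n*}$ (exact, as $\pr_{X^n}$ is finite on $\bigcup_i\Gamma_i$) and using $\pr_{X^n}\colon\Gamma_I\xrightarrow{\ \sim\ }\Delta_I$ turns $\reg_{\Gamma_I}\otimes\pr_X^*F$ into $F_I=\iota_{I*}p_I^*F$ and $\reg_{\bigcup_i\Gamma_i}\otimes\pr_X^*F$ into $p_*q^*F^{[n]}$. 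The outcome is precisely the augmented complex $0\to p_*q^*F^{[n]}\to C^0_F\to C^1_F\to\cdots$; matching its maps with the signed restriction differentials of $C^\bullet_F$ is the bookkeeping recorded in lemma \ref{signlemma}. This shows $C^\bullet_F$ is a right resolution of $p_*q^*F^{[n]}\simeq\Phi(F^{[n]})$ and reproves the concentration in degree zero for all coherent $F$ at once.

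For the second statement, since the $E_i$ are locally free, $q^*(\bigotimes_iE_i^{[n]})=\bigotimes_iq^*E_i^{[n]}$ is locally free on $I^nX$, so $\Phi(\bigotimes_iE_i^{[n]})=Rp_*\bigotimes_iq^*E_i^{[n]}$, and its concentration in degree zero follows from exactly the same punctual--fibre vanishing as above, now applied to this locally free sheaf. The morphism $\alpha$ is the natural multiplication map $\bigotimes_ip_*q^*E_i^{[n]}\to p_*\bigotimes_iq^*E_i^{[n]}$; I would prove its surjectivity in the same local model, using that $p_*\reg_{I^nX}$ is generated by Haiman's explicit sections and that each $q^*E_i^{[n]}$ is generated by pullbacks (equivalently via the degeneration of the spectral sequence of $\bigotimes_iC^\bullet_{E_i}$, as in \cite{Sca1}). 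The identification $\ker\alpha=\text{torsion}$ is then formal: over the open locus of pairwise--distinct points $p$ is an isomorphism, so $\alpha$ is an isomorphism there and $\ker\alpha$ is supported on the codimension--two diagonals, hence torsion; conversely $p_*q^*(\bigotimes_iE_i^{[n]})$ is torsion--free, being the pushforward of a locally free sheaf along the surjection $p$ from the integral scheme $I^nX$, so the torsion of the source is killed by $\alpha$ and lies in $\ker\alpha$. Thus $\ker\alpha$ is exactly the torsion subsheaf, which yields the displayed isomorphisms.
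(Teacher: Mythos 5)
The first thing to say is that the paper offers no proof of this statement at all: it is Theorem \ref{Sca}, attributed to \cite{Sca2} (and \cite{Sca1}), and is used as a black box. So there is no internal argument to compare against; your proposal has to be judged as a reconstruction of Scala's proof.

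As such a reconstruction, your outline has the right architecture: composing the two Fourier--Mukai kernels, using Haiman's Cohen--Macaulayness of $I^nX$ plus miracle flatness to replace $Lq^*$ by $q^*$, identifying the (reduced) image of $\Xi\times_{X^{[n]}}I^nX$ in $X\times X^n$ with the union of graphs $\bigcup_i\Gamma_i$ (Haiman's polygraph $Z(n,1)$), and obtaining $C^\bullet_F$ from the generalised Mayer--Vietoris complex of that union. The Mayer--Vietoris exactness you call ``a local Koszul computation'' is in fact fine, though the right mechanism is a K\"unneth argument: formally locally the $\Gamma_i$ become coordinate subspaces cut out by pairwise disjoint sets of variables, and the complex is the tensor product over $i$ of the two-term complexes $\reg\to\reg_{\Gamma_i}$, each exact in positive degree. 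Your torsion argument for $\ker\alpha$ in part (ii) is also correct as stated.

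The genuine gaps are exactly the two places you flag and then wave at. First, the isomorphism $\mathcal M\simeq\reg_{\bigcup_i\Gamma_i}$ together with $R^{>0}\pr_{13*}\reg_{\Xi\times_{X^{[n]}}I^nX}=0$ is not a consequence of $I^nX$ being Cohen--Macaulay or of fibre-dimension estimates; it requires Haiman's polygraph theorem (freeness of $\reg(Z(n,l))$ over $\reg(X^n)$), and you only ``invoke'' it. Second, and more seriously for part (ii), the surjectivity of $\alpha$ \emph{is} the theorem: it is equivalent to the degree-zero concentration of $\Phi(\bigotimes_iE_i^{[n]})$ together with its identification with the $E^{0,0}_\infty$-term of the spectral sequence of $\bigotimes_iC^\bullet_{E_i}$, which is the main result of \cite{Sca2} and needs the polygraph theorem for $Z(n,k)$ with $k\ge 2$, not ``exactly the same punctual-fibre vanishing as above.'' Writing ``I would prove its surjectivity\dots as in \cite{Sca1}'' concedes the point. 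Since the paper itself only cites this result, deferring to Haiman and Scala is a legitimate way to \emph{use} the theorem, but your text is an annotated citation with a correct skeleton rather than a proof: the two load-bearing steps are named, not established.
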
 
We denote for $i\in [k]$ the augmentation map by $\gamma_i\colon p_*q^*(E_i^{[n]})\to C^0_{E_i}$.
\begin{prop}\label{im}
 Let $E_1,\dots,E_k$ be locally free sheaves on $X$. Then there is a natural isomorphism
\[p_*q^*(E_1^{[n]}\otimes \dots\otimes E_k^{[n]})\cong \im(\gamma_1\otimes\dots\otimes \gamma_k)\subset C^0_{E_1}\otimes \dots\otimes C^0_{E_k}\,.\]
\end{prop}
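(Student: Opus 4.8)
The plan is to build the desired isomorphism out of the surjection $\alpha$ from Theorem \ref{Sca}(2) and the augmentation maps $\gamma_i$, by showing that $\gamma_1\otimes\dots\otimes\gamma_k$ factors through $\alpha$ as an embedding whose image is exactly $\im(\gamma_1\otimes\dots\otimes\gamma_k)$. First I would record that each $\gamma_i\colon p_*q^*E_i^{[n]}\to C^0_{E_i}$ is injective: by Theorem \ref{Sca}(1) the complex $C^\bullet_{E_i}$ is a right resolution of $p_*q^*E_i^{[n]}$, so $\gamma_i$ identifies $p_*q^*E_i^{[n]}$ with $\ker(d^0\colon C^0_{E_i}\to C^1_{E_i})$. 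Since $E_i$ is locally free, $C^0_{E_i}=\bigoplus_{j=1}^n p_j^*E_i$ is locally free, hence so is $K_0=C^0_{E_1}\otimes\dots\otimes C^0_{E_k}$; in particular $K_0$ is torsion-free.

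Consequently the $\sym_n$-equivariant morphism $\gamma_1\otimes\dots\otimes\gamma_k\colon \bigotimes_i p_*q^*E_i^{[n]}\to K_0$ annihilates every torsion section, hence kills the whole torsion subsheaf, which by Theorem \ref{Sca}(2) is precisely $\ker\alpha$. It therefore factors uniquely as $\gamma_1\otimes\dots\otimes\gamma_k=\beta\circ\alpha$ for a $\sym_n$-equivariant morphism $\beta\colon p_*q^*(E_1^{[n]}\otimes\dots\otimes E_k^{[n]})\to K_0$. Because $\alpha$ is surjective we get $\im(\gamma_1\otimes\dots\otimes\gamma_k)=\im\beta$, so it remains only to prove that $\beta$ is injective; then $\beta$ furnishes the asserted isomorphism of $p_*q^*(E_1^{[n]}\otimes\dots\otimes E_k^{[n]})$ onto $\im(\gamma_1\otimes\dots\otimes\gamma_k)$.

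The injectivity of $\beta$ is the heart of the matter, and I would establish it by a generic argument. Let $U\subset X^n$ be the $\sym_n$-invariant open subset of points with pairwise distinct coordinates. For $p\ge 1$ every summand of $C^p_{E_i}$ is supported on a partial diagonal $\Delta_I$, so $C^p_{E_i}\mid_U=0$; since restriction to an open set is exact, exactness of the resolution forces $\gamma_i\mid_U$ to be an isomorphism $p_*q^*E_i^{[n]}\mid_U\xrightarrow{\sim}C^0_{E_i}\mid_U$. Likewise $\alpha\mid_U$ is an isomorphism, its kernel being torsion supported on the diagonals, which are disjoint from $U$. Hence $\beta\mid_U$ is an isomorphism, so $\ker\beta$ is supported on the complement of $U$, a proper closed subset, and is therefore torsion. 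But by Theorem \ref{Sca}(2) we have $p_*q^*(E_1^{[n]}\otimes\dots\otimes E_k^{[n]})\cong \bigotimes_i p_*q^*E_i^{[n]}/\text{torsion}$, which is torsion-free on the reduced scheme $X^n$ and thus has no nonzero torsion subsheaf. Therefore $\ker\beta=0$, $\beta$ is injective, and it is the required isomorphism.

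I expect the main obstacle to be exactly this injectivity of $\beta$: the factorization through $\alpha$ and the passage from $\im(\gamma_1\otimes\dots\otimes\gamma_k)$ to $\im\beta$ are purely formal, whereas controlling $\ker\beta$ combines the geometric input that the higher terms of Scala's complex live on the diagonals (so that each $\gamma_i$ is generically an isomorphism) with the torsion-freeness of $p_*q^*(E_1^{[n]}\otimes\dots\otimes E_k^{[n]})$ supplied by Theorem \ref{Sca}(2). Naturality of the resulting isomorphism is inherited from the naturality of $\alpha$ and of the augmentations $\gamma_i$.
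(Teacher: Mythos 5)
Your proposal is correct and follows essentially the same route as the paper: torsion-freeness of the locally free sheaf $K_0$ forces $\gamma_1\otimes\dots\otimes\gamma_k$ to kill the torsion subsheaf, while the fact that the higher terms of Scala's complex are supported on the big diagonal makes the map an isomorphism generically, so its kernel is exactly the torsion and Theorem \ref{Sca}(ii) identifies the image with $p_*q^*(E_1^{[n]}\otimes\dots\otimes E_k^{[n]})$. The only cosmetic difference is that you phrase this as a factorization through $\alpha$ followed by an injectivity check, whereas the paper computes $\ker(\gamma_1\otimes\dots\otimes\gamma_k)$ directly.
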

\begin{proof}
The sheaf $\otimes_i C^0_{E_i}$ is locally free and hence torsion-free. 
Thus, the kernel of $\otimes_i \gamma_i$ must contain the torsion subsheaf.
Since $C^1_{E_i}$ is supported on the big diagonal $\mathbbm D$ for every $i\in [k]$, the map $\otimes_i\gamma_i$
is an isomorphism outside of $\mathbbm D$. Thus, the kernel is supported on $\mathbbm D$ and hence torsion. In summary,  $\im(\otimes_i \gamma_i)$ is the quotient of $\otimes_ip_*q^*(E_i^{[n]})$ by the torsion subsheaf, which leads to the identification with $p_*q^*(\otimes_i E_i^{[n]})$ by theorem \ref{Sca}.
\end{proof}  
\begin{remark}\label{permute}
The morphism $\otimes_i \gamma_i \colon \otimes_i p_*q^*(E_i^{[n]})\to \otimes_i C^0_{E_i}$ is $\sym_n$-equivariant since all the $\gamma_i$ are equivariant. Thus, the isomorphism of the previous proposition is $\sym_n$-equivariant when considering 
$\im(\otimes_i\gamma_1)$ with the linearization induced by the linearization on $\otimes_iC^0_{E_i}$.
In the case $E_1=\dots=E_k=E$ the isomorphism is also $\sym_k$-equivariant when considering $\otimes_i E_i^{[n]}$ as well as 
$\otimes_i C^0_{E_i}$ with the action given by permuting the tensor factors. 
\end{remark}

\section{Description of $p_*q^*(E_1^{[n]}\otimes \dots\otimes E_k^{[n]})$}
In this whole section let $E_1, \dots, E_k$ be locally free sheaves on a quasi-projective surface $X$ and $n\in \N$.
We will use proposition \ref{im} in order to study the image of $E_1^{[n]}\otimes\dots \otimes E_k^{[n]}$ under the Bridgeland--King--Reid--Haiman equivalence.
We have to keep track of the $\sym_n$-linearization, since we later want compute the $\sym_n$-invariant cohomology (see proposition \ref{mubkr}). 
In the case that $E_1=\dots=E_k=E$ we also have to keep track of the $\sym_k$-action in order to get later also results for the symmetric products of tautological bundles (see remark \ref{permute}).  
\subsection{Construction of the $T_\ell$ and $\phi_\ell$}\label{constr}
\begin{defin}
Let $k,n\in\N$. For $1\le \ell\le k$ we define $I_\ell$ as the set of tuples of the form $(M;i,j;a)$ consisting of a subset $M\subset [k]$ with $|M|=\ell$, two numbers $i,j\in [n]$ with $i<j$, and
a multi-index $a\colon [k]\setminus M\to [n]$.
Given such a tuple we set \[\hat M:=\hat M(i,j;a):=M\cup a^{-1}(\{i,j\})\]
and $a_|:=a_{|[k]\setminus \hat M}$. The data of $i,j\in[n]$ with  $i<j$ is the same as the subset $\{i,j\}\subset [n]$. Thus, we will also write $(M;\{i,j\};a)$ instead of $(M;i,j;a)$. 
\end{defin}
For $E_1,\dots,E_k$ locally free sheaves on $X$ and $\ell=1,\dots,k$ we define the coherent sheaf 
\[T_\ell(E_1,\dots,E_k):=\bigoplus_{(M;i,j;a)\in I_\ell} S^{\ell-1}N^\vee_{\Delta_{i,j}}\otimes\left( \bigotimes_{\alpha\in M} E_\alpha\right)_{i,j}\otimes \left(\bigotimes_{\beta\in [k]\setminus M} \pr_{a(\beta)}^*E_\beta\right)\]
on $X^n$. We will often leave out the $E_i$ in the argument of $T_\ell$ and denote the direct summands by $T_\ell(M;i,j;a)$. If we want to emphasise the values of $k$ and $n$ we will put them in the left under respectively upper index of the objects and  morphisms, e.g. we will write $_k^nT_\ell$. We can rewrite the summands as 
\[T_\ell(M;i,j;a)= \left( S^{\ell-1}\Omega_X\otimes ( \bigotimes_{\alpha\in \hat M} E_\alpha)\right)_{i,j}\otimes \left(\bigotimes_{\beta\in [k]\setminus\hat M} \pr_{a(\beta)}^*E_\beta\right)\]
or as (see \cite[Chapter II 8]{Har2})
\begin{align*}T_\ell(M;i,j;a)&= S^{\ell-1}(\mathcal I_{i,j}/\mathcal I_{i,j}^2)\otimes \left ( \bigotimes_{\alpha\in \hat M} E_\alpha\right)_{i,j}\otimes \left(\bigotimes_{\beta\in [k]\setminus\hat M} \pr_{a(\beta)}^*E_\beta\right)\\
&\cong (\mathcal I_{i,j}^{\ell-1}/\mathcal I_{i,j}^\ell) \otimes \left ( \bigotimes_{\alpha\in \hat M} E_\alpha\right)_{i,j}\otimes \left(\bigotimes_{\beta\in [k]\setminus\hat M} \pr_{a(\beta)}^*E_\beta\right)\,.
\end{align*}
As in subsection \ref{ccomplex} for the terms $F_I$, we get for $\sigma\in \sym_n$ by flat base change canonical isomorphisms
\[\sigma_*\colon T_\ell(M;\sigma^{-1}(\{i,j\});\sigma^{-1}\circ a)\to \sigma^*T_\ell(M;i,j;a)\,.\]
Thus, there is a $\sym_n$-linearization $\lambda$ of $T_\ell$ given on local sections $s\in T_\ell$ by
\[\lambda_\sigma(s)(M;i,j;a)=\eps_{\sigma,\sigma^{-1}(\{i,j\})}^\ell\sigma_*s(M;\sigma^{-1}(\{i,j\});\sigma^{-1}\circ a)\,.\]
\begin{remark}\label{minus}
For $\sigma=(i\,\,j)$ the map $\sigma_*\colon N_{\Delta_{ij}}\to N_{\Delta_{ij}}$ is given by multiplication with $-1$ (see \cite[section 4]{Kru}). Thus, $\sigma_*\colon T_\ell(M;i,j;a)\to T_\ell(M;i,j;\sigma^{-1}\circ a)$ is given by multiplication with $(-1)^{\ell-1}$. Together with the sign $\eps_{\sigma,\sigma^{-1}(\{i,j\})}^\ell$ this makes $\sigma$ act by $-1$ on $T_\ell(M;i,j;a)$ for every tuple $(M;i,j;a)$ such that $a^{-1}(\{i,j\})=\emptyset$, i.e. if $\hat M=M$.  
\end{remark}
If $E_1=\dots=E_k$ we define a $\sym_k$-action on $T_\ell$ by setting 
\[(\mu\cdot s)(M;i,j;a):=\mu\cdot s(\mu^{-1}(M);i,j;a\circ\mu)\]
for $\mu\in \sym_k$. The action of $\mu$ on the right-hand side is given by permuting the factors $E_t$ of the tensor product.  Since the two linearizations commute, they give a $\sym_n\times \sym_k$-linearization of $T_\ell$. We will now successively define $\sym_n$- respectively $\sym_n\times\sym_k$-equivariant morphisms $\phi_\ell\colon K_{\ell-1}\to T_\ell$, where
\[K_0(E_1,\dots,E_k):=K_0:=\bigotimes_{t=1}^kC_{E_t}^0= \bigoplus_{a\colon[k]\to [n]}K_0(a)\quad,\quad K_0(a)=\bigotimes_{t=1}^k\pr_{a(t)}^*E_t\]
and $K_\ell:=\ker(\phi_\ell)$ for $\ell=1,\dots,k$. 
We set $I_0:=\Map([k],[n])$.
We consider $K_0$ with the $\sym_n$-linearization $\lambda$ given by
$\lambda_\sigma(s)(a):=\sigma_*s(\sigma^{-1}\circ a)$ and, if all the $E_t$ are equal, with the $\sym_k$-action $(\mu\cdot s)(a):= \mu\cdot s(a\circ \mu)$
(see also remark \ref{permute}).
For $(M;i,j;a)\in I_\ell$ we set
\[I_0\supset I(M;i,j;a):=\bigl\{c\colon[k]\to [n]\mid c(M)\subset \{i,j\}\,,\,c_{|[k]\setminus M}=a\bigr\}=\bigl\{a\uplus b\mid b\colon M\to \{i,j\}   \bigr\}\]
and define $K_{\ell-1}(M;i,j;a)$ as the image of $K_{\ell-1}$ under the projection
\[K_{\ell-1}\hookrightarrow K_0=\bigoplus_{c\in I_0} K_0(c)\to \bigoplus_{c\in I(M;i,j;a)}K_0(c)\]
We define the component $\phi_{\ell}(M;i,j;a)\colon K_{\ell-1}\to T_\ell(M;i,j;a)$ of $\phi_\ell$ as the composition
of the projection $K_{\ell-1}\to K_{\ell-1}(M;i,j;a)$ with a morphism $K_{\ell-1}(M;i,j;a)\to T_\ell(M;i,j;a)$. We denote the latter morphism again by $\phi_\ell(M;i,j;a)$ and will define it in the following.
For $s\in K_{\ell-1}\subset K_0$ the above means that $s(c)$ for $c\notin I(M;i,j;a)$ does not contribute to $\phi_\ell(s)(M;i,j;a)$.
We assume first that for all $t\in \hat M=M\cup a^{-1}(\{i,j\})$ the bundles $E_t$ equal the trivial line bundle, i.e. $E_t=\reg_X$.
Then for all $b\colon M\to \{i,j\}$ we have 
\[K_0(a\uplus b)=H\quad,\quad T_\ell(M;i,j;a)=(\mathcal I_{i,j}^{\ell-1}/\mathcal I_{i,j}^{\ell})
 \otimes H\quad,\quad H:= \bigotimes_{t\in [k]\setminus \hat M}\pr_{a(t)}^*E_t\,.
\]
Thus, for a local section $s\in K_{\ell-1}$ the components $s(a\uplus b)\in K_{\ell-1}(M;i,j;a)$ are all sections of the same locally free sheaf $H$ and we can define 
 \[\phi_\ell(M;i,j;a)(s):=\sum_{b\colon M\to \{i,j\}}\eps_b s(a\uplus b)\quad \text{mod $\I_{ij}^\ell$}\]
where $\eps_b=(-1)^{\#\{t\mid b(t)=j\}}$.
Inductively, the map $\phi_\ell(M;i,j;a)$ is well defined, which means that $\phi_\ell(M;i,j;a)(s)\in \mathcal I_{i,j}^{\ell-1}\cdot H$, since if we take any $m\in M$ we have
\[\phi_\ell(M;i,j;a)(s)=\sum_{b\colon M\setminus\{m\}\to \{i,j\}}\eps_b s(a\uplus b,m\mapsto i)\,-\, \sum_{b\colon M\setminus\{m\}\to\{i,j\}}\eps_b s(a\uplus b,m\mapsto j)\,.\]
Both sums occuring are elements of $\mathcal I_{i,j}^{\ell-1}$ since because of $s\in \ker\phi_{\ell-1}$ we have
\[\phi_{\ell-1}(s)(M\setminus \{m\};i,j;a,m\mapsto i)=\phi_{\ell-1}(s)(M\setminus \{m\};i,j;a,m\mapsto j)=0 \quad\text{mod $\mathcal I_{i,j}^{\ell-1}$}\,.\]  
Let now $E_t$ for $t\in \hat M$ be the trivial vector bundle of rank $r_t$. Let $r:=\prod_{t\in\hat M} r_t$. Then for $b\colon M\to \{i,j\}$ we have 
$K_0(a\uplus b)=(\oplus_\alpha H)$ and $T_\ell=(\I_{ij}^{\ell-1}/\I_{ij}^\ell)\otimes (\oplus_\alpha H)$. 
Here the index $\alpha$ goes through all multi-indices $(\alpha_t|t\in \hat M)$ with $1\le \alpha_t\le r_t$.
Now we can define $\phi_\ell(M;i,j;a)$ the same way as before. 
The components $\phi_{\ell}(M;i,j;a)(\alpha,\alpha')$ are zero if $\alpha\neq \alpha'$ and coincide with the $\phi_\ell(M;i,j;a)$ from the trivial line bundle case if $\alpha=\alpha'$. 
\begin{remark}
Every collection of automorphisms of the $E_t=\reg_X^{\oplus r_t}$ for $t\in \hat M$ induces canonically an automorphism of $\oplus_\alpha H$. The morphism $\phi_\ell(M;i,j;a)$ commutes with the automorphisms induced by this automorphism on its domain and codomain.
\end{remark}
This observation allows us to define $\phi_\ell(M;i,j;a)$ in the case of general locally free sheaves as follows.    
We choose an open covering $\{U_m\}_m$ of $X$ such that on every open set $U_m$ all the $E_t$ are simultaneously trivial, say with trivialisations $\mu_{m,t}\colon E_{t|U_m}\xrightarrow{\cong} \reg_{U_m}^{r_t}$. 
Then the tivialisations $\mu_{m,t}$ for $t\in \hat M$ induce over $\pr_{ij}^{-1}(U_m\times U_m)$ isomorphisms $K_0(a\uplus b)\cong(\oplus_\alpha H)$ and $T_\ell\cong(\I_{ij}^{\ell-1}/\I_{ij}^\ell)\otimes (\oplus_\alpha H)$. We define the restriction of $\phi_\ell(M;i,j;a)$ to $\pr_{ij}^{-1}(U_m^2)$ under these isomorphisms as the morphism $\phi_\ell(M;i,j;a)$ from the case of trivial vector bundles. It is independent of the chosen trivialisations by the above remark. Thus, the $\phi_\ell(M;i,j;a)$ defined over the $\pr_{ij}^{-1}(U_m^2)$ for varying $m$ glue together. Since the $\pr_{ij}^{-1}(U_m^2)$ cover the partial diagonal $\Delta_{ij}$, which is the support of $T_\ell(M;i,j;a)$, this defines $\phi_\ell(M;i,j;a)$ globally. Using lemma \ref{signlemma} one can check that the morphisms $\phi_\ell$ are indeed equivariant.   
\subsection{The open subset $X^n_{**}$}\label{twostar}
As done in \cite{Dandual} and \cite{Sca1}, we consider the  following open subvarieties of $X^n$, $S^nX$, and $X^{[n]}$. Let
$W\subset S^nX $ be the closed subvariety of unordered tuples $\sum_{i=1}^n x_i$ with the property that $|\{x_1,\dots,x_n\}|\le n-2$, i.e 
\[W=\pi\left(\bigcup_{|J|=|K|=2,\\ J\ne K} \Delta_J\cap\Delta_K\right) \ \,.\]
We set $S^nX_{**}:=S^nX\setminus W$, $X^n_{**}:=\pi^{-1}(S^nX_{**})$, $X^{[n]}_{**}:=\mu^{-1}(S^nX_{**})$ and 
\[I^nX_{**}= q^{-1}(X^{[n]}_{**}) = p^{-1}(X^n_{**}) = (X^{[n]}_{**}\times_{S^nX_{**}} X^n_{**})_{\text{red}}\,.\]
In summary, there is the following open immersion of commutative diagrams   
\[
\begin{CD}
I^nX_{**}
@>{p_{**}}>>
X^n_{**} \\
@V{q_{**}}VV
@VV{\pi_{**}}V \\
X^{[n]}_{**}
@>>{\mu_{**}}>
S^nX_{**}
\end{CD}\quad\overset{\iota}{\hookrightarrow}\quad
 \begin{CD}
I^nX
@>{p}>>
X^n \\
@V{q}VV
@VV{\pi}V \\
X^{[n]}
@>>{\mu}>
S^nX
\end{CD}
\]
where we denote every open immersion $(\_)_{**}\hookrightarrow (\_)$ by $\iota$. For a sheaf or complex of sheaves $F$ on $X^n$, $S^nX$, $X^{[n]}$ or $I^nX$ we write $F_{**}$ for its restriction to the appropriate open subset.
The codimensions of the complements are at least two. More precisely, we have
\begin{align*}\codim(X^n\setminus X^n_{**},X^n)&=\codim(S^nX\setminus S^nX_{**},S^nX)=4\,,\\ \codim(X^{[n]}\setminus X^{[n]}_{**},X^{[n]})&=\codim(I^nX\setminus I^nX_{**},I^nX)=2\,.\end{align*}
\begin{lemma}\label{pushpull}
Let $E_1,\dots,E_k$ be locally free sheaves on $X$. Then on $X^n$ there is a natural isomorphism
\[\iota_*\iota^*p_*q^*(E_1^{[n]}\otimes \dots\otimes E_k^{[n]})\cong p_*q^*(E_1^{[n]}\otimes \dots\otimes E_k^{[n]})\,.\]
\end{lemma}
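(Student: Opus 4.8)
The plan is to lift the statement to the isospectral Hilbert scheme $I^nX$, where the relevant sheaf becomes locally free, and to combine a base-change reduction with the facts that $I^nX$ is Cohen--Macaulay and that the closed locus removed by passing to the $(\_)_{**}$ open subsets has codimension two. Write $\mathcal G:=q^*(E_1^{[n]}\otimes\dots\otimes E_k^{[n]})$. Since the $E_i^{[n]}$ are tautological \emph{bundles} and both tensor products and the pullback $q^*$ preserve local freeness, $\mathcal G$ is locally free on $I^nX$, and by theorem \ref{Sca} we have $p_*q^*(E_1^{[n]}\otimes\dots\otimes E_k^{[n]})=p_*\mathcal G$ with $p_*$ non-derived. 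Denote by $j\colon I^nX_{**}\hookrightarrow I^nX$ the open immersion corresponding to $\iota\colon X^n_{**}\hookrightarrow X^n$, and recall from subsection \ref{twostar} that the square formed by $p,p_{**},\iota,j$ is cartesian, so that $I^nX_{**}=p^{-1}(X^n_{**})$ and $\iota\circ p_{**}=p\circ j$.

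First I would apply base change along the flat (open) immersion $\iota$ to this cartesian square, giving $\iota^*p_*\mathcal G\cong(p_{**})_*j^*\mathcal G$. Applying $\iota_*$ and using commutativity of the square yields
\begin{align*}
\iota_*\iota^*p_*\mathcal G
&\cong \iota_*(p_{**})_*j^*\mathcal G
=(\iota\circ p_{**})_*j^*\mathcal G\\
&=(p\circ j)_*j^*\mathcal G
=p_*(j_*j^*\mathcal G)\,,
\end{align*}
and under this identification the unit morphism $p_*\mathcal G\to\iota_*\iota^*p_*\mathcal G$ becomes $p_*$ applied to the unit $\mathcal G\to j_*j^*\mathcal G$. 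Hence it suffices to show that the latter is an isomorphism on $I^nX$; applying the functor $p_*$ then gives the claim.

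The remaining step is where the essential input enters. Set $Z:=I^nX\setminus I^nX_{**}$, so that $\codim(Z,I^nX)=2$ by subsection \ref{twostar}. Since $\mathcal G$ is locally free, for every point $z\in Z$ we have $\operatorname{depth}_{\mathcal O_{I^nX,z}}\mathcal G_z=\operatorname{depth}\mathcal O_{I^nX,z}$, and by Haiman's theorem (\cite{Hai}) the isospectral Hilbert scheme $I^nX$ is Cohen--Macaulay, whence this depth equals $\dim\mathcal O_{I^nX,z}\ge 2$. Equivalently the local cohomology sheaves $\Ho^0_Z(\mathcal G)$ and $\Ho^1_Z(\mathcal G)$ vanish, which is precisely the condition that the unit $\mathcal G\to j_*j^*\mathcal G$ be an isomorphism across the codimension-two locus $Z$. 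The main obstacle is exactly this input: the argument rests on $I^nX$ being at least $S_2$, for which I invoke the deep theorem of Haiman that $I^nX$ is Cohen--Macaulay. Everything else — the base-change reduction and the depth criterion for reflexive-type extension across a codimension-two subset — is formal.
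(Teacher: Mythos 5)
Your proof is correct and follows essentially the same route as the paper: the paper's Lemma \ref{codimpush} (applied with $f=p$, $U=X^n_{**}$) performs exactly your base-change reduction and then extends the locally free sheaf $q^*(E_1^{[n]}\otimes\dots\otimes E_k^{[n]})$ across the codimension-two complement in $I^nX$. The only difference is that where you invoke Haiman's Cohen--Macaulayness of $I^nX$ to get depth $\ge 2$, the paper only uses normality of $I^nX$ (Serre's $S_2$ already gives the required depth for a locally free sheaf), so your input is slightly stronger than necessary but certainly valid.
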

\begin{proof}
 We apply lemma \ref{codimpush} with $f=p$ and $U=X^n_{**}$.
\end{proof}
\subsection{Description of $p_*q^*(E_1^{[n]}\otimes \dots\otimes E_k^{[n]})_{**}$}
\begin{prop}\label{opendes}
Let $E_1,\dots, E_k$ be locally free sheaves on $X$. Then on the open subset $X^n_{**}\subset X^n$ there is the equality 
\[K_{k**}=p_*q^*(E_1^{[n]}\otimes \dots \otimes E_k^{[n]})_{**}\]
of subsheaves of $K_{0**}$.
\end{prop}
\begin{proof}
We will often drop the indices $(\_)_{**}$ in this proof. Using proposition \ref{im} it suffices to show that $K_k=\im(\gamma_{E_1}\otimes \dots\otimes \gamma_{E_k})$. For fixed $1\le i\le j\le n$ and $\ell\in[k]$ we denote by $\phi_\ell(i,j)$ the direct sum of all $\phi_\ell(M;i,j;a)$ with $(M;i,j;a)\in I_\ell$. On the open subset $X^n_{**}\subset X^n$ the pairwise diagonals $\Delta_{i,j}$ do not intersect. We denote the big diagonal by $\mathbbm D=\cup_{1\le i<j\le n} \Delta_{i,j}$. Then $X^n_{**}$ is covered by the open subsets $V_{i,j}:=(X^n_{**}\setminus\mathbbm D)\cup \Delta_{i,j}$. Thus we can test the equality on the $V_{i,j}$ where 
\[K_\ell=K_\ell(i,j)=\cap_{\ell=1}^k \ker\phi_\ell(i,j)\]
holds.
We will assume without loss of generality the case that $i=1$ and $j=2$. 
We consider as in the construction of the $\phi_\ell$ an open covering $\{U_m\}_m$ of $X$ on which all the $E_t$ are simultaneously trivial. Since both $K_k$ and $\im(\gamma_{E_1}\otimes \dots\otimes \gamma_{E_k})$ equal $K_0$ on $V_{12}\setminus \Delta_{12}= X^n_{**}\setminus \mathbbm D$, it is sufficient to show the equality on every member of the covering of $\Delta_{12}$ given by
\[U_m\times U_m\times U_{m_3}\times\dots\times U_{m_n}\,.\]
Since on these open sets the maps $\phi_\ell(1,2)$ are defined as the maps $\phi_\ell(1,2)(\reg_X^{\oplus r_1},\dots,\reg_X^{\oplus r_k})$ under the trivializations, we may assume that all the $E_t$ are trivial vector bundles of rank $r_t$, i.e. $E_t=\reg_X^{\oplus r_t}$. Since in this case the $\phi_\ell$ as well as $\gamma_{E_1}\otimes\dots\otimes \gamma_{E_k}$ are defined component-wise, we may assume that $E_1=\dots=E_t=\reg_X$. By theorem \ref{Sca} (i) a section $x\in p_*q^* \reg_X^{[n]}\subset C^0_{\reg_X}$ over $V_{12}$ is of the form $x=(x(1),x(2),\dots,x(n))$ with $x(\alpha)\in \pr_\alpha^*\reg_X\cong \reg_{X^n}$ for $\alpha\in [n]$ and $x(1)_{|\Delta_{12}}=x(2)_{|\Delta_{12}}$. For a section $s\in K_0$ and a multi-index $a\colon [k]\to [n]$ we denote the component of $s$ in 
\[K_0(a)=\pr_{a(1)}\reg_X\otimes\dots\otimes \pr_{a(k)}\reg_X\cong \reg_{X^n}\]
by $s(a)$. The image of a pure tensor $x_1\otimes \dots \otimes x_k\in (p_*q^*\reg_X)^{\otimes k}$ under the $k$-th power of $\gamma=\gamma_{\reg_X}$ is given by
\[\gamma^{\otimes k}(x_1\otimes \dots\otimes x_k)(a)=x_1(a(1))\cdots x_k(a(k))\in \reg_{X^n}\,.\]
For a tuple $(M,a)$ with $\emptyset\neq M\subset[k]$, $a\colon [k]\setminus M\to [n]$, and $s\in K_0$ we set
\[s(M,a)=\sum_{b\colon M\to[2]}\eps_b s(a\uplus b)\,\in\reg_{X^n} \,.\]
Then for a section $s\in K_0$ being a section of $K_k=K_k(1,2)$  is equivalent to the condition that $s(M,a)\in \mathcal I^{|M|}$ for each tuple $(M,a)$ as above, where $\mathcal I:=\mathcal I_{12}$. We first show the inclusion $\im (\gamma^{\otimes k})\subset K_k$. For this let $x=x_1\otimes\dots\otimes x_k\in (p_*q^*\reg_X)^{\otimes k}$ and $s=\gamma^{\otimes k}(x)$. We show by induction over $|M|$ that $s(M,a)\in \mathcal I^{|M|}$ for each pair $(M,a)$. For $M=\{t\}$ we have
\[s(\{t\},a)=s(t\mapsto 1,a)-s(t\mapsto 2,a)=(x_t(1)-x_t(2))\cdot\prod_{i\in[k]\setminus\{t\}}x_i(a(i))\]
which is indeed a section of $\mathcal I$ since $x_t(1)_{|\Delta_{12}}=x_t(2)_{|\Delta_{12}}$. For an arbitrary $M\subset [k]$ we choose an $m\in M$ and set
\[\tilde x=x_1\otimes \dots \otimes x_{m-1}\otimes \tilde x_m\otimes x_{m+1}\otimes \dots\otimes x_k\]
with $\tilde x_m(j)=1$ for every $j\in [n]$. We also set $\tilde s=\gamma^{\otimes k}(\tilde x)$. With this notation 
\[s(M,a)=(x_m(1)-x_m(2))\cdot \tilde s(M\setminus \{k\}, a,m\mapsto 1)\,.\]
By induction we have $\tilde s(M\setminus \{k\}, a,m\mapsto 1)\in \mathcal I^{|M|-1}$ and thus $s(M,a)\in \mathcal I^{|M|}$.
For the inclusion $K_k\subset \im \gamma^{\otimes k}$ we need the following lemma, where we are still working over $V_{12}$.
For $a\colon [k]\to [n]$ we set $\hat M(a):=a^{-1}(\{1,2\})$ and $a_|:=a_{|[k]\setminus \hat M(a)}$.
\begin{lemma}\label{elim}
  Let $s\in K_k$ be a local section and $a\colon [k]\to [n]$ such that $s(b)=0$ 
for all $b\colon [k]\to [n]$ with $(\hat M(b),b_|)=(\hat M(a),a_|)$ and $|b^{-1}(\{2\})|<|a^{-1}(\{2\})|$. Then there exists a local section $x\in (p_*q^*\reg_X^{[n]})^{\otimes k}$ such that $\gamma^{\otimes k}(x)(a)=s(a)$ and $\gamma^{\otimes k}(x)(c)=0$ for all multi-indices $c\colon [k]\to [n]$ with the property that $(\hat M(c),c_|)\neq (\hat M(a),a_|)$ or with the property that there exists an $i\in \hat M(a)=\hat M(c)$ with $c(i)=1$ and $a(i)=2$. 
\end{lemma}
\begin{proof}
We assume for simplicity that $\hat M(a)=[u]$ and $a^{-1}(\{2\})=[v]$ with $1\le v\le u\le k$, i.e. $a$ is of the form
\[a=\left(2,\dots,2,1,\dots,1,a(u+1),\dots,a(k)\right)\,.\]
By the assumptions
\[\mathcal I^v\ni s([v],a_{|[v+1,k]})= s([v],\underline 1 \uplus a_|)=\sum_{b\colon [v]\to [2]}\eps_b s(b\uplus \underline 1\uplus a_|)=(-1)^v s(a)\,.\]
Now we can write $s(a)=\sum_{\alpha\in A} y_{\alpha,1}\cdots y_{\alpha,v}$ as a finite sum with all $y_{\alpha,\beta}\in \mathcal I$. We denote by $e_j$ the section of $C^0_{\reg_X}$ with $e_j(h)=\delta_{jh}$. Then the section 
\[x=\sum_{\alpha\in A} y_{\alpha,1}e_2\otimes \dots\otimes y_{\alpha,v}e_2\otimes (e_1+e_2)\otimes \dots\otimes (e_1+e_2)\otimes e_{a(u+1)}\otimes \dots\otimes e_{a(k)}\,\]
is indeed in $(p_*q^*\reg_X^{[n]})^{\otimes k}$ and has the desired properties.   
\end{proof}  
Let $\prec$ be any total order on the set of tuples $(M,a)$ with $M\subset [k]$ and $a\colon [k]\setminus M\to [3,n]$ and let $\lhd_{M,v}$ be any total order on the set of subsets of $M$ of cardinality $v$. We define a total order $<$ on the set $I_0=\Map([k],[n])$ by setting $b<a$ if 
$(\hat M(b),b_|)\prec (\hat M(a),a_|)$ or if $(\hat M(b),b_|)= (\hat M(a),a_|)$ and $|b^{-1}(2)|<|a^{-1}(2)|$ or if $(\hat M(b),b_|)= (\hat M(a),a_|)=:M$ and $|b^{-1}(2)|=|a^{-1}(2)|=:v$ and $b^{-1}(2)\lhd_{M,v} a^{-1}(2)$. For $s\in K_k$ let $a$ be the minimal multi-index with $s(a)\neq 0$, i.e. $s(b)=0$ for all $b<a$. Then lemma \ref{elim} yields a $x\in (p_*q^*\reg_X^{[n]})^{\otimes k}$ such that $\hat s=s-\gamma^{\otimes k}(x)$ fulfills $\hat s(b)=0$ for all $b\le a$. Thus, by induction over the set $I_0$ with the order $<$, indeed, $s\in \im(\gamma^{\otimes k})$ which completes the proof of proposition \ref{opendes}.   
\end{proof}
\subsection{Description of $p_*q^*(E_1^{[n]}\otimes \dots\otimes E_k^{[n]})$}
The result of the last subsection carries over directly to the whole $X^n$.
\begin{theorem}\label{maindesc}
Let $E_1,\dots, E_k$ be locally free sheaves on $X$. Then on $X^n$ there is the equality 
\[K_{k}=p_*q^*(E_1^{[n]}\otimes \dots \otimes E_k^{[n]})\]
of subsheaves of $K_{0}$.
\end{theorem}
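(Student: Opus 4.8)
The plan is to bootstrap the equality already established on the open locus $X^n_{**}$ (Proposition \ref{opendes}) up to all of $X^n$, exploiting that both sides are recovered from their restrictions to $X^n_{**}$ by pushforward along the open immersion $\iota$. By Lemma \ref{pushpull} the right-hand side already satisfies $p_*q^*(\bigotimes_i E_i^{[n]}) \cong \iota_*\iota^* p_*q^*(\bigotimes_i E_i^{[n]})$, and by Proposition \ref{opendes} we have $\iota^* K_k = K_{k**} = (p_*q^*\bigotimes_i E_i^{[n]})_{**}$; applying $\iota_*$ therefore identifies $\iota_*\iota^* K_k$ with $p_*q^*(\bigotimes_i E_i^{[n]})$ as subsheaves of $K_0 = \iota_*\iota^* K_0$ (the last equality because $K_0 = \bigotimes_t C^0_{E_t}$ is locally free and $\codim(Z,X^n)=4$ for $Z := X^n\setminus X^n_{**}$). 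Everything thus reduces to the single statement that $K_k$ is \emph{saturated} across $Z$, i.e. that the unit $K_k \to \iota_*\iota^* K_k$ is an isomorphism.

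I would prove the stronger claim $K_\ell \cong \iota_*\iota^* K_\ell$ for all $\ell=0,\dots,k$ by induction on $\ell$. The base case $\ell=0$ is Lemma \ref{codimpush}, since $K_0$ is locally free and $Z$ has codimension $\ge 2$ in the smooth variety $X^n$. For the inductive step I apply the left-exact functor $\iota_*\iota^*$ (here $\iota^*$ is exact and $\iota_*$ left exact) to the defining sequence $0\to K_\ell \to K_{\ell-1} \xrightarrow{\phi_\ell} T_\ell$, obtaining $\iota_*\iota^* K_\ell = \ker(\iota_*\iota^*\phi_\ell)$, and I compare the two kernels through the commutative square coming from naturality of the adjunction unit $\eta$. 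The middle arrow $\eta_{K_{\ell-1}}$ is an isomorphism by the induction hypothesis; injectivity of $\eta_{K_\ell}$ is automatic because $K_\ell\subset K_0$ is torsion-free and $X^n_{**}$ is dense; and a short diagram chase shows that surjectivity of $\eta_{K_\ell}$ follows as soon as the right-hand arrow $\eta_{T_\ell}\colon T_\ell \to \iota_*\iota^* T_\ell$ is injective, since any $s\in K_{\ell-1}$ with $\eta_{K_{\ell-1}}(s)\in\ker(\iota_*\iota^*\phi_\ell)$ then satisfies $\eta_{T_\ell}(\phi_\ell(s))=0$, hence $\phi_\ell(s)=0$ and $s\in K_\ell$.

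The crux, and the point I expect to require the most care, is the injectivity of $\eta_{T_\ell}$, equivalently that $T_\ell$ carries no nonzero local section supported on $Z$. Each summand $T_\ell(M;i,j;a)$ is the pushforward $\iota_{ij*}$ of a locally free sheaf on the pairwise diagonal $\Delta_{ij}$, so its sections supported on $Z$ correspond to sections of a torsion-free sheaf on $\Delta_{ij}\cong X^{n-1}$ supported on $Z\cap\Delta_{ij}$. The latter is the locus where a further coincidence among the coordinates occurs, hence a proper closed (in fact codimension $\ge 2$) subset of $\Delta_{ij}$; a torsion-free sheaf has no nonzero sections supported on a nowhere-dense closed subset, so $\eta_{T_\ell}$ is injective. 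Feeding this through the induction gives $K_k \cong \iota_*\iota^* K_k$, and combining with the first paragraph yields $K_k = p_*q^*(E_1^{[n]}\otimes\dots\otimes E_k^{[n]})$ as subsheaves of $K_0$, completing the proof.
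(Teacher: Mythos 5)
Your proof is correct and follows essentially the same route as the paper: reduce via Proposition \ref{opendes} and Lemma \ref{pushpull} to showing $K_\ell\cong\iota_*\iota^*K_\ell$, and establish this by induction using left-exactness of $\iota_*\iota^*$ applied to $0\to K_\ell\to K_{\ell-1}\to T_\ell$ (the paper packages the diagram chase as Lemma \ref{codimseq}). The only difference is cosmetic: the paper proves the full isomorphism $\iota_*T_{\ell**}\cong T_\ell$ via the normality/codimension-two Lemma \ref{codimpush}, whereas you observe that injectivity of $T_\ell\to\iota_*\iota^*T_\ell$ suffices and deduce it from torsion-freeness on the diagonals, which is equally valid.
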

\begin{proof}
Since $K_0$ is locally free and $\codim(X^n\setminus X^n_{**},X^n)=4$ we have $\iota_*K_{0**}=K_0$. Furthermore the direct summands of $T_\ell$ are push forwards of locally free sheaves on the partial diagonals $\Delta_{i,j}$. Since \[\codim(\Delta_{ij}\setminus(\Delta_{ij}\cap X^n_{**}), \Delta_{i,j})=2\] we get by lemma \ref{codimpush} that $\iota_*T_{\ell**}=T_\ell$ for all $\ell\in[k]$. Using lemma \ref{codimseq} we get by induction that $\iota_*K_{\ell **}=K_\ell$ for $\ell\in [k]$. In particular
\[K_k=\iota_*K_{k**}\overset{\ref{opendes}}=\iota_*\left( p_*q^*(E_1^{[n]}\otimes \dots\otimes E_k^{[n]})_{**}\right)\overset{\ref{pushpull}}= p_*q^*(E_1^{[n]}\otimes \dots\otimes E_k^{[n]})\,.\]  
\end{proof}
\begin{cor}\label{mucor}
There are natural isomorphisms $\mu_*(E_1^{[n]}\otimes\dots\otimes E_k^{[n]})\cong K_k^{\sym_n}$ and
\[\Ho^*(X^{[n]},E_1^{[n]}\otimes \dots\otimes E_k^{[n]})\cong \Ho^*(X^n,K_k)^{\sym_n}\,.\] 
\end{cor}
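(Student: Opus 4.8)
The plan is to combine Theorem~\ref{maindesc} with the already-established Theorem~\ref{Sca} and Proposition~\ref{mubkr}; no genuinely new computation is required. First I would record the identification of $\Phi(\bigotimes_{i=1}^k E_i^{[n]})$ as a sheaf. By Theorem~\ref{Sca}(ii) this object is cohomologically concentrated in degree zero and isomorphic to $p_*q^*(\bigotimes_{i=1}^k E_i^{[n]})$, and by Theorem~\ref{maindesc} the latter sheaf is exactly $K_k$. Hence in $\D^b_{\sym_n}(X^n)$ we have $\Phi(\bigotimes_{i=1}^k E_i^{[n]})\simeq K_k$, placed in degree zero.

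For the first isomorphism I would apply the natural isomorphism $R\mu_*\F^\bullet\simeq[\Phi(\F^\bullet)]^{\sym_n}$ of Proposition~\ref{mubkr} to $\F^\bullet=\bigotimes_{i=1}^k E_i^{[n]}$, obtaining $R\mu_*(\bigotimes_{i=1}^k E_i^{[n]})\simeq[K_k]^{\sym_n}$. Since $[\_]^{\sym_n}$ is the composition of the exact finite pushforward $\pi_*$ with the exact functor of $\sym_n$-invariants, it is exact and therefore sends the degree-zero sheaf $K_k$ to a degree-zero object. Thus the derived pushforward $R\mu_*(\bigotimes_{i=1}^k E_i^{[n]})$ has vanishing higher cohomology, and comparing degree-zero parts gives $\mu_*(\bigotimes_{i=1}^k E_i^{[n]})\cong K_k^{\sym_n}$, where $K_k^{\sym_n}$ is read as $[K_k]^{\sym_n}$ in the abbreviation fixed before Proposition~\ref{mubkr}.

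For the cohomological statement I would instead substitute $\F^\bullet=\bigotimes_{i=1}^k E_i^{[n]}$ into the chain $\Ho^*(X^{[n]},\F^\bullet)\cong\Ho^*(X^n,\Phi(\F^\bullet))^{\sym_n}$ of Proposition~\ref{mubkr} and replace $\Phi(\bigotimes_{i=1}^k E_i^{[n]})$ by $K_k$. The only point demanding any care is the degree-zero concentration used to pass from $R\mu_*$ to the underived $\mu_*$ in the first isomorphism; this is immediate from the exactness of $[\_]^{\sym_n}$, so I expect no real obstacle---the substance of the corollary lies entirely in Theorem~\ref{maindesc}.
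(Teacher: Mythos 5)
Your proposal is correct and follows exactly the route the paper takes: the paper's proof is the one-line citation of Theorem \ref{maindesc}, Proposition \ref{mubkr}, and Theorem \ref{Sca}, and you have simply spelled out the intended combination (including the degree-zero concentration via exactness of $[\_]^{\sym_n}$) in full detail. No gaps.
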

\begin{proof}
 This follows by the previous theorem together with proposition \ref{mubkr} and theorem \ref{Sca}.
\end{proof}

\section{Invariants of $K_0$ and the $T_\ell$}
In this section we will use Danila's lemma \ref{Dan} in order to compute the invariants of the sheaves $K_0$ and $T_\ell$.
\subsection{Orbits and their isotropy groups on the sets of indices}\label{orbi}
For $\ell=1,\dots,k$ we have the decomposition $T_\ell=\oplus_{I_\ell} T_\ell(M;i,j;a)$ with
\[I_\ell=\bigl\{(M;i,j;a)\mid M\subset [k]\,,\, \#M=\ell\,,\, 1\le i<j\le n\,,\, a\colon[k]\setminus M\to [n]\bigr\}\,.\]
The $\sym_n$- as well as the $\sym_k$-linearization of $T_\ell$ induce actions on $I_\ell$ given for $\sigma\in \sym_n$ and $\mu\in \sym_k$ by
\[\sigma\cdot (M;\{i,j\};a)=(M;\sigma(\{i,j\});\sigma\circ a)\quad,\quad \mu\cdot(M;\{i,j\};a)=(\mu(M);\{i,j\};a\circ \mu^{-1})\,.\] 
Furthermore there is the decomposition
\[K_0=\bigoplus_{a\in I_0}K_0(a)\quad,\quad I_0=\Map([k],[n])\quad,\quad K_0(a)=\bigotimes_{t=1}^k \pr_{a(t)}^* E_t\,.\]
The $\sym_n\times\sym_k$-action on $I_0$ is given by $\sigma\cdot a=\sigma\circ a$ and $\mu\cdot a=a\circ \mu^{-1}$. Let $\prec$ be any total order on the set of subsets of $[k]$ such that $\emptyset$ is the maximal element.
\begin{remark}\label{orbits}
\begin{enumerate}
Let $1\le \ell\le k$.
\item Every $\sym_n$-orbit of $I_0$ has a unique representative $a$ such that 
\[ a^{-1}(1)\prec a^{-1}(2)\prec\dots \prec a^{-1}(n)\,.\]
We denote the set of these representatives by $J_0$. For $a\in I_0$ the isotropy group is given by
$\Stab_{\sym_n}(a)=\sym_{[n]\setminus \im(a)}$.
For $a\in J_0(1)$ we have $[n]\setminus \im (a)=[\max a+1,n]$.
\item \label{Snorbits} Every $\sym_n$-orbit of $I_\ell$ has a unique representative of the form $(M;1,2;a)$ such that $a^{-1}(1)\prec a^{-1}(2)$ and
\[a^{-1}(3)\prec a^{-1}(4)\prec \dots\prec a^{-1}(n)\,.\]
We denote the set of these representatives by $J_\ell$. 
Furthermore, we set 
\[
 \hat I_\ell:=\bigl\{(M;i,j;a)\in I_\ell\mid a^{-1}(\{i,j\})\neq \emptyset\bigr\}\quad,\quad\hat J_\ell:=J_\ell\cap \hat I_\ell\,.
\]
We will often use the identification $(M;a)\cong (M;1,2;a)\in J_\ell$ in the notations.
The isotropy group of a tuple $(M;i,j;a)\in I_\ell$ with $Q:=\{i,j\}\cup \im(a)$ and $\bar Q=[n]\setminus Q$ is given by
\[\Stab_{\sym_n}(M;i,j;a)=\begin{cases}
             \sym_{\bar Q} &\text{ if $(M;i,j;a)\in \hat I_\ell$,}\\
             \sym_{\{i,j\}}\times  \sym_{\bar Q} &\text{ if $(M;i,j;a)\notin \hat I_\ell$.}
                          \end{cases}
  \]
If $(M;i,j;a)=(M;1,2;a)\in  J_\ell$ we have $Q=[\max (a,2)]$ and $\bar Q=[\max (a,2)+1,n]$.
\end{enumerate}
\end{remark} 
\subsection{The sheaves of invariants and their cohomology}
\begin{lemma}\label{kzero}
There is a natural isomorphism
\[K_0^{\sym_n}\cong \bigoplus_{a\in J_0}K_0(a)^{\sym_{[\max a+1,n]}}\,.\] 
\end{lemma}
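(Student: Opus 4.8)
The plan is to deduce the statement as a direct application of Danila's lemma \ref{Dan} to the $\sym_n$-equivariant direct sum decomposition $K_0=\bigoplus_{a\in I_0}K_0(a)$. First I would record that the $\sym_n$-linearization $\lambda$ of $K_0$ is of permutation type relative to this decomposition: by its very definition $\lambda_\sigma(s)(a)=\sigma_*s(\sigma^{-1}\circ a)$, so $\lambda_\sigma$ carries the summand $K_0(\sigma^{-1}\circ a)$ isomorphically onto $K_0(a)$ via the flat base change map $\sigma_*$, matching the $\sym_n$-action $\sigma\cdot a=\sigma\circ a$ on the index set $I_0$. This compatibility of the linearization with a permutation action on the indexing set is precisely the hypothesis needed to invoke Danila's lemma.

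Next I would invoke part (i) of remark \ref{orbits}, which identifies $J_0$ as a complete and irredundant set of representatives for the $\sym_n$-orbits in $I_0$ (namely those $a$ with $a^{-1}(1)\prec\dots\prec a^{-1}(n)$), and which computes the isotropy group of any $a\in I_0$ as $\Stab_{\sym_n}(a)=\sym_{[n]\setminus\im a}$, specialising for $a\in J_0$ to $\sym_{[\max a+1,n]}$. Danila's lemma then asserts that the functor of $\sym_n$-invariants decomposes along the orbits, each orbit contributing the invariants of its chosen summand under the corresponding stabiliser, whence
\[
K_0^{\sym_n}\cong\bigoplus_{a\in J_0}K_0(a)^{\Stab_{\sym_n}(a)}=\bigoplus_{a\in J_0}K_0(a)^{\sym_{[\max a+1,n]}}\,.
\]

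The only point requiring care, and the place where I expect the main (though still routine) effort, is checking that the stabiliser action induced on the representative summand $K_0(a)$ is exactly the restriction of $\lambda$. For $\sigma\in\sym_{[\max a+1,n]}$ one has $\sigma\circ a=a$, so $\lambda_\sigma$ indeed restricts to an automorphism $\sigma_*\colon K_0(a)\to K_0(a)$; since every value $a(t)$ lies in $[\max a]$ and is therefore fixed by such a $\sigma$, the base change identifies $\sigma^*K_0(a)$ with $K_0(a)$ compatibly, so the induced action is well defined and $K_0(a)^{\sym_{[\max a+1,n]}}$ is the correct invariant subsheaf. Naturality of the full isomorphism is inherited from the naturality built into Danila's lemma. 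I would keep this verification brief, since the genuine structural content is supplied entirely by Danila's lemma together with the orbit and isotropy bookkeeping already established in remark \ref{orbits}.
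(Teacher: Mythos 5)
Your proof is correct and follows exactly the paper's argument: the paper's proof of this lemma is a one-line appeal to Danila's lemma together with remark \ref{orbits} (i), which is precisely the decomposition, orbit bookkeeping, and stabiliser computation you carry out (your extra verification that the linearization is of permutation type and that the stabiliser acts on the representative summand is the routine content implicitly assumed there, together with remark \ref{notrans} for the non-transitive case).
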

\begin{proof}
 This follows from Danila's lemma and remark \ref{orbits} (i).
\end{proof}
\begin{lemma}
\[T_\ell^{\sym_n}\cong\bigoplus_{(M;a)\in \hat J_\ell(1)}T_\ell(M;a)^{\sym_{[\max(a,2)+1, n]}}\]
holds for every $\ell=1,\dots,k$.
\end{lemma}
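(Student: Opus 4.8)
The plan is to apply Danila's lemma (\ref{Dan}) to the $\sym_n$-equivariant sheaf $T_\ell=\bigoplus_{(M;i,j;a)\in I_\ell}T_\ell(M;i,j;a)$, exactly as was done for $K_0$ in Lemma \ref{kzero}. Danila's lemma computes the invariants of a direct sum indexed by a $\sym_n$-set: the invariant sheaf decomposes as a direct sum over orbit representatives, where each summand is the subsheaf of invariants of the corresponding summand under its \emph{isotropy group}. So the first step is to identify a set of orbit representatives for the $\sym_n$-action on $I_\ell$ together with their isotropy groups, both of which have already been recorded in Remark \ref{orbits}(\ref{Snorbits}).

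The key subtlety, and the reason the sum on the right-hand side is taken over $\hat J_\ell$ rather than all of $J_\ell$, is the sign issue flagged in Remark \ref{minus}. For an orbit representative $(M;1,2;a)\in J_\ell\setminus\hat J_\ell$, i.e.\ one with $a^{-1}(\{1,2\})=\emptyset$ (equivalently $\hat M=M$), the isotropy group contains the transposition $\sigma=(1\,2)$, and by Remark \ref{minus} this transposition acts on $T_\ell(M;1,2;a)$ by multiplication with $-1$. Hence on such a summand the space of $\Stab_{\sym_n}$-invariants is already killed by the $\sym_{\{1,2\}}$-factor of the isotropy group, and $T_\ell(M;a)^{\Stab_{\sym_n}(M;a)}=0$. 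Therefore these summands contribute nothing to the invariants and may be dropped, leaving only the representatives in $\hat J_\ell$, which is precisely the indexing set appearing in the statement.

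Concretely, I would proceed as follows. First, invoke Danila's lemma to write
\[
T_\ell^{\sym_n}\cong\bigoplus_{(M;i,j;a)\in J_\ell}T_\ell(M;i,j;a)^{\Stab_{\sym_n}(M;i,j;a)}\,.
\]
Next, split the index set $J_\ell=\hat J_\ell\sqcup(J_\ell\setminus\hat J_\ell)$. For representatives in $J_\ell\setminus\hat J_\ell$ the isotropy group is $\sym_{\{i,j\}}\times\sym_{\bar Q}$ by Remark \ref{orbits}(\ref{Snorbits}), and since the transposition in $\sym_{\{i,j\}}$ acts by $-1$ (Remark \ref{minus}), the invariants vanish over $\C$. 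For representatives in $\hat J_\ell$ the isotropy group is $\sym_{\bar Q}$, and using the normalisation $(M;i,j;a)=(M;1,2;a)\in J_\ell$ one has $\bar Q=[\max(a,2)+1,n]$ by the last sentence of Remark \ref{orbits}(\ref{Snorbits}), so the surviving summand is $T_\ell(M;a)^{\sym_{[\max(a,2)+1,n]}}$. Combining these gives the claimed formula.

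I do not expect a real obstacle here: the statement is the $T_\ell$-analogue of Lemma \ref{kzero}, and every ingredient (the orbit/isotropy description and the sign computation) is already available. The only point requiring genuine care is the vanishing argument for $J_\ell\setminus\hat J_\ell$; one must verify that the full isotropy group indeed contains the transposition $(i\,j)$ and that this transposition acts by $-1$ — exactly the content of Remark \ref{minus} — since otherwise one could not delete those indices from the sum. Everything else is a direct application of Danila's lemma.
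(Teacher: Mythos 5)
Your proposal is correct and follows essentially the same route as the paper: apply Danila's lemma with the orbit representatives and isotropy groups from Remark \ref{orbits}(\ref{Snorbits}), then discard the summands indexed by $J_\ell\setminus\hat J_\ell$ because the transposition $(1\,2)$ in their stabiliser acts by $-1$ (Remark \ref{minus}), killing the invariants. No gaps.
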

\begin{proof}
By Danila's lemma and remark \ref{orbits} (ii) we have
\[T_\ell^{\sym_n}=\bigoplus_{(M;a)\in  J_\ell}T_\ell(M;a)^{\Stab(M;a)}\,.\]
Let $(M;a)\in J_\ell\setminus \hat J_\ell$. Then $\tau=(1\,\,2)\in \Stab(M;a)$ acts on $T_\ell(M;a)$ by $(-1)^{\ell+\ell-1}=-1$ (see remark \ref{minus}) which makes the invariants vanish.
\end{proof}
\begin{cor}\label{kvanish}
The sheaf $T_k^{\sym_n}$ is zero and thus 
\[\mu_*(E_1^{[n]}\otimes\dots\otimes E_k^{[n]})\cong p_*q^*(E_1^{[n]}\otimes\dots\otimes E_k^{[n]})^{\sym_n}\cong K_{k-1}^{\sym_n}\,.\]
\end{cor}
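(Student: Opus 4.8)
The plan is to prove the two claims in sequence, the vanishing of $T_k^{\sym_n}$ feeding directly into the chain of isomorphisms.

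First I would show $T_k^{\sym_n}=0$. This is immediate from the preceding lemma, which exhibits $T_\ell^{\sym_n}$ as a direct sum indexed by $\hat J_\ell\subset\hat I_\ell$. The point is that $\hat I_k$ is empty: a tuple $(M;i,j;a)\in I_k$ has $|M|=k$, hence $M=[k]$, so that $a\colon[k]\setminus M=\emptyset\to[n]$ is the empty map and $a^{-1}(\{i,j\})=\emptyset$; thus the tuple fails the condition $a^{-1}(\{i,j\})\neq\emptyset$ defining $\hat I_k$. An empty direct sum is zero. Equivalently, one may argue through Remark \ref{minus}: for every summand of $T_k$ one has $\hat M=M$, so the transposition $(i\,j)$, which by Remark \ref{orbits} lies in the stabilizer $\sym_{\{i,j\}}\times\sym_{\bar Q}$, acts by $-1$ and annihilates the invariants.

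Next I would deduce the collapse $K_k^{\sym_n}=K_{k-1}^{\sym_n}$. By construction $K_k=\ker\phi_k$, so there is an exact sequence $0\to K_k\to K_{k-1}\xrightarrow{\phi_k}T_k$ of $\sym_n$-equivariant sheaves on $X^n$. The functor $[\_]^{\sym_n}=[\_]^{\sym_n}\circ\pi_*$ is exact (as noted in the proof of Proposition \ref{mubkr}, since $\pi$ is finite and the ground field is $\C$), so it preserves this kernel, giving $K_k^{\sym_n}=\ker(\phi_k^{\sym_n}\colon K_{k-1}^{\sym_n}\to T_k^{\sym_n})$. As $T_k^{\sym_n}=0$, the morphism $\phi_k^{\sym_n}$ is zero and the inclusion $K_k^{\sym_n}\hookrightarrow K_{k-1}^{\sym_n}$ is an isomorphism of subsheaves of $K_0^{\sym_n}$.

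Finally I would assemble the stated chain. Theorem \ref{maindesc} identifies $K_k$ with $p_*q^*(E_1^{[n]}\otimes\dots\otimes E_k^{[n]})$, and Corollary \ref{mucor} (via Proposition \ref{mubkr} and Theorem \ref{Sca}) gives $\mu_*(E_1^{[n]}\otimes\dots\otimes E_k^{[n]})\cong K_k^{\sym_n}$; concatenating with the previous paragraph yields
\[\mu_*(E_1^{[n]}\otimes\dots\otimes E_k^{[n]})\cong p_*q^*(E_1^{[n]}\otimes\dots\otimes E_k^{[n]})^{\sym_n}=K_k^{\sym_n}=K_{k-1}^{\sym_n}\,.\]
There is no genuine obstacle here, as the substantive computation was carried out in the preceding lemma; the only points demanding care are the combinatorial observation that $\hat I_k=\emptyset$ (because $\ell=k$ forces $\hat M=M$) and the appeal to exactness of $[\_]^{\sym_n}$, which is precisely what lets the vanishing of $T_k^{\sym_n}$ propagate to the identification $K_k^{\sym_n}=K_{k-1}^{\sym_n}$.
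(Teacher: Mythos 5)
Your proof is correct and takes essentially the same route as the paper, whose entire argument is the observation that $\hat J_k$ is empty (your stronger remark that $\hat I_k=\emptyset$ because $\ell=k$ forces $M=[k]$ and $a=\emptyset$ immediately implies this) together with an appeal to Corollary \ref{mucor}. The additional detail you supply --- exactness of $[\_]^{\sym_n}$ propagating the vanishing of $T_k^{\sym_n}$ to the identification $K_k^{\sym_n}=K_{k-1}^{\sym_n}$ --- is precisely what the paper leaves implicit.
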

\begin{proof}
The set $\hat J_k$ is empty. The isomorphisms follow by corollary \ref{mucor}.
\end{proof}
\begin{remark}\label{partialquotient}
For a subset $Q\subset [n]$ with $|Q|=q$ and $\bar Q=[n]\setminus Q$ we denote by $X^Q\times S^{\bar Q} X$ the quotient of $X^n$ by the  $\sym_{\bar Q}$-action. It is isomorphic to $X^{q}\times S^{n-q}X$. We denote by $\pi_Q\colon X^Q\times S^{\bar Q}X\to S^nX$ the morphism induced by the quotient morphism $\pi\colon X^n\to S^nX$. Under the identification $X^Q\times S^{\bar Q} X\cong X^q\times S^{n-q}$ it is given by
\[(x_1,\dots,x_q,\Sigma)\mapsto x_1+\dots+x_q+\Sigma\,.\]
Let $a\in I_0$ respectively $(M;i,j;a)\in \hat I_\ell$ and $Q=\im(a)$ respectively $Q=\{i,j\}\cup \im(a)$, and $\bar Q=[n]\setminus Q$. The sheaves $K_0(a)^{\sym_{\bar Q}}$ respectively $T_\ell(M;i,j;a)^{\sym_{\bar Q}}$ in the two lemmas above are considered as sheaves on the $\sym_n$-quotient $S^nX$, i.e. they are abbreviations
\[K_0(a)^{\sym_{\bar Q}}:=(\pi_*K_0(a))^{\sym_{\bar Q}}\quad ,\quad T_\ell(M;i,j;a)^{\sym_{\bar Q}}:=(\pi_*T_\ell(M;i,j;a))^{\sym_{\bar Q}}\]
But we can also take the $\sym_{\bar Q}$-invariants already on the $\sym_{\bar Q}$-quotient $X^Q\times S^{\bar Q} X$ and consider $K_0(a)^{\sym_{\bar Q}}$ and $T_\ell(M;i,j;a)^{\sym_{\bar Q}}$ as sheaves on this variety. With this notation we have
\[(\pi_*K_0(a))^{\sym_{\bar Q}}=\pi_{Q*}(K_0(a)^{\sym_{\bar Q}})\quad,\quad (\pi_*T_\ell(M;i,j;a))^{\sym_{\bar Q}}=\pi_{Q*}(T_\ell(M;i,j;a)^{\sym_{\bar Q}})\,.\]
We denote for $m\in Q$ by $p_m\colon X^Q\times S^{\bar Q}X\to X$ the projection induced by the projection $\pr_m\colon X^n\to X$. For $I\subset Q$ we have the closed embedding $\Delta_I\times S^{\bar Q} X\subset X^Q\times S^{\bar Q}X$ which is the $\sym_{\bar Q}$-quotient of the closed embedding $\Delta_J\subset X^n$. Then the sheaves of invariants considered as sheaves on $X^Q\times S^{\bar Q}X$ are given by  
\begin{align*}K_0(a)^{\sym_{\bar Q}}=\bigotimes_{\substack{m\in Q\\ t\in a^{-1}(m)}}p_m^*E_t\quad,\quad T_\ell(M;i,j;a)^{\sym_{\bar Q}}=\Bigl(\bigotimes_{t\in M\cup a^{-1}(\{i,j\})}E_t\Bigr)_{ij}\otimes \bigotimes_{\substack{m\in Q\setminus\{i,j\}\\ t\in a^{-1}(m)}}p_m^*E_t\,.\end{align*}
In particular, $K_0(a)^{\sym_{\bar Q}}$ is still locally free. The sheaf $T(M;i,j;a)^{\sym_{\bar Q}}$ can also be considered as a sheaf on its support $\Delta_{ij}\times S^{\bar Q}X$ on which it is locally free, too.
\end{remark}
For the following, remember that we interpret an empty tensor product of sheaves on the surface $X$ as the structural sheaf $\reg_X$.
\begin{lemma}\label{kunneth}
\begin{enumerate}
 \item  For every $a\in I_0$ the cohomology $\Ho^*\left(X^n,K_0(a)\right)$ is naturally isomorphic to 
\[ \Ho^*\left(\bigotimes_{t\in a^{-1}(1)}E_t\right)\otimes \dots \otimes \Ho^*\left(\bigotimes_{t\in a^{-1}(n)}E_t\right)\,.\] 
\item  For every $a\in J_0$ the invariant cohomology $\Ho^*\left(X^n,K_0(a)\right)^{\sym_{[\max a+1,n]}}$ is naturally isomorphic to
\[\Ho^*\left(\bigotimes_{t\in a^{-1}(1)}E_t\right)\otimes \dots \otimes \Ho^*\left(\bigotimes_{t\in a^{-1}(\max a)}E_t\right)\otimes S^{n-\max a}\Ho^*(\reg_X)\,.\]
\item  For every $(M;i,j;a)\in I_\ell$ the cohomology $\Ho^*\left(X^n,T_\ell(M;i,j;a)\right)$ is  naturally isomorphic to
\[ \Ho^*\left(S^{\ell-1}\Omega_X\otimes\bigotimes_{t\in \hat M(a)}E_t\right)\otimes \bigotimes_{m\in [n]\setminus\{i,j\}} \Ho^*\left(\bigotimes_{t\in a^{-1}(m)}E_t\right)\,.\]
\item  For $(M;a)\in \hat J_\ell$ the invariant cohomology $\Ho^*\left(X^n,T_\ell(M;a)\right)^{\sym_{[\max(a,2)+1,n}]}$ is naturally isomorphic to
\[\Ho^*\left(S^{\ell-1}\Omega_X\otimes\bigotimes_{t\in \hat M(a)}E_t\right)\otimes\bigotimes_{m=3}^{\max a}\Ho^*\left(\bigotimes_{t\in a^{-1}(m)}E_t\right)\otimes S^{n-\max (a,2)}\Ho^*(\reg_X)\,.\]
\end{enumerate}
\end{lemma}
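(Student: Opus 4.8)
The four statements fall into two pairs: (i) and (iii) are pure Künneth computations, while (ii) and (iv) are deduced from them by passing to invariants under the residual symmetric group, which in each case only permutes tensor factors carrying the trivial sheaf $\reg_X$. I will treat the two pairs in turn, working throughout over the field $\C$, so that taking invariants under a finite group is exact and commutes with cohomology.

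For (i) the plan is to regroup the factors of $K_0(a)=\bigotimes_{t=1}^k\pr_{a(t)}^*E_t$ according to the copy of $X$ they are pulled back from: collecting for each $m\in[n]$ the factors with $a(t)=m$ rewrites $K_0(a)$ as the external tensor product, over $m\in[n]$, of the locally free sheaves $\bigotimes_{t\in a^{-1}(m)}E_t$ (with the convention that an empty inner product is $\reg_X$). Since all factors are locally free and $X$ is quasi-projective, the Künneth formula gives the asserted tensor product of the $\Ho^*\!\bigl(\bigotimes_{t\in a^{-1}(m)}E_t\bigr)$. For (iii) I would first note that $T_\ell(M;i,j;a)=\iota_{ij*}\G$ is the pushforward along the closed embedding $\iota_{ij}\colon\Delta_{ij}\hookrightarrow X^n$ of a locally free sheaf $\G$ on $\Delta_{ij}\cong X^{n-1}$, namely the external tensor product whose factor on the merged copy is $S^{\ell-1}\Omega_X\otimes\bigotimes_{t\in\hat M}E_t$ (using $S^{\ell-1}N^\vee_{\Delta_{ij}}\cong S^{\ell-1}\Omega_X$) and whose factor on each remaining copy $m\in[n]\setminus\{i,j\}$ is $\bigotimes_{t\in a^{-1}(m)}E_t$. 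As $\iota_{ij}$ is a closed immersion one has $\Ho^*(X^n,\iota_{ij*}\G)\cong\Ho^*(\Delta_{ij},\G)$, and Künneth on $\Delta_{ij}\cong X^{n-1}$ then yields (iii).

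For (ii) and (iv) the plan is to take invariants under the stabilizer $\sym_{\bar Q}$, with $\bar Q=[\max a+1,n]$ in (ii) and $\bar Q=[\max(a,2)+1,n]$ in (iv). For a representative lying in $J_0$ respectively $\hat J_\ell$, the indices of $\bar Q$ lie neither in $\im(a)$ nor in $\{i,j\}$, so the corresponding inner products are empty and the associated Künneth factors are all copies of $\Ho^*(\reg_X)$. From the explicit linearizations one reads off that for $\sigma\in\sym_{\bar Q}$ one has $\sigma^{-1}\circ a=a$ and $\sigma^{-1}(\{i,j\})=\{i,j\}$, so that $\lambda_\sigma$ fixes every distinguished Künneth factor and merely permutes the $\reg_X$-copies indexed by $\bar Q$; in the $T_\ell$ case the extra sign $\eps^{\ell}_{\sigma,\{i,j\}}$ is $+1$ because $\sigma$ fixes $\{i,j\}$ pointwise. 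Hence the induced action on cohomology permutes the $\Ho^*(\reg_X)$-factors, and invariants of $\Ho^*(\reg_X)^{\otimes(n-\max a)}$ respectively $\Ho^*(\reg_X)^{\otimes(n-\max(a,2))}$ under the full symmetric group give the symmetric power $S^{n-\max a}\Ho^*(\reg_X)$ respectively $S^{n-\max(a,2)}\Ho^*(\reg_X)$, producing (ii) and (iv).

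The main obstacle is the sign bookkeeping. One must check that the Künneth isomorphism can be chosen $\sym_{\bar Q}$-equivariantly, and that the permutation action on $\Ho^*(\reg_X)^{\otimes m}$ carries the Koszul signs dictated by the grading, so that the invariants are the \emph{graded} symmetric power (symmetric on even-degree classes, alternating on odd-degree ones). Verifying that this agrees with the paper's convention for $S^m$ of a graded vector space, and confirming via lemma \ref{signlemma} that the residual signs $\eps^{\ell}_{\sigma,\{i,j\}}$ indeed vanish for $\sigma\in\sym_{\bar Q}$, is the only delicate point; the cohomological content is entirely the Künneth formula.
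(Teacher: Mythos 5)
Your proof is correct and follows essentially the same route as the paper: Künneth isomorphisms (after regrouping factors by the copy of $X$, and pushing forward along $\Delta_{ij}\hookrightarrow X^n$ for the $T_\ell$ terms) for (i) and (iii), then observing that the residual stabilizer permutes only the $\Ho^*(\reg_X)$-factors, yielding the graded symmetric power for (ii) and (iv). Your attention to the cohomological Koszul sign is exactly the point the paper handles by referring to its appendix on graded vector spaces, where $S^m$ of a graded vector space is defined with precisely that sign convention.
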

\begin{proof}
The natural isomorphisms in (i) and (iii) are the K\" unneth isomorphisms. The assertions (ii) and (iv) follow from the fact that the natural $\sym_n$-linearization of $\reg_{X^n}$ induces the action on 
\[\Ho^*(X^n,\reg_{X^n})\cong \Ho^*(\reg_X)^{\otimes n}\]
given by permuting the tensor factors together with the cohomoligcal sign $\eps_{\sigma,p_1,\dots,p_n}$ (see section \ref{graded}).  
\end{proof}
\begin{lemma}\label{Eulersummands}
Let $X$ be projective. 
\begin{enumerate}
  \item For every $a\in J_0$ the Euler characteristic of the invariants of $K_0(a)$ is given by
\[
 \chi\left(K_0(a)^{\sym_{[\max a+1, n]}}\right)=\prod_{m=1}^{\max a}\chi\left(\bigotimes_{t\in a^{-1}(m)}E_t\right)\cdot \binom{\chi(\reg_X)+n-\max a-1}{n-\max a}\,.
\]
  \item For every $(M;a)\in \hat J_\ell$ the Euler characteristic $\chi(T_\ell(M;a)^{\sym_{[\max(a,2)+1, n]}})$ is given by
\[
 \chi\left(S^{\ell-1}\Omega_X\otimes\bigotimes_{t\in \hat M (a)}E_t\right)\cdot\prod_{m=3}^{\max a}\chi\left(\bigotimes_{t\in a^{-1}(m)}E_t\right)\cdot \binom{\chi(\reg_X)+n-\max (a,2)-1}{n-\max (a,2)}\,.
\]
   \end{enumerate}
\end{lemma}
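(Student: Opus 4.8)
The plan is to read off both Euler characteristics directly from the cohomology computations in Lemma \ref{kunneth}. Since $X$ is projective, every cohomology group occurring is finite-dimensional, so the graded vector spaces in parts (ii) and (iv) of that lemma have well-defined Euler characteristics; moreover, by Remark \ref{partialquotient} together with the exactness of $\pi_*$ (the morphism is finite) and of taking invariants in characteristic zero, the cohomology of the invariant sheaf $K_0(a)^{\sym_{[\max a+1,n]}}$ (resp.\ $T_\ell(M;a)^{\sym_{[\max(a,2)+1,n]}}$) equals the invariant cohomology appearing in Lemma \ref{kunneth}. Hence $\chi$ of the sheaf equals the Euler characteristic of the corresponding graded vector space. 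Using that $\chi$ is multiplicative under the tensor product of graded vector spaces, $\chi(V\otimes W)=\chi(V)\cdot\chi(W)$, and that $\chi(\Ho^*(G))=\chi(G)$ for a sheaf $G$ on $X$, applying $\chi$ to the isomorphism of Lemma \ref{kunneth}(ii) turns the tensor product into the product $\prod_{m=1}^{\max a}\chi\bigl(\bigotimes_{t\in a^{-1}(m)}E_t\bigr)$ times $\chi\bigl(S^{n-\max a}\Ho^*(\reg_X)\bigr)$, and similarly for (iv). Everything therefore reduces to the Euler characteristic of a symmetric power of a graded vector space.

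The key step, and the only nonformal one, is the identity
\[
\chi\bigl(S^m V\bigr)=\binom{\chi(V)+m-1}{m}
\]
for a finite-dimensional graded vector space $V$, where $S^m$ denotes the symmetric power in the graded (super) sense, which is exactly the signed-permutation invariants of $V^{\otimes m}$ that enter the proof of Lemma \ref{kunneth}. I would prove this via a generating-function computation. Writing $e$ and $o$ for the total dimensions of the even- and odd-degree parts of $V$, so that $\chi(V)=e-o$, the decomposition $S^mV=\bigoplus_{p+q=m}S^p(V^{\mathrm{ev}})\otimes\Lambda^q(V^{\mathrm{odd}})$ gives
\[
\sum_{m\ge 0}\chi(S^mV)\,t^m=\Bigl(\sum_{p\ge 0}\binom{e+p-1}{p}t^p\Bigr)\Bigl(\sum_{q\ge 0}(-1)^q\binom{o}{q}t^q\Bigr)=(1-t)^{-e}(1-t)^{o}=(1-t)^{-\chi(V)}\,,
\]
and expanding $(1-t)^{-\chi(V)}$ as a formal power series yields the claimed binomial coefficient, which is valid for every integer value of $\chi(V)$, in particular the negative ones arising when odd-degree cohomology dominates.

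Granting this identity, part (i) follows by taking $V=\Ho^*(\reg_X)$ and $m=n-\max a$, so that $\chi\bigl(S^{n-\max a}\Ho^*(\reg_X)\bigr)=\binom{\chi(\reg_X)+n-\max a-1}{n-\max a}$, which combined with the product over $m$ gives the stated formula. Part (ii) is identical, now with $m=n-\max(a,2)$, the leading tensor factor $S^{\ell-1}\Omega_X\otimes\bigotimes_{t\in\hat M(a)}E_t$ contributing the Euler characteristic $\chi\bigl(S^{\ell-1}\Omega_X\otimes\bigotimes_{t\in\hat M(a)}E_t\bigr)$ and the remaining product running from $m=3$ to $\max a$. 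I do not expect any genuine obstacle here: all the content sits in the symmetric-power identity above, and once that is established the two formulas are immediate consequences of Lemma \ref{kunneth} and the multiplicativity of $\chi$.
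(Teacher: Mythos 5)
Your proposal is correct and follows the same route as the paper: Lemma \ref{Eulersummands} is deduced by applying multiplicativity of $\chi$ and the symmetric-power identity $\chi(S^mV)=\binom{\chi(V)+m-1}{m}$ to the Künneth description in Lemma \ref{kunneth}; the paper's proof is literally the one-line reduction to Lemma \ref{kunneth} and the appendix Lemma \ref{Euler}. The only divergence is in how the symmetric-power identity itself is established. You prove it via the decomposition $S^mV=\bigoplus_{p+q=m}S^p(V^{\mathrm{ev}})\otimes\Lambda^q(V^{\mathrm{odd}})$ and the generating function $\sum_m\chi(S^mV)t^m=(1-t)^{-\chi(V)}$, which handles all integer values of $\chi(V)$ uniformly; the paper instead replaces $V^\bullet$ by a quasi-isomorphic complex $k^{|\chi|}[0]$ (resp.\ $k^{|\chi|}[-1]$ when $\chi<0$, where $S^m(k^{-\chi}[-1])=(\wedge^mk^{-\chi})[-m]$) and reads off the dimension, splitting into two cases. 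Both arguments are valid; yours is more uniform, the paper's stays closer to the derived-category formalism it uses elsewhere.
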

\begin{proof}
 This follows from the previous lemma and lemma \ref{Euler}.
\end{proof}

\subsection{The map $\phi_1$ on cohomology and the cup product}\label{phioncoh}
We consider the morphism $\phi_1\colon K_0\to T_1$ defined in subsection \ref{constr} and for $a\in I_0$ and $(\{t\};i,j;b)\in I_1$ its components
\[\phi_1(a\to(\{t\};i,j;b))\colon K_0(a)\to T_1(\{t\};i,j;b)\,.\]
The morphism $\phi_1(a\to(\{t\};i,j;b))$ is non-zero only if $a_{|[k]\setminus\{t\}}=b$ and $a(t)\in \{i,j\}$. In this case it is given by $\eps_{a(t),\{i,j\}}$ times the morphisms given by restricting sections to the pairwise diagonal $\Delta_{ij}$. 
For two sheaves $F,G\in \Coh(X)$ (or more generally two objects in $\D^b(X)$) the composition
\[\Ho^*(X,F)\otimes \Ho^*(X,G)\cong \Ho^*(X^2,F\boxtimes G)\to \Ho^*(X,F\otimes G)\]
of the K\" unneth isomorphism and the map induced by the restriction to the diagonal equals the cup product. 
Thus the map 
$H^*(\phi_1(a,(\{t\};i,j;a_{|[k]\setminus \{t\}})))$ is given in terms of the natural isomorphisms of lemma \ref{kunneth} by
sending a
\[v_1\otimes\dots \otimes v_n\in \Ho^*\left(\bigotimes_{t\in a^{-1}(1)}E_t\right)\otimes \dots \otimes \Ho^*\left(\bigotimes_{t\in a^{-1}(n)}E_t\right)\]
to the class
\[(v_i\cup v_j)\otimes v_1\otimes\dots\otimes \hat v_i\otimes \dots \otimes\hat v_j\otimes \dots\otimes v_n\in  \Ho^*\left(\bigotimes_{t\in a^{-1}(\{i,j\})}E_t\right)\otimes \bigotimes_{m\in [n]\setminus\{i,j\}} \Ho^*\left(\bigotimes_{t\in a^{-1}(m)}E_t\right)\,.\]
Remember (theorem \ref{Sca} (i)) that there are the augmentation morphisms $\gamma_i\colon p_*q^*E_i^{[n]}\to C^0_{E_i}$ and that $K_0=\otimes_{i=1}^k C^0_{E_i}$.
We consider the composition
\[\bigotimes_{i=1}^k\Ho^*(X^n,p_*q^*E_i^{[n]})\xrightarrow{\cup} \Ho^*(X^n,\bigotimes_{i=1}^kp_*q^*E_i^{[n]})\xrightarrow{\Ho^*(\otimes_i\gamma_i)}\Ho^*(X^n,K_0)\,.\]
Taking (factor-wise) the $\sym_n$-invariants we get the map (see formula (1) of the introduction)
\[\Psi\colon \bigotimes_{i=1}^k\Ho^*(X^{[n]},E_i^{[n]})\cong \bigotimes_{i=1}^k\left(\Ho^*(E_i)\otimes S^{n-1}\Ho^*(\reg_X)\right)\to \Ho^*(X^n,K_0)^{\sym_n}\,.\]
This map coincides with the $\sym_n$-invariant cup product $\otimes_i\Ho^*(C^0_{E_i})^{\sym_n}\to \Ho^*(\otimes_iC^0_{E_i})^{\sym_n}$. 
The inclusion $p_*q^*(\otimes_i E_i^{[n]})^{\sym_n}\subset K_0^{\sym_n}$ induces a map $\Ho^*(X^{[n]},\otimes_i E_i^{[n]})\to \Ho^*(X^n,K_0)^{\sym_n}$. Since $\im(\otimes_i\gamma_i)= \otimes_ip_*q^*E_i^{[n]}$ (proposition \ref{im}), the image of $\Psi$ is a subset of the image of this map. 
In degree zero the map $\Ho^0(X^{[n]},\otimes_i E_i^{[n]})\to \Ho^0(X^n,K_0)^{\sym_n}$ is a inclusion. Thus, we have $\im(\Psi)\subset \Ho^0(X^{[n]},\otimes_iE_i^{[n]}).$  
Let $X$ be projective. 
In this case $\Ho^0(\reg_X)=\langle 1\rangle\cong \C$ where $1$ is the function with constant value one.
Thus, we have for $a\in I_0$ the formula (see lemma \ref{kunneth})
\[
 \Ho^0(X^n,K_0(a))\cong\bigotimes_{m\in \im(a)}\Ho^0\left(\bigotimes_{r\in a^{-1}(m)} E_r \right)
\]
and the action of $\Stab_{\sym_n}(a)=\sym_{\overline{\im a}}$ on this vector spaces is the trivial one, which means $\Ho^0(X^n,K_0(a))^{\sym_{{\overline {\im a}}}}=\Ho^0(X^n,K_0(a))$.
Now, for \[x_i\in \Ho^0(E_i^{[n]})\cong \Ho^0(E_i)\otimes S^{n-1}\Ho^0(\reg_X)\cong \Ho^0(E_i)\] and $a\in J_0(1)$ we have
\begin{align}\label{psiformula}\Psi(x_1\otimes\dots\otimes x_k)(a)=\bigotimes_{m\in \im(a)}(\cup_{r\in a^{-1}(m)}x_r)\,.\end{align}
\section{Cohomology in the highest and lowest degree}
\subsection{Cohomology in degree $2n$}
Let for $\ell\in[k]$ be $B_\ell:=\im(\phi_\ell)\subset T_\ell$, i.e. we have exact sequences
\begin{align*}0\to K_\ell\to\K_{\ell-1}\to B_\ell\to 0 \,.\tag 1\end{align*}
Since $T_\ell$ is the push-forward of a sheaf on $\mathbbm D$, the subsheaf $B_\ell$ is, too.
Since $\dim \mathbbm D=2(n-1)$ we have $\Ho^i(X^n,B_\ell)=0$ for $i=2n-1,2n$. By the long exact sheaf cohomology sequence associated to (1)
\[\dots\to \Ho^{2n-1}(B_\ell)\to \Ho^{2n}(K_\ell)\to \Ho^{2n}(K_{\ell-1})\to \Ho^{2n}(B_\ell)\to 0\]
we see that $\Ho^{2n}(K_\ell)=\Ho^{2n}(K_{\ell-1})$. By induction we get $\Ho^{2n}(K_\ell)=\Ho^{2n}(K_{0})$. Using corollary \ref{mucor} and lemma \ref{kunneth} this yields the following formula for the cohomology of tensor products of tautological bundles in the maximal degree.
\begin{prop}\label{maxcoh}
\[\Ho^{2n}(X^{[n]}, E_1^{[n]}\otimes \dots\otimes E_k^{[n]})\cong \Ho^{2n}(X^n,K_0)^{\sym_n}\cong \bigoplus_{a\in J_0}\bigotimes_{r=1}^{\max a}\Ho^2(\bigotimes_{t\in a^{-1}(r)} E_t)\otimes S^{n-\max a}\Ho^2(\reg_X)\,. \]
\end{prop}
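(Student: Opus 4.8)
The plan is to reduce the top-degree cohomology of the complicated subsheaf $K_k \subset K_0$ to that of $K_0$ itself, exploiting the fact that the successive quotients are supported on the big diagonal, which is too low-dimensional to carry cohomology in degree $2n$. By corollary \ref{mucor} we have a natural isomorphism $\Ho^{2n}(X^{[n]}, E_1^{[n]}\otimes\dots\otimes E_k^{[n]})\cong \Ho^{2n}(X^n,K_k)^{\sym_n}$, so it suffices to compute the top-degree $\sym_n$-invariant cohomology of $K_k$ and then identify it with that of $K_0$ before applying the explicit K\"unneth computations.

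First I would record the relevant short exact sequences. Writing $B_\ell:=\im(\phi_\ell)\subset T_\ell$, the definition $K_\ell=\ker(\phi_\ell)$ gives, for $\ell=1,\dots,k$, the $\sym_n$-equivariant short exact sequences
\[0\to K_\ell\to K_{\ell-1}\to B_\ell\to 0\,.\]
Since $T_\ell$ is the push-forward of a sheaf supported on the big diagonal $\mathbbm D=\bigcup_{i<j}\Delta_{ij}$, so is its subsheaf $B_\ell$. As $\dim \mathbbm D=2(n-1)$, Grothendieck vanishing gives $\Ho^i(X^n,B_\ell)=0$ for $i>2n-2$, in particular for both $i=2n-1$ and $i=2n$. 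Feeding this into the long exact cohomology sequence, the segment
\[\Ho^{2n-1}(B_\ell)\to \Ho^{2n}(K_\ell)\to \Ho^{2n}(K_{\ell-1})\to \Ho^{2n}(B_\ell)\]
has both outer terms zero, so the middle map is an isomorphism; because the short exact sequence is $\sym_n$-equivariant this isomorphism is as well. Iterating over $\ell=1,\dots,k$ produces a $\sym_n$-equivariant isomorphism $\Ho^{2n}(K_k)\cong \Ho^{2n}(K_0)$, and passing to invariants yields $\Ho^{2n}(K_k)^{\sym_n}\cong \Ho^{2n}(K_0)^{\sym_n}$.

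It then remains to evaluate $\Ho^{2n}(X^n,K_0)^{\sym_n}$ explicitly. Using the decomposition $K_0=\bigoplus_a K_0(a)$ together with lemma \ref{kzero}, the invariants split as $\bigoplus_{a\in J_0}\Ho^*(K_0(a))^{\sym_{[\max a+1,n]}}$; restricting to degree $2n$ and inserting the K\"unneth description of lemma \ref{kunneth}(ii), each summand must take $\Ho^2$ in every one of the $n$ tensor factors, the $n-\max a$ trivial factors assembling into $S^{n-\max a}\Ho^2(\reg_X)$. Summing over $a\in J_0$ gives the asserted formula.

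The step that needs the most care is the vanishing input: to upgrade the connecting maps to isomorphisms one needs $B_\ell$ (not merely $T_\ell$) to be genuinely supported on $\mathbbm D$ and one needs the vanishing simultaneously in degrees $2n-1$ and $2n$, so that both the injectivity and the surjectivity of $\Ho^{2n}(K_\ell)\to \Ho^{2n}(K_{\ell-1})$ are secured. The dimension count $\dim\mathbbm D=2n-2$ delivers exactly these two degrees and nothing more, which is why the argument is confined to the top degree. A secondary, purely bookkeeping point is the interaction of the $\sym_n$-action with the cohomological signs on $\Ho^*(\reg_X)^{\otimes n}$; but since only the even class $\Ho^2(\reg_X)$ survives in top degree these signs are trivial and the graded-symmetric power collapses to the ordinary symmetric power $S^{n-\max a}\Ho^2(\reg_X)$, so no sign subtlety actually intervenes.
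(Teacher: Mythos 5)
Your proposal is correct and follows essentially the same route as the paper: the same short exact sequences $0\to K_\ell\to K_{\ell-1}\to B_\ell\to 0$ with $B_\ell=\im(\phi_\ell)$ supported on $\mathbbm D$, the same dimension-based vanishing of $\Ho^{2n-1}(B_\ell)$ and $\Ho^{2n}(B_\ell)$, the same induction giving $\Ho^{2n}(K_k)\cong\Ho^{2n}(K_0)$, and the same final evaluation via corollary \ref{mucor} and lemma \ref{kunneth}. Your closing remarks on equivariance and on the cohomological signs being trivial in top degree are sensible additions but do not change the argument.
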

\begin{remark}
In the case that $X$ is projective, the above proposition is the same as \cite[Remark 6.22]{Kru} by Serre duality.
\end{remark}
\subsection{Global sections for $n\ge k$ and $X$ projective}
In this subsection we assume that $X$ is projective. We will generalise the formula given in \cite{Danglob} for the global sections of tensor powers of a tautological sheaf associated to a line bundles to a formula for tensor products of arbitrary tautological bundles.   
\begin{lemma}
 Let $a\colon [k]\to [n]$, $t\in [k]$, $i=a(t)$, and $j\in [n]\setminus\im(a)$. Then under the natural isomorphisms of lemma \ref{kunneth} the map
$\Ho^0(\phi_1)(a\to(\{t\};\{i,j\};a_{|[k]\setminus\{t\}}))$ corresponds to $\eps_{a(t),\{i,j\}}$ times the identity on
$\otimes_{m\in \im(a)}\Ho^0(\otimes_{r\in a^{-1}(m)} E_r )$.   
\end{lemma}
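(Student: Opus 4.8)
The plan is to reduce the assertion to the cup-product description of $\Ho^*(\phi_1)$ recorded in subsection \ref{phioncoh}, and then to exploit the hypothesis $j\notin\im(a)$. First I would set $b:=a_{|[k]\setminus\{t\}}$ and unravel both source and target in degree zero. Since $X$ is projective, $\Ho^0(\reg_X)=\langle 1\rangle\cong\C$, so in lemma \ref{kunneth}(i) every tensor factor indexed by an $m\notin\im(a)$ contributes only a trivial $\C$; hence $\Ho^0(X^n,K_0(a))$ identifies canonically with $\bigotimes_{m\in\im(a)}\Ho^0(\bigotimes_{r\in a^{-1}(m)}E_r)$, which is exactly the space named in the statement. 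For the target I would compute $\hat M(b)=\{t\}\cup b^{-1}(\{i,j\})$: because $j\notin\im(a)$ we have $b^{-1}(j)=\emptyset$, and since $a(t)=i$ we get $b^{-1}(i)=a^{-1}(i)\setminus\{t\}$, so that $\hat M(b)=a^{-1}(i)$. Plugging this into lemma \ref{kunneth}(iii) (with $S^{0}\Omega_X=\reg_X$) and again discarding the trivial $\C$-factors at positions outside $\im(a)$, the target $\Ho^0(X^n,T_1(\{t\};i,j;b))$ identifies with the same space $\bigotimes_{m\in\im(a)}\Ho^0(\bigotimes_{r\in a^{-1}(m)}E_r)$, its diagonal factor $\Ho^0(\bigotimes_{r\in a^{-1}(i)}E_r)$ matching the $m=i$ factor of the source.

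Next I would invoke the cup-product description from subsection \ref{phioncoh}: under the K\"unneth identifications, $\Ho^*(\phi_1)(a\to(\{t\};i,j;b))$ sends $v_1\otimes\dots\otimes v_n$ to $\eps_{a(t),\{i,j\}}\,(v_i\cup v_j)\otimes v_1\otimes\dots\otimes\hat v_i\otimes\dots\otimes\hat v_j\otimes\dots\otimes v_n$. The decisive observation is that $a^{-1}(j)=\emptyset$, so the factor $v_j$ lies in $\Ho^0(\reg_X)=\C\cdot 1$; writing $v_j=\lambda\cdot 1$, cupping with the unit class gives $v_i\cup v_j=\lambda\, v_i$. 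On the source side the very same scalar $\lambda$ is what the canonical identification $V\otimes\C\cong V$ reads off from the $j$-slot, so the two occurrences of $\lambda$ cancel. Thus, in the identifications of the first paragraph, the map simply reinserts $v_i$ into the collapsed $m=i$ slot and leaves every other factor untouched, i.e.\ it is $\eps_{a(t),\{i,j\}}$ times the identity.

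The only point requiring care is the bookkeeping of the omitted $j$-slot and of the factor reordering, but both are harmless here. Because we work in $\Ho^0$, every class has cohomological degree zero, so the passage from the source ordering $v_1\otimes\dots\otimes v_n$ to the target ordering that places the diagonal factor first introduces \emph{no} Koszul sign; and the trivial degree-zero class $1$ in the $j$-slot commutes past all other factors freely. The sign contributed by $\phi_1$ on the level of sheaves is exactly $\eps_{a(t),\{i,j\}}$, as recorded in subsection \ref{phioncoh}, and it is transported unchanged to cohomology. I expect this sign-and-reindexing check, rather than any geometric input, to be the only (mild) obstacle, and it resolves precisely because $j\notin\im(a)$ forces the $j$-factor to be the unit line.
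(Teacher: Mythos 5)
Your proof is correct and follows essentially the same route as the paper: the paper's own argument simply cites the cup-product description of $\Ho^*(\phi_1)$ from subsection \ref{phioncoh} together with the identity $v\cup 1=v$ for $v\in \Ho^0(\otimes_{r\in a^{-1}(i)}E_r)$, which is exactly your decisive observation that $j\notin\im(a)$ forces the $j$-factor to be the unit line. Your additional bookkeeping (the computation $\hat M(b)=a^{-1}(i)$, the vanishing of Koszul signs in degree zero) is accurate and just makes explicit what the paper leaves implicit.
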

\begin{proof}
 This follows by the formula for $\Ho^*(\phi_1)(a\to(\{t\};\{i,j\};a_{|[k]\setminus\{t\}}))$ of subsection \ref{phioncoh} and the fact that $v\cup 1=v$ for every $v\in \Ho^0(\otimes_{r\in a^{-1}(i)} E_r)$.
\end{proof}
\begin{lemma}
 Let $m=\min(n,k)$. Then every $s\in \ker \Ho^0(\phi_1)$ is determined by its components $s(a)\in \Ho^0(K_0(a))$ for $a\in I_0$ with $|\im(a)|=m$.
\end{lemma}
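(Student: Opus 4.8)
The plan is to combine the explicit description of $\Ho^0(\phi_1)$ from subsection \ref{phioncoh} (together with the preceding lemma) with a downward induction on the size of the image $|\im(a)|$. Recall that the target summand $T_1(\{t\};\{i,j\};b)$ receives a contribution under $\Ho^0(\phi_1)$ only from those components $s(c)$ with $c_{|[k]\setminus\{t\}}=b$ and $c(t)\in\{i,j\}$. Writing $a:=b\uplus(t\mapsto i)$ and $a':=b\uplus(t\mapsto j)$, the vanishing of the $(\{t\};\{i,j\};b)$-component of $\Ho^0(\phi_1)(s)$ for a section $s\in\ker\Ho^0(\phi_1)$ therefore reads
\[
\Ho^0(\phi_1)(a\to(\{t\};\{i,j\};b))(s(a)) + \Ho^0(\phi_1)(a'\to(\{t\};\{i,j\};b))(s(a'))=0\,,
\]
where each map is $\eps_{\cdot,\{i,j\}}$ times the restriction to the pairwise diagonal $\Delta_{ij}$, i.e. a (signed) cup product in the $i,j$-factors under the Künneth identifications of lemma \ref{kunneth}.

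The key step is to choose $i,j$ so that this relation expresses $s(a)$ in terms of a component whose image is strictly larger. Suppose $|\im(a)|<m=\min(n,k)$. Then $|\im(a)|<n$ provides a fresh index $j\notin\im(a)$, while $|\im(a)|<k$ forces a repeated value, so we may pick $t$ whose value $i:=a(t)$ is attained also by some $t'\neq t$. With $a':=a_{|[k]\setminus\{t\}}\uplus(t\mapsto j)$ the value $i$ is still attained via $t'$, so $\im(a')=\im(a)\cup\{j\}$ and $|\im(a')|=|\im(a)|+1$. For the $a$-term the index $j$ is fresh; hence by the preceding lemma the map is $\eps_{i,\{i,j\}}$ times the Künneth \emph{identity}, because the $j$-factor of $s(a)$ lies in $\Ho^0(\reg_X)=\C$ and cupping with the constant $1$ does nothing. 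For the $a'$-term both $i,j\in\im(a')$, so the map is $\eps_{j,\{i,j\}}$ times a genuine cup product. Since $a'^{-1}(\{i,j\})=a^{-1}(i)$ and $\im(a')\setminus\{i,j\}=\im(a)\setminus\{i\}$, the two target decompositions coincide, and after cancelling the signs (which satisfy $\eps_{i,\{i,j\}}=-\eps_{j,\{i,j\}}$) the relation becomes, under the identifications of lemma \ref{kunneth},
\[
s(a)=\bigl(\text{cup product in the }i,j\text{-factors}\bigr)(s(a'))\,.
\]
Thus $s(a)$ is completely determined by $s(a')$.

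Carrying out this downward induction on $|\im(a)|$, every component with $|\im(a)|<m$ is determined by components of image size one larger, and since the image of any $a\colon[k]\to[n]$ has at most $m$ elements, the procedure stops precisely at $|\im(a)|=m$, where no simultaneous choice of a fresh index and a repeated value exists. Hence all components of $s$ are determined by those with $|\im(a)|=m$, as asserted. The main obstacle is the bookkeeping in the key step: one must verify that the targets of the $a$- and $a'$-contributions genuinely coincide, so that the equation is a real relation between $s(a)$ and $s(a')$, and that on the $a$-side the induced map is the Künneth identity rather than a nontrivial cup product. The latter is what guarantees that $s(a)$ is recovered without loss of information, and it rests on the freshness of $j$ via the preceding lemma together with $\Ho^0(\reg_X)=\C$ for projective $X$.
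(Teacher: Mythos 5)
Your proposal is correct and follows essentially the same route as the paper: vanishing of the $(\{t\};\{i,j\};b)$-component of $\Ho^0(\phi_1)(s)$ yields a two-term relation between $s(a)$ and $s(\tilde a)$ with $|\im(\tilde a)|=|\im(a)|+1$, the $a$-side map being an isomorphism by the preceding lemma since $j\notin\im(a)$, and one concludes by induction on the image size. The extra bookkeeping you supply (that only the two indices $a$, $a'$ contribute to that target summand, and that the signs are opposite) is consistent with the paper's argument.
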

\begin{proof}
We use induction over $w:=m-\im(b)$ with the hypothesis that $s(b)$ is determined by the values of the $s(a)$ with $|\im(a)|=m$. Clearly, the hypothesis is true for $w=0$. So now let $b\colon [n]\to [k]$ with $|\im(b)|<\min(n,k)$. Such a map $b$ is neither injective nor surjective. Thus, we can choose  $j\in [n]\setminus\im(a)$ and a pair $t,t'\in[k]$ with $t\ne t'$ and $i:=b(t)=b(t')$. We define $\tilde b\colon [k]\to [n]$ by $\tilde b_{|[k]\setminus \{t\}}= b_{|[k]\setminus \{t\}}$ and $\tilde b(t)=j$. Then $\im(\tilde b)=\im(b)\cup \{j\}$ which gives $|\im(\tilde b)|=|\im(b)|+1$. We have
\begin{align*}
 0&=\Ho^0(\phi_1)(s)(\{t\};\{i,j\};b_{|[k]\setminus\{t\}})\\
 &= \Ho^0(\phi_1)(b\to(\{t\};\{i,j\};b_{|[k]\setminus\{t\}}))(s(b)) +\Ho^0(\phi_1)(\tilde b\to(\{t\};\{i,j\};b_{|[k]\setminus\{t\}}))(s(\tilde b))   \,.
\end{align*}
Since $\Ho^0(\phi_1)(b\to(\{t\};\{i,j\};b_{|[k]\setminus\{t\}}))$ is an isomorphism by the previous lemma, $s(b)$ is determined by $s(\tilde b)$ which in turn is determined by the values of $s(a)$ with $|\im(a)|=m$ by the induction hypothesis.    
\end{proof}
Since the functor of taking global sections is left-exact, the inclusions 
\[p_*q^*(E_1^{[n]}\otimes \dots\otimes E_k^{[n]})=K_k\subset K_1\subset K_0\]
induce inclusions
\[\Ho^0(X^{[n]},E_1^{[n]}\otimes \dots\otimes E_k^{[n]})\subset \Ho^0(X^n,K_1)^{\sym_n}\subset \Ho^0(X^n,K_0)^{\sym_n}\,.\]
Furthermore, $\Ho^0(X^n,K_1)=\ker(\Ho^0(\phi_1))$ holds. 
\begin{lemma}\label{phiker}
Let $n\ge k$. Then the projection
\[\Ho^0(X^n,K_0)^{\sym_n}\cong \bigoplus_{a\in J_0}\Ho^0(X^n,K_0(a))\to \Ho^0(X^n, K_0(1,2,\dots,k))\]
induces an isomorphism 
\[\Ho^0(X^n,K_1)^{\sym_n}\xrightarrow\cong \Ho^0(X^n,K_0(1,2,\dots,k))\]
 as well as an isomorphism
\[\Ho^0(X^{[n]},E_1\otimes \dots\otimes E_k)\xrightarrow\cong \Ho^0(K_0(1,2,\dots,k))\,.\]  
\end{lemma}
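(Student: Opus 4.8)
The plan is to pin down $\Ho^0(X^n,K_1)^{\sym_n}$ using the two lemmas just proved and then to split the projection by means of the cup-product map $\Psi$ of subsection \ref{phioncoh}. Write $a_0:=(1,2,\dots,k)\in J_0$ for the representative of the unique $\sym_n$-orbit of injective multi-indices $[k]\to[n]$; its isotropy group is $\sym_{[k+1,n]}$, and since $X$ is projective so that $\Ho^0(\reg_X)\cong\C$, Lemma \ref{kunneth} furnishes the K\"unneth identification $\Ho^0(X^n,K_0(a_0))\cong\Ho^0(E_1)\otimes\dots\otimes\Ho^0(E_k)$. Because $\Ho^0$ is left exact we have $\Ho^0(X^n,K_1)=\ker\Ho^0(\phi_1)$, and the hypothesis $n\ge k$ gives $m:=\min(n,k)=k$.

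First I would show that the projection to the $a_0$-summand is injective on $\Ho^0(X^n,K_1)^{\sym_n}$. By the second of the two preceding lemmas any $s\in\ker\Ho^0(\phi_1)$ is determined by its components $s(a)$ with $a$ injective. If in addition $s$ is $\sym_n$-invariant, the linearization formula $\lambda_\sigma(s)(a)=\sigma_*s(\sigma^{-1}\circ a)$ gives $s(a)=\sigma_*s(a_0)$ for every $a=\sigma\circ a_0$; as all injective $a$ arise in this way, every such component---and hence all of $s$---is determined by the single component $s(a_0)$. Thus $s(a_0)=0$ forces $s=0$, which is the desired injectivity (it holds in particular on the subspace $\Ho^0(X^n,K_k)^{\sym_n}$).

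For surjectivity I would produce a section. By subsection \ref{phioncoh} the map $\Psi$ sends $\bigotimes_i\Ho^0(E_i^{[n]})\cong\bigotimes_i\Ho^0(E_i)$ into $\Ho^0(X^{[n]},\otimes_iE_i^{[n]})=\Ho^0(X^n,K_k)^{\sym_n}\subset\Ho^0(X^n,K_1)^{\sym_n}$, and formula (\ref{psiformula}) computes its $a_0$-component as $\Psi(x_1\otimes\dots\otimes x_k)(a_0)=\bigotimes_{m=1}^k(\cup_{r\in a_0^{-1}(m)}x_r)=x_1\otimes\dots\otimes x_k$, using $a_0^{-1}(m)=\{m\}$. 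Hence, under the K\"unneth identification above, the composite of $\Psi$ with the projection to $\Ho^0(X^n,K_0(a_0))$ is the identity. This shows the projection is onto and splits, and together with the injectivity it yields the first isomorphism $\Ho^0(X^n,K_1)^{\sym_n}\xrightarrow{\cong}\Ho^0(X^n,K_0(a_0))$.

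The second isomorphism is then immediate: the splitting $\Psi$ already takes values in the subspace $\Ho^0(X^n,K_k)^{\sym_n}\cong\Ho^0(X^{[n]},E_1^{[n]}\otimes\dots\otimes E_k^{[n]})$ (Corollary \ref{mucor}), so the projection restricted to this subspace remains surjective, while injectivity was observed above; therefore it restricts to an isomorphism $\Ho^0(X^{[n]},E_1^{[n]}\otimes\dots\otimes E_k^{[n]})\xrightarrow{\cong}\Ho^0(X^n,K_0(a_0))$. The essential content sits in the two preceding lemmas (the reduction to injective indices and the identification of $\Ho^0(\phi_1)$ with cup products); the one point that must be checked with care is that $\Psi$ really lands in $K_k\subset K_1$ and not merely in $K_0$, which is exactly the inclusion $\im\Psi\subset\Ho^0(X^{[n]},\otimes_iE_i^{[n]})$ established in subsection \ref{phioncoh}.
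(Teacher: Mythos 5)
Your proposal is correct and follows essentially the same route as the paper: injectivity via the preceding lemma (reduction to injective multi-indices, then $\sym_n$-invariance to reduce to the single representative $(1,\dots,k)$), and surjectivity by exhibiting the splitting $\Psi$ with its $a_0$-component computed from formula (\ref{psiformula}), noting that $\Psi$ already lands in $\Ho^0(X^{[n]},\otimes_iE_i^{[n]})$. The only difference is that you spell out explicitly the invariance step the paper leaves implicit.
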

\begin{proof}
 By the previous lemma, the map $\Ho^0(K_1)^{\sym_n}\to \Ho^0(K_0(1,2,\dots,k))$ is injective. Thus it is left to show that for each 
\[x=x_1\otimes \dots\otimes x_k\in \Ho^0(X^n,K_0(1,2,\dots,k))=\Ho^0(E_1)\otimes \dots\otimes \Ho^0(E_k)\]
there exists an $s\in \Ho^0(X^{[n]},E_1^{[n]}\otimes \dots \otimes E_k^{[n]})\subset \Ho^0(X^n,K_1)^{\sym_n}\subset \Ho^0(X^n,K_0)^{\sym_n}$ with $s(1,2,\dots,k)=x$. We can consider each $x_t$ as a section of $\Ho^0(X^{[n]},E_t^{[n]})\cong \Ho^0(E_t)$. Then by formula (\ref{psiformula}) of the previous section $s=\Psi(x_1\otimes \dots\otimes x_n)$ has the desired property. 
\end{proof}
\begin{theorem}\label{globsec}
For $n\ge k$ there is a natural isomorphism
\[\Ho^0(X^{[n]},E_1^{[n]}\otimes\dots\otimes E_k^{[n]})\cong \Ho^0(E_1)\otimes \dots\otimes \Ho^0(E_k)\,.\]
\end{theorem}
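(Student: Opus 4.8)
The plan is to reduce the statement entirely to Lemma \ref{phiker} together with a single K\"unneth computation. By Lemma \ref{phiker} (in the regime $n\ge k$), the projection onto the summand indexed by the multi-index $(1,2,\dots,k)$ induces a natural isomorphism
\[\Ho^0(X^{[n]},E_1^{[n]}\otimes\dots\otimes E_k^{[n]})\xrightarrow{\cong}\Ho^0\bigl(X^n,K_0(1,2,\dots,k)\bigr)\,,\]
so it only remains to identify the right-hand side with $\Ho^0(E_1)\otimes\dots\otimes\Ho^0(E_k)$.

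First I would spell out the sheaf $K_0(1,2,\dots,k)$. By definition $K_0(a)=\bigotimes_{t=1}^k \pr_{a(t)}^*E_t$, so for the specific index $a=(1,2,\dots,k)$, i.e.\ $a(t)=t$, this is the external tensor product $E_1\boxtimes\dots\boxtimes E_k\boxtimes\reg_X\boxtimes\dots\boxtimes\reg_X$ on $X^n$, whose last $n-k$ factors are the structure sheaf. Equivalently, in the notation of Lemma \ref{kunneth}(i) the fibres are $a^{-1}(m)=\{m\}$ for $m\in[k]$ and $a^{-1}(m)=\emptyset$ for $m\in[k+1,n]$, so the corresponding factor is $\reg_X$.

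Next I would invoke the K\"unneth isomorphism of Lemma \ref{kunneth}(i) in cohomological degree zero. Since $X$ is projective and connected we have $\Ho^0(\reg_X)=\C$, and the degree-zero part of K\"unneth gives
\[\Ho^0\bigl(X^n,K_0(1,2,\dots,k)\bigr)\cong \Ho^0(E_1)\otimes\dots\otimes\Ho^0(E_k)\otimes \Ho^0(\reg_X)^{\otimes(n-k)}\cong \Ho^0(E_1)\otimes\dots\otimes\Ho^0(E_k)\,.\]
Composing this with the isomorphism furnished by Lemma \ref{phiker} yields the asserted isomorphism.

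I do not expect a genuine obstacle here: the real content of the theorem is already packaged in Lemma \ref{phiker}, which in turn rests on the two preceding lemmas (a section of $\ker\Ho^0(\phi_1)$ is determined by its components on multi-indices $a$ with $|\im(a)|=\min(n,k)$, and for $n\ge k$ the component on $(1,\dots,k)$ is realised by a genuine global section via the cup-product map $\Psi$). The only point requiring care is \emph{naturality} of the composite isomorphism in the $E_i$; this holds because each constituent step — the projection of Lemma \ref{phiker}, the realisation through $\Psi$, and the K\"unneth isomorphism — is itself natural, so their composition is as well.
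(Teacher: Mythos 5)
Your proposal is correct and follows exactly the paper's own (one-line) proof, which likewise cites Lemma \ref{phiker} together with the K\"unneth computation of Lemma \ref{kunneth}; you have merely spelled out the degree-zero K\"unneth step and the identification of $K_0(1,2,\dots,k)$ that the paper leaves implicit. No further comment is needed.
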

\begin{proof}
 This follows from the lemmas \ref{phiker} and \ref{kunneth}.
\end{proof}
\section{Tensor products of tautological bundles on $X^{[2]}$ and $X^{[n]}_{**}$}\label{long}
\subsection{Long exact sequences on $X^2$}\label{longzwei}
We want to enlarge the exact sequences \[0\to K_\ell\to K_{\ell-1}\xrightarrow{\phi_\ell} T_\ell\]
to long exact sequences with a $0$ on the right. We will do this first on $X^2$. Since the pairwise diagonals are disjoint on $X^n_{**}$, long exact sequences on $X^n_{**}$ can be obtained later from this case.
 We denote the diagonal in $X^2$ by $\Delta$ its inclusion by $\delta\colon X\to X^2$, and its vanishing ideal by $\mathcal I$. For a set $M=\{t_1<\dots<t_s\}\subset [k]$ of cardinality $s$ we will consider the standard representation $\rho_M\cong \rho_s$ of $\sym_M\cong \sym_s$ as the subspace of $\rho_k\subset \C^k$ with basis
\[\zeta_{M}^1:=e_{t_1}-e_{t_2}\,,\,\zeta_{M}^2:=e_{t_2}-e_{t_3}\,,\,\dots\,,\,\zeta_{M}^{s-1}:=e_{t_{s-1}}-e_{t_s}\,.\]
For $M\subset N$ we denote the inclusion by $\iota_{M\to N}\colon \rho_M\to \rho_N$ but will also often omit it in the following. For $\ell=1,\dots,k$ and $i=0,\dots , k-\ell$ we set
\[I_\ell^i:=\bigl\{(M;a)\mid M\subset [k]\,,\, \#M=\ell+i\,,\, a\colon [k]\setminus M\to[2]\bigr\}\quad,\quad R_\ell^i:=\bigoplus_{(M;a)\in I_\ell^i}\wedge^{\ell-1}\rho_M(a)\]
where $\rho_M(a)=\rho_M$ for every $a$. We define differentials $d_\ell^i\colon R_\ell^i\to R_\ell^{i+1}$ for $s\in R_\ell^i$ by
\[d_\ell^i(s)(M;a):=\sum_{i\in M}\eps_{i,M}\iota_{M\setminus\{i\}\to M}\left(s(M\setminus\{i\};a,i\mapsto 1)-s(M\setminus\{i\};a,i\mapsto 2)\right)\,.\]
We have indeed defined complexes $R_\ell^\bullet$ for $\ell=1\dots,k$ since
\begin{align*}
 (d\circ d)(s)(M;a)
=& \sum_{i\in M}\eps_{i,M}\iota_{M\setminus\{i\}\to M}\bigl(d(s)(M\setminus\{i\};a,i\mapsto 1)-d(s)(M\setminus\{i\};a,i\mapsto 2)\bigr)\\
=& \sum_{i\in M}\sum_{j\in M\setminus \{i\}}\eps_{i,M}\eps_{j,M\setminus\{i\}}\iota_{M\setminus\{i,j\}\to M}\left(\sum_{b\colon\{i,j\}\to[2]}\eps_b s(M\setminus\{i,j\};a\uplus b)\right)
\end{align*}
vanishes by the fact that $\eps_{i,M}\eps_{j,M\setminus\{i\}}=-\eps_{j,M}\eps_{i,M\setminus\{j\}}$ for all $i,j\in M$.
We define a $\sym_k$-action on every $R_\ell^i$ by setting
\[(\sigma\cdot s)(M;a):=\eps_{\sigma,\sigma^{-1}(M)}\sigma\cdot s(\sigma^{-1}(M);a\circ\sigma)\,.\]
The $\sym_k$-action on the right-hand side is the exterior power of the action on $\rho_k$. It maps indeed $\wedge^{\ell-1}\rho_{\sigma^{-1}(M)}$ to $\wedge^{\ell-1}\rho_M$.
This makes each $R_\ell^\bullet$ into a $\sym_k$-equivariant complex, since for $i\in M$ the term
\[s(\sigma^{-1}(M\setminus\{i\});a\circ\sigma,\sigma^{-1}(i)\mapsto 1) - s(\sigma^{-1}(M\setminus\{i\});a\circ\sigma,\sigma^{-1}(i)\mapsto 2)\]
occurs in $(\sigma\cdot d(s))(M;a)$ with the sign $\eps_{\sigma^{-1}(i),\sigma^{-1}(M)}\cdot\eps_{\sigma,\sigma^{-1}(M)}$ and in $d(\sigma\cdot s)(M;a)$ with the sign $\eps_{i,M}\cdot \eps_{\sigma,\sigma^{-1}(M\setminus\{i\})}$. Both signs are equal by \ref{signlemma}.
Note that for $(M;a)\in I_\ell^0$ we have $\wedge^{\ell-1}\rho_M(a)\cong \C$. We will denote the canonical base vector $\zeta_M^1\wedge\dots\wedge \zeta_M^{\ell-1}$ by $e_{(M;a)}$.
We also define
\[R_\ell^{-1}:=\bigoplus_{a\colon [k]\to [2]} \C(a)\quad,\quad \C(a)=\C\]
together with the $\sym_k$-equivariant map $\tilde\phi_\ell=d_{\ell}^{-1}\colon R^{-1}_\ell\to R^0_\ell$ given by
\[\tilde \phi_\ell(s)(M;a)=\left(\sum_{b\colon M\to [2]}\eps_bs(a\uplus b)\right)\cdot e_{(M;a)}\,.\]
The $\sym_k$-action on $R_\ell^{-1}$ is given by $(\sigma\cdot s)(a)=s(\sigma^{-1}\circ a)$.
We set
\[\tilde R_\ell^\bullet :=\bigl(0\to R_\ell^{-1}\to R^\bullet_\ell\bigr)=\bigl(0\to R_\ell^{-1}\to R_\ell^0\to\dots\to R_\ell^{k-\ell}\to 0\bigr)\,.\]
We make this complex also $\sym_2$-equivariant by defining the action of $\tau=(1\,\,2)$ in degree $-1$ by $(\tau\cdot s)(a):=a(\tau^{-1}\circ a)=a(\tau\circ a)$ and in degree $i\ge 0$ by \[(\tau\cdot s)(M;a):=\eps_{\tau,\{1,2\}}^{\ell+i} s(M;\tau^{-1}\circ a)=(-1)^{\ell+i} s(M;\tau\circ a)\,.\]
We will sometimes write a $k$ as a left lower index of the occurring objects and morphisms, e.g. ${_kR_\ell^i}$, if we want to emphasise a chosen value of $k$.
\begin{prop}\label{longtwoR}
For every $\ell=1,\dots,k$ the complex $\tilde R_\ell^\bullet$ is cohomologically concentrated in degree $-1$, i.e.
the sequence
\[R_\ell^{-1}\to R^0_\ell\to R_\ell^1\to\dots\to R_\ell^{n-\ell}\to 0\]
is exact.
\end{prop}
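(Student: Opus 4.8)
The plan is to regard $\tilde R_\ell^\bullet$ as a complex of finite-dimensional vector spaces and to prove its acyclicity in non-negative degrees by a K\"unneth computation, reducing the general case to $\ell=1$ by induction on $\ell$. I would first settle $\ell=1$, where every coefficient $\wedge^{\ell-1}\rho_M=\wedge^0\rho_M$ is just $\C$, so that $\tilde R_1^\bullet$ is freely spanned by the symbols $(M;a)$ with $a\colon[k]\setminus M\to[2]$, graded by $|M|$. For each $t\in[k]$ I introduce the two-term complex
\[C_t^\bullet=\Bigl(\,\C\langle e_t^1,e_t^2\rangle\xrightarrow{\;e_t^1\mapsto f_t,\ e_t^2\mapsto -f_t\;}\C\langle f_t\rangle\,\Bigr)\]
in degrees $0$ and $1$, whose degree-zero part records the value $a(t)\in[2]$ and whose degree-one part records $t\in M$. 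Since its differential is $(c_1,c_2)\mapsto c_1-c_2$, the complex $C_t^\bullet$ is exact except for $\Ho^0(C_t^\bullet)=\C\langle e_t^1+e_t^2\rangle$. The degree-$p$ part of $\bigotimes_{t=1}^kC_t^\bullet$ is $\bigoplus_{|M|=p}\bigl(\bigotimes_{t\in M}\C f_t\bigr)\otimes\bigl(\bigotimes_{t\notin M}\C\langle e_t^1,e_t^2\rangle\bigr)$, which is canonically $R_1^{p-1}$, and the Koszul sign of the tensor differential reproduces the sign $\eps_{i,M}$ of $d_1$ together with the two difference terms in its definition. K\"unneth then gives $\Ho^\bullet(\bigotimes_tC_t^\bullet)=\bigotimes_t\Ho^\bullet(C_t^\bullet)=\C$, concentrated in degree $0$, so $\tilde R_1^\bullet$ is exact in all degrees $\ge0$.

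For the inductive step I replace each coefficient $\wedge^{\ell-1}\rho_M$ by $\wedge^{\ell-1}\C^M$ and each $\iota_{M\setminus\{i\}\to M}$ by the wedge of the coordinate inclusion $\C^{M\setminus\{i\}}\hookrightarrow\C^M$ (of which $\iota_{M\setminus\{i\}\to M}$ is exactly the restriction to the sum-zero subspaces), obtaining a companion complex $\bar R_\ell^\bullet$. A basis of $\wedge^{\ell-1}\C^M$ is given by the $(\ell-1)$-subsets $S\subset M$, and the differential removes only indices $i\in M\setminus S$; hence $\bar R_\ell^\bullet$ splits as a direct sum, over the ``frozen'' set $S$ with $|S|=\ell-1$, of copies of the $\ell=1$-type complex on the ground set $[k]\setminus S$. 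By the base case each summand is exact above its bottom term $M=S$, so $\bar R_\ell^\bullet$ is exact in degrees $\ge0$. Contraction $\iota_\phi$ with the summation functional $\phi\in(\C^M)^\vee$ is a Koszul differential: for $M\neq\emptyset$ one has $\wedge^{\ell-1}\rho_M=\ker(\iota_\phi\colon\wedge^{\ell-1}\C^M\to\wedge^{\ell-2}\C^M)$ and $\im\iota_\phi=\wedge^{\ell-2}\rho_M$, and $\iota_\phi$ commutes with the coordinate inclusions and leaves the $a$-factor untouched, so it is a map of complexes. This produces a short exact sequence of complexes
\[0\to R_\ell^\bullet\to\bar R_\ell^\bullet\xrightarrow{\;\iota_\phi\;}R_{\ell-1}^\bullet[1]\to0\,,\]
whose long exact cohomology sequence, combined with the exactness of $\bar R_\ell^\bullet$ and the inductive hypothesis for $R_{\ell-1}^\bullet$, yields the vanishing of $\Ho^i(R_\ell^\bullet)$ for all $i\ge1$.

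The remaining point, which I expect to be the main obstacle, is exactness at $R_\ell^0$, i.e. the identity $\ker(R_\ell^0\xrightarrow{d}R_\ell^1)=\im\tilde\phi_\ell$. The augmentation $\tilde\phi_\ell$ does not belong to the Koszul complex used above: it is an $\ell$-fold difference landing in the line $\C\,e_{(M;a)}=\wedge^{\ell-1}\rho_M$ with $|M|=\ell$, and it reaches from the term $M=\emptyset$ below the bottom $|M|=\ell-1$ of $\bar R_\ell^\bullet$, so the long exact sequence does not control this degree. I would treat it directly, translating the cocycle condition $ds=0$ into the compatibility relations that allow one to solve for a preimage of $s$ under the $\ell$-fold difference, or alternatively pinning down the bottom via the connecting homomorphism together with the already-established vanishing in positive degrees. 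In both the base case and the inductive step the genuine work is the sign bookkeeping—verifying that the Koszul signs of the tensor product and of the splitting of $\bar R_\ell^\bullet$ match the conventions $\eps_{i,M}$ and $\eps_b$, and that $\iota_\phi$ induces precisely the differential of $R_{\ell-1}^\bullet$—all of which are instances of the sign identities of Lemma \ref{signlemma}.
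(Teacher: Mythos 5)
Your base case ($\ell=1$ via the K\"unneth theorem for $(\tilde C^\bullet)^{\otimes k}$) is exactly the paper's argument. Your inductive step, however, departs from the paper in an interesting way: where the paper runs an intricate double induction on $k$ and $\ell$, decomposing $R_\ell^i$ according to whether $k\in M$ and the value $a(k)$ and further splitting off the line $\langle\zeta_{M,\max}\rangle$, you package the relation between $\ell$ and $\ell-1$ into a single short exact sequence of complexes $0\to R_\ell^\bullet\to\bar R_\ell^\bullet\xrightarrow{\iota_\phi}R_{\ell-1}^\bullet[1]\to 0$ coming from the Koszul contraction $\wedge^{\ell-1}\rho_M=\ker(\iota_\phi)$, $\wedge^{\ell-2}\rho_M=\im(\iota_\phi)$, and you kill $\bar R_\ell^\bullet$ by splitting it over frozen $(\ell-1)$-subsets $S$ into copies of the $\ell=1$ complex. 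This part is sound modulo the sign bookkeeping you defer (the discrepancy $\eps_{i,M}$ versus $\eps_{i,M\setminus S}$ is absorbed by rescaling each slot by $(-1)^{\sum_{i\in M\setminus S}\#\{j\in S\mid j<i\}}$), with one bookkeeping caveat: to get vanishing of $\mathcal H^1(R_\ell^\bullet)$ and not just $\mathcal H^{\ge 2}$ from the long exact sequence, you must include the $|M|=\ell-1$ terms $\wedge^{\ell-1}\C^M\cong\C$ in $\bar R_\ell^\bullet$, so that the quotient is $(\sigma_{\ge 0}R_{\ell-1}^\bullet)[1]$ and each fixed-$S$ summand is the \emph{augmented} complex $\tilde R_1^\bullet$ on $[k]\setminus S$; with the truncation you wrote, $\Ho^0(\bar R_\ell^\bullet)\neq 0$ and the degree-one case is not covered.

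The genuine gap is the one you yourself flag: exactness at $R_\ell^0$, i.e.\ $\ker(d_\ell^0)=\im(\tilde\phi_\ell)$. This is not a loose end — it is the bulk of the paper's proof (Lemma \ref{Ul}, proved by a separate induction on $k$ with the decompositions $V_0,V_1,V_2$ and $W_0,\dots,W_3$, supplemented by the Euler-characteristic count of Remark \ref{countdim}), and your write-up names two possible strategies without carrying out either. Your own framework does in fact close it: with the extended $\bar R_\ell^\bullet$ the long exact sequence gives $0\to\ker(d_{\ell-1}^0)\xrightarrow{\ \delta\ }\ker(d_\ell^0)\to 0$, the lift through the one-dimensional spaces $\wedge^{\ell-1}\C^M$ ($|M|=\ell-1$) is unique, and a direct computation of $\delta$ on $t=\tilde\phi_{\ell-1}(s)$ shows $\delta\circ\tilde\phi_{\ell-1}$ is a nonzero multiple of $\tilde\phi_\ell$, whence $\im(\tilde\phi_\ell)=\delta(\im\tilde\phi_{\ell-1})=\delta(\ker d_{\ell-1}^0)=\ker(d_\ell^0)$ by the inductive hypothesis. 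But until that computation (or the direct cocycle analysis) is actually written down, the proposal does not prove the proposition.
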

\begin{proof}
 We will divide the proof into several lemmas. We will often omit certain indices in the notation, when we think that it will not lead to confusion.
For $\ell=1$ the complex
$\tilde R_1^\bullet$
is isomorphic to $(\tilde C^\bullet)^{\otimes k}[1]$, where $\tilde C^\bullet$ is the complex concentrated  in degree 0 and 1 given by
\[0\to \C\oplus \C\to \C\to 0\quad,\quad \binom ab\mapsto a-b\,.\]
Since the complex $\tilde C^\bullet$ has only cohomology in degree zero and the tensor product is taken over the field $\C$, it follows that $\tilde R_1^\bullet$ is indeed cohomologically concentrated in degree $-1$. We go on by induction over $\ell$ assuming that the proposition is true for all values smaller than $\ell$.
\begin{lemma}\label{tinker}
 Let $t\in R_\ell^0$. Then $d_\ell^0(t)=0$ if and only if for every $(N;a)\in I_\ell^1$ and every pair $i,j\in N$ the following holds:
\[t(N\setminus\{i\};a,i\mapsto 1)-t(N\setminus\{i\};a,i\mapsto2)=t(N\setminus\{j\};a,j\mapsto 1)-t(N\setminus\{j\};a,j\mapsto2)\,.\]
Here for $(M;b)\in I_\ell^0$ we use the notation
$t(M;b)=t(M;b)\cdot e_{(M;b)}$,
i.e. we denote by $t(M;b)$ also its preimage under the canonical isomorphism $\C\cong \wedge^{\ell-1}\rho_{M}$.
\end{lemma}
\begin{proof}
Let $N=\{n_1<\dots<n_{\ell+1}\}\subset [k]$. The above formula holds for every pair $i,j\in N$ if and only if it holds for every pair of neighbors. Thus we may assume that $i=n_h$ and $j=n_{h+1}$ with $h\in [\ell]$. The wedge product $\wedge^{\ell-1}\rho_{N\setminus\{n_h\}}$ is spanned by the vector
\[e_{N\setminus\{n_h\}}=\zeta_{N\setminus\{n_h\}}^1\wedge\dots\wedge\zeta_{N\setminus\{n_h\}}^{\ell-1}=\begin{cases}
                                                                                \zeta_{N}^2\wedge\dots\wedge\zeta_{N}^\ell&\text{ for $h=1$,}\\
                                                                           \zeta_{N}^1\wedge\dots\wedge(\zeta_{N}^{h-1}+\zeta_{N}^h)\wedge\dots\wedge\zeta_{N}^\ell&\text{ else.}
                                                                               \end{cases}
\]
Thus, for $t\in R_\ell^0$ the coefficient of $\zeta_{N}^1\wedge\dots\wedge\widehat{\zeta_{N}^h}\wedge \dots\wedge \zeta_{N}^\ell$ of $d(t)(N,a)\in \wedge^{\ell-1}\rho_N$ equals
\begin{align*}&\eps_{n_h,N}\bigl(t(N\setminus\{n_h\};a,n_h\mapsto 1)-t(N\setminus\{n_h\};a,n_h\mapsto 2)\bigr)\\+&
 \eps_{n_{h+1},N}\bigl(t(N\setminus\{n_{h+1}\};a,n_{h+1}\mapsto 1)-t(N\setminus\{n_{h+1}\};a,n_{h+1}\mapsto 2)\bigr)
\end{align*}
which proves the lemma.
\end{proof}
The inclusion $\im(\tilde\phi)\subset \ker(d_\ell^0)$ follows since for $s\in R^{-1}_\ell$ and $t=\tilde\phi(s)$ both sides of the equation in the above lemma equal
\[\sum_{b\colon N\to [2]}\eps_b\cdot s(a\uplus b)\,.\]
We will actually show a bit more than $\im (\tilde\phi)\supset \ker(d_\ell^0)$ in the next lemma.
We decompose $R_\ell^{-1}$ into $U_\ell\oplus S_\ell$ with
\begin{align*}
U_\ell&=\{s\in R_\ell^{-1}\mid s(a)=0\,\forall a:\, |a^{-1}(\{2\})|\le \ell-1\}=\langle e_a\mid a^{-1}(\{2\})\ge\ell\rangle \\
S_\ell&=\{s\in R_\ell^{-1}\mid s(a)=0\,\forall a:\, |a^{-1}(\{2\})|\ge \ell\}=\langle e_a\mid a^{-1}(\{2\})\le\ell-1\rangle\,. 
\end{align*}
\begin{lemma}\label{Ul}
 The map $\tilde\phi_{\ell\mid U_\ell}\colon U_\ell \to \ker(d_\ell^0)$ is an isomorphism. 
\end{lemma}
\begin{proof}
We first show the injectivity. Let $s\in U$ with $\tilde\phi(s)=0$. We show that $s(a)$ is zero for every $a$ by induction over $\alpha=|a^{-1}(\{2\})|$. For $\alpha=\ell$ we set $M=a^{-1}(\{2\})$ and get
\[0=\tilde\phi(s)(M;\underline 1)=s(a)\,.\]
We may now assume that $s(b)=0$ for every $b\colon [k]\to[2]$ with $b^{-1}(\{2\})<\alpha$. Then for $M\subset a^{-1}(\{2\})$ with $|M|=\ell$ we obtain
\[0=\tilde\phi(s)(M;a_{|[k]\setminus M})=s(a)\,.\]
For the surjectivity we precede by induction on $k$. For $k=\ell$ the map $\tilde\phi$ sends the basis vector $e_{\underline 2}$ of the one-dimensional vector space $U$ with a factor of $(-1)^\ell$ to the basis vector $e_{[\ell]}$ of the one-dimensional vector space $T_\ell^0$. Let now be $k>\ell$. 
We set
\begin{align*}&V_0=\{a\colon [k]\to [2]\mid a(k)=2,\,\#a^{-1}(\{2\})=\ell\},\\ &V_1=\{a\colon [k]\to [2]\mid a(k)=1,\,\#a^{-1}(\{2\})\ge\ell\},\\ &V_2=\{a\colon [k]\to [2]\mid a(k)=2,\,\#a^{-1}(\{2\})\ge \ell+1\},\\ &W_0=\{(M,a)\in {_kI_\ell^0}\mid k\in M,\,a=\underline1\}\,,
\, W_1=\{(M,a)\in {_kI_\ell^0}\mid k\notin M,\,a(k)=1\},\\&W_2=\{(M,a)\in {_kI_\ell^0}\mid k\notin M,\,a(k)=2\}\,,\,  W_3=\{(M,a)\in {_kI_\ell^0}\mid k\in M,\,a\ne\underline1\}, 
\end{align*}    
We define subspaces of $U$ respectively $R_\ell^0$ by 
\[\langle V_i\rangle=\langle e_a\mid a\in V_i\rangle\quad,\quad \langle W_i\rangle=\langle e_{(M;a)}\mid (M;a)\in W_i\rangle\]
which yields $U=\langle V_0\rangle\oplus\langle V_1\rangle\oplus\langle V_2\rangle$ and $T_\ell^0=\langle W_0\rangle\oplus\langle W_1\rangle\oplus\langle W_2\rangle\oplus\langle W_3\rangle$.
We denote by $\tilde\phi(i,j)$ the component of $\tilde\phi_\ell$ given by the composition  
\[\langle V_i\rangle\to U\xrightarrow{\tilde\phi} R_\ell^0\to \langle W_j\rangle\,.\]
Let now $t\in \ker({_kd^0_\ell})$. We define $s_0\in \langle V_0\rangle$ by $s_0(a)=t(a^{-1}(\{2\}); \underline 1)$ for $a\in V_0$. This yields $\tilde\phi(0,0)(s_0)= t_{|\langle W_0\rangle}$. We set $\tilde t:= t-\tilde\phi(s_0)$. The components $\tilde\phi(1,0)$, $\tilde\phi(2,0)$, $\tilde\phi(1,2)$, and $\tilde\phi(2,1)$ are all zero. There are canonical bijections 
\begin{align*}
& V_1\xrightarrow{\cong}\{a\colon [k-1]\to [2]\mid \#a^{-1}(\{2\})\ge\ell\}\xleftarrow{\cong} V_2\,,\\
&W_1\xrightarrow{\cong}\{(M,a)\mid M\subset[k-1],\, \#M=\ell,\, a\colon [k-1]\setminus M\to[2]\}\xleftarrow{\cong} W_2
\end{align*}
 given by dropping $a(k)$. They induce linear isomorphisms $\langle V_1\rangle\cong {_{k-1}U_{\ell}}\cong \langle V_2\rangle$ as well as $\langle W_1\rangle\cong{_{k-1}T_\ell^0}\cong \langle W_2\rangle$ under which the linear maps $\tilde\phi(1,1)$ and $\tilde\phi(2,2)$ both correspond to $_{k-1}\tilde\phi$.
Thus, by induction there are $s_1\in \langle V_1\rangle$ and $s_2\in \langle V_2\rangle$ such that $\tilde\phi(1,1)(s_1)=\tilde t_{|\langle W_1\rangle}$
and $\tilde\phi(2,2)(s_2)=\tilde t_{|\langle W_2\rangle}$. Defining $s\in {_kU}$ by  $s_{|\langle V_i\rangle}=s_i$ for $i=0,1,2$ we get 
\[\tilde \phi(s)_{|\langle W_0\rangle\oplus \langle W_1\rangle\oplus \langle W_2\rangle}=t_{|\langle W_0\rangle\oplus \langle W_1\rangle\oplus \langle W_2\rangle}\,.\] It is left to show that the equation also holds on $\langle W_3\rangle$. For this we show that for every $x\in \ker d_\ell^0$ with $x_{|\langle W_0\rangle\oplus \langle W_1\rangle\oplus \langle W_2\rangle}=0$ also $x(M;a)=0$ for every $(M,a)\in W_3$ holds. We use induction over $\alpha = |a^{-1}(\{2\})|$. 
For $\alpha=0$ the tuple $(M;a)$ is an element of $W_0$. Hence, $x(M;a)=0$ holds. We now assume that $x(M;b)=0$ for every tuple $(M;b)\in W_3$ with $|b^{-1}(\{2\})|<\alpha=|a^{-1}(\{2\})|$ holds and choose an $i\in [k]\setminus M$ with $a(i)=2$. Applying lemma \ref{tinker} to $N=M\cup \{i\}$ we get
\[
x(M;a)=x(M;a_{|[k]\setminus N},i\mapsto 1)-x(N\setminus\{k\};a_{|[k]\setminus N},k\mapsto 1)+x(N\setminus\{k\};a_{|[k]\setminus N},k\mapsto 2)\,.
\]
The first term on the right hand side is zero by induction hypothesis and the second and third are zero since they are coefficients of $s$ in $W_1$ respectively $W_2$.
\end{proof}
\begin{remark}\label{countdim}
Counting the cardinality of the base we get 
\[\dim({_kU_\ell})=\sum_{j=\ell}^k\binom kj\,.\]
On the other hand we have 
\[\dim({_kR_\ell^i})=2^{k-\ell-i}\binom k{\ell+i}\binom{\ell+i-1}{\ell-1}\]
and thus also 
\[\chi({_kR_\ell^\bullet})=\sum_{i=0}^{k-\ell}(-1)^i2^{k-\ell-i}\binom k{\ell+i}\binom{\ell+i-1}{\ell-1}\overset{\ref{binom}}= \sum_{j=\ell}^k \binom kj\,.\]
It follows by the previous lemma that if the complex $ R_\ell^\bullet$ is exact in all but one positive degree it is exact in every positive degree.  
\end{remark}
\begin{lemma}
Let $k\ge \ell+2$ and $2\le j\le k-\ell$. Then $\mathcal H^j(R_\ell^\bullet)=0$, i.e. the sequence
\[R_\ell^{j-1}\to R_\ell^{j}\to R_\ell^{j+1}\]
is exact (with $R_\ell^{j+1}=0$ in the case $j=k-\ell$).  
\end{lemma}
\begin{proof}
The term $R_\ell^{k-\ell}=\wedge ^{\ell-1}\rho_{[k]}$ is an irreducible $\sym_k$-representation (see \cite[Proposition 3.12]{FH}). It is still irreducible after tensoring with the alternating representation. Since $d_\ell^{k-\ell-1}$ is non-zero and $\sym_k$-equivariant, it follows that it is surjective. This proves the case $j=k-\ell$. For general $j\in [2,k-\ell]$ we use induction over $k$. For $k=\ell+2$ only the case $j=k-\ell$ occurs which is already proven. So now let $k\ge \ell+3$, $2\le j\le k-\ell-1$, and $t\in \ker d_\ell^j$. For $i=0,\dots,k-\ell$ we decompose $R_\ell^i$ into the direct summands
 \begin{align*}
  R_\ell^i(0)=\bigoplus_{(M,a):\, k\in M} \wedge^{\ell-1}\rho_M(a)\,&,\, R_\ell^i(1)=\bigoplus_{(M,a):\, k\notin M,\, a(k)=1} \wedge^{\ell-1}\rho_M(a)\\
R_\ell^i(2)=&\bigoplus_{(M,a):\, k\notin M,\, a(k)=2} \wedge^{\ell-1}\rho_M(a)
 \end{align*}
and denote the components of the differential by $d(u,v)\colon R_\ell^i(u)\to R_\ell^{i+1}(v)$. Then $d(0,1)$, $d(0,2)$, $d(1,2)$ and $d(2,1)$ all vanish. It follows that $t_1\in \ker d(1,1)$ and $t_2\in \ker d(2,2)$ where $t_u$ is the component of $t$  in $R_\ell^j(u)$. By dropping $a(k)$ we get isomorphisms
\[{_kR_\ell^i(1)}\cong  {_{k-1}R_\ell^i}\cong {_kR_\ell^i(2)}\]
under which $_kd_\ell^i(1,1)$ as well as $_kd_\ell^i(2,2)$ coincide with $_{k-1}d_\ell^i$. Thus, by induction there exist $s_u\in {_kR_\ell^{j-1}(u)}$ for $u=1,2$ such that $d(u,u)(s_u)=t_u$. It remains to find a $s_0\in R_\ell^{j-1}(0)$ such that \[d(s_0)=d(0,0)(s_0)=\hat t:= t-d(s_1+s_2)\,.\]
For $M\subset [k-1]$ we have $\rho_{M\cup\{k\}}=\rho_M\oplus\langle \zeta_{M\cup \{k\},\max}\rangle$, where $\max=|M|$, i.e. 
\[\zeta_{M\cup\{k\},\max}=\zeta_{M\cup\{k\},|M|}=e_{\max(M)}-e_k\,.\]
Using this we can decompose the $R_\ell^i(0)$ further into the two direct summands
\[R_\ell^i(3)=\bigoplus_{(M,a):\, k\in M} \wedge^{\ell-1}\rho_{M\setminus\{k\}}(a)\quad,\quad R_\ell^i(4)=\bigoplus_{(M,a):\, k\in M} \wedge^{\ell-2}\rho_{M\setminus\{k\}}\otimes \langle \zeta_{M,\max}\rangle(a)\,.\]
The differential $d(3,4)$ vanishes. Thus, $d(4,4)(\hat t_4)=0$. Furthermore we have the following isomorphism of sequences
\[\begin{CD}
 {_{k-1}R_{\ell-1}^{j-1}} @>{{_{k-1}d_{\ell-1}^{j-1}}}>> {_{k-1}R_{\ell-1}^{j}} @>{{_{k-1}d_{\ell-1}^{j}}}>> {_{k-1}R_{\ell-1}^{j+1}}  
\\
 @V{\cong}VV @V{\cong}VV @V{\cong}VV 
\\
 {_kR_\ell^{j-1}(4)}@>{{_{k}d_{\ell}^{j-1}}(4,4)}>> {_kR_\ell^{j}(4)}@>{{_{k}d_{\ell}^{j}}(4,4)}>> {_kR_\ell^{j+1}(4)} 
\end{CD}\]
induced by the maps ${_{k-1}I_{\ell-1}^{i}}\to {_{k}I_\ell^{i}}$ given by $(L;a)\mapsto (L\cup\{k\};a)$ and the isomorphisms $\wedge^{\ell-2}\rho_{M\setminus\{k\}}\cong \wedge^{\ell-2}\rho_{M\setminus\{k\}}\otimes \langle \zeta_{M,\max}\rangle$. By the induction hypothesis for $\ell$ (saying that proposition \ref{longtwoR} is true for $\ell-1$) the upper sequence is exact. Thus, there is a $s_4\in R_\ell^{j-1}(4)$ such that $d(4,4)(s_4)=\hat t_4$. We set $\tilde t_3:=\hat t_3-d(4,3)(s_4)$. There is also a canonical isomorphism of the exact sequences
\[\begin{CD}
 {_{k-1}R_{\ell}^{j-2}} @>{{_{k-1}d_{\ell}^{j-2}}}>> {_{k-1}R_{\ell}^{j-1}} @>{{_{k-1}d_{\ell}^{j-1}}}>> {_{k-1}R_{\ell}^{j}}  
\\
 @V{\cong}VV @V{\cong}VV @V{\cong}VV 
\\
 {_kR_\ell^{j-1}(3)}@>{{_{k}d_{\ell}^{j-1}}(3,3)}>> {_kR_\ell^{j}(3)}@>{{_{k}d_{\ell}^{j}}(3,3)}>> {_kR_\ell^{j+1}(3)} 
\end{CD}\]
induced by the maps ${_{k-1}I_\ell^{i-1}}\to {_{k}I_\ell^{i}}$ which are again given by $(L;a)\mapsto (L\cup\{k\};a)$. 
By induction over $k$ the upper sequence is exact (in the case that $j=2$ we have to use remark \ref{countdim}). This yields a $s_3\in R_\ell^{j-1}(3)$ such that $d(3,3)(s_3)=\tilde t_3$ holds. In summary we have found a 
$s=(s_1,s_2,s_3,s_4)\in R_\ell^{j-1}$ with $d(s)=t$. 
\end{proof}
As mentioned in remark \ref{countdim} the previous lemma finishes the proof of proposition \ref{longtwoR}. 
\end{proof}
For $\ell=1,\dots,k$ we set
\[H_\ell:=\delta_*\left(S^{\ell-1}\Omega_X \otimes E_1\otimes \dots\otimes E_k\right)=\left(S^{\ell-1}\Omega_X \otimes E_1\otimes \dots\otimes E_k\right)_{12}\,.\]
Then $H_\ell$ equals $T_\ell(M;1,2;a)$ (see section \ref{constr}) for every tuple $(M;a)\in I_\ell^0\cong {_2I_\ell}$.
We define the complexes $T_\ell^\bullet:=H_\ell\otimes_\C R_\ell^\bullet$ and $\tilde T_\ell^\bullet=H_\ell\otimes_\C \tilde R_\ell^\bullet$. We denote again the differentials by $d_\ell^i$ and $d_\ell^{-1}=\tilde \phi_\ell$.
Then for every $\ell=1,\dots,k$ there are isomorphisms $T_\ell\cong T_\ell^0$. We will always consider the isomorphism $T_\ell\cong T_\ell^0$ induced by the canonical isomorphisms $\C\cong \wedge^{\ell-1}\rho_M(a)$ for $(M;a)\in I_\ell^0$ given by
$1\mapsto e_{(M;a)}$.
We will denote the composition of $\phi_\ell\colon K_{\ell-1}\to T_\ell$ with this isomorphism again by $\phi_\ell\colon K_{\ell-1}\to T_\ell^0$. 
If $E_1=\dots=E_k$, the complex $\tilde T_\ell^\bullet$ carries a canonical $\sym_k$-linearization given by applying the action of $\sym_k$ on $\tilde R_\ell^\bullet$ and permuting the tensor factors of $H_\ell$. 
We also define a $\sym_2$-action on $\tilde T_\ell^\bullet$ by using the $\sym_2$-action on $\tilde R_\ell^\bullet$ as well as the natural action on $H_\ell$ which means that $\tau$ acts on $H_\ell$ by $(-1)^{\ell-1}$ because of the factor $(S^{\ell-1}\Omega_X)_{12}=S^{\ell-1}N_\Delta^\vee$.
The isomorphism $T_\ell\cong T_\ell^0$ is $\sym_k$-equivariant since
\[\sigma\cdot e_{(\sigma^{-1}(M),\sigma^{-1}\circ a)}=\sigma\cdot(\zeta_{\sigma^{-1}(M)}^1\wedge \dots\wedge \zeta_{\sigma^{-1}(M)}^{\ell-1})=\eps_{\sigma,\sigma^{-1}(M)}\cdot\zeta_{M}^1\wedge \dots\wedge \zeta_{M}^{\ell-1}=\eps_{\sigma,\sigma^{-1}(M)}\cdot e_{(M;a)}\,.\]
It is also $\sym_2$-equivariant. Hence, $\phi_\ell\colon K_{\ell-1}\to T_\ell^0$ is again $\sym_2$-equivariant and, if all the $E_i$ are equal, also $\sym_k$-invariant.
\begin{prop}\label{longtwo}
For every $\ell=1,\dots,k$ the sequences 
\[0\to K_\ell\to K_{\ell-1}\xrightarrow{\phi_\ell} T_\ell^0\to T_\ell^1\to\dots\to T_\ell^{k-\ell}\to 0\]
are exact. 
\end{prop}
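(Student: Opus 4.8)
The plan is to reduce the whole statement to the purely combinatorial Proposition \ref{longtwoR} together with the defining property $K_\ell=\ker\phi_\ell$. Exactness at $K_\ell$ and at $K_{\ell-1}$ is immediate from $K_\ell=\ker\phi_\ell$, so the content is exactness at the terms $T_\ell^0,\dots,T_\ell^{k-\ell}$. I would split this into two tasks: (a) exactness of the tail $T_\ell^0\xrightarrow{d_\ell^0}T_\ell^1\to\dots\to T_\ell^{k-\ell}\to 0$ at every position of positive degree, together with the identification $\ker d_\ell^0=\im\tilde\phi_\ell$; and (b) the equality $\im\phi_\ell=\ker d_\ell^0$, which is exactness at $T_\ell^0$.

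For (a) I would apply the functor $H_\ell\otimes_\C(-)$ to the exact sequence $R_\ell^{-1}\to R_\ell^0\to\dots\to R_\ell^{k-\ell}\to 0$ of Proposition \ref{longtwoR}. Since $T_\ell^\bullet=H_\ell\otimes_\C R_\ell^\bullet$ and $\tilde T_\ell^\bullet=H_\ell\otimes_\C\tilde R_\ell^\bullet$ with differentials $\id_{H_\ell}\otimes d$, and since tensoring a complex of finite-dimensional $\C$-vector spaces with the fixed sheaf $H_\ell$ over the field $\C$ preserves exactness, this yields the exact sequence $\tilde T_\ell^{-1}\xrightarrow{\tilde\phi_\ell}T_\ell^0\to\dots\to T_\ell^{k-\ell}\to 0$. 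In particular $d_\ell^{k-\ell-1}$ is surjective, the sequence is exact at $T_\ell^1,\dots,T_\ell^{k-\ell}$, and $\ker d_\ell^0=\im\tilde\phi_\ell$.

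For (b) I compare $\phi_\ell\colon K_{\ell-1}\to T_\ell^0$ with $\tilde\phi_\ell\colon\tilde T_\ell^{-1}\to T_\ell^0$. Both are given, in the $(M;a)$-component, by the alternating sum $s(M,a):=\sum_{b\colon M\to[2]}\eps_b\,s(a\uplus b)$ reduced modulo $\I^\ell$, where $\I$ is the ideal of the diagonal in $X^2$. The inclusion $\im\phi_\ell\subseteq\ker d_\ell^0$ is the same computation that gives $\im\tilde\phi_\ell\subseteq\ker d_\ell^0$ in the proof of Proposition \ref{longtwoR}: for $s\in K_{\ell-1}$ and $(N;a)\in I_\ell^1$ one has $s(N,a)\in\I^{\ell-1}$ (as in the well-definedness argument for $\phi_\ell$), and the combination $s(N\setminus\{i\},(a,i\mapsto1))-s(N\setminus\{i\},(a,i\mapsto2))$ equals $s(N,a)$ independently of $i\in N$, so that
\[
d_\ell^0\bigl(\phi_\ell(s)\bigr)(N;a)=\overline{s(N,a)}\cdot\sum_{i\in N}\eps_{i,N}\,\iota_{N\setminus\{i\}\to N}\bigl(e_{N\setminus\{i\}}\bigr),
\]
whose combinatorial factor (with $e_{N\setminus\{i\}}$ the canonical generator of $\wedge^{\ell-1}\rho_{N\setminus\{i\}}$) vanishes because $\tilde R_\ell^\bullet$ is a complex. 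For the reverse inclusion $\ker d_\ell^0=\im\tilde\phi_\ell\subseteq\im\phi_\ell$ it is enough to hit, locally, every generator of $\im\tilde\phi_\ell$: given $h\in H_\ell$ sitting in the $a_0$-summand of $\tilde T_\ell^{-1}$, I would lift it to a local section $\hat h\in\I^{\ell-1}K_0(a_0)$ and take for $s$ the section of $K_0$ concentrated in the single summand $K_0(a_0)$ with value $\hat h$. Since $\hat h\in\I^{\ell-1}\subseteq\I^{m}$ for every $m\le\ell-1$, the only surviving sums $s(M,a)$ lie in the requisite ideal powers, so $\phi_m(s)=0$ for all $m\le\ell-1$ and hence $s\in K_{\ell-1}$; by construction $\phi_\ell(s)=\tilde\phi_\ell(h)$. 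Summing such preimages treats an arbitrary local section of $\im\tilde\phi_\ell$, and together with the first inclusion this gives $\im\phi_\ell=\ker d_\ell^0$.

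The formal parts are (a), which is base change over the field $\C$, and the surjectivity in (b), which is witnessed by the one-summand sections $\hat h$ above. The one genuinely delicate point is the vanishing $d_\ell^0\circ\phi_\ell=0$: it requires carrying the reduction modulo $\I^\ell$, the well-definedness $s(N,a)\in\I^{\ell-1}$, and the purely combinatorial identity $\sum_{i\in N}\eps_{i,N}\,\iota_{N\setminus\{i\}\to N}(e_{N\setminus\{i\}})=0$ in $\wedge^{\ell-1}\rho_N$ simultaneously, i.e. it is precisely the interaction between the $\I$-adic filtration on $X^2$ and the combinatorial differential of $R_\ell^\bullet$. All the maps $\phi_\ell$, $\tilde\phi_\ell$ and $d_\ell^i$ being $\sym_2$- and (when all $E_t$ coincide) $\sym_k$-equivariant, the resulting exact sequence inherits these linearizations.
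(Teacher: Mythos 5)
Your proof is correct and follows essentially the same route as the paper: reduce to Proposition \ref{longtwoR} by tensoring the exact sequence of complexes with $H_\ell$ over $\C$, and identify $\im\phi_\ell$ with $\im\tilde\phi_\ell=\ker d_\ell^0$ by lifting sections of $H_\ell$ to sections of $\I^{\ell-1}K_0(a_0)$, which automatically lie in $K_{\ell-1}$. You are in fact slightly more explicit than the paper on the inclusion $\im\phi_\ell\subseteq\ker d_\ell^0$ (via the identity $\sum_{i\in N}\eps_{i,N}\,\iota_{N\setminus\{i\}\to N}(e_{N\setminus\{i\}})=0$); the only presentational point is to state up front, as the paper does by invoking the argument of Proposition \ref{opendes}, that one may assume all $E_t$ trivial so that the component sums $s(M,a)$ make sense.
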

\begin{proof}
With the same arguments as in the proof of proposition \ref{opendes} we may assume that $E_1=\dots=E_k=\reg_X$.
Thus we have \[H_\ell=\delta_*\left(S^{\ell-1}\Omega_X \otimes E_1\otimes \dots\otimes E_k\right)=\mathcal I^{\ell-1}/\mathcal I^{\ell}\,.\] A section $s\in K_0$ with arbitrary values $s(a_1,\dots,a_k)\in \mathcal I^{\ell-1}\subset \reg_{X^2}$ is already in the kernel of all the maps $\phi_1,\dots,\phi_{\ell-1}$, i.e. is a section of $K_{\ell-1}$. Thus, the image of $\phi_\ell\colon K_{\ell-1}\to T_\ell^0$ equals the image of $\tilde\phi_\ell\colon T_\ell^{-1}\to T^0_\ell$.
Hence, it suffices to show the exactness of 
\[T_{\ell}^{-1}\xrightarrow{\tilde\phi_\ell} T_\ell^0\to T_\ell^1\to\dots\to T_\ell^{k-\ell}\to 0\,.\]
Since the tensor product over $\C$ is exact, this follows from proposition \ref{longtwoR}.
\end{proof}
\subsection{Symmetric powers}
Let $E_1=\dots=E_k=E$ be a line bundle. Then the $\sym_k$-action on $H_\ell$ for $\ell=1,\dots,k$ given by permuting the tensor factors is the trivial one. Furthermore, the $\sym_k$-invariants of $R_\ell^i$ vanish for $i>0$. Indeed, $\wedge^{\ell-1}\rho_M$ tensorised by the alternating representation is an irreducible $\sym_M$ representation. For $|M|>\ell$ it is not the trivial one and hence has no $\sym_M$-invariants. By lemma \ref{tensinv} also the invariants of $T_\ell^i=H_\ell\otimes R_\ell^i$ vanish for $i>0$. Thus, by proposition \ref{longtwo} there are on $X^2$ short exact sequences 
\[0\to K_\ell^{\sym_k}\to K_{\ell-1}^{\sym_k}\to T_\ell^{\sym_k}\to 0\,.\]
By remark \ref{permute} we have 
\[\Phi(S^kE^{[2]})=\Phi(((E^{[2]})^{\otimes k})^{\sym_k})=\Phi((E^{[2]})^{\otimes k})^{\sym_k}=K_k^{\sym_k}\,.\]
Thus, all the computations below can also be done after taking $\sym_k$-invariants in order to get (simpler) formulas for the Euler characteristics of symmetric products instead of tensor products.

\subsection{Long exact sequences on $X^{[n]}_{**}$}\label{longopensec}
Let $k,n\in \N$ and $E_1,\dots,E_k$ be locally free sheaves on $X$. 
For $\ell=1,\dots,k$, $1\le i<j\le n$, $\hat M\subset[k]$, and $a\colon [k]\setminus \hat M\to [n]\setminus\{i,j\}$ we define the complexes 
\[ T_\ell^\bullet[\hat M ;i,j;a]:= \pr_{ij}^*T_\ell^\bullet(\{E_s\}_{s\in \hat M}) \otimes \bigotimes_{t\in [k]\setminus\hat M}\pr_{a(t)}^*E_t\,.\]
Here $\pr_{ij}\colon X^n\to X^2$ is the projection to the $i$-th and $j$-th factor and $T_\ell^\bullet(\{E_s\}_{s\in \hat M})$ is the complex defined in subsection \ref{longzwei} with $E_1,\dots,E_k$ replaced by $\{E_s\}_{s\in\hat M}$.
We furthermore set
\[T_\ell^\bullet=\bigoplus_{\substack{\hat M\subset [k]\,,\,1\le i<j\le n\\ a\colon [k]\setminus \hat M\to [n]\setminus\{i,j\}}}T_\ell^\bullet[\hat M;i,j;a]\,.\]
This complex can be equipped with a natural $\sym_n$-action and $T_\ell^0$ can be identified with $T_\ell$. By proposition \ref{longtwo} and the fact that the pairwise diagonals are disjoint on $X^n_{**}$ the sequences
\[0\to K_{\ell**}\to K_{\ell-1**}\to T_{\ell**}^0\to\dots\to T_{\ell**}^{k-\ell}\to 0\]
are exact for every $\ell=1,\dots,k$.

\subsection{The invariants on $S^2X$}
For $\ell=1,\dots,k$ we define the complex $\hat R_\ell^\bullet$ of $\sym_2\times \sym_k$-representations by setting $\hat R_\ell^\bullet=R_\ell^\bullet$ as $\sym_k$-representations and defining the $\sym_2$-action by letting $\tau=(1\,\,2)$ act on $\hat R_\ell^i$ by \[(\tau\cdot s)(M;a)=(-1)^{\ell+i+\ell-1}s(M;\tau^{-1}\circ a)=(-1)^{i-1}s(M;\tau\circ a)\,.\] In this way we have $T_\ell^\bullet=H_\ell\otimes_\C\hat R_\ell^\bullet$ as $\sym_2\times \sym_k$-equivariant complexes, when considering $H_\ell$ equipped with the trivial action.
The action of $\tau$ on the index set $I_\ell^i$ of the  direct sum $\hat R_\ell^i$ is given by $\tau\cdot(M;a)=(M;\tau\circ a)$.
We define for $\ell\le k$ the number $N(k,\ell)$ by
\[N(k,\ell)=             \frac12 \left(\sum_{j=\ell}^k \binom kj -\binom{k-1}{\ell-1}\right) \,.  
\]           
Note that $N(k,k)=0$.
\begin{lemma}\label{Nkl}
The following holds for two natural numbers $\ell\le k$:
 \[\dim\left(\ker({_k\hat d_\ell^0})^{\sym_2}\right)=\dim\left(\mathcal H^0({_k\hat R_\ell^\bullet})^{\sym_2}\right)= \chi\left(({_k\hat R_\ell^\bullet})^{\sym_2}\right) =N(k,\ell)\,.\]
\end{lemma}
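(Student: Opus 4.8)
The plan is to dispose of the two left-hand equalities formally and then reduce the equality with $N(k,\ell)$ to a single Euler-characteristic computation, handled by the averaging formula for $\sym_2$-invariants.

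First I would observe that the two leftmost equalities are essentially formal. Since $\hat R_\ell^\bullet$ is concentrated in non-negative degrees, $\mathcal H^0(\hat R_\ell^\bullet)=\ker(\hat d_\ell^0)$, and taking $\sym_2$-invariants gives $\dim\ker(\hat d_\ell^0)^{\sym_2}=\dim\mathcal H^0(\hat R_\ell^\bullet)^{\sym_2}$. For the middle equality, note that $\hat R_\ell^\bullet$ coincides with $R_\ell^\bullet$ as a complex (only the $\sym_2$-action has been added), so by proposition \ref{longtwoR} it is cohomologically concentrated in degree $0$. As $\C$ has characteristic zero and $\sym_2$ is finite, the invariants functor $(\_)^{\sym_2}$ is exact; hence $(\hat R_\ell^\bullet)^{\sym_2}$ is again concentrated in degree $0$ with $\mathcal H^0\bigl((\hat R_\ell^\bullet)^{\sym_2}\bigr)=\mathcal H^0(\hat R_\ell^\bullet)^{\sym_2}$, and the Euler characteristic of a complex concentrated in one degree is the dimension of its unique cohomology, giving $\chi\bigl((\hat R_\ell^\bullet)^{\sym_2}\bigr)=\dim\mathcal H^0(\hat R_\ell^\bullet)^{\sym_2}$.

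It then remains to compute $\chi\bigl((\hat R_\ell^\bullet)^{\sym_2}\bigr)$. For each $i$ I would apply $\dim(\hat R_\ell^i)^{\sym_2}=\tfrac12\bigl(\dim\hat R_\ell^i+\tr(\tau\mid\hat R_\ell^i)\bigr)$ and sum with signs, obtaining
\[
\chi\bigl((\hat R_\ell^\bullet)^{\sym_2}\bigr)=\tfrac12\,\chi(R_\ell^\bullet)+\tfrac12\sum_{i=0}^{k-\ell}(-1)^i\tr(\tau\mid\hat R_\ell^i)\,.
\]
By the computation in remark \ref{countdim} the first term equals $\tfrac12\sum_{j=\ell}^k\binom kj$, so everything reduces to the Lefschetz number $\sum_i(-1)^i\tr(\tau\mid\hat R_\ell^i)$. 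Here $\tau$ acts on $\hat R_\ell^i=\bigoplus_{(M;a)\in I_\ell^i}\wedge^{\ell-1}\rho_M(a)$ by sending the summand $(M;a)$ to $(M;\tau\circ a)$ and scaling by $(-1)^{i-1}$, so it does not act on the factors $\wedge^{\ell-1}\rho_M$ themselves; consequently only summands fixed by the permutation $(M;a)\mapsto(M;\tau\circ a)$ contribute to the trace. Since $\tau$ fixes no element of $[2]$, the condition $\tau\circ a=a$ forces $[k]\setminus M=\emptyset$, i.e. $M=[k]$ and $i=k-\ell$; the unique such index is $([k];\emptyset)$, on whose summand $\wedge^{\ell-1}\rho_{[k]}$ (of dimension $\binom{k-1}{\ell-1}$) the element $\tau$ acts by the scalar $(-1)^{k-\ell-1}$. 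Hence the Lefschetz number equals $(-1)^{k-\ell}(-1)^{k-\ell-1}\binom{k-1}{\ell-1}=-\binom{k-1}{\ell-1}$, and substituting gives $\chi\bigl((\hat R_\ell^\bullet)^{\sym_2}\bigr)=\tfrac12\bigl(\sum_{j=\ell}^k\binom kj-\binom{k-1}{\ell-1}\bigr)=N(k,\ell)$.

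The only genuine obstacle is the sign bookkeeping: one must keep the twist $(-1)^{i-1}$ straight throughout and recognize that the character formula localises the entire Lefschetz number on the single top-degree fixed index. This is precisely what lets me bypass the parity-dependent behaviour of the individual dimensions $\dim(\hat R_\ell^i)^{\sym_2}$ (which differs between $i<k-\ell$, where $\sym_2$ acts freely on the index set and the invariants are exactly half the total dimension, and the top degree $i=k-\ell$, where a fixed point appears); the averaging/Lefschetz argument packages both cases uniformly and produces $N(k,\ell)$ directly.
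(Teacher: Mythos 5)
Your proof is correct and follows essentially the same route as the paper: both arguments rest on the facts that $\tau$ permutes the index set $I_\ell^i$ freely for $i<k-\ell$, acts by the scalar $(-1)^{k-\ell-1}$ on the single top-degree summand $\wedge^{\ell-1}\rho_{[k]}$ of dimension $\binom{k-1}{\ell-1}$, and that $\chi(R_\ell^\bullet)=\sum_{j=\ell}^k\binom kj$ from remark \ref{countdim}. The only difference is presentational: the paper computes $\dim\bigl((\hat R_\ell^i)^{\sym_2}\bigr)$ degree by degree via Danila's lemma together with a parity case distinction in the top degree, whereas you package the same information uniformly through the averaging formula and the resulting Lefschetz number.
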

\begin{proof}
The second equality follows by proposition \ref{longtwoR} and the fact that taking invariants is exact.
The only non-trivial element $\tau=(1\,\,2)$ of $\sym_2$ acts freely on $I_\ell^i$ for $0\le i<k-\ell$. Thus, by Danila's lemma $\dim((R_\ell^i)^{\sym_2})=1/2\dim(R_\ell^i)$ for $i<k-\ell$. Furthermore, $\tau$ acts by $(-1)^{k+\ell-1}=(-1)^{k-\ell-1}$ on $\hat R_\ell^{k-\ell}$. Hence we have $(\hat R_\ell^{k-\ell})^{\sym_2}=\hat R_\ell^{k-\ell}$ if $k-\ell$ is odd and $(\hat R_\ell^{k-\ell})^{\sym_2}=0$ if $k-\ell$ is even. The assertion follows using remark \ref{countdim}.  
\end{proof}
\begin{prop}\label{symtwoexact}
On $S^2X$ there are for $\ell=1,\dots,k$ exact sequences
\[0\to K_\ell^{\sym_2}\to K_{\ell-1}^{\sym_2}\to (\pi_*H_\ell)^{\oplus N(k,\ell)}\to 0\,.\]
In particular $K_k^{\sym_2}=K_{k-1}^{\sym_2}$. 
\end{prop}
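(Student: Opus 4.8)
The plan is to deduce these short exact sequences by applying the invariant functor $[\_]^{\sym_2}=[\_]^{\sym_2}\circ\pi_*$ to the long exact sequences of proposition \ref{longtwo} and then identifying the resulting term on the right. The functor $[\_]^{\sym_2}$ is exact: $\pi$ is finite so $\pi_*$ is exact, and over $\C$ the invariants under the finite group $\sym_2$ split off as a direct summand via the averaging operator $\frac12\sum_{\sigma\in\sym_2}\sigma$. Hence applying it to
\[0\to K_\ell\to K_{\ell-1}\xrightarrow{\phi_\ell} T_\ell^0\to T_\ell^1\to\dots\to T_\ell^{k-\ell}\to 0\]
produces an exact sequence
\[0\to K_\ell^{\sym_2}\to K_{\ell-1}^{\sym_2}\to (T_\ell^0)^{\sym_2}\to (T_\ell^1)^{\sym_2}\to\dots\to (T_\ell^{k-\ell})^{\sym_2}\to 0\]
on $S^2X$.

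Next I would use the factorisation $T_\ell^\bullet=H_\ell\otimes_\C\hat R_\ell^\bullet$ as $\sym_2$-equivariant complexes, in which $H_\ell$ carries the trivial action (the sign $(-1)^{\ell-1}$ by which $\tau$ would act on $H_\ell=\delta_*(S^{\ell-1}\Omega_X\otimes E_1\otimes\dots\otimes E_k)$ has been shifted into $\hat R_\ell^\bullet$). By lemma \ref{tensinv} the invariants commute with tensoring by the trivial-action factor, so $(T_\ell^i)^{\sym_2}\cong(\pi_*H_\ell)\otimes_\C(\hat R_\ell^i)^{\sym_2}$ with induced differential $\mathrm{id}_{\pi_*H_\ell}\otimes(d_\ell^i)^{\sym_2}$. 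By exactness of the displayed invariant sequence, the image of $K_{\ell-1}^{\sym_2}\to(T_\ell^0)^{\sym_2}$ is the kernel of the next map, which equals
\[(\pi_*H_\ell)\otimes_\C\ker\big((d_\ell^0)^{\sym_2}\big)=(\pi_*H_\ell)\otimes_\C\mathcal H^0(\hat R_\ell^\bullet)^{\sym_2},\]
the last identity holding because $\hat R_\ell^\bullet$ is concentrated in non-negative degrees, so $\ker d_\ell^0=\mathcal H^0(\hat R_\ell^\bullet)$, and invariants is exact.

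Finally I would invoke lemma \ref{Nkl}, which gives $\dim\mathcal H^0(\hat R_\ell^\bullet)^{\sym_2}=N(k,\ell)$ and hence $\mathcal H^0(\hat R_\ell^\bullet)^{\sym_2}\cong\C^{N(k,\ell)}$. Thus the image above is $(\pi_*H_\ell)^{\oplus N(k,\ell)}$. Since $K_\ell^{\sym_2}=\ker(\phi_\ell^{\sym_2})$ and the map $K_{\ell-1}^{\sym_2}\to(T_\ell^0)^{\sym_2}$ surjects onto its image, this is exactly the asserted short exact sequence
\[0\to K_\ell^{\sym_2}\to K_{\ell-1}^{\sym_2}\to(\pi_*H_\ell)^{\oplus N(k,\ell)}\to 0.\]
The final claim follows at once, since $N(k,k)=0$ forces the rightmost term to vanish when $\ell=k$, whence $K_k^{\sym_2}=K_{k-1}^{\sym_2}$.

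The computation is essentially formal once proposition \ref{longtwo} and lemmas \ref{tensinv}, \ref{Nkl} are in hand; the one point requiring care is the bookkeeping in the factorisation $T_\ell^\bullet=H_\ell\otimes_\C\hat R_\ell^\bullet$, namely that passing from $R_\ell^\bullet$ to $\hat R_\ell^\bullet$ correctly absorbs the $\sym_2$-sign on $H_\ell$ so that the purely combinatorial dimension count of lemma \ref{Nkl} may be applied to the vector-space factor alone, together with the identification $\ker\big((d_\ell^0)^{\sym_2}\big)=(\ker d_\ell^0)^{\sym_2}=\mathcal H^0(\hat R_\ell^\bullet)^{\sym_2}$. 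This is where I expect the only genuine subtlety to lie.
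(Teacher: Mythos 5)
Your argument is correct and follows essentially the same route as the paper: both identify $\im(\phi_\ell)^{\sym_2}$ with $\mathcal H^0(T_\ell^\bullet)^{\sym_2}$ via proposition \ref{longtwo}, use the factorisation $T_\ell^\bullet=H_\ell\otimes_\C\hat R_\ell^\bullet$ with $H_\ell$ carrying the trivial action together with lemma \ref{tensinv}, and then apply the dimension count of lemma \ref{Nkl} to conclude. The only difference is presentational — you apply the invariant functor to the whole long exact sequence before extracting the kernel, while the paper passes directly to $\im(\phi_\ell)^{\sym_2}$ — and your closing remark about the sign bookkeeping in $\hat R_\ell^\bullet$ correctly pinpoints the one subtlety.
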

\begin{proof}
By proposition \ref{longtwo} we have $\im(\phi_\ell)=\mathcal H^0(T_\ell^\bullet)$. Since the tensor product over $\C$ as well as taking $\sym_2$-invariants are exact functors and we consider $H_\ell$ equipped with the trivial $\sym_2$-action,
\[\im(\phi_\ell)^{\sym_2}=\mathcal H^0(T_\ell^\bullet)^{\sym_2}=H_\ell\otimes_\C\mathcal H^0(\hat R_\ell^\bullet)^{\sym_2}=(\pi_*H_\ell)^{\oplus N(k,\ell)}\]
follows by the previous lemma.
\end{proof}
We denote the diagonal embedding of $X$ into $S^2X$ again by $\delta$.
\begin{cor}\label{Ktwofree}
For $E_1,\dots,E_k$ locally free sheaves on $X$, there is in the Grothendieck group $\K(S^2X)$ the equality
\[\mu_!\bigl[(E_1^{[2]}\otimes\dots\otimes E_k^{[2]})\bigr]=\bigl[K_0^{\sym_2}\bigr]-\sum_{\ell=1}^k N(k,\ell)\delta_!\left(\bigl[S^{\ell-1}\Omega_X\bigr]\cdot\bigl[E_1\bigr]\cdots \bigl[E_k\bigr]\right)\,.\]  
\end{cor}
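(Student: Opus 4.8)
The plan is to obtain the statement by passing the short exact sequences of Proposition \ref{symtwoexact} to the Grothendieck group and telescoping. First I would identify the left-hand side. By Theorem \ref{Sca}(2) together with Theorem \ref{maindesc} the object $\Phi(E_1^{[2]}\otimes\dots\otimes E_k^{[2]})$ is cohomologically concentrated in degree zero and equals $K_k$. Feeding this into Proposition \ref{mubkr} gives $R\mu_*(E_1^{[2]}\otimes\dots\otimes E_k^{[2]})\simeq K_k^{\sym_2}$; since $\pi_*$ is exact ($\pi$ being finite) and taking $\sym_2$-invariants is exact, the right-hand side is again concentrated in degree zero, so there are no higher direct images. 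Thus in $\K(S^2X)$ we have $\mu_!\bigl[E_1^{[2]}\otimes\dots\otimes E_k^{[2]}\bigr]=\bigl[K_k^{\sym_2}\bigr]$, which is just Corollary \ref{mucor} upgraded to the vanishing of $R^i\mu_*$ for $i>0$.

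Next I would run the telescoping. Each exact sequence from Proposition \ref{symtwoexact}, namely $0\to K_\ell^{\sym_2}\to K_{\ell-1}^{\sym_2}\to (\pi_*H_\ell)^{\oplus N(k,\ell)}\to 0$, yields in $\K(S^2X)$ the relation
\[
\bigl[K_\ell^{\sym_2}\bigr]=\bigl[K_{\ell-1}^{\sym_2}\bigr]-N(k,\ell)\bigl[\pi_*H_\ell\bigr]\,.
\]
Summing over $\ell=1,\dots,k$, the intermediate classes $\bigl[K_1^{\sym_2}\bigr],\dots,\bigl[K_{k-1}^{\sym_2}\bigr]$ cancel, leaving
\[
\bigl[K_k^{\sym_2}\bigr]=\bigl[K_0^{\sym_2}\bigr]-\sum_{\ell=1}^k N(k,\ell)\bigl[\pi_*H_\ell\bigr]\,.
\]
Since $N(k,k)=0$ (consistent with the identity $K_k^{\sym_2}=K_{k-1}^{\sym_2}$ noted in Proposition \ref{symtwoexact}), the top summand is zero and may be dropped, but it costs nothing to keep it in the sum.

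Finally I would rewrite the classes $\bigl[\pi_*H_\ell\bigr]$ in the stated form. Recall $H_\ell=\delta_*(S^{\ell-1}\Omega_X\otimes E_1\otimes\dots\otimes E_k)$ as a sheaf on $X^2$, with $\delta\colon X\to X^2$ the diagonal. Composing with $\pi\colon X^2\to S^2X$ and using that $\pi$ restricted to the diagonal is the diagonal embedding $\delta\colon X\to S^2X$, I get $\pi_*H_\ell=\delta_*(S^{\ell-1}\Omega_X\otimes E_1\otimes\dots\otimes E_k)$ on $S^2X$. As this $\delta$ is finite, there are no higher direct images, so $\bigl[\pi_*H_\ell\bigr]=\delta_!\bigl[S^{\ell-1}\Omega_X\otimes E_1\otimes\dots\otimes E_k\bigr]$; and since all factors are locally free, multiplicativity of the tensor product in $\K(X)$ rewrites this as $\delta_!\bigl(\bigl[S^{\ell-1}\Omega_X\bigr]\cdot\bigl[E_1\bigr]\cdots\bigl[E_k\bigr]\bigr)$. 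Substituting into the telescoped identity gives exactly the claim.

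There is no genuine obstacle here, as all the substantive work lies in Proposition \ref{symtwoexact} and the combinatorics encoded in $N(k,\ell)$, both already established. The only points demanding a little care are the vanishing of the higher direct images $R^i\mu_*$, so that $\mu_!$ is represented by the single sheaf $K_k^{\sym_2}$, and the clean factorization of $\pi\circ\delta$ on the diagonal, which converts $\pi_*H_\ell$ into a pushforward along $\delta\colon X\to S^2X$.
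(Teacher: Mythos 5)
Your proposal is correct and follows essentially the same route as the paper: identify $\mu_!\bigl[\otimes_i E_i^{[2]}\bigr]$ with $\bigl[K_k^{\sym_2}\bigr]$ via Proposition \ref{mubkr}, Theorem \ref{Sca} and Corollary \ref{mucor}, then telescope the short exact sequences of Proposition \ref{symtwoexact} in $\K(S^2X)$. The only difference is that you spell out the conversion of $\bigl[\pi_*H_\ell\bigr]$ into $\delta_!\bigl(\bigl[S^{\ell-1}\Omega_X\bigr]\cdot\bigl[E_1\bigr]\cdots\bigl[E_k\bigr]\bigr)$, which the paper leaves implicit.
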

\begin{proof}
Since $R\mu_*(E_1^{[2]}\otimes\dots\otimes E_k^{[2]})$ is cohomologically concentrated in degree zero (see corollary proposition \ref{mubkr} and theorem \ref{Sca}), we have \[
\mu_! \bigl[(E_1^{[2]}\otimes\dots\otimes E_k^{[2]})\bigr]=\bigl[\mu_*(E_1^{[2]}\otimes\dots\otimes E_k^{[2]})\bigr]\,.                                                                                                                       
                                                                                                                       \]
Now the formula follows by $K_k^{\sym_2}=\mu_*(E_1^{[2]}\otimes\dots\otimes E_k^{[2]})$ (see corollary \ref{mucor}) and the previous proposition. 
\end{proof}
We can identify a set of representatives of the $\sym_2$-orbits of $_2I_0$ (see remark \ref{orbits} (i)) with the set of subsets $P\subset [k]$ with $1\in P$ by identifying $P$ with the map $a\colon [k]\to [2]$ with $a^{-1}(1)=P$ and $a^{-1}(2)=[k]\setminus P$. We get the following as a special case of lemma \ref{kzero}.
\begin{lemma}\label{zerotwo}
On $S^2X$ there is the following isomorphism
\[K_0^{\sym_2}\cong \bigoplus_{1\in P\subset [k]} \pi_*K_0(P)\quad,\quad K_0(P)=\left(\bigotimes_{t\in P} E_t\right)\boxtimes \left(\bigotimes_{t\in [k]\setminus P} E_t\right)\,.\]
\end{lemma}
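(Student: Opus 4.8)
The plan is to deduce this directly from Lemma \ref{kzero} specialised to $n=2$, together with an explicit bookkeeping of the $\sym_2$-orbits on $I_0=\Map([k],[2])$. First I would recall that $K_0=\bigoplus_{a\colon[k]\to[2]}K_0(a)$ carries the $\sym_2$-linearisation $\lambda_\sigma(s)(a)=\sigma_*s(\sigma^{-1}\circ a)$, so that the nontrivial element $\tau=(1\,\,2)$ permutes the summands through the action $\tau\cdot a=\tau\circ a$ on the index set. Since $\tau$ has no fixed point in $[2]$ and $k\ge 1$, one has $\tau\circ a\ne a$ for every $a$; hence every orbit has exactly two elements $\{a,\tau\circ a\}$, and each orbit contains a unique representative with $a(1)=1$. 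Identifying such a representative with the subset $P:=a^{-1}(1)$ gives the announced bijection between the orbit set and $\{P\subset[k]\mid 1\in P\}$.

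Next I would invoke Lemma \ref{kzero}, which via Danila's lemma and Remark \ref{orbits}(i) yields $K_0^{\sym_2}\cong\bigoplus_{a\in J_0}K_0(a)^{\sym_{[\max a+1,2]}}$. The key simplification in the case $n=2$ is that all the isotropy groups are trivial: if $a$ is surjective then $\max a=2$ and $[\max a+1,2]=\emptyset$, while if $a=\underline 1$ then $\max a=1$ and $\sym_{[2,2]}=\sym_{\{2\}}=1$, the orbit of $\underline 2$ being represented by $\underline 1$ because $a^{-1}(1)=[k]\prec\emptyset=a^{-1}(2)$. Consequently no invariance condition — and in particular no sign as in Remark \ref{minus} — is imposed, and each orbit simply contributes its summand $\pi_*K_0(a)$ on $S^2X$. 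This is precisely where the present lemma differs from the vanishing phenomenon for the $T_\ell$: here the stabiliser acts trivially rather than by $-1$.

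Finally I would reindex the sum by $P$. Because Danila's lemma allows any choice of orbit representatives — the linearisation $\tau_*$ furnishes a canonical isomorphism $\pi_*K_0(a)\cong\pi_*K_0(\tau\circ a)$ on $S^2X$, compatibly with $\pi\circ\tau=\pi$ — I may replace each $\prec$-minimal representative of $J_0$ by the representative with $1\in P$. For that representative, $K_0(a)=\bigl(\bigotimes_{t\in P}\pr_1^*E_t\bigr)\otimes\bigl(\bigotimes_{t\in[k]\setminus P}\pr_2^*E_t\bigr)=\bigl(\bigotimes_{t\in P}E_t\bigr)\boxtimes\bigl(\bigotimes_{t\in[k]\setminus P}E_t\bigr)=K_0(P)$, giving $K_0^{\sym_2}\cong\bigoplus_{1\in P\subset[k]}\pi_*K_0(P)$. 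I expect the only genuinely delicate step to be the matching of representatives: one must check that switching from the abstract order $\prec$ of Remark \ref{orbits}(i) to the concrete choice $1\in P$ introduces no twist, which is guaranteed precisely because the isotropy is trivial, so that the orbit summand is canonically the pushforward of either representative.
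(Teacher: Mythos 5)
Your argument is correct and follows the paper's own route: the paper likewise obtains this as a special case of Lemma \ref{kzero} (Danila's lemma) by identifying the $\sym_2$-orbit representatives of $\Map([k],[2])$ with the subsets $P\subset[k]$ containing $1$, the stabilisers being trivial. Your additional remarks on the freeness of the $\tau$-action, the harmlessness of switching from the $\prec$-minimal representatives to those with $1\in P$, and the absence of any sign issue are all accurate elaborations of what the paper leaves implicit.
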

\begin{prop}\label{Eulertwo}
Let $X$ be a smooth projective surface.
Then the Euler characteristic of tensor products of tautological bundles on $X^{[2]}$ is related to the Euler characteristic of the bundles on $X$ by the formula   
\begin{align*}&\chi_{X^{[2]}}\left(E_1^{[2]}\otimes \dots\otimes E_k^{[2]}\right)\\=&\sum_{1\in P\subset[k]}\chi\left(\bigotimes_{t\in P}E_t\right)\cdot \chi\left(\bigotimes_{t\in [k]\setminus P}E_t\right)-\sum_{\ell=1}^{k-1}N(k,\ell)\chi\left(S^{\ell-1}\Omega_X\otimes E_1\otimes\dots\otimes E_k\right)\,.\end{align*}
\end{prop}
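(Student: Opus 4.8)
The plan is to deduce the formula directly from the $\K$-theoretic identity of Corollary \ref{Ktwofree} by applying the Euler characteristic homomorphism $\chi_{S^2X}\colon \K(S^2X)\to\Z$. The key point is that the Euler characteristic commutes with proper pushforward, so that $\chi_{X^{[2]}}(\F)=\chi_{S^2X}(\mu_!\F)$ for every class $\F\in\K(X^{[2]})$: this follows from the functoriality of pushforward applied to the factorisation of the structure morphism $X^{[2]}\to\operatorname{pt}$ through $\mu\colon X^{[2]}\to S^2X$. Applied to $\F=[E_1^{[2]}\otimes\dots\otimes E_k^{[2]}]$, this rewrites the left-hand side of the claimed formula as $\chi_{S^2X}$ of the right-hand side of Corollary \ref{Ktwofree}, so it remains only to evaluate $\chi_{S^2X}$ on the two contributions $[K_0^{\sym_2}]$ and the correction terms $\delta_!(\dots)$ separately.

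First I would treat the term $\chi_{S^2X}([K_0^{\sym_2}])$. Using the decomposition of Lemma \ref{zerotwo}, this equals $\sum_{1\in P\subset[k]}\chi_{S^2X}(\pi_*K_0(P))$. Since $\pi\colon X^2\to S^2X$ is finite, it has no higher direct images, so $\chi_{S^2X}(\pi_*K_0(P))=\chi_{X^2}(K_0(P))$; and because $K_0(P)=(\bigotimes_{t\in P}E_t)\boxtimes(\bigotimes_{t\in[k]\setminus P}E_t)$ is an external tensor product, the K\"unneth formula gives $\chi_{X^2}(K_0(P))=\chi(\bigotimes_{t\in P}E_t)\cdot\chi(\bigotimes_{t\in[k]\setminus P}E_t)$. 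Summing over all $P$ with $1\in P$ produces exactly the first sum in the statement.

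Next I would treat the correction terms. The diagonal $\delta\colon X\to S^2X$ is a closed immersion, hence finite, so $\chi_{S^2X}\circ\delta_!=\chi_X$; moreover, since all sheaves involved are locally free, the $\K$-theory product $[S^{\ell-1}\Omega_X]\cdot[E_1]\cdots[E_k]$ is the class of the tensor product $S^{\ell-1}\Omega_X\otimes E_1\otimes\dots\otimes E_k$. Therefore $\chi_{S^2X}\bigl(\delta_!([S^{\ell-1}\Omega_X]\cdot[E_1]\cdots[E_k])\bigr)=\chi(S^{\ell-1}\Omega_X\otimes E_1\otimes\dots\otimes E_k)$. Combining the three evaluations yields the formula with the correction sum running from $\ell=1$ to $k$; finally, since $N(k,k)=0$ (as noted before Lemma \ref{Nkl}), the $\ell=k$ summand vanishes and the sum may be truncated at $k-1$, which is the stated identity.

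I expect this to be essentially a bookkeeping argument rather than one with a genuine obstacle: the substantive input, namely the short exact sequences of Proposition \ref{symtwoexact} that encode the multiplicities $N(k,\ell)$, has already been absorbed into Corollary \ref{Ktwofree}. The only points requiring care are the compatibilities of the Euler characteristic with proper pushforward along $\mu$ and with the finite pushforwards along $\pi$ and $\delta$, together with the identification of the $\K$-theory products with honest tensor products of locally free sheaves.
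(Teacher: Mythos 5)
Your proposal is correct and is exactly the argument the paper intends: the later text explicitly says the generalised Euler characteristic formula "follows from the previous proposition the same way Proposition \ref{Eulertwo} followed from Corollary \ref{Ktwofree}", i.e. by applying $\chi_{S^2X}$ to the $\K$-theoretic identity, using Lemma \ref{zerotwo} and the K\"unneth formula for the $K_0^{\sym_2}$ term, the finiteness of $\pi$ and $\delta$ for the pushforwards, and $N(k,k)=0$ to truncate the sum. No discrepancies.
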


\subsection{Extension groups on $X^{[2]}$}\label{bichar}
Similarly, we can compute for locally free sheaves $E_1,\dots,E_k$ and $F_1,\dots,F_{\hat k}$ the Euler bicharacteristics
\[\chi\left( E_1^{[2]}\otimes\dots\otimes E_k^{[2]},F_{1}^{[2]}\otimes\dots\otimes F_{\hat k}^{[2]}\right):=\chi\left(\Ext^*\left( E_1^{[2]}\otimes\dots\otimes E_k^{[2]},F_{1}^{[2]}\otimes\dots\otimes F_{\hat k}^{[2]}\right)\right)\,.\]
We use the notation 
$K_\ell:=K_\ell(E_1,\dots,E_k)$, $\hat K_\ell:=K_\ell(F_1,\dots,F_{\hat k})$
and define $T_\ell^\bullet$ and $\hat T_\ell^\bullet$ analogously.
In $\K(X^2)$ the following holds 
\begin{align*}
&R\sHom(\Phi(E_1^{[2]}\otimes \dots\otimes E_k^{[2]}),\Phi(F_1^{[2]}\otimes\dots\otimes F_{\hat k}^{[2]}))\\
=&R\sHom(K_k,\hat K_{\hat k})\\
=&R\sHom(K_0-\sum_{\ell=1}^k T_\ell^\bullet,\hat K_0-\sum_{\hat \ell=1}^{\hat k} \hat T_{\hat \ell}^\bullet)\\
=&R\sHom(K_0,\hat K_0)-\sum_{\hat\ell=1}^{\hat k} R\sHom(K_0,\hat T_{\hat \ell}^\bullet)-\sum_{\ell=1}^kR\sHom(T_\ell^\bullet,\hat K_0)+\sum_{\ell=1}^k\sum_{\hat\ell=1}^{\hat k}
R\sHom(T_\ell^\bullet,\hat T_{\hat\ell}^\bullet)\,.
\end{align*}
The permutation $\tau=(1\,\,2)$ acts freely on the direct summands of every term of the complex 
$R\sHom(K_0,\hat T_{\hat \ell}^\bullet)=\sHom^\bullet(K_0,\hat T_{\hat \ell}^\bullet)$. Thus, in $\K(S^2X)$ there is the equality
\[R\sHom(K_0,\hat T_{\hat \ell}^\bullet)^{\sym_2}=a(k,\hat k,\hat \ell)\cdot \delta_! A_{\hat \ell}\]
where 
\begin{align*}
a(k,\hat k,\hat \ell)&=2^{k-1}\cdot\sum_{\hat j=\hat \ell}^{\hat k}\binom{\hat k}{\hat j}\quad,\quad
A_{\hat \ell}=\sHom(E_1\otimes \dots\otimes E_k, S^{\hat \ell-1} \Omega_X\otimes F_1\otimes \dots\otimes F_{\hat k})\,.
\end{align*}
Furthermore, we have by equivariant Grothendieck duality (see \cite{Has} for the general theory or \cite{Kru} for the special case which is needed here)
$R\sHom(T_\ell^i,\hat K_0)=\bigoplus_{{_kI}_\ell^i \times {_{\hat k}I}_0} \delta_* B_\ell[-2]$
with $\tau$ acting freely on the direct summands which are of the form
\[B_\ell=\omega_X^\vee\otimes \sHom(S^{\ell-1}\Omega_X\otimes E_1\otimes \dots\otimes E_k,F_1\otimes \dots\otimes F_{\hat k})\,.\]
Thus, in $\K(S^2X)$ there is the equality 
\[R\sHom(T_\ell^\bullet,\hat K_0)^{\sym_2}=b(k,\ell,\hat k)\cdot\delta_!B_\ell\quad,\quad b(k,\ell,\hat k)=2^{\hat k-1}\cdot\sum_{j=\ell}^k \binom kj\,.\]
Similarly, we have 
\[\sExt^*(T_\ell^i,\hat T_{\hat \ell}^{\hat i})=\bigoplus_{ {_kI}_\ell^i \times {_{\hat k}I}_{\hat \ell}^{\hat i}}\delta_*\left(C_{\ell,\hat \ell}[0]\oplus T_X\otimes C_{\ell,\hat \ell}[-1]\oplus \omega_X^\vee\otimes C_{\ell,\hat \ell}[-2]\right)\]
with $C_{\ell,\hat \ell}=\sHom(S^{\ell-1}\Omega_X\otimes E_1\otimes\dots\otimes E_k,S^{\hat \ell-1}\Omega_X\otimes F_1\otimes \dots\otimes F_{\hat k})$. On these summands, $\tau$ acts freely except for in the case that $i=k-\ell$ and $\hat i=\hat k-\hat \ell$.
In that case $\tau$ acts on $C$ by $(-1)^{k+\ell-1}(-1)^{\hat k+\hat\ell-1}=(-1)^{k-\ell}(-1)^{\hat k-\hat \ell}$. Thus, in $\K(S^2X)$ there is the equality
\begin{align*}
R\sHom(T_\ell^\bullet,\hat T_{\hat \ell}^\bullet)^{\sym_2}&=\delta_!\left(c_+(k,\hat k,\ell,\hat \ell)(C_{\ell,\hat \ell} +\omega_X^\vee\otimes C_{\ell,\hat \ell})- c_-(k,\hat k,\ell,\hat \ell) T_X\otimes C_{\ell,\hat \ell}\right)\,,\\
c_\pm(k,\hat k,\ell,\hat \ell)&=\frac 12\left(\sum_{j=\ell}^k\binom kj\cdot\sum_{\hat j=\hat \ell}^{\hat k}\binom{\hat k}{\hat j}\pm\binom{k-1}{\ell-1}\binom{\hat k-1}{\hat\ell-1}\right)\,.
\end{align*}
The different signs in the coefficients are due to the fact that $\tau$ acts by $-1$ on $T_X=N_\Delta$.
Finally, $\tau$ acts freely on the direct summands of $R\sHom(K_0,\hat K_0)$ which gives
\[R\sHom(K_0,\hat K_0)^{\sym_2}=\pi_!\left(\sum_{P,Q\subset[k],\,1\in P}\sHom(\bigotimes_{s\in P}E_s,\bigotimes_{t\in Q} E_t)\boxtimes \sHom(\bigotimes_{s\in [k]\setminus P}E_s,\bigotimes_{t\in[k]\setminus Q} E_t)\right)\,.\]
Summing up we get a formula for $R\sHom(\Phi(E_1^{[2]}\otimes \dots\otimes E_k^{[2]}),\Phi(F_1^{[2]}\otimes\dots\otimes F_{\hat k}^{[2]}))^{\sym_2}$ in $\K(S^2X)$. This gives using corollary \ref{bkrext} the following formula for the Euler characteristic:
\begin{align*}
&\chi(E_1^{[2]}\otimes\dots\otimes E_k^{[2]},F_1^{[2]}\otimes\dots \otimes F_{\hat k}^{[2]})\\
=&\sum_{P,Q\subset[k],\,1\in P}\chi(P,Q)\chi([k]\setminus P,[k]\setminus Q)-\sum_{\hat \ell=1}^{\hat k}a(k,\hat k,\hat \ell)\chi(A_{\hat \ell})
-\sum_{\ell=1}^kb(k,\ell,\hat k)\chi(B_\ell)\\ & + \sum_{\ell=1}^k\sum_{\hat \ell=1}^{\hat k}
\left( c_+(k,\hat k,\ell,\hat \ell)(\chi(C_{\ell,\hat\ell})+\chi(\omega_X^\vee\otimes C_{\ell,\hat \ell}))-c_-(k,\hat k,\ell,\hat \ell)\chi(T_X\otimes C_{\ell,\hat \ell})\right) 
\end{align*}
where $\chi(P,Q):=\chi(\otimes_{p\in P}E_p,\otimes_{q\in Q}F_q)$.

\section{Triple tensor products of tautological bundles on $X^{[n]}$}\label{threetens}
We consider the case $k=3$ and $n\ge 3$. We have by corollary \ref{kvanish}
\[\mu_*(E_1^{[n]}\otimes E_2^{[n]}\otimes E_3^{[n]})\cong K_2^{\sym_n}\,.\] 
In this section we will enlarge the exact sequences 
\[0\to K_\ell^{\sym_n}\to K_{\ell-1}^{\sym_n}\xrightarrow{\phi_\ell^{\sym_n}} T_\ell^{\sym_n}\]
for $\ell=1,2$ to long exact sequences on $S^nX$ with a zero on the right. 
This will lead to results for the cohomology of triple tensor products of tautological bundles. Results for the double tensor product can be found in \cite{Sca1} and \cite{Sca2}.
We consider the set of representatives 
\begin{align*}
\tilde I:= \bigl\{(1,1,1),\,(1,1,3),\,(1,3,1),\,(3,1,1),\,(1,2,3)\bigl\}
\end{align*}
of the $\sym_n$-orbits of $I_0$ and the set of representatives $\tilde J=\tilde J^1\cup \tilde J^2\cup \tilde J^3\cup \tilde J^4$ of the $\sym_n$-orbits of $\hat I_1$ given by
\begin{align*}
\tilde J^1&=\{(\{1\};1,3;(1,1)),\,(\{2\};1,3;(1,1)),\,(\{3\};1,3;(1,1))\}\,,\\
\tilde J^2&=\{(\{1\};1,3;(1,3)),\,(\{2\};1,3;(3,1)),\,(\{3\};1,3;(1,3))\}\,,\\
\tilde J^3&=\{(\{1\};1,2;(1,3)),\,(\{2\};1,2;(3,1)),\,(\{3\};1,2;(1,3))\}\,,\\
\tilde J^4&=\{(\{1\};1,2;(3,1)),\,(\{2\};1,2;(1,3)),\,(\{3\};1,2;(3,1))\}\,.
\end{align*}
Here in the tuple $(\{t\};i,j;(\alpha,\beta))$ the canonical identification $\Map([3]\setminus\{t\},[n])\cong [n]^2$ is used, i.e. the tuple $(\alpha,\beta)$ stands for the map $a\colon [3]\setminus\{t\}\to [n]$ given by $a(\min([3]\setminus\{t\}))=\alpha$ and $a(\max([3]\setminus\{t\}))=\beta$. One can check that $\tilde I$ and $\tilde J$ are indeed systems of representatives of the $\sym_n$-orbits of $I_0$ respectively $\hat I_1$ by giving bijections which preserve the orbits $\tilde I\cong J_0$ and $\tilde J\cong \hat J_1$ to the sets of representatives given in remark \ref{orbits}. 
Thus, we have the isomorphism
$K_0^{\sym_n}\cong \oplus_{a\in \tilde I} K_0(a)^{\sym_{\overline {\im(a)}}}$ and $T_1^{\sym_n}$ is given by
\[\bigoplus_{\tilde J^1}T_\ell(\{t\};1,3;(\alpha,\beta))^{\sym_{\overline{\{1,3\}}}}\oplus \bigoplus_{\tilde J^2}T_\ell(\{t\};1,3;(\alpha,\beta))^{\sym_{\overline{\{1,3\}}}}\oplus \bigoplus_{\tilde J^3\cup \tilde J^4}T_\ell(\{t\};1,2;(\alpha,\beta))^{\sym_{[4,n]}}\,.\]
We use for $r=1,2,3,4$ the notation 
\[T_\ell(r)=\bigoplus_{(\{t\};i,j;(\alpha,\beta))\in \tilde J^r} T_1(\{t\};i,j;(\alpha,\beta))\]
as well as $\phi_1(r)=\oplus_{\tilde J^r} \phi_1(\{t\};i,j;(\alpha,\beta))$.
Then
\[K_1^{\sym_n}=\ker(\phi_1^{\sym_n})=\ker(\phi_1^{\sym_n}(1))\cap \ker(\phi_1^{\sym_n}(2))\cap \ker(\phi_1^{\sym_n}(3))\cap \ker(\phi_1^{\sym_n}(4))\,.\]
For every $\gamma\in \tilde J_1\cup \tilde J_2$ the sheaves $T_1(\gamma)$ are canonically isomorphic to
\[
 \mathcal F:=\bigl(E_1\otimes E_2\otimes E_3 \bigl)_{13}\,.
\]
Hence, $T_1(2)^{\sym_{\overline{\{1,3\}}}}\cong T_1(1)^{\sym_{\overline{\{1,3\}}}}\cong (\F^{\sym_{\overline{\{1,3\}}}})^{\oplus 3}$. 
\begin{lemma}\label{onlyone}
The following sequence on $S^nX$ is exact:
\[ 0\to \ker(\phi_1^{\sym_n}(1))\cap \ker(\phi_1^{\sym_n}(2))\to K_0^{\sym_n}\xrightarrow{\phi_1^{\sym_n}(1)}  T_1(1)^{\sym_{\overline{\{1,3\}}}}
\to 0\,.\]
In particular $\ker(\phi_1^{\sym_n}(1))\cap \ker(\phi_1^{\sym_n}(2))=\ker(\phi_1^{\sym_n}(1))$.
\end{lemma}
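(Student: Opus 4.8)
The plan is to prove the two assertions of the lemma separately---surjectivity of $\phi_1^{\sym_n}(1)$ (exactness on the right) and the equality $\ker\phi_1^{\sym_n}(1)=\ker\phi_1^{\sym_n}(1)\cap\ker\phi_1^{\sym_n}(2)$ (exactness in the middle, which is also the ``in particular'')---by working with the explicit component description of $\phi_1$. As in the proof of proposition \ref{opendes} I would first reduce to the case $E_1=E_2=E_3=\reg_X$ by local triviality, and then use the sheaf-level description of $\phi_1$ recalled at the beginning of subsection \ref{phioncoh}: restriction to the pairwise diagonal $\Delta_{13}$ weighted by the signs $\eps_{\alpha,\{1,3\}}$. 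Identifying $K_0(c)_{|\Delta_{13}}\cong\mathcal F$ for every multi-index $c$ with values in $\{1,3\}$ via $\pr_1|_{\Delta_{13}}=\pr_3|_{\Delta_{13}}$, the three components of $\phi_1^{\sym_n}(1)$ indexed by $\tilde J^1$ become, for $t=1,2,3$,
\[ s\longmapsto s(1,1,1)_{|\Delta_{13}}-s(c_t)_{|\Delta_{13}},\qquad c_1=(3,1,1),\ c_2=(1,3,1),\ c_3=(1,1,3). \]
The key structural observation is that $(1,1,1),c_1,c_2,c_3$ lie in four pairwise distinct $\sym_n$-orbits, distinguished by the fibre partition of $[3]$.

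For surjectivity I would treat each of the three copies of $\mathcal F^{\sym_{\overline{\{1,3\}}}}$ in $T_1(1)^{\sym_{\overline{\{1,3\}}}}$ separately. Since $\Stab_{\sym_n}(c_t)=\sym_{\overline{\{1,3\}}}$ coincides with the isotropy group of the $t$-th target summand, any $\sym_{\overline{\{1,3\}}}$-invariant section over $\Delta_{13}$ lifts to a section of the locally free sheaf $K_0(c_t)^{\sym_{\overline{\{1,3\}}}}$ (restriction to a closed subscheme being surjective), which I extend by zero on all other orbits to an element $s\in K_0^{\sym_n}$. Because the orbit of $c_t$ is disjoint from those of $(1,1,1)$ and of the remaining $c_{t'}$, this $s$ is annihilated by the other two components, so $\phi_1^{\sym_n}(1)(s)$ equals the prescribed datum placed in the $t$-th copy. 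This gives surjectivity.

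The heart of the matter is the middle exactness $\ker\phi_1^{\sym_n}(1)\subseteq\ker\phi_1^{\sym_n}(2)$, and here I would exploit the transposition $\tau=(1\,3)\in\sym_n$. Since the linearization of $K_0$ carries no sign and $\tau$ fixes $\Delta_{13}$ pointwise, for $s\in K_0^{\sym_n}$ the base-change isomorphism $\tau_*$ restricts to the identity on $K_0(c)_{|\Delta_{13}}$, whence $s(\tau\circ c)_{|\Delta_{13}}=s(c)_{|\Delta_{13}}$ for every $c$. For $s\in\ker\phi_1^{\sym_n}(1)$ the displayed equations give $s(1,1,1)_{|\Delta_{13}}=s(c_1)_{|\Delta_{13}}=s(c_2)_{|\Delta_{13}}=s(c_3)_{|\Delta_{13}}$. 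Computing the three components of $\phi_1^{\sym_n}(2)$ indexed by $\tilde J^2$ and rewriting the ``two-$3$'' multi-indices through $\tau$---namely $s(3,1,3)_{|\Delta_{13}}=s(1,3,1)_{|\Delta_{13}}$, $s(3,3,1)_{|\Delta_{13}}=s(1,1,3)_{|\Delta_{13}}$, and $s(1,3,3)_{|\Delta_{13}}=s(3,1,1)_{|\Delta_{13}}$---each component collapses to a difference of two of the equal restrictions above and therefore vanishes. This yields $\phi_1^{\sym_n}(2)(s)=0$ and hence the claimed exactness.

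I expect the main obstacle to be purely a matter of bookkeeping rather than of ideas: keeping the signs $\eps_{\alpha,\{1,3\}}$ and the canonical identifications $K_0(c)_{|\Delta_{13}}\cong\mathcal F$ consistent across the various multi-indices, and above all verifying that $\tau$ acts \emph{trivially} on these restrictions. The latter is special to $\ell=1$ and to $K_0$---whose linearization, unlike that of $T_\ell$ in remark \ref{minus}, has no sign---and it is precisely this triviality that makes the $\tilde J^2$-components reduce to the $\tilde J^1$-relations.
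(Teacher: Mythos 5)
Your proof is correct and follows essentially the same route as the paper: the same explicit component description of $\phi_1(1)$ and $\phi_1(2)$ on the orbit representatives, the same use of $\tau=(1\,3)$ acting trivially on $\Delta_{13}$ (with no sign on $K_0$) to rewrite the ``two-$3$'' indices and deduce $\ker\phi_1^{\sym_n}(1)\subset\ker\phi_1^{\sym_n}(2)$, and the same lifting of invariant sections along the closed embedding $\Delta_{13}\hookrightarrow X^n$ (extended by zero on the other orbits) for surjectivity. The only cosmetic differences are your preliminary reduction to trivial bundles and treating the three target summands one at a time rather than simultaneously.
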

\begin{proof}
Let $\tau=(1\,\,3)\in \sym_n$. Note that whenever $s\in K_0^{\sym_n}$ is a $\sym_n$-invariant section $s(3,3,1)=\tau_*s(1,1,3)$ holds. Since $\tau$ acts trivially on $\Delta_{13}$, there is the equality \[s(3,3,1)_{|\Delta_{13}}=s(1,1,3)_{|\Delta_{13}}\] in $\F$. Analogously, $s(3,1,3)_{|\Delta_{13}}=s(1,3,1)_{|\Delta_{13}}$ and $s(1,1,3)_{|\Delta_{13}}=s(3,3,1)_{|\Delta_{13}}$.  
The exactness in the first two degrees comes from the inclusion $\ker(\phi_1(1))\subset \ker(\phi_1(2))$.
Indeed, a section $s\in K_0^{\sym_n}$ is in $\ker(\phi_1^{\sym_n}(1))$ if and only if
\[s(1,1,1)_{|\Delta_{13}}=s(1,1,3)_{|\Delta_{13}}=s(1,3,1)_{|\Delta_{13}}=s(3,1,1)_{|\Delta_{13}}\,,\]
whereas $s\in \ker(\phi_1(2))$ holds if and only if 
\[s(1,1,3)_{|\Delta_{13}}=s(1,3,1)_{|\Delta_{13}}=s(3,1,1)_{|\Delta_{13}}\,.\]
For the exactness on the right consider $t\in T_1(1)^{\sym_{\overline{\{1,3\}}}}$. Then locally there are sections $s(a)\in K_0(a)^{\sym_{\overline{\{1,3\}}}}$ for $a=(1,1,3),(1,3,1),(3,1,1)$ such that 
\begin{align*}&s(3,1,1)_{|\Delta_{13}}=t(\{1\};1,3;(1,1))\quad,\quad s(1,3,1)_{|\Delta_{13}}=t(\{2\};1,3;(1,1))\,,\\&s(1,1,3)_{|\Delta_{13}}=t(\{3\};1,3;(1,1))\,.\end{align*}
By setting $s(1,1,1)=0=s(1,2,3)$ we have indeed defined a local section $s\in K_0^{\sym_n}$ with $\phi_1^{\sym_n}(1)(s)=t$.
\end{proof}
For every tuple $(\{t\};i,j;(\alpha,\beta))\in \tilde J$ the restriction of the corresponding sheaf to $\Delta_{123}$ is given by
\[T_1(\{t\};i,j;(\alpha,\beta))_{|\Delta_{123}}=\left(E_1\otimes E_2 \otimes E_3 \right)_{123}=:\mathcal E\,.\]
Thus we can define the $\sym_{[4,n]}$-equivariant morphism
\begin{align*}
F\colon T_1(3)\to \E^{\oplus 2}\,,\quad s\mapsto\binom{s(\{1\};1,2;(1,3))_{|\Delta_{123}}-s(\{2\};1,2;(3,1))_{|\Delta_{123}}}{s(\{2\};1,2;(3,1))_{|\Delta_{123}}-s(\{3\};1,2;(1,3))_{|\Delta_{123}}}\,.
\end{align*}
\begin{lemma}\label{kone}
 On $S^nX$ there is the exact sequence
\[0\to K_1^{\sym_n}\to \ker(\phi_1^{\sym_n}(1))\xrightarrow{\phi_1^{\sym_n}(3)} T_1(3)^{\sym_{[4,n]}}\xrightarrow{F^{\sym_{[4,n]}}} (\mathcal E^{\oplus 2})^{\sym_{[4,n]}}\to 0\,.\]
\end{lemma}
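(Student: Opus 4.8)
The statement packages four assertions: injectivity at $K_1^{\sym_n}$, exactness at $\ker(\phi_1^{\sym_n}(1))$, exactness at $T_1(3)^{\sym_{[4,n]}}$, and surjectivity of $F^{\sym_{[4,n]}}$. The plan is to treat them in this order, exploiting throughout that the transposition $\tau=(1\,2)$ acts trivially on $\Delta_{12}$ and that each transposition of $\sym_{\{1,2,3\}}$ fixes a pairwise diagonal containing the small diagonal $\Delta_{123}$. For the left half I would first record that $K_1^{\sym_n}=\bigcap_{r=1}^4\ker(\phi_1^{\sym_n}(r))$ and, by Lemma \ref{onlyone}, that $\ker(\phi_1^{\sym_n}(1))\subset\ker(\phi_1^{\sym_n}(2))$. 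The key observation is that $\phi_1^{\sym_n}(3)$ and $\phi_1^{\sym_n}(4)$ have the same kernel on $K_0^{\sym_n}$: each of their components is a signed difference of two restrictions to $\Delta_{12}$; for instance the $(\{1\};1,2;(1,3))$-component of $\phi_1(3)$ sends $s$ to $s(1,1,3)_{\mid\Delta_{12}}-s(2,1,3)_{\mid\Delta_{12}}$ and the $(\{2\};1,2;(1,3))$-component of $\phi_1(4)$ to $s(1,1,3)_{\mid\Delta_{12}}-s(1,2,3)_{\mid\Delta_{12}}$. Since $\tau$ fixes $\Delta_{12}$ pointwise, $\sym_n$-invariance gives $s(2,1,3)_{\mid\Delta_{12}}=s(1,2,3)_{\mid\Delta_{12}}$, so these two components coincide, and the remaining two pairs match identically. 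Hence $\ker(\phi_1^{\sym_n}(3))=\ker(\phi_1^{\sym_n}(4))$ and therefore $K_1^{\sym_n}=\ker(\phi_1^{\sym_n}(1))\cap\ker(\phi_1^{\sym_n}(3))$, which is exactly exactness at the first two spots.

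Next I would check that the sequence is a complex, i.e. $F^{\sym_{[4,n]}}\circ\phi_1^{\sym_n}(3)=0$ on $\ker(\phi_1^{\sym_n}(1))$. Writing $[s(a)]:=s(a)_{\mid\Delta_{123}}\in\E$ for $a\colon[3]\to\{1,2,3\}$, the three transpositions of $\sym_{\{1,2,3\}}$ each fix a pairwise diagonal containing $\Delta_{123}$, so $[s(\sigma\circ a)]=[s(a)]$ for all $\sigma\in\sym_{\{1,2,3\}}$; in particular $[s(2,1,3)]=[s(3,2,1)]=[s(1,3,2)]=[s(1,2,3)]=:Q$. The condition $s\in\ker(\phi_1^{\sym_n}(1))$, restricted from $\Delta_{13}$ down to $\Delta_{123}$, gives $[s(1,1,3)]=[s(1,3,1)]=[s(3,1,1)]=:P$. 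Restricting the three components of $\phi_1(3)(s)$ to $\Delta_{123}$ then yields $P-Q$ in all three cases, so $F$ annihilates $\phi_1(3)(s)$.

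Surjectivity of $F^{\sym_{[4,n]}}$ I would obtain formally: each restriction $T_1(\{t\};1,2;a)\to\E$ to $\Delta_{123}$ is a surjection of $\sym_{[4,n]}$-equivariant sheaves, and composing the three of them with the surjective consecutive-difference map $\E^{\oplus3}\to\E^{\oplus2}$ shows that $F\colon T_1(3)\to\E^{\oplus2}$ is surjective; since taking $\sym_{[4,n]}$-invariants is exact over $\C$, so is $F^{\sym_{[4,n]}}$. Concretely one lifts $(w_1,w_2)$ by setting the $(\{3\};1,2;(1,3))$-component to $0$, the $(\{2\};1,2;(3,1))$-component to a lift of $w_2$, and the $(\{1\};1,2;(1,3))$-component to a lift of $w_1+w_2$.

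The remaining and hardest step is exactness in the middle, $\ker(F^{\sym_{[4,n]}})\subset\im(\phi_1^{\sym_n}(3))$. Given $t=(u_1,u_2,u_3)$ with common restriction $w:=(u_1)_{\mid\Delta_{123}}=(u_2)_{\mid\Delta_{123}}=(u_3)_{\mid\Delta_{123}}$, I would build a preimage $s\in\ker(\phi_1^{\sym_n}(1))$ in the style of the surjectivity argument of Lemma \ref{onlyone}. Setting the bijective-orbit component $s(1,2,3)$ (hence its whole $\sym_n$-orbit) to zero collapses $\phi_1(3)(s)$ to the single restrictions $s(1,1,3)_{\mid\Delta_{12}}$, $s(3,1,1)_{\mid\Delta_{12}}$, $s(1,3,1)_{\mid\Delta_{12}}$; one then chooses a common lift $\rho$ over $\Delta_{13}$ of $w$ and picks $s(1,1,3),s(3,1,1),s(1,3,1),s(1,1,1)$ to have the prescribed $\Delta_{12}$-restrictions $u_1,u_2,u_3$ (free for $(1,1,1)$) together with common $\Delta_{13}$-restriction $\rho$. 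The hypothesis $F(t)=0$ is exactly what makes the $\Delta_{12}$- and $\Delta_{13}$-data agree on the overlap $\Delta_{123}$, so they glue over the reduced union $\Delta_{12}\cup\Delta_{13}$ and lift to sections of the locally free sheaves $K_0(a)$. The main obstacle is to perform this gluing-and-lifting $\sym_n$-equivariantly: the isotropy group $\sym_{\overline{\{1,3\}}}$ of the component $s(1,1,3)$ is strictly larger than the isotropy $\sym_{[4,n]}$ of $u_1$ along $\Delta_{12}$, so I would carry out the construction on the partial quotient of Remark \ref{partialquotient}, using that restriction to $\Delta_{12}\cup\Delta_{13}$ is surjective with sections determined by compatible data on the two components and that $(\_)^{\sym_{\overline{\{1,3\}}}}$ is exact. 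Extending $s$ equivariantly from the orbit representatives then furnishes the desired preimage and completes the proof.
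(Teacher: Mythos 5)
Your proposal is correct and follows essentially the same route as the paper: the identification $\ker(\phi_1^{\sym_n}(3))=\ker(\phi_1^{\sym_n}(4))$ via the triviality of $(1\,2)$ on $\Delta_{12}$, the vanishing of $F\circ\phi_1(3)$ by comparing restrictions to $\Delta_{123}$, surjectivity of $F$ from its definition, and the middle exactness by setting $s(1,2,3)=0$ and lifting compatible data on $\Delta_{12}$ and $\Delta_{13}$ via lemma \ref{closedres}, carried out on the partial quotients of remark \ref{partialquotient} exactly as the paper does with its cartesian diagrams. No gaps.
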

\begin{proof}
Because of the invariance under the transposition $(1\,\,2)$, we have for every $s\in K_0^{\sym_n}$ the equality $s(1,2,3)_{|\Delta_{12}}=s(2,1,3)_{|\Delta_{12}}$. Thus,
\begin{align*}
\phi_1(s)(\{1\};1,2;(1,3))=s(1,1,3)_{|\Delta_{12}}-s(2,1,3)_{|\Delta_{12}}&=s(1,1,3)_{|\Delta_{12}}-s(1,2,3)_{|\Delta_{12}}\\&=
\phi_1(s)(\{2\};1,2;(1,3))\,.
\end{align*}
Similarly, we get \[\phi_1(s)(\{2\};1,2;(3,1))=\phi_1(s)(\{3\};1,2;(3,1))\,,\,\phi_1(s)(\{3\};1,2;(1,3))=\phi_1(s)(\{1\};1,2;(3,1))\,.\]
This shows that $\ker(\phi_1^{\sym_n}(3))=\ker(\phi_1^{\sym_n}(4))$.
Together with the previous lemma we thus have $K_1^{\sym_n}=\ker(\phi_1^{\sym_n}(1))\cap \ker(\phi_1^{\sym_n}(3))$ which shows the exactness in the first two degrees. Let $s\in K_0$ be a $\sym_n$-invariant section with $\phi_1(1)(s)=0$, i.e.
\[s(1,1,1)_{|\Delta_{13}}=s(1,1,3)_{|\Delta_{13}}=s(1,3,1)_{|\Delta_{13}}=s(3,1,1)_{|\Delta_{13}}\,.\]
By the $\sym_n$-invariance we have
\[s(2,1,3)=(1\,\,2)_*s(1,2,3)\,,\,s(3,2,1)=(1\,\,3)_*s(1,2,3)\,,\,s(1,3,2)=(2\,\,3)_*s(1,2,3)\,.\]
Since $\sym_{[3]}$ acts trivially on $\Delta_{123}$, it follows that
\[s(1,2,3)_{|\Delta_{123}}=s(2,1,3)_{|\Delta_{123}}=s(3,2,1)_{|\Delta_{123}}=s(1,3,2)_{|\Delta_{123}}\,.\]
 This yields for  the first component $F_1$ of the morphism $F$
\begin{align*}
F_1(\phi_1(3)(s))&= \phi_1(s)(\{1\};1,2;(1,3))_{|\Delta_{123}}-\phi_1(s)(\{2\};1,2;(3,1))_{|\Delta_{123}}\\
&=\bigl(s(1,1,3)-s(2,1,3)-s(3,1,1)+s(3,2,1)\bigr)_{|\Delta_{123}}=0\,.
\end{align*}
Analogously, $F_2(\phi_1(3)(s))=0$ which gives the inclusion $\im(\phi_1^{\sym_n}(3))\subset \ker(F^{\sym_{[4,n]}})$. To show the other inclusion let $t\in \ker(F^{\sym_{[4,n]}})$, i.e.
\[t(\{1\};1,2;(1,3))_{|\Delta_{123}}=t(\{2\};1,2;(3,1))_{|\Delta_{123}}=t(\{3\};1,2;(1,3))_{|\Delta_{123}}=:t_{123}\,.\]
As explained in remark \ref{partialquotient} we can consider the invariants of the occurring direct summands by their stabilisers as sheaves on the quotients of $X^n$ by the stabilisers, i.e. we have for $a=(1,1,3),(1,3,1),(3,1,1)$, $\gamma_1\in \tilde J^1$, and $\gamma_3\in \tilde J^3$
\begin{align*}
\mathcal E^{\sym_{[4,n]}}\in\Coh(\Delta_{123}\times S^{[4,n]}X)\,&,\,   T_1(\gamma_3)^{\sym_{[4,n]}}\in\Coh(\Delta_{12}\times S^{[4,n]}X)\\
\F^{\sym_{\overline{\{1,3\}}}}=T_1(\gamma_1)^{\sym_{\overline{\{1,3\}}}}\in\Coh(\Delta_{13}\times S^{\overline{\{1,3\}}}X)
\,&,\,
K_0(a)^{\sym_{\overline{\{1,3\}}}}\in\Coh(X^{\{1,3\}}\times S^{\overline{\{1,3\}}}X)
\end{align*}
There are the two cartesian diagrams of closed embeddings (see lemma \ref{stillclosed})
\[ \begin{CD}
\Delta_{123}
@>>>
\Delta_{12} \\
@VVV
@VVV \\
\Delta_{13}
@>>>
X^n
\end{CD}\quad,\quad
 \begin{CD}
\Delta_{123}\times S^{[4,n]}X
@>>>
\Delta_{12}\times S^{[4,n]}X \\
@VVV
@VVV \\
\Delta_{13}\times S^{\overline{\{1,3\}}}X
@>>>
X^{\{1,3\}}\times S^{\overline{\{1,3\}}}X
\end{CD} \]
where the second one is induced by the first one by the universal properties of the quotient morphisms. That the second diagram is indeed again cartesian can be seen by assuming that $X$ is affine with $\reg(X)=A$ (see lemma \ref{opencover}). Then
\begin{align*}
 &\reg(\Delta_{12}\times S^{[4,n]}X)\otimes_{\reg(X^{\{1,3\}}\times S^{\overline{\{1,3\}}}X)}\reg(\Delta_{13}\times S^{\overline{\{1,3\}}}X)\\
=&(A^{\otimes 2}\otimes S^{n-3} A)\otimes_{A^{\otimes 2}\otimes S^{n-2}A}(A\otimes S^{n-2}A)\cong (A^{\otimes 2}\otimes S^{n-3} A)\otimes_{A^{\otimes 2}}A\cong A\otimes S^{n-3}A\\
=&\reg(\Delta_{123}\times S^{[4,n]}X)\,.
\end{align*}
We now choose $s_{13}\in \F^{\sym_{\overline{\{1,3\}}}}$ as any local section on $\Delta_{13}\times S^{\overline{\{1,3\}}}X$ such that
\[s_{13|\Delta_{123}\times S^{[4,n]}X}=t_{123}\,.\]
Then by lemma \ref{closedres} there exists a local section $s(1,1,3)\in K_0(1,1,3)^{\sym_{\overline{\{1,3\}}}}$ on the quotient $X^{\{1,3\}}\times S^{\overline{\{1,3\}}}X$ such that
\[
s(1,1,3)_{|\Delta_{13}\times S^{\overline{\{1,3\}}}X}=s_{13}\quad,\quad s(1,1,3)_{|\Delta_{12}\times S^{[4,n]}X}=t(\{1\};1,2;(1,3))\,.
\]
Analogously, there are local sections $s(1,3,1)\in K_0(1,3,1)^{\sym_{\overline{\{1,3\}}}}$ and $s(3,1,1)\in K_0(3,1,1)^{\sym_{\overline{\{1,3\}}}}$ such that their restrictions to the appropriate closed subvarieties are $s_{13}$ and $t(\{3\};1,2;(1,3))$ respectively $t(\{2\};1,2;(3,1))$. We furthermore set $s(1,2,3)=0$ and choose any section $s(1,1,1)\in K_0(1,1,1)$ on $X^1\times S^{[2,n]}$ that restricts to $s_{13}$ on $\Delta_{13}\times S^{\overline{\{1,3\}}}X$. Then $s=(s(a))_{a\in \tilde I}$ is indeed a section of $\ker(\phi_1^{\sym_n}(1))$ with the property that
$\phi_1(3)(s)=t$. Finally, the surjectivity of the morphism $F$ follows directly from its definition. This implies the exactness on the right.  
\end{proof}
A system of representatives of $\hat I_2$ is given by
\[\tilde J_2:=\bigl\{(\{1,2\};1,3;1)\,,\,(\{1,3\};1,3;1)\,,\,(\{2,3\};1,3;1)\bigr\}
 \]
The sheaves $T_2(\gamma)$ for $\gamma\in \tilde J_2$ are all canonically isomorphic to
\[\mathcal H:=N_{\Delta_{13}}\otimes \bigl(E_1\otimes E_2\otimes E_3\bigr)_{13}\cong\bigl(\Omega_X\otimes E_1\otimes E_2\otimes E_3\bigr)_{13}\,.
\]
The restriction $\mathcal H\to \mathcal H_{|\Delta_{123}}$ induces the morphism $\res\colon \mathcal H^{\sym_{\overline{\{1,3\}}}}\to \mathcal H_{|\Delta_{123}}^{\sym_{[4,n]}}$.
\begin{lemma}\label{ktwo}
 The following sequence is exact:
\[0\to K_2^{\sym_n}\to K_1^{\sym_n}\xrightarrow{\phi_2^{\sym_n}(\{1,2\};1,3;1)} \mathcal H^{\sym_{\overline{\{1,3\}}}}\xrightarrow{\res} \mathcal H_{|\Delta_{123}}^{\sym_{[4,n]}}\to 0\,.\]
\end{lemma}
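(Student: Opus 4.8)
The plan is to prove exactness at each of the three non-trivial spots separately, following the pattern of the proofs of Lemma \ref{onlyone} and Lemma \ref{kone}. Recall that, by Danila's lemma together with Remark \ref{orbits}(ii), the invariant sheaf $T_2^{\sym_n}$ decomposes as $\bigoplus_{\gamma\in\tilde J_2}T_2(\gamma)^{\sym_{\overline{\{1,3\}}}}$, that is, as three copies of $\mathcal H^{\sym_{\overline{\{1,3\}}}}$ indexed by the representatives $(\{1,2\};1,3;1)$, $(\{1,3\};1,3;1)$, $(\{2,3\};1,3;1)$. Since taking $\sym_n$-invariants is exact, we have $K_2^{\sym_n}=\ker(\phi_2^{\sym_n})=\bigcap_{\gamma\in\tilde J_2}\ker(\phi_2^{\sym_n}(\gamma))$, so the whole point of the first spot is to see that a single component already cuts out this intersection.

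First I would prove exactness at $K_1^{\sym_n}$ by showing that on $K_1^{\sym_n}$ the three components $\phi_2^{\sym_n}(\gamma)$ coincide, whence $\ker(\phi_2^{\sym_n})=\ker(\phi_2^{\sym_n}(\{1,2\};1,3;1))$. Writing out the definition from subsection \ref{constr}, the component at $(\{1,2\};1,3;1)$ sends an invariant section $s$ to $s(1,1,1)-s(3,1,1)-s(1,3,1)+s(3,3,1)$ modulo $\I_{13}^2$, and similarly for the other two indices. Computing a difference such as $\phi_2(\{1,2\};1,3;1)(s)-\phi_2(\{1,3\};1,3;1)(s)$, each summand reduces to a first-order expression $s(b)-s(b')$ that already lies in $\I_{13}$ because $s\in K_1=\ker\phi_1$, and the two such expressions are interchanged by the transposition $\tau=(1\,3)$. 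Using $\sym_n$-equivariance together with the fact that $\tau_*$ acts by $-1$ on $N_{\Delta_{13}}$ (Remark \ref{minus}), hence by $-1$ on $\I_{13}/\I_{13}^2$, the two terms represent the same class and the difference vanishes; the analogous computation handles $(\{2,3\};1,3;1)$. This is the step where the signs coming from the conormal direction modulo $\I_{13}^2$ must be tracked with care.

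Next comes exactness at $\mathcal H^{\sym_{\overline{\{1,3\}}}}$. The inclusion $\im(\phi_2^{\sym_n}(\{1,2\};1,3;1))\subset\ker(\res)$ is the direct computation that $\phi_2(\{1,2\};1,3;1)(s)$ restricts to zero on $\Delta_{123}$: on the small diagonal $\sym_{[3]}$ acts trivially on the base, so by $\sym_n$-invariance the four terms of the alternating sum become equal after restriction to $\Delta_{123}$ and cancel, exactly as in the computation $F(\phi_1(3)(s))=0$ in Lemma \ref{kone}. For the reverse inclusion I would take a local section $t\in\mathcal H^{\sym_{\overline{\{1,3\}}}}$ with $t_{|\Delta_{123}}=0$ and construct $s\in K_1^{\sym_n}$ with $\phi_2(\{1,2\};1,3;1)(s)=t$. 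Viewing the invariants, as in Remark \ref{partialquotient}, as sheaves on the partial quotients $\Delta_{13}\times S^{\overline{\{1,3\}}}X$, $\Delta_{12}\times S^{[4,n]}X$ and $X^{\{1,3\}}\times S^{\overline{\{1,3\}}}X$, I would lift $t$ across the closed embeddings using the cartesian diagrams of Lemma \ref{stillclosed} and the lifting Lemma \ref{closedres}, after reducing to the affine case via Lemma \ref{opencover}. This is the main obstacle: one must produce the components $s(a)$ so that $s$ lies in $K_1$ (the first-order diagonal compatibilities, i.e. membership in $\ker\phi_1$), is $\sym_n$-invariant, and realizes the prescribed second-order datum $t$ along $\Delta_{13}$ while contributing nothing on $\Delta_{123}$.

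Finally, exactness at $\mathcal H_{|\Delta_{123}}^{\sym_{[4,n]}}$ amounts to surjectivity of $\res$. The restriction map $\mathcal H\to\mathcal H_{|\Delta_{123}}$ of a sheaf that is locally free on its support $\Delta_{13}$ onto its restriction to the closed subvariety $\Delta_{123}$ is surjective, and since the square relating the quotients $\Delta_{13}\times S^{\overline{\{1,3\}}}X$ and $\Delta_{123}\times S^{[4,n]}X$ is cartesian (Lemma \ref{stillclosed}) and taking $\sym$-invariants is exact, the induced map on invariants $\res\colon\mathcal H^{\sym_{\overline{\{1,3\}}}}\to\mathcal H_{|\Delta_{123}}^{\sym_{[4,n]}}$ is surjective as well. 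Combining the three steps yields the asserted exact sequence.
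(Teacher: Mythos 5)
Your overall architecture matches the paper's: exactness at $K_1^{\sym_n}$ via the observation that the three components $\phi_2^{\sym_n}(\gamma)$, $\gamma\in\tilde J_2$, coincide on $K_1^{\sym_n}$ (using invariance and the sign $-1$ of $(1\,3)_*$ on $N_{\Delta_{13}}$), and surjectivity of $\res$ because it is restriction of sections along the closed embedding $\Delta_{123}\times S^{[4,n]}X\hookrightarrow\Delta_{13}\times S^{\overline{\{1,3\}}}X$. Both of these steps are fine and are exactly what the paper does.

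The gap is at the middle spot, and it is a genuine one. For the inclusion $\im(\phi_2^{\sym_n}(\{1,2\};1,3;1))\subset\ker(\res)$ you argue "the four terms of the alternating sum become equal after restriction to $\Delta_{123}$ and cancel, exactly as in Lemma \ref{kone}". This analogy fails: in Lemma \ref{kone} the target of $\phi_1$ is $(E_1\otimes E_2\otimes E_3)_{12}$, degree-zero data, so restriction to $\Delta_{123}$ is literally evaluation of the component functions and the cancellation argument is meaningful. Here $\phi_2(s)(\{1,2\};1,3;1)$ is a class in $\mathcal H\cong(\I_{13}/\I_{13}^2)\otimes(\cdots)$, and $\ker(\res)=\I_{123}\cdot\mathcal H$. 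The underlying alternating sum $u=s(1,1,1)-s(3,1,1)-s(1,3,1)+s(3,3,1)$ already lies in $\I_{13}\subset\I_{123}$, so the statement "$u$ vanishes on $\Delta_{123}$" is automatic and carries no information; what must be proved is the strictly stronger containment $u\in\I_{13}^2+\I_{123}\cdot\I_{13}$. The paper establishes this by identifying the class of $u$ in $\I_{13}/\I_{13}^2$ with $d_1u-d_3u$ and computing all the relevant differentials over $\Delta_{123}$, which requires not only $\phi_1^{\sym_n}(1)(s)=0$ but also $\phi_1^{\sym_n}(3)(s)=0$ — i.e.\ the compatibilities on $\Delta_{12}$ tying $s(1,1,3)$, $s(1,3,1)$, $s(3,1,1)$ to the generic component $b=s(1,2,3)$ — together with $s(1,1,1)_{|\Delta_{123}}=b_{|\Delta_{123}}$. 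None of this first-order bookkeeping appears in your sketch. For the reverse inclusion $\ker(\res)\subset\im$ you correctly name the tools (Lemmas \ref{opencover}, \ref{stillclosed}, \ref{closedres}) but explicitly defer the construction; the actual content in the paper is to reduce to $E_i=\reg_X$, write $t=f\bar\zeta_i$ with $f\in\I_{123}$, set $F=-f\zeta_i\in\I_{123}\I_{13}$, use $\I_{123}\I_{13}\subset\I\bigl((\Delta_{12}\cup\Delta_{13})\cap\Delta_{23}\bigr)$ and Lemma \ref{closedres} to produce an invariant $G$ with $G_{|\Delta_{23}}=(1\,3)_*F$ and $G_{|\Delta_{12}}=G_{|\Delta_{13}}=0$, and then assemble $s$ with $s(3,1,1)=F$, $s(1,2,3)=G$ and the remaining components zero, checking membership in $K_1^{\sym_n}$. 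Without these two arguments the lemma is not proved.
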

\begin{proof}
 Let $s\in K_1^{\sym_n}$ and $\tau=(1\,\,3)\in \sym_n$. Then 
\[s(3,3,1)=\tau_*s(1,1,3)\quad,\quad s(3,1,3)=\tau_*s(1,3,1)\quad,\quad s(1,3,3)=\tau_*s(3,1,1)\,.\]
Since $\tau$ acts by $(-1)^{2+1}=-1$ (see remark \ref{minus}) on $\mathcal H$ one can compute that $\phi_2(s)(\gamma)$ is equal for all $\gamma\in \tilde J_2$. Namely, we have modulo $\mathcal I_{13}^2$ the equalities
\begin{align*}
\phi_2(s)(\{1,2\};1,3;1)
=&s(1,1,1)-s(3,1,1)-s(1,3,1)+\tau_*s(1,1,3)\\ 
=&s(1,1,1)-s(3,1,1)-\tau_*\bigl(-s(1,1,3)+\tau_*s(1,3,1)\bigr)\\
=&s(1,1,1)-s(3,1,1)-s(1,1,3)+\tau_*s(1,3,1)\\
=&\phi_2(s)(\{1,3\};1,3;1)=\dots = \phi_2(s)(\{2,3\};1,3;1)\,.
\end{align*}
Thus, $K_2^{\sym_n}=\ker(\phi_2^{\sym_n})=\ker(\phi_2^{\sym_n}(\gamma))$ for any $\gamma\in \tilde J_2$. In particular, this holds for the tuple $\gamma=(\{1,2\};1,3;1)$ which shows the exactness of the sequence in the first two degrees. 
The surjectivity of the map $\res$ is due to the fact that it is given by the restriction of sections along the closed embedding 
$\Delta_{123}\times S^{[4,n]} X\to \Delta_{13}\times S^{\overline{\{1,3\}}}X$
which is induced by the embedding $\Delta_{123}\hookrightarrow \Delta_{13}$. Thus, it is only left to show that the sequence is exact at the term 
$\mathcal H^{\sym_{\overline{\{1,3\}}}}$ which equivalent to the equality 
\[\im(\phi_2^{\sym_n}(\{1,2\};1,3;1))=\mathcal (I_{123}\cdot\mathcal H)^{\sym_{\overline{\{1,3\}}}}\,.\]
Since it suffices to show the equality on a family of open subsets of $X^n$ covering $\Delta_{123}$, we can assume that $E_1=E_2=E_3=\reg_X$ (see proof of proposition \ref{opendes}), i.e. $\mathcal H=\mathcal I_{13}/\mathcal I_{13}^2=N_{\Delta_{13}}$. Furthermore, we may assume that $\mathcal I_{13}/\mathcal I_{13}^2$ is a free $\reg_{\Delta_{13}}$-module with generators $\bar \zeta_1,\bar\zeta_2$. The generators $\bar \zeta_i$ can be taken as the pullback along $\pr_{13}$ of generators of $N_\Delta$ where $\Delta$ is the diagonal in $X^2$. Thus, we may assume that their representatives $\zeta_i\in \mathcal I_{13}$ are $\sym_{\overline{\{1,3\}}}$-invariant. 
For $f\in \mathcal I_{123}^{\sym_{\overline{\{1,3\}}}}$ and $i=1,2$ we have to show that $f\cdot \bar \zeta_i\in \im(\phi_2^{\sym_n}(\{1,2\};1,3;1))$.
We set $F=-f\cdot \zeta_i\in \mathcal I_{123}\cdot \mathcal I_{13}$. Then also $(1\,\,3)_*F\in \mathcal I_{123}\cdot \mathcal I_{13}$. Since
\begin{align*}\I_{123} I_{13}=(\I_{12}+\I_{23}) \I_{13}=\I_{12}\I_{13}+\I_{23}\I_{13}&\subset \I(\Delta_{12}\cup \Delta_{13})+\I_{23}=\I((\Delta_{12}\cup \Delta_{13})\cap \Delta_{23})\,,\end{align*}
the restriction $((1\,\,3)_*F)_{|(\Delta_{12}\cup \Delta_{13})\cap \Delta_{23}}$ vanishes. Here $\cup$ and $\cap$ denote the scheme-theoretic union and intersection. By lemma \ref{closedres} there is a $G\in \reg_{X^n}$ such that $G_{|\Delta_{23}}=(1\,\,3)_*F$ and $G_{|\Delta_{12}}=0=G_{|\Delta_{13}}$.
Since $0$ as well as $(1\,\,3)_*F$ are $\sym_{[4,n]}$-invariant functions, it is possible to choose also $G\in \reg_{X^n}^{\sym_{[4,n]}}$.
We set
\[s(1,1,1)=s(1,3,1)=s(1,1,3)=0\quad,\quad s(3,1,1)=F\quad,\quad s(1,2,3)=G\,.\] 
Then $s\in K_1^{\sym_n}$. Indeed, since $F\in \mathcal I_{13}$ we have 
\[s(1,1,1)_{|\Delta_{13}}=s(1,3,1)_{|\Delta_{13}}=s(1,1,3)_{|\Delta_{13}}=s(3,1,1)_{|\Delta_{13}}\]
which is the condition for $s\in \ker(\phi_1(1))\cap\ker(\phi_1(2))$ (see proof of lemma \ref{onlyone}). 
Furthermore, $\phi_1(s)(\{2\};1,2;(3,1))=0$ since 
\begin{align*}s(3,2,1)_{|\Delta_{12}}= \bigl((1\,\,3)_* s(1,2,3)\bigr)_{|\Delta_{12}}=(1\,\,3)_* \bigl(s(1,2,3)_{|\Delta_{23}}\bigr)&=(1\,\,3)_*(1\,\,3)_* (F_{|\Delta_{12}})\\&=F_{|\Delta_{12}}=s(3,1,1)_{|\Delta_{12}}\,.\end{align*}
Similarly, we also get that $\phi_1(s)(\{1\};1,2;(1,3))=0=\phi_1(s)(\{3\};1,2;(1,3))$. Because of $\phi_2(s)(\{1,2\};1,3;1)=f\cdot\bar \zeta_i$ we get the inclusion  
\[\im(\phi_2^{\sym_n}(\{1,2\};1,3;1))\supset\mathcal I_{123}^{\sym_{\overline{\{1,3\}}}}\mathcal H\,.\]
For the other inclusion we first notice that the surjection $\I_{13}\to \I_{13}/\I_{13}^2$ is given under the identifications
\[\I_{13}/\I_{13}\cong N_{\Delta_{13}}\cong (\Omega_X)_{13}\cong (\pr_1^*\Omega_X)_{|\Delta_{13}}\cong (\pr_3^*\Omega_X)_{|\Delta_{13}}\]
by $s\mapsto d_1s-d_3s$. Here $d_i\colon \reg_{X^n}\to \pr_i^*\Omega_X$ denotes the composition of the  differential
\[d\colon \reg_{X^n}\to \Omega_{X^n}\cong \pr_1^*\Omega_X\oplus\dots\oplus\pr_n^*\Omega_X\]
with the projection to the $i$-th summand. For $s\in \reg_{X^n}$ and $\tau=(i\,\,j)\in \sym_n$ we have $d_i(\tau_*s)=d_js$.
Let $s\in K_1^{\sym_n}$ with $a:=s(1,1,1)$ and $b:=s(1,2,3)$. Then because of $\phi_1^{\sym_n}(1)(s)=0$
\[
s(1,1,3)_{|\Delta_{13}}=s(1,3,1)_{|\Delta_{13}}=s(3,1,1)_{|\Delta_{13}}=a_{|\Delta_{13}}\]
and because of $\phi_1^{\sym_n}(3)(s)=0$
\[s(1,1,3)_{|\Delta_{12}}=\bigl((1\,\,2)_*b\bigr)_{|\Delta_{12}}\,,\,s(1,3,1)_{|\Delta_{12}}=\bigl((2\,\,3)_*b\bigr)_{|\Delta_{12}}
\,,\, s(3,1,1)_{|\Delta_{12}}=\bigl((1\,\,3)_*b\bigr)_{|\Delta_{12}}\,.
\]
Thus, over $\Delta_{123}$ there are the equalities
\begin{align*}
&d_3s(1,1,3)=d_3b\,,\,d_3s(1,3,1)=d_2b\,,\,d_3s(3,1,1)=d_1b\,,\\ 
&d_1s(1,1,3)=(d_1+d_3)a-d_3b\,,\,d_1s(1,3,1)=(d_1+d_3)a-d_2b\,,\,d_1s(3,1,1)=(d_1+d_3)a-d_1b\,.
\end{align*}
Hence, still over $\Delta_{123}$ we get
\begin{align*}&\phi_2(s)(\{1,2\};1,3;1)\\
=&(d_1-d_3)\bigl(s(1,1,1)-s(3,1,1)-s(1,3,1)+(1\,\,3)_*s(1,1,3)\bigr)\\
=&d_1a-(d_1+d_3)a+d_1b-(d_1+d_3)a+d_2b+d_3b-d_3a+d_1b+d_2b-(d_1+d_3)a+d_3b\\   
=&-2d_1a-4d_3a+2d_1b+2d_2b+2d_3b\\
=&-2(d_1+d_2+d_3)a+2(d_1+d_2+d_3)b\\
=&0\,.
\end{align*}
The fourth equality is due to the fact that $a$ is $(2\,\,3)$-invariant and thus $d_2a=d_3a$. The last equality is due to the fact that $a_{|\Delta_{123}}=b_{|\Delta_{123}}$ and thus $(d_1+d_2+d_3)a=(d_1+d_2+d_3)b$. Now we have shown that $\phi_2(s)(\{1,2\};1,3;1)\in \I_{123}\cdot \mathcal H$ which finishes the proof.
\end{proof}
\begin{cor}\label{Kthree}
 In the Grothendieck group $\K(S^nX)$ there is the equality
\[\mu_!\bigl[E_1^{[n]}\otimes E_2^{[n]}\otimes E_3^{[n]}\bigr]=\bigl[K_0^{\sym_n}\bigr] -3\bigl[\mathcal F^{\sym_{\overline{\{1,3\}}}} \bigr] -\bigl[
T_1(3)^{\sym_{[4,n]}}  \bigr]+2\bigl[\mathcal E^{\sym_{[4,n]}}  \bigr]- \bigl[\mathcal H^{\sym_{\overline{\{1,3\}}}}  \bigr]+ 
\bigl[\mathcal H_{|\Delta_{123}}^{\sym_{[4,n]}}  \bigr]
\,.\] 
\end{cor}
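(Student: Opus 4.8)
The plan is to read off the statement as the alternating-sum identity in $\K(S^nX)$ produced by the three exact sequences established in Lemmas \ref{onlyone}, \ref{kone}, and \ref{ktwo}, once the left-hand side has been identified with $[K_2^{\sym_n}]$.

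First I would record that, by Proposition \ref{mubkr} together with Theorem \ref{Sca}, the object $R\mu_*(E_1^{[n]}\otimes E_2^{[n]}\otimes E_3^{[n]})$ is cohomologically concentrated in degree zero, so that $\mu_!\bigl[E_1^{[n]}\otimes E_2^{[n]}\otimes E_3^{[n]}\bigr]=\bigl[\mu_*(E_1^{[n]}\otimes E_2^{[n]}\otimes E_3^{[n]})\bigr]$. Corollary \ref{kvanish} then identifies this class with $[K_2^{\sym_n}]$ (this is where $k=3$ enters, via $\hat J_3=\emptyset$). Thus it suffices to compute $[K_2^{\sym_n}]$ in the Grothendieck group, and the whole argument reduces to the additivity of classes along bounded exact complexes of coherent sheaves.

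Next I would feed the three sequences into this additivity. From Lemma \ref{onlyone}, using the identification $T_1(1)^{\sym_{\overline{\{1,3\}}}}\cong (\mathcal F^{\sym_{\overline{\{1,3\}}}})^{\oplus 3}$ noted just before that lemma, I obtain $[\ker(\phi_1^{\sym_n}(1))]=[K_0^{\sym_n}]-3[\mathcal F^{\sym_{\overline{\{1,3\}}}}]$. From the four-term exact sequence of Lemma \ref{kone} the alternating sum gives $[K_1^{\sym_n}]=[\ker(\phi_1^{\sym_n}(1))]-[T_1(3)^{\sym_{[4,n]}}]+2[\mathcal E^{\sym_{[4,n]}}]$, where the coefficient $2$ comes from the summand $(\mathcal E^{\oplus 2})^{\sym_{[4,n]}}$. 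Finally the four-term sequence of Lemma \ref{ktwo} yields $[K_2^{\sym_n}]=[K_1^{\sym_n}]-[\mathcal H^{\sym_{\overline{\{1,3\}}}}]+[\mathcal H_{|\Delta_{123}}^{\sym_{[4,n]}}]$. Substituting the first two relations into the third reproduces exactly the claimed expression for $[K_2^{\sym_n}]=\mu_!\bigl[E_1^{[n]}\otimes E_2^{[n]}\otimes E_3^{[n]}\bigr]$.

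There is essentially no obstacle beyond bookkeeping: all the genuine content — the exactness of the three sequences on $S^nX$ and the identification of the sheaves of invariants appearing in them — is already carried out in the preceding lemmas. The only points that need a word of care are that each sequence is a bounded exact complex of coherent sheaves on $S^nX$, so that additivity in $\K(S^nX)$ legitimately applies, and that the multiplicities are tracked correctly through the two identifications $T_1(1)^{\sym_{\overline{\{1,3\}}}}\cong (\mathcal F^{\sym_{\overline{\{1,3\}}}})^{\oplus 3}$ and $(\mathcal E^{\oplus 2})^{\sym_{[4,n]}}\cong 2\,[\mathcal E^{\sym_{[4,n]}}]$, which account for the coefficients $-3$ and $+2$ in the final formula.
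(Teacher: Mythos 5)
Your proposal is correct and is essentially the paper's own argument: the paper likewise reduces $\mu_!\bigl[E_1^{[n]}\otimes E_2^{[n]}\otimes E_3^{[n]}\bigr]$ to $\bigl[K_2^{\sym_n}\bigr]$ via the degree-zero concentration of $R\mu_*$ (Proposition \ref{mubkr}, Theorem \ref{Sca}) together with Corollary \ref{kvanish}, and then applies additivity in $\K(S^nX)$ along the exact sequences of Lemmas \ref{onlyone}, \ref{kone}, and \ref{ktwo}. Your bookkeeping of the coefficients $-3$ and $+2$ through the identifications $T_1(1)^{\sym_{\overline{\{1,3\}}}}\cong (\mathcal F^{\sym_{\overline{\{1,3\}}}})^{\oplus 3}$ and $(\mathcal E^{\oplus 2})^{\sym_{[4,n]}}$ is exactly right.
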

\begin{proof}
This follows by the fact that $R^i\mu_*(E_1^{[n]}\otimes E_2^{[n]}\otimes E_3^{[n]})=0$ for $i>0$ (see proposition \ref{mubkr} and theorem \ref{Sca}) and the results of this subsection. 
\end{proof}
\begin{defin}\label{chi}
We use for $m\in \N$ and $F^\bullet\in\D^b(X)$ the abbreviation (see lemma \ref{Euler})
\[s^{m}\chi(F^\bullet):= \chi(((F^\bullet)^{\boxtimes m})^{\sym_m})=\chi(S^m\Ho^*(F^\bullet))=\binom{\chi(F^\bullet)+m-1}{m}\,.\]
Furthermore, for $F^\bullet=\reg_X$ we set
\[s^{m}\chi:=s^m\chi(\reg_X)= \chi(\reg_{S^mX})=\chi(S^m\Ho^*(\reg_X))=\binom{\chi(\reg_X)+m-1}{m}\,.\]
\end{defin}
\begin{cor}\label{Eulerthree}
If $X$ is projective, the Euler characteristic of the triple tensor product of tautological bundles is given by
\begin{align*}
&\chi(E_1^{[n]}\otimes E_2^{[n]}\otimes E_3^{[n]})\\
=&\chi(E_1)\chi(E_2)\chi(E_3)s^{n-3}\chi \\
 &+\bigl(\chi(E_1\otimes E_2)\chi(E_3)+\chi(E_1\otimes E_3)\chi(E_2)+\chi(E_1\otimes E_3)\chi(E_2)  \bigr)\bigl(s^{n-2}\chi -s^{n-3}\chi\bigr)\\
 &+\chi(E_1\otimes E_2\otimes E_3)(s^{n-1}\chi-3s^{n-2}\chi+2 s^{n-3}\chi)\\
 &+\chi(\Omega_X\otimes E_1\otimes E_2\otimes E_3)(s^{n-3}\chi-s^{n-2}\chi) \,.
\end{align*}
\end{cor}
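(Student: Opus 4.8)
The plan is to apply the global Euler characteristic $\chi_{S^nX}\colon\K(S^nX)\to\Z$ to the identity in the Grothendieck group provided by Corollary \ref{Kthree}. Since $\mu$ is proper and $R\Gamma(X^{[n]},-)=R\Gamma(S^nX,R\mu_*(-))$ by the Leray spectral sequence, the homomorphism $\chi_{S^nX}$ satisfies $\chi_{S^nX}(\mu_!\alpha)=\chi_{X^{[n]}}(\alpha)$ for every $\alpha\in\K(X^{[n]})$; this is also compatible with Proposition \ref{mubkr}. Hence the left-hand side becomes exactly $\chi(E_1^{[n]}\otimes E_2^{[n]}\otimes E_3^{[n]})$, and it remains to evaluate $\chi$ on each of the six summands on the right and to collect terms. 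Everything is additive on the Grothendieck group, so no convergence or spectral-sequence issues arise.

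First I would treat $[K_0^{\sym_n}]$. By Lemma \ref{kzero} it splits into the orbit summands indexed by the five representatives in $\tilde I$, and Lemma \ref{Eulersummands}(i) (rewriting the binomial via Definition \ref{chi}) shows that an orbit whose image $Q\subset[n]$ has $q$ elements contributes $\prod_{m\in Q}\chi\bigl(\bigotimes_{t\in a^{-1}(m)}E_t\bigr)\cdot s^{n-q}\chi$. Reading off $\tilde I$ gives $\chi(E_1\otimes E_2\otimes E_3)\,s^{n-1}\chi$ from $(1,1,1)$; the three contributions $\chi(E_1\otimes E_2)\chi(E_3)$, $\chi(E_1\otimes E_3)\chi(E_2)$, $\chi(E_2\otimes E_3)\chi(E_1)$, each times $s^{n-2}\chi$, from $(1,1,3),(1,3,1),(3,1,1)$; and $\chi(E_1)\chi(E_2)\chi(E_3)\,s^{n-3}\chi$ from $(1,2,3)$.

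The remaining five terms are each the invariants of a good sheaf supported on a pairwise or triple diagonal, so the same Künneth and symmetric-power computation as in Lemma \ref{kunneth} together with Lemma \ref{Euler} and Definition \ref{chi} applies: a bundle $G$ pinned on a diagonal where $q$ factors are fixed and the remaining $n-q$ carry $\reg_X$ has invariant Euler characteristic $\chi(G)\cdot s^{n-q}\chi$. Concretely, $\mathcal F=(E_1\otimes E_2\otimes E_3)_{13}$ lives on $\Delta_{13}$ with $n-2$ free factors, giving $\chi(E_1\otimes E_2\otimes E_3)\,s^{n-2}\chi$; likewise $\mathcal H\cong(\Omega_X\otimes E_1\otimes E_2\otimes E_3)_{13}$ gives $\chi(\Omega_X\otimes E_1\otimes E_2\otimes E_3)\,s^{n-2}\chi$; the three summands of $T_1(3)$ are supported on $\Delta_{12}$ with image $\{1,2,3\}$, hence $n-3$ free factors, giving $\bigl(\chi(E_1\otimes E_2)\chi(E_3)+\chi(E_1\otimes E_3)\chi(E_2)+\chi(E_2\otimes E_3)\chi(E_1)\bigr)s^{n-3}\chi$; and $\mathcal E=(E_1\otimes E_2\otimes E_3)_{123}$, $\mathcal H_{|\Delta_{123}}=(\Omega_X\otimes E_1\otimes E_2\otimes E_3)_{123}$ live on $\Delta_{123}$ with $n-3$ free factors, giving $\chi(E_1\otimes E_2\otimes E_3)\,s^{n-3}\chi$ and $\chi(\Omega_X\otimes E_1\otimes E_2\otimes E_3)\,s^{n-3}\chi$ respectively. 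The step needing the most care is exactly this bookkeeping: correctly reading off each good sheaf's supporting diagonal, hence the exponent $n-q$, together with its bundle factor on $X$.

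Finally I would substitute these values into the signed combination of Corollary \ref{Kthree} and collect the coefficients of $s^{n-1}\chi$, $s^{n-2}\chi$, $s^{n-3}\chi$. Writing $A=\chi(E_1\otimes E_2\otimes E_3)$, $B$ for the sum of the three mixed products, $C=\chi(E_1)\chi(E_2)\chi(E_3)$, and $D=\chi(\Omega_X\otimes E_1\otimes E_2\otimes E_3)$, the coefficient of $s^{n-1}\chi$ is $A$, that of $s^{n-2}\chi$ is $B-3A-D$, and that of $s^{n-3}\chi$ is $C-B+2A+D$; regrouping these into $C\,s^{n-3}\chi+B(s^{n-2}\chi-s^{n-3}\chi)+A(s^{n-1}\chi-3s^{n-2}\chi+2s^{n-3}\chi)+D(s^{n-3}\chi-s^{n-2}\chi)$ reproduces the claimed formula. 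The only realistic pitfall is a sign error or a miscounted symmetric factor, and the coefficient-matching against this regrouping serves as the consistency check.
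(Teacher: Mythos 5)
Your proposal is correct and follows the same route as the paper: apply $\chi$ to the Grothendieck-group identity of Corollary \ref{Kthree} and evaluate each good summand via Lemma \ref{kzero}, Lemma \ref{Eulersummands}, and Definition \ref{chi}; the paper's own proof is a one-line version of exactly this bookkeeping. Note that your computation produces the symmetric sum $\chi(E_1\otimes E_2)\chi(E_3)+\chi(E_1\otimes E_3)\chi(E_2)+\chi(E_2\otimes E_3)\chi(E_1)$ as the coefficient of $s^{n-2}\chi-s^{n-3}\chi$, which is evidently what the statement intends (one of the three terms is duplicated there by a typo).
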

\begin{proof}
The first summand comes from $K_0(1,2,3)$, the second from $K_0(1,1,3)$, $K_0(1,3,1)$, $K_0(3,1,1)$, and $T_1(3)$, the third from $K_0(1,1,1)$, $\F$,and $\E$, and the fourth from $\mathcal H$ and $\mathcal H_{|\Delta_{123}}$. 
\end{proof}

\section{Generalisations}
\subsection{Determinant line bundles}
There is a homomorphism which associates to any line bundle on $X$ its associated \textit{determinant line bundle} on $X^{[n]}$ given by  
\[\mathcal D\colon \Pic X\to \Pic X^{[n]}\quad ,\quad L\mapsto \mathcal D_L:= \mu^*((L^{\boxtimes n})^{\sym_n})\,.\]
Here the $\sym_n$-linearization of $L^{\boxtimes n}$ is given by the canonical isomorphisms $p_{\sigma^{-1}(i)}^*L\cong \sigma^*p_i^*L$, i.e. given by permutation of the factors.
By \cite[Theorem 2.3]{DN} the sheaf of invariants of $L^{\boxtimes n}$ is also the decent of $L^{\boxtimes n}$, i.e. $L^{\boxtimes n}\cong \pi^*((L^{\boxtimes n})^{\sym_n})$.
\begin{remark}
The functor $\mathcal D$ maps the trivial respectively the canonical line bundle to the trivial respectively the canonical line bundle, i.e. 
$\mathcal D_{\reg_X}\cong \reg_{X^{[n]}}$ and $\mathcal D_{\omega_X}\cong \omega_{X^{[n]}}$. The assertion for the trivial line bundle is true since the pull-back of the trivial line bundle along any morphism is the trivial line bundle and since taking the invariants of the trivial line bundle yields the trivial line bundle on the quotient by the group action. For a proof of $\mathcal D_{\omega_X}\cong \omega_{X^{[n]}}$ see \cite[Proposition 1.6]{NW}.
\end{remark}
\begin{lemma}\label{det}
Let $L$ be a line bundle on $X$.
\begin{enumerate}
\item For every $\mathcal F^\bullet\in \D^b(X^{[n]})$ there is a natural isomorphism
$\Phi(\F^\bullet\otimes \mathcal D_L)\simeq \Phi(\F^\bullet)\otimes L^{\boxtimes n}$ in $\D_{\sym_n}(X^n)$. 
\item For every $\mathcal G^\bullet\in \D^b_{\sym_n}(X^n)$ and every subgroup $H\le \sym_n$ there is a natural isomorphism $[\pi_*(\mathcal G^\bullet\otimes L^{\boxtimes n})]^{H}\simeq
(\pi_*\mathcal G^\bullet)^{H}\otimes (\pi_*L^{\boxtimes n})^{\sym_n}$.
\item For every $\mathcal F^\bullet\in \D^b(X^{[n]})$ there is a natural isomorphism 
\[R\mu_*(\F^\bullet\otimes \mathcal D_L)\simeq R\mu_*\mathcal F\otimes(L^{\boxtimes n})^{\sym_n}\,.\]  
\end{enumerate}
\end{lemma}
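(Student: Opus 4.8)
The plan is to prove (i) and (ii) directly from the projection formula and the descent isomorphism $\pi^*((L^{\boxtimes n})^{\sym_n})\cong L^{\boxtimes n}$ recorded just before the lemma, and then to obtain (iii) by combining them with Proposition \ref{mubkr}. For (i), I would start from $\Phi(\F^\bullet)\simeq Rp_*q^*\F^\bullet$ and $\mathcal D_L=\mu^*((L^{\boxtimes n})^{\sym_n})$. The commutativity $\mu\circ q=\pi\circ p$ of the Bridgeland--King--Reid square gives $q^*\mu^*\simeq p^*\pi^*$, so that $q^*\mathcal D_L\simeq p^*\pi^*((L^{\boxtimes n})^{\sym_n})\simeq p^*(L^{\boxtimes n})$ by descent. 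Hence $\Phi(\F^\bullet\otimes\mathcal D_L)\simeq Rp_*(q^*\F^\bullet\otimes p^*L^{\boxtimes n})$, and since $L^{\boxtimes n}$ is locally free the projection formula for $p$ yields $Rp_*(q^*\F^\bullet)\otimes L^{\boxtimes n}=\Phi(\F^\bullet)\otimes L^{\boxtimes n}$, as desired.

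For (ii), descent again lets me write $L^{\boxtimes n}\simeq\pi^*M$ with $M:=(L^{\boxtimes n})^{\sym_n}$ carrying the trivial $\sym_n$-linearization. Then, $\pi$ being finite, the projection formula gives $\pi_*(\mathcal G^\bullet\otimes L^{\boxtimes n})\simeq\pi_*(\mathcal G^\bullet\otimes\pi^*M)\simeq(\pi_*\mathcal G^\bullet)\otimes M$. Taking $H$-invariants and pulling the trivially linearized locally free sheaf $M$ out of the invariants as in lemma \ref{tensinv} produces $(\pi_*\mathcal G^\bullet)^H\otimes M=(\pi_*\mathcal G^\bullet)^H\otimes(\pi_*L^{\boxtimes n})^{\sym_n}$.

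For (iii), I would chain Proposition \ref{mubkr} with parts (i) and (ii): $R\mu_*(\F^\bullet\otimes\mathcal D_L)\simeq[\Phi(\F^\bullet\otimes\mathcal D_L)]^{\sym_n}\simeq[\Phi(\F^\bullet)\otimes L^{\boxtimes n}]^{\sym_n}$, and then (ii) with $H=\sym_n$ and $\mathcal G^\bullet=\Phi(\F^\bullet)$ gives $[\Phi(\F^\bullet)]^{\sym_n}\otimes(L^{\boxtimes n})^{\sym_n}\simeq R\mu_*\F^\bullet\otimes(L^{\boxtimes n})^{\sym_n}$, once more by Proposition \ref{mubkr}. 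The main obstacle is purely equivariant bookkeeping: one must check that each projection-formula isomorphism is $\sym_n$-equivariant and, crucially, that the linearization on $\pi^*M$ induced by the trivial linearization on $M$ agrees with the permutation linearization on $L^{\boxtimes n}$ specified in the definition of $\mathcal D$. Once these identifications are fixed, every step above respects the linearizations and the invariants may be taken termwise.
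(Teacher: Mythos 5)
Your proposal is correct and follows essentially the same route as the paper: part (i) via $q^*\mathcal D_L\simeq p^*L^{\boxtimes n}$ (descent plus commutativity of the BKR square) and the projection formula along $p$, part (ii) via the projection formula along the finite morphism $\pi$ together with lemma \ref{tensinv}, and part (iii) by combining (i), (ii) with $H=\sym_n$ and proposition \ref{mubkr}. The equivariance bookkeeping you flag is exactly what the paper leaves implicit, so there is nothing to add.
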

\begin{proof}
 By the definition of the determinant line bundle and the fact that $\pi^*(L^{\boxtimes n})^{\sym_n}\cong L^{\boxtimes n}$ we have 
\[q^*\mathcal D_L\cong q^*\mu^* (L^{\boxtimes n})^{\sym_n} \cong p^*\pi^*(L^{\boxtimes n})^{\sym_n}\cong p^*L^{\boxtimes n}\,.\]
Using this, we get indeed natural isomorphisms
\begin{align*}
\Phi(\F^\bullet\otimes \mathcal D_L)\simeq Rp_*q^*(\F^\bullet\otimes \mathcal D_L)\simeq Rp_*\left(q^*\F^\bullet\otimes q^*\mathcal D_L\right)&\simeq Rp_*\left(q^*\F^\bullet\otimes p^*L^{\boxtimes n} \right)\\
&\overset{\text{PF}}\simeq Rp_*q^*\F^\bullet\otimes L^{\boxtimes n}\\
&\simeq \Phi(\F^\bullet)\otimes L^{\boxtimes n}\,. 
\end{align*}
This shows (i). For (ii) we remember that the functor $(\_)^{\sym_n}$ on $\D^b_{\sym_n}(X^n)$ is a abbreviation of the composition $(\_)^{\sym}\circ \pi_*$.
Then
\[\left[\pi_*(\mathcal G^\bullet\otimes L^{\boxtimes n})\right]^{H}\simeq \left[\pi_*(\mathcal G^\bullet\otimes \pi^*(L^{\boxtimes n})^{\sym_n})\right]^{H}
 \overset{\text{PF}}\simeq \left[\pi_*(\mathcal G^\bullet)\otimes (L^{\boxtimes n})^{\sym_n}\right]^{H}
 \overset{\ref{tensinv}}\simeq (\pi_*\mathcal G^\bullet)^{H}\otimes (L^{\boxtimes n})^{\sym_n}\,.
\]
Now (iii) follows by (i),(ii) with $H=\sym_n$ and proposition \ref{mubkr} or directly by the projection formula along $\mu$.
\end{proof}
\begin{cor}\label{zeroconc}
Let $E_1,\dots,E_k$ be locally free sheaves and $L$ a line bundle on $X$. Then $\Phi(E_1^{[n]}\otimes \dots\otimes E_k^{[n]}\otimes \mathcal D_L)$ as well as
$R\mu_*(E_1^{[n]}\otimes \dots\otimes E_k^{[n]}\otimes \mathcal D_L)$ are cohomologically concentrated in degree zero, i.e.
\begin{align*}\Phi(E_1^{[n]}\otimes \dots\otimes E_k^{[n]}\otimes \mathcal D_L)&\simeq p_*q^*(E_1^{[n]}\otimes \dots\otimes E_k^{[n]}\otimes \mathcal D_L)\,,\,\\
 R\mu_*(E_1^{[n]}\otimes \dots\otimes E_k^{[n]}\otimes \mathcal D_L)&\simeq \mu_*(E_1^{[n]}\otimes \dots\otimes E_k^{[n]}\otimes \mathcal D_L)\,.
\end{align*}
\end{cor}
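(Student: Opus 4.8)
The plan is to deduce both statements formally from Lemma \ref{det} together with the concentration result of Theorem \ref{Sca} (ii), the only analytic input being that tensoring by an invertible (hence flat) sheaf is an exact functor and therefore cannot create higher cohomology. In other words, every derived tensor product that appears will silently be an ordinary one, so that cohomological concentration in degree zero is transported from the untwisted tensor product to the twisted one.

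For the first assertion I would apply Lemma \ref{det} (i) to $\F^\bullet=E_1^{[n]}\otimes\dots\otimes E_k^{[n]}$ to obtain the natural isomorphism
\[\Phi(E_1^{[n]}\otimes\dots\otimes E_k^{[n]}\otimes \mathcal D_L)\simeq \Phi(E_1^{[n]}\otimes\dots\otimes E_k^{[n]})\otimes L^{\boxtimes n}\,.\]
By Theorem \ref{Sca} (ii) the factor $\Phi(E_1^{[n]}\otimes\dots\otimes E_k^{[n]})$ is concentrated in degree zero, and since $L^{\boxtimes n}$ is a line bundle on $X^n$ the tensor product is exact, so the right-hand side stays in degree zero. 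To identify this with the non-derived $p_*q^*$, I would reuse the identity $q^*\mathcal D_L\cong p^*L^{\boxtimes n}$ from the proof of Lemma \ref{det}, giving
\[q^*(E_1^{[n]}\otimes\dots\otimes E_k^{[n]}\otimes \mathcal D_L)\cong q^*(E_1^{[n]}\otimes\dots\otimes E_k^{[n]})\otimes p^*L^{\boxtimes n}\,,\]
and then apply $p_*$ and the projection formula to get
\[p_*q^*(E_1^{[n]}\otimes\dots\otimes E_k^{[n]}\otimes \mathcal D_L)\cong p_*q^*(E_1^{[n]}\otimes\dots\otimes E_k^{[n]})\otimes L^{\boxtimes n}\,,\]
which by Theorem \ref{Sca} (ii) agrees with $\Phi(E_1^{[n]}\otimes\dots\otimes E_k^{[n]})\otimes L^{\boxtimes n}\simeq \Phi(E_1^{[n]}\otimes\dots\otimes E_k^{[n]}\otimes \mathcal D_L)$.

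For the $R\mu_*$ statement I would argue analogously via Lemma \ref{det} (iii), which yields $R\mu_*(E_1^{[n]}\otimes\dots\otimes E_k^{[n]}\otimes \mathcal D_L)\simeq R\mu_*(E_1^{[n]}\otimes\dots\otimes E_k^{[n]})\otimes (L^{\boxtimes n})^{\sym_n}$. Here $R\mu_*(E_1^{[n]}\otimes\dots\otimes E_k^{[n]})$ is concentrated in degree zero because Proposition \ref{mubkr} identifies it with $\Phi(E_1^{[n]}\otimes\dots\otimes E_k^{[n]})^{\sym_n}$ and $[\_]^{\sym_n}$ is exact, and tensoring with the invertible sheaf $(L^{\boxtimes n})^{\sym_n}$ on $S^nX$ again preserves this. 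The only point requiring genuine care, and the step I would present explicitly, is precisely this exactness/flatness observation: one must note that $(L^{\boxtimes n})^{\sym_n}$, being the descent of $L^{\boxtimes n}$ (by \cite[Theorem 2.3]{DN}), is an invertible sheaf, so that the tensor products in Lemma \ref{det} do not contribute extra cohomology. Everything else is a purely formal assembly of the natural isomorphisms already at our disposal.
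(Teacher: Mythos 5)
Your proposal is correct and follows exactly the route the paper takes: the paper's own proof is the one-line statement that the corollary follows from Proposition \ref{mubkr}, Theorem \ref{Sca}, and Lemma \ref{det}, and your argument simply spells out those same three ingredients, with the exactness of tensoring by the invertible sheaves $L^{\boxtimes n}$ and $(L^{\boxtimes n})^{\sym_n}$ as the glue. Nothing is missing and nothing is genuinely different.
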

\begin{proof}
 This follows from proposition \ref{mubkr}, theorem \ref{Sca}, and the previous lemma.
\end{proof}
Using the above lemma and the corollary we can generalise the results of this chapter to sheaves on $X^{[n]}$ tensorised with determinant line bundles. 
\subsection{Derived functors}
We can generalise the results on the the push forwards of the Grothendieck classes along the Hilbert-Chow morphism and with this also the results on the Euler characteristics from locally free sheaves on $X$ to objects in $\D^b(X)$. For this we have to note that we can define the occurring functors $K_0$ and $T_\ell$ (subsection \ref{constr}), $H_\ell$ (subsection \ref{longzwei}), and $\F$, $\E$, and $\mathcal H$ (section \ref{threetens}) also on the level of derived categories.  
\begin{defin}\label{derivedT}
Let $E_1^\bullet,\dots,E_k^\bullet\in \D^b(X)$. We define for $n\in N$ derived multi-functors as follows
\begin{align*}K_0(a)\colon \D^b(X)^k&\to \D_{\sym_n}^b(X^n)\,,\\
 (E_1^\bullet,\dots,E_k^\bullet)&\mapsto \pr_{a(1)}^*E_1^\bullet\otimes^L\dots\otimes^L\pr_{a(k)}^*E_k^\bullet\simeq \bigotimes_{t=1}^n\pr_t^*(\otimes_{\alpha\in a^{-1}(t)}^L E_\alpha^\bullet)\,,\\
 T_\ell(M;i,j;a)\colon \D^b(X)^k&\to \D_{\sym_n}^b(X^n)\,,\\
(E_1^\bullet,\dots,E_k^\bullet)&\mapsto\left(S^{\ell-1}\Omega_X\otimes (\otimes_{\alpha\in \hat M(a)}^L E_\alpha^\bullet) \right)_{ij}\otimes \bigotimes_{t=3}^n\pr_t^*(\otimes_{\alpha\in a^{-1}(t)}^L \E_\alpha^\bullet)\,.\\
K_0:=&\bigoplus_{a\in I_0}K_0(a)\quad,\quad T_\ell:=\bigoplus_{(M;i,j;a)\in I_\ell}T_\ell(M;i,j;a)\\
H_\ell\colon \D^b(X)^k&\to \D_{\sym_2}^b(X^2)\,,\,
(E_1^\bullet,\dots,E_k^\bullet)\mapsto \delta_*(E_1^\bullet\otimes^L\dots\otimes^LE_k^\bullet)\,,\\
\mathcal F\colon \D^b(X)^k&\to \D^b_{\sym_n}(X^n) \quad  (n\ge 3)\,,\,
(E_1^\bullet,E_2^\bullet,E_3^\bullet)\mapsto\left(E_1^\bullet\otimes^L E_2^\bullet\otimes^L E_3^\bullet  \right)_{13}\,,\\
\mathcal E\colon \D^b(X)^k&\to \D^b_{\sym_n}(X^n) \quad  (n\ge 3)\,,\,
(E_1^\bullet,E_2^\bullet,E_3^\bullet)\mapsto\left(E_1^\bullet\otimes^L E_2^\bullet\otimes^L E_3^\bullet  \right)_{123}\,,\\
\mathcal H\colon \D^b(X)^k&\to \D^b_{\sym_n}(X^n) \quad  (n\ge 3)\,,\,
(E_1^\bullet,E_2^\bullet,E_3^\bullet)\mapsto\left(\Omega_X\otimes E_1^\bullet\otimes^L E_2^\bullet\otimes^L E_3^\bullet  \right)_{13}\\
\mathcal H_{123}\colon \D^b(X)^k&\to \D^b_{\sym_n}(X^n) \quad  (n\ge 3)\,,\,
(E_1^\bullet,E_2^\bullet,E_3^\bullet)\mapsto\left(\Omega_X\otimes E_1^\bullet\otimes^L E_2^\bullet\otimes^L E_3^\bullet  \right)_{123}\\
\end{align*}
The empty derived tensor product has to be interpreted as the sheaf $\reg_X$.
\end{defin}
The functor $(\_)_I$ is the composition of the pull-back along the flat morphism $p_I$ and the closed embedding $\iota_I$. Thus, it is exact.
The only derived functor occurring in the above multi-functors is the tensor product on $X$. Thus the images under the functors can be computed by replacing $E_1^\bullet,\dots,E_k^\bullet$ by locally free resolutions.   
In particular, for $E_1,\dots,E_k$ locally free sheaves on $X$ the functors $K_0$, $T_\ell$, $H_\ell$, $\E$, and $\mathcal H$ coincide with the functors defined before and $\mathcal H_{123}$ coincides with $\mathcal H_{|\Delta_{123}}$.
Again we will often drop the arguments of the functors in the notations. We also define as in subsection \ref{threetens} the functors $T_1(1)$ and $T_1(3)$ as the direct sum of the $T_1(M;i,j;a)$ in the case $k=3$ over $\tilde J^1$ respectively $\tilde J^3$. Again $T_1(1)^{\sym_{\overline{\{1,3\}}}}\simeq (\F^{\sym_{\overline{\{1,3\}}}})^{\oplus 3}$.
\subsection{Generalised results}\label{gen}
\begin{theorem}\label{maindescnew}
Let $E_1,\dots, E_k$ be locally free sheaves and $L$ a line bundle on $X$. Then on $X^n$ there is the equality
\[K_{k}\otimes L^{\boxtimes n} =p_*q^*(E_1^{[n]}\otimes \dots \otimes E_k^{[n]}\otimes \mathcal D_L)\]
of subsheaves of $K_{0}\otimes L^{\boxtimes n}$.  Also, for every $\ell=1,\dots,k$ we have
$K_\ell\otimes L^{\boxtimes n}=\ker(\phi_\ell\otimes \id_{L^{\boxtimes n}})$.
\end{theorem}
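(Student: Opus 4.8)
The plan is to reduce the twisted statement to the untwisted Theorem \ref{maindesc} using Lemma \ref{det} together with the exactness of tensoring by the line bundle $L^{\boxtimes n}$. I would begin with the second assertion, which is purely formal. By construction $K_\ell=\ker(\phi_\ell\colon K_{\ell-1}\to T_\ell)$, so there is a left-exact sequence $0\to K_\ell\to K_{\ell-1}\xrightarrow{\phi_\ell}T_\ell$ on $X^n$. Since $L^{\boxtimes n}$ is a line bundle, hence flat, the functor $(\_)\otimes L^{\boxtimes n}$ is exact, and applying it gives the left-exact sequence
\[0\to K_\ell\otimes L^{\boxtimes n}\to K_{\ell-1}\otimes L^{\boxtimes n}\xrightarrow{\phi_\ell\otimes\id} T_\ell\otimes L^{\boxtimes n}\,.\]
Thus $K_\ell\otimes L^{\boxtimes n}=\ker(\phi_\ell\otimes\id_{L^{\boxtimes n}})$, and inductively this exhibits $K_\ell\otimes L^{\boxtimes n}$ as a subsheaf of $K_0\otimes L^{\boxtimes n}$ cut out successively by the twisted morphisms.

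For the first assertion I would apply Lemma \ref{det}(i) with $\F^\bullet=\bigotimes_i E_i^{[n]}$, giving $\Phi(\bigotimes_i E_i^{[n]}\otimes\mathcal D_L)\simeq\Phi(\bigotimes_i E_i^{[n]})\otimes L^{\boxtimes n}$. By Corollary \ref{zeroconc} (and Theorem \ref{Sca}) both sides are cohomologically concentrated in degree zero, so $\Phi$ may be replaced by its non-derived version $p_*q^*$. Combining this with Theorem \ref{maindesc} yields
\[p_*q^*\Bigl(\bigotimes_i E_i^{[n]}\otimes\mathcal D_L\Bigr)\cong p_*q^*\Bigl(\bigotimes_i E_i^{[n]}\Bigr)\otimes L^{\boxtimes n}=K_k\otimes L^{\boxtimes n}\,.\]
To see that this isomorphism realises the left-hand side as the subsheaf $K_k\otimes L^{\boxtimes n}$ of $K_0\otimes L^{\boxtimes n}$, and not merely abstractly, I would rerun Proposition \ref{im} in the twisted setting. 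The underlying reason is the identity $q^*\mathcal D_L\cong p^*L^{\boxtimes n}$ from the proof of Lemma \ref{det}: tensoring the right resolution $C^\bullet_{E_i}$ of $p_*q^*E_i^{[n]}$ by the flat sheaf $L^{\boxtimes n}$ produces a right resolution of $p_*q^*(E_i^{[n]}\otimes\mathcal D_L)$, so the augmentation of the twisted tautological sheaf is $\gamma_i\otimes\id_{L^{\boxtimes n}}\colon p_*q^*E_i^{[n]}\otimes L^{\boxtimes n}\to C^0_{E_i}\otimes L^{\boxtimes n}$. The torsion argument of Proposition \ref{im} then identifies $p_*q^*(\bigotimes_i E_i^{[n]}\otimes\mathcal D_L)$ with $\im\bigl((\otimes_i\gamma_i)\otimes\id_{L^{\boxtimes n}}\bigr)$, and since tensoring by the invertible sheaf $L^{\boxtimes n}$ commutes with taking images this equals $\im(\otimes_i\gamma_i)\otimes L^{\boxtimes n}=K_k\otimes L^{\boxtimes n}$.

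The main obstacle is not any genuine difficulty but the bookkeeping needed to verify that the projection-formula isomorphism of Lemma \ref{det}(i) is compatible with the canonical subsheaf embeddings into $K_0\otimes L^{\boxtimes n}$. This compatibility follows from naturality: all the structure maps ($\gamma_i$, the augmentations, and the $\phi_\ell$) are natural, and because $(\_)\otimes L^{\boxtimes n}$ is exact and commutes with kernels and images, the entire twisted construction is obtained termwise from the untwisted one by tensoring with $L^{\boxtimes n}$. In particular the twisted comparison morphism is exactly $\phi_\ell\otimes\id_{L^{\boxtimes n}}$, which is precisely what makes the two assertions of the theorem mutually consistent.
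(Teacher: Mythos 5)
Your proposal is correct and follows essentially the same route as the paper, which proves the theorem by combining Theorem \ref{maindesc} with Lemma \ref{det} and noting that tensoring with the line bundle $L^{\boxtimes n}$ is exact. Your additional bookkeeping (rerunning Proposition \ref{im} with the twisted augmentations $\gamma_i\otimes\id_{L^{\boxtimes n}}$ to see that the abstract isomorphism respects the subsheaf embedding into $K_0\otimes L^{\boxtimes n}$) is a sound elaboration of what the paper leaves implicit.
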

\begin{proof}
Use theorem \ref{maindesc} and lemma \ref{det}. The second assertion is due to the fact that tensoring with the line bundle $L^{\boxtimes n}$ is exact.
\end{proof}
\begin{prop}\label{maxcohtwi}
Let $E_1,\dots, E_k$ be locally free sheaves and $L$ a line bundle on $X$. Then
\begin{align*}\Ho^{2n}(X^{[n]}, E_1^{[n]}\otimes \dots\otimes E_k^{[n]}\otimes \mathcal D_L)&\cong \Ho^{2n}(X^n,K_0\otimes L^{\boxtimes n})^{\sym_n}\\&\cong \bigoplus_{a\in J_0}\bigotimes_{r=1}^{\max a}\Ho^2(\bigotimes_{t\in a^{-1}(r)} E_t\otimes L)\otimes S^{n-\max a}\Ho^2(L)\,. \end{align*}
\end{prop}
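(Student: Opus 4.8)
The plan is to obtain this as a flat twist of Proposition \ref{maxcoh}, inserting $L^{\boxtimes n}$ at each step. First I would set up the identification of the cohomology groups. Combining Lemma \ref{det}(i) with Theorem \ref{maindescnew} gives $\Phi(E_1^{[n]}\otimes\dots\otimes E_k^{[n]}\otimes \mathcal D_L)\simeq K_k\otimes L^{\boxtimes n}$, which by Corollary \ref{zeroconc} is cohomologically concentrated in degree zero. Feeding this into Proposition \ref{mubkr} yields
\[
\Ho^*(X^{[n]}, E_1^{[n]}\otimes\dots\otimes E_k^{[n]}\otimes \mathcal D_L)\cong \Ho^*(X^n, K_k\otimes L^{\boxtimes n})^{\sym_n},
\]
so it suffices to compute the $\sym_n$-invariant cohomology of $K_k\otimes L^{\boxtimes n}$ in degree $2n$.

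Next I would transport the degeneration argument of Proposition \ref{maxcoh}. Writing $B_\ell=\im(\phi_\ell)\subset T_\ell$ and tensoring the short exact sequences $0\to K_\ell\to K_{\ell-1}\to B_\ell\to 0$ with the locally free sheaf $L^{\boxtimes n}$ (which preserves exactness) gives
\[
0\to K_\ell\otimes L^{\boxtimes n}\to K_{\ell-1}\otimes L^{\boxtimes n}\to B_\ell\otimes L^{\boxtimes n}\to 0.
\]
Since $B_\ell$ is supported on the big diagonal $\mathbbm D$ and tensoring by a line bundle does not change the support, $B_\ell\otimes L^{\boxtimes n}$ is again supported on $\mathbbm D$, which has dimension $2(n-1)$; hence $\Ho^{2n-1}(B_\ell\otimes L^{\boxtimes n})=\Ho^{2n}(B_\ell\otimes L^{\boxtimes n})=0$. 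The long exact cohomology sequence then forces $\Ho^{2n}(K_\ell\otimes L^{\boxtimes n})=\Ho^{2n}(K_{\ell-1}\otimes L^{\boxtimes n})$, and by induction $\Ho^{2n}(K_k\otimes L^{\boxtimes n})=\Ho^{2n}(K_0\otimes L^{\boxtimes n})$. All these maps are $\sym_n$-equivariant, so passing to invariants establishes the first isomorphism in the statement.

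It then remains to compute $\Ho^{2n}(X^n,K_0\otimes L^{\boxtimes n})^{\sym_n}$. Using $K_0=\bigoplus_{a\in I_0}K_0(a)$ together with $L^{\boxtimes n}=\bigotimes_{i=1}^n\pr_i^*L$, I would rewrite each summand as
\[
K_0(a)\otimes L^{\boxtimes n}\cong \bigotimes_{m=1}^n\pr_m^*\Bigl(\bigotimes_{t\in a^{-1}(m)}E_t\otimes L\Bigr),
\]
which has exactly the box-product shape of Lemma \ref{kunneth}, with $\bigotimes_{t\in a^{-1}(m)}E_t$ replaced by $\bigotimes_{t\in a^{-1}(m)}E_t\otimes L$ in each factor. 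The Künneth isomorphism then identifies the top-degree piece $\Ho^{2n}(X^n,K_0(a)\otimes L^{\boxtimes n})$ with $\bigotimes_{m=1}^n\Ho^2(\bigotimes_{t\in a^{-1}(m)}E_t\otimes L)$. Applying Danila's lemma \ref{Dan} with the orbit data of Remark \ref{orbits}(i) (unique representatives $a\in J_0$, isotropy $\sym_{[\max a+1,n]}$), and noting that for $m>\max a$ the factor is $\Ho^2(L)$, the stabiliser acts on the remaining $n-\max a$ copies of $\Ho^2(L)$ by permutation; since $\Ho^2$ is concentrated in even degree the Koszul sign is trivial, so these invariants are $S^{n-\max a}\Ho^2(L)$. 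This produces precisely the claimed direct sum.

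I expect no genuine obstacle here, as the whole argument is a flat twist of Proposition \ref{maxcoh}. The only point needing minor care is the sign bookkeeping in the symmetrisation of the $\Ho^2(L)$-factors, but because these classes live in even cohomological degree the symmetrisation is honestly symmetric, exactly as in the proof of Lemma \ref{kunneth}(ii); so I would simply invoke that lemma rather than redo the computation.
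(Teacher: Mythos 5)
Your proposal is correct and follows exactly the route the paper intends: its proof of this proposition is the one-line remark that it follows from Theorem \ref{maindescnew} just as Proposition \ref{maxcoh} followed from Theorem \ref{maindesc}, and your write-up simply spells out that twist-by-$L^{\boxtimes n}$ argument (exactness of tensoring by a line bundle, support of $B_\ell\otimes L^{\boxtimes n}$ on $\mathbbm D$, Künneth and Danila as in Lemma \ref{kunneth}). No gaps.
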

\begin{proof}
This follows from theorem \ref{maindescnew} the same way proposition \ref{maxcoh} followed from theorem \ref{maindesc}.
\end{proof}
\begin{theorem}\label{longtwodet}
Let $E_1,\dots, E_k$ be locally free sheaves and $L$ a line bundle on $X$. Then on $X^2$ for every $\ell=1,\dots,k$ the sequences
\[0\to K_\ell\otimes L^{\boxtimes n}\to K_{\ell-1}\otimes L^{\boxtimes n}\to T_\ell^0\otimes L^{\boxtimes n} \to T_\ell^1\otimes L^{\boxtimes n}\to\dots\to T_\ell^{k-\ell}\otimes L^{\boxtimes n}\to 0\]
are exact.
\end{theorem}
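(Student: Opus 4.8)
The plan is to deduce these twisted sequences directly from the untwisted long exact sequences of Proposition \ref{longtwo} by tensoring with the line bundle $L^{\boxtimes n}$, using that tensoring with an invertible sheaf is an exact functor. First I would recall that Proposition \ref{longtwo} already establishes, on $X^2$, the exactness of
\[0\to K_\ell\to K_{\ell-1}\xrightarrow{\phi_\ell} T_\ell^0\to T_\ell^1\to\dots\to T_\ell^{k-\ell}\to 0\]
for each $\ell=1,\dots,k$. Since $L^{\boxtimes n}$ is a line bundle, hence locally free and in particular flat, the functor $(\_)\otimes L^{\boxtimes n}$ is exact; applying it termwise to the above sequence preserves exactness and produces the asserted complex, with differentials $\phi_\ell\otimes\id_{L^{\boxtimes n}}$ and $d_\ell^i\otimes\id_{L^{\boxtimes n}}$.

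The only point requiring care is the identification of the twisted terms with the objects named in the statement. For the kernel terms I would invoke the second assertion of Theorem \ref{maindescnew}, which identifies $K_\ell\otimes L^{\boxtimes n}$ with $\ker(\phi_\ell\otimes\id_{L^{\boxtimes n}})$; this accounts intrinsically for the leftmost arrow and for exactness at $K_\ell\otimes L^{\boxtimes n}$ and $K_{\ell-1}\otimes L^{\boxtimes n}$. For the remaining terms $T_\ell^i\otimes L^{\boxtimes n}$ nothing further is needed beyond the observation that the differentials $d_\ell^i$ were constructed (in subsection \ref{longzwei}) from $T_\ell^\bullet=H_\ell\otimes_\C R_\ell^\bullet$ as $\reg$-linear maps whose formation commutes with tensoring by a line bundle; hence the twisted differentials are literally $d_\ell^i\otimes\id_{L^{\boxtimes n}}$, and the complex in the statement is exactly the twist of the one in Proposition \ref{longtwo}.

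I expect no genuine obstacle here: the analytic content is entirely carried by Proposition \ref{longtwo}, and the passage to the determinant-twisted setting is formal. The only bookkeeping is to confirm that twisting the sheaf on $X^2$ by $L^{\boxtimes n}$ is the correct operation corresponding, after applying $\Phi$, to tensoring by the determinant line bundle $\mathcal D_L$ on $X^{[n]}$; but this compatibility is precisely the identification $q^*\mathcal D_L\cong p^*L^{\boxtimes n}$ supplied by Lemma \ref{det}, so that the determinant-twisted statement reduces cleanly to the already-proven untwisted one.
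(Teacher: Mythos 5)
Your proposal is correct and matches the paper's proof exactly: the paper simply tensorizes the exact sequences of Proposition \ref{longtwo} with the line bundle $L^{\boxtimes n}$, relying on the exactness of this operation. Your additional remarks on identifying the twisted kernels via Theorem \ref{maindescnew} and on the compatibility $q^*\mathcal D_L\cong p^*L^{\boxtimes n}$ are consistent with the surrounding material and do not change the argument.
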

\begin{proof}
 We tensorize the exact sequences of \ref{longtwo} with the line bundle $L^{\boxtimes n}$.
\end{proof}
\begin{prop}
On $S^2X$ there are for $\ell=1,\dots,k$ exact sequences
\[0\to (K_\ell\otimes L^{\boxtimes n})^{\sym_2}\to (K_{\ell-1}\otimes L^{\boxtimes n}) ^{\sym_2}\to \pi_*(H_\ell\otimes L^{\boxtimes n})^{\oplus N(k,\ell)}\to 0\,.\]
In particular $(K_k\otimes L^{\boxtimes n})^{\sym_2}=(K_{k-1}\otimes L^{\boxtimes n})^{\sym_2}$. 
\end{prop}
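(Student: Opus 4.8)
The plan is to tensor the long exact sequence already established in Theorem \ref{longtwodet} with the line bundle and then push down to $S^2X$ while taking $\sym_2$-invariants, thereby reducing the whole statement to the untwisted Proposition \ref{symtwoexact}. Since tensoring with the line bundle $L^{\boxtimes n}$ is exact and $\phi_\ell\otimes\id$ has the same kernel/image behaviour as $\phi_\ell$, I expect no new geometry to be needed, only careful bookkeeping of the $\sym_2$-linearization.

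First I would extract from the twisted long exact sequence of Theorem \ref{longtwodet} the short exact sequence
\[0\to K_\ell\otimes L^{\boxtimes n}\to K_{\ell-1}\otimes L^{\boxtimes n}\xrightarrow{\phi_\ell\otimes\id}\im(\phi_\ell\otimes\id)\to 0\,,\]
using exactness at the first two terms; exactness of tensoring with $L^{\boxtimes n}$ gives $\im(\phi_\ell\otimes\id)=\im(\phi_\ell)\otimes L^{\boxtimes n}$. Then I would apply the functor $[\_]^{\sym_2}=(\_)^{\sym_2}\circ\pi_*$. This functor is exact, because $\pi$ is finite, so $\pi_*$ is exact, and taking invariants under the finite group $\sym_2$ is exact over $\C$. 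Hence the induced sequence of invariants
\[0\to (K_\ell\otimes L^{\boxtimes n})^{\sym_2}\to (K_{\ell-1}\otimes L^{\boxtimes n})^{\sym_2}\to [\im(\phi_\ell\otimes\id)]^{\sym_2}\to 0\]
stays exact, and it remains to identify the rightmost term.

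For this identification I would invoke Lemma \ref{det}(ii) with $\mathcal G^\bullet=\im(\phi_\ell)$ and $H=\sym_2$, which yields
\[[\pi_*(\im(\phi_\ell)\otimes L^{\boxtimes n})]^{\sym_2}\cong (\pi_*\im(\phi_\ell))^{\sym_2}\otimes (L^{\boxtimes n})^{\sym_n}\,.\]
By Proposition \ref{symtwoexact} the first factor equals $(\pi_*H_\ell)^{\oplus N(k,\ell)}$, while the projection formula together with the descent isomorphism $\pi^*((L^{\boxtimes n})^{\sym_n})\cong L^{\boxtimes n}$ gives $(\pi_*H_\ell)\otimes (L^{\boxtimes n})^{\sym_n}\cong\pi_*(H_\ell\otimes L^{\boxtimes n})$. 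Combining these identifies the rightmost term with $\pi_*(H_\ell\otimes L^{\boxtimes n})^{\oplus N(k,\ell)}$, which is precisely the desired sequence. The final assertion is then immediate, since $N(k,k)=0$ forces the rightmost term to vanish for $\ell=k$, so that $(K_k\otimes L^{\boxtimes n})^{\sym_2}=(K_{k-1}\otimes L^{\boxtimes n})^{\sym_2}$.

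The main, and only mildly subtle, obstacle is ensuring that the line-bundle twist does not disturb the representation-theoretic count producing the multiplicity $N(k,\ell)$. In the approach above this is exactly what Lemma \ref{det}(ii) and the projection formula guarantee. If instead one argues directly in parallel with the proof of Proposition \ref{symtwoexact}, writing $T_\ell^\bullet\otimes L^{\boxtimes n}=(H_\ell\otimes L^{\boxtimes n})\otimes_\C\hat R_\ell^\bullet$, the key point becomes that $\sym_2$ acts trivially on $H_\ell\otimes L^{\boxtimes n}$: the factor $H_\ell$ already carries the trivial action by the convention fixing $\hat R_\ell^\bullet$, and on the diagonal $\Delta$ where $H_\ell$ is supported the transposition $\tau=(1\,2)$ acts on $(L^{\boxtimes n})_{|\Delta}\cong L^{\otimes 2}$ as the swap of $L_x\otimes L_x$, which is the identity because $L$ has rank one. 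Granting this, exactness of $\otimes_\C$ and of $\sym_2$-invariants gives $[\mathcal H^0(T_\ell^\bullet\otimes L^{\boxtimes n})]^{\sym_2}=\pi_*(H_\ell\otimes L^{\boxtimes n})\otimes_\C\mathcal H^0(\hat R_\ell^\bullet)^{\sym_2}$, and Lemma \ref{Nkl} supplies the multiplicity $N(k,\ell)$, recovering the same conclusion.
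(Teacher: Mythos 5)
Your proof is correct and uses the same two ingredients as the paper — Proposition \ref{symtwoexact} and Lemma \ref{det}(ii) — merely in the opposite order: the paper tensors the untwisted invariant sequences on $S^2X$ with the descended line bundle $(L^{\boxtimes 2})^{\sym_2}$ and then invokes Lemma \ref{det}(ii), whereas you twist upstairs first and take invariants afterwards, which Lemma \ref{det}(ii) shows is the same thing. Your closing observation that $\tau$ acts trivially on $(L^{\boxtimes n})_{|\Delta}\cong L^{\otimes 2}$ is a correct (and slightly more hands-on) justification of the same commutation.
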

\begin{proof}
We tensorise the exact sequences of proposition \ref{symtwoexact} with the line  bundle $(L^{\boxtimes 2})^{\sym_2}$ and use lemma \ref{det} (ii). 
\end{proof}
\begin{lemma}
For $E_1,\dots,E_k$ locally free sheaves and $L$ a line bundle on $X$ there is in the Grothendieck group $\K(S^2X)$ the equality
\[\mu_!\bigl[(E_1^{[2]}\otimes\dots\otimes E_k^{[2]}\otimes \mathcal D_L)\bigr]=\bigl[(K_0\otimes L^{\boxtimes n})^{\sym_2}\bigr]-\sum_{\ell=1}^k N(k,\ell)\bigl[(H_\ell\otimes L^{\boxtimes n})^{\sym_2} \bigr]\,.\]   
\end{lemma}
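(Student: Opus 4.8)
The plan is to argue exactly as in the proof of Corollary~\ref{Ktwofree}, feeding in the determinant-twisted versions of the ingredients used there. Concretely, I would first replace $\mu_!$ by the class of an honest sheaf, then identify that sheaf with $(K_k\otimes L^{\boxtimes n})^{\sym_2}$, and finally collapse the short exact sequences of the preceding proposition by a telescoping sum in $\K(S^2X)$.

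First I would invoke Corollary~\ref{zeroconc}: since $R\mu_*(E_1^{[2]}\otimes\dots\otimes E_k^{[2]}\otimes\mathcal D_L)$ is cohomologically concentrated in degree zero, we have in $\K(S^2X)$
\[\mu_!\bigl[E_1^{[2]}\otimes\dots\otimes E_k^{[2]}\otimes\mathcal D_L\bigr]=\bigl[\mu_*(E_1^{[2]}\otimes\dots\otimes E_k^{[2]}\otimes\mathcal D_L)\bigr]\,.\]
Combining Proposition~\ref{mubkr} with Corollary~\ref{zeroconc} and Theorem~\ref{maindescnew} (applied with $n=2$) then identifies this sheaf as
\[\mu_*(E_1^{[2]}\otimes\dots\otimes E_k^{[2]}\otimes\mathcal D_L)\cong\Phi(E_1^{[2]}\otimes\dots\otimes E_k^{[2]}\otimes\mathcal D_L)^{\sym_2}\cong(K_k\otimes L^{\boxtimes n})^{\sym_2}\,.\]

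Next I would use the short exact sequences of the preceding proposition. For each $\ell=1,\dots,k$ they give in $\K(S^2X)$ the relation
\[\bigl[(K_{\ell-1}\otimes L^{\boxtimes n})^{\sym_2}\bigr]=\bigl[(K_\ell\otimes L^{\boxtimes n})^{\sym_2}\bigr]+N(k,\ell)\bigl[\pi_*(H_\ell\otimes L^{\boxtimes n})\bigr]\,.\]
Summing these telescopically over $\ell$ and substituting the identification of $[(K_k\otimes L^{\boxtimes n})^{\sym_2}]$ with $\mu_!$ of the twisted product yields
\[\mu_!\bigl[E_1^{[2]}\otimes\dots\otimes E_k^{[2]}\otimes\mathcal D_L\bigr]=\bigl[(K_0\otimes L^{\boxtimes n})^{\sym_2}\bigr]-\sum_{\ell=1}^kN(k,\ell)\bigl[\pi_*(H_\ell\otimes L^{\boxtimes n})\bigr]\,.\]

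It then remains to identify $[\pi_*(H_\ell\otimes L^{\boxtimes n})]$ with $[(H_\ell\otimes L^{\boxtimes n})^{\sym_2}]$ in the notation of Remark~\ref{partialquotient}; this is the only place where real care is needed. The point is that the $\sym_2$-linearization on $H_\ell$ entering the preceding proposition is the trivial one, and the transposition $\tau=(1\,2)$ acts trivially on the restriction of $L^{\boxtimes n}$ to the diagonal because the swap of the two identical line-bundle factors is the identity; hence $\pi_*(H_\ell\otimes L^{\boxtimes n})$ already equals its own $\sym_2$-invariants. I expect the main obstacle to be precisely this bookkeeping, together with confirming that the degree-zero concentration and the identification of $\mu_*$ with $\sym_2$-invariants persist after twisting by $\mathcal D_L$ — but both are supplied by Corollary~\ref{zeroconc} and Lemma~\ref{det}, so beyond this the argument is a routine telescoping in the Grothendieck group.
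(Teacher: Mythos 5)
Your proof is correct, but it takes a different route from the paper's. The paper proves this lemma in two lines: it multiplies both sides of the untwisted identity of Corollary \ref{Ktwofree} by the class $[(L^{\boxtimes n})^{\sym_n}]$ in $\K(S^2X)$ and then applies the projection formula, in the form of Lemma \ref{det} (iii) to convert $\mu_![\cdots]\cdot[(L^{\boxtimes n})^{\sym_n}]$ into $\mu_![\cdots\otimes\mathcal D_L]$ on the left, and Lemma \ref{det} (ii) to pull the twist inside the invariants on the right. You instead re-run the whole telescoping argument of Corollary \ref{Ktwofree} with the twisted ingredients: degree-zero concentration from Corollary \ref{zeroconc}, the identification $\mu_*(\otimes_i E_i^{[2]}\otimes\mathcal D_L)\cong(K_k\otimes L^{\boxtimes 2})^{\sym_2}$ via Theorem \ref{maindescnew}, and the twisted short exact sequences of the preceding proposition. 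Both are valid; the paper's derivation is shorter and makes the lemma a formal consequence of the untwisted case, while yours is self-contained given the twisted exact sequences (which the paper establishes anyway). Your attention to the identification $\pi_*(H_\ell\otimes L^{\boxtimes 2})=(H_\ell\otimes L^{\boxtimes 2})^{\sym_2}$ is well placed: the transposition acts trivially on $H_\ell$ by construction and on $L^{\boxtimes 2}|_\Delta\cong L^{\otimes 2}$ because swapping the two factors of a line bundle is the identity, so the two classes agree. The only cosmetic blemish is writing $L^{\boxtimes n}$ where $n=2$ is meant, but the paper's own statement does the same.
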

\begin{proof}
 We multiply both sides of the equation in corollary \ref{Ktwofree} by $[(L^{\boxtimes n})^{\sym_n}]$. Then we apply \ref{det} (iii) to the left-hand side and \ref{det} (ii) to the right-hand side.
\end{proof}
\begin{prop}\label{Kgen}
 For $E_1^\bullet,\dots,E_k^\bullet\in\D^b(X)$ and $L$ a line bundle on $X$ there is in the Grothendieck group $\K(S^2X)$ the equality
\[\mu_!\bigl[((E_1^\bullet)^{[2]}\otimes\dots\otimes (E_k^\bullet)^{[2]}\otimes \mathcal D_L)\bigr]=\bigl[(K_0\otimes L^{\boxtimes n})^{\sym_2}\bigr]-\sum_{\ell=1}^k N(k,\ell)\bigl[(H_\ell\otimes L^{\boxtimes n})^{\sym_2} \bigr]\,.\]   
\end{prop}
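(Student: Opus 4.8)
The plan is to show that both sides of the claimed identity, regarded as functions of the tuple $(E_1^\bullet,\dots,E_k^\bullet)\in\D^b(X)^k$ with $L$ fixed, descend to multi-additive maps on the Grothendieck group, and then to invoke the previous lemma on a generating set. First I would check that both sides are well-defined classes in $\K(S^2X)$. On the left the tautological functor $(\_)^{[2]}\colon\D^b(X)\to\D^b(X^{[2]})$ is exact, hence triangulated; the derived tensor product of the $(E_i^\bullet)^{[2]}$ with the line bundle $\mathcal D_L$ stays in $\D^b(X^{[2]})$ because $X^{[2]}$ is smooth; and $R\mu_*$ preserves boundedness and coherence since $\mu$ is projective. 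On the right the functors $K_0$ and $H_\ell$ of Definition \ref{derivedT} involve only the derived tensor product on $X$ together with the exact operations $(\_)_I$, $\otimes\,L^{\boxtimes n}$ and $(\_)^{\sym_2}$, so their values also lie in $\D^b(S^2X)$.

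Next I would establish multi-additivity. A distinguished triangle in the $i$-th slot of $\D^b(X)$ is sent by the exact functor $(\_)^{[2]}$ to a distinguished triangle in $\D^b(X^{[2]})$; derived tensoring with the remaining tautological factors and with $\mathcal D_L$ is triangulated in each variable, and $R\mu_*$ is triangulated, so the left-hand side turns such a triangle into an additive relation among the corresponding classes in $\K(S^2X)$. The same holds for the right-hand side, because the derived tensor product on $X$ is triangulated in each variable while $(\_)_I$, the twist by $L^{\boxtimes n}$ and the invariants functor $(\_)^{\sym_2}=(\_)^{\sym_2}\circ\pi_*$ are all exact. Since the relations in $\K(X)=\K_0(\D^b(X))$ are exactly those coming from distinguished triangles, both sides therefore factor through well-defined multi-additive maps $\K(X)^k\to\K(S^2X)$ (the fixed line bundle $\mathcal D_L$, being invertible, causes no difficulty, and one may alternatively reduce to $L=\reg_X$ beforehand using Lemma \ref{det}).

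Finally, because $X$ is smooth every object of $\D^b(X)$ is quasi-isomorphic to a bounded complex of locally free sheaves, so $\K(X)$ is generated as an abelian group by classes $[E]$ of locally free sheaves. By the multi-additivity just established it suffices to verify the identity when each $E_i^\bullet$ is a locally free sheaf $E_i$. For such inputs the derived functors $K_0$ and $H_\ell$ coincide with their classical counterparts, since the derived tensor products become ordinary tensor products; hence both sides reduce precisely to the statement of the previous lemma, which yields the equality.

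The step I expect to require the most care is the additivity of the left-hand side: one must make sure that the tautological functor, the derived tensor product on $X^{[2]}$, and $R\mu_*$ are genuinely triangulated in the relevant variables, so that distinguished triangles produce Grothendieck-group relations, and that all resulting complexes remain bounded with coherent cohomology so that the classes $\mu_!\bigl[\cdots\bigr]$ actually live in $\K(S^2X)$. Once this is in place, the reduction to locally free generators and the appeal to the previous lemma are formal.
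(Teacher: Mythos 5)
Your proposal is correct and follows essentially the same route as the paper: both reduce to the locally free case of the preceding lemma by exploiting the exactness of $(\_)^{[2]}$, the triangulated nature of the derived tensor products, and the exactness of $(\_)_I$, $\otimes\,L^{\boxtimes n}$ and $(\_)^{\sym_2}$. The paper simply phrases the reduction by choosing locally free resolutions $F_i^\bullet$ of the $E_i^\bullet$ and writing each side as the alternating sum $\sum_q(-1)^q\sum_{p_1+\dots+p_k=q}(\cdot)(F_1^{q_1},\dots,F_k^{q_k})$, which is the concrete form of your factorisation through multi-additive maps $\K(X)^k\to\K(S^2X)$.
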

\begin{proof}
We denote by $A$ the left-hand side of the equation and by $B$ the right hand side, both as functions in $E_1^\bullet,\dots,E_k^\bullet$.
We choose locally free resolutions $F_i^\bullet$ of $E_i^\bullet$ for $i\in[k]$. Since the functor $(\_)^{[n]}$ is exact and $R\mu_*$ coincides with $\mu_*$ on tensor products of tautological bundles and determinant line bundles by corollary \ref{zeroconc} we have for the left-hand side
\[A((E_i^\bullet)_i)=A((F_i^\bullet)_i)=\sum_{q\in \Z}(-1)^q\left(\sum_{p_1+\dots+p_k=q} A(F_1^{q_1},\dots,F_k^{q_k})\right)\,.\]
Since the derived functors occuring on the right-hand side coincide with their non-derived versions on locally free sheaves, we get also
\[B((E_i^\bullet)_i)=B((F_i^\bullet)_i)=\sum_{q\in \Z}(-1)^q\left(\sum_{p_1+\dots+p_k=q} B(F_1^{q_1},\dots,F_k^{q_k})\right)\]
and the assertion follows by the previous lemma.
\end{proof}
\begin{theorem}
Let $X$ be a smooth projective surface. Let $E_1^\bullet,\dots,E_k^\bullet\in \D^b(X)$ and $L$ a line bundle on $X$. Then there is the formula
\begin{align*}&\chi_{X^{[2]}}\left((E_1^\bullet)^{[2]}\otimes^L \dots\otimes^L (E_k^\bullet)^{[2]}\otimes^L \mathcal D_L \right)\\
=&\sum_{1\in P\in[k]}\chi\left(\otimes_{t\in P}^L E_t^\bullet\otimes L\right)\cdot \chi\left(\otimes_{t\in [k]\setminus P}^L E_t^\bullet\otimes L\right)-\sum_{\ell=1}^{k-1}N(k,\ell)\cdot\chi\left(\otimes_{t\in [k]}^LE_t^\bullet\otimes S^{\ell-1}\Omega_X\otimes L^{\otimes 2}\right)\,.\end{align*}
\end{theorem}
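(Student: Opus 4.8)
The plan is to obtain the formula as a direct consequence of Proposition \ref{Kgen} by applying the Euler characteristic and identifying the two resulting terms as invariants on $X$. The starting observation is that, since $\mu$ is proper, the Euler characteristic on $X^{[2]}$ is computed after push-forward: for every $\mathcal F^\bullet\in\D^b(X^{[2]})$ one has $\chi_{X^{[2]}}(\mathcal F^\bullet)=\chi_{S^2X}(\mu_!\mathcal F^\bullet)$, because $\Ho^*(X^{[2]},\mathcal F^\bullet)\cong\Ho^*(S^2X,R\mu_*\mathcal F^\bullet)$ and $\chi$ is additive on the Grothendieck group. Applying this to $\mathcal F^\bullet=(E_1^\bullet)^{[2]}\otimes^L\dots\otimes^L(E_k^\bullet)^{[2]}\otimes^L\mathcal D_L$ and inserting the identity of Proposition \ref{Kgen} yields
\[\chi_{X^{[2]}}\Bigl((E_1^\bullet)^{[2]}\otimes^L\dots\otimes^L(E_k^\bullet)^{[2]}\otimes^L\mathcal D_L\Bigr)=\chi_{S^2X}\bigl((K_0\otimes L^{\boxtimes 2})^{\sym_2}\bigr)-\sum_{\ell=1}^kN(k,\ell)\,\chi_{S^2X}\bigl((H_\ell\otimes L^{\boxtimes 2})^{\sym_2}\bigr).\]
It then remains to evaluate the two Euler characteristics on the right-hand side.

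For the first term I would use the decomposition of Lemma \ref{zerotwo}, which (after replacing the $E_t^\bullet$ by locally free resolutions, exactly as in the proof of Proposition \ref{Kgen}) gives $(K_0\otimes L^{\boxtimes 2})^{\sym_2}\cong\bigoplus_{1\in P\subset[k]}\pi_*\bigl(K_0(P)\otimes L^{\boxtimes 2}\bigr)$ with $K_0(P)\otimes L^{\boxtimes 2}\cong\bigl(\otimes_{t\in P}^LE_t^\bullet\otimes L\bigr)\boxtimes\bigl(\otimes_{t\in[k]\setminus P}^LE_t^\bullet\otimes L\bigr)$. Since $\pi$ is finite it preserves $\chi$, and the K\"unneth formula for Euler characteristics of external products then produces
\[\chi_{S^2X}\bigl((K_0\otimes L^{\boxtimes 2})^{\sym_2}\bigr)=\sum_{1\in P\subset[k]}\chi\Bigl(\otimes_{t\in P}^LE_t^\bullet\otimes L\Bigr)\cdot\chi\Bigl(\otimes_{t\in[k]\setminus P}^LE_t^\bullet\otimes L\Bigr),\]
which is precisely the first sum in the assertion.

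For the second term I would exploit that $H_\ell$ is supported on the diagonal. Writing $\delta\colon X\to S^2X$ for the finite diagonal embedding, the twist restricts there as $\delta^*(L^{\boxtimes 2})^{\sym_2}\cong L^{\otimes 2}$, so that $(H_\ell\otimes L^{\boxtimes 2})^{\sym_2}\cong\delta_*\bigl(S^{\ell-1}\Omega_X\otimes E_1^\bullet\otimes^L\dots\otimes^L E_k^\bullet\otimes L^{\otimes 2}\bigr)$. As $\delta$ preserves $\chi$, this summand contributes $\chi\bigl(\otimes_{t\in[k]}^LE_t^\bullet\otimes S^{\ell-1}\Omega_X\otimes L^{\otimes 2}\bigr)$. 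Finally, the vanishing $N(k,k)=0$ removes the $\ell=k$ summand, so the sum runs only over $\ell=1,\dots,k-1$, reproducing the second sum of the formula.

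The whole argument is essentially bookkeeping once Proposition \ref{Kgen} is in hand; the one point demanding attention is tracking how the determinant twist restricts on the two strata. On the locus carrying the external products $K_0(P)$ the twist $L^{\boxtimes 2}$ splits as $L\boxtimes L$, contributing the factor $\otimes L$ in each K\"unneth factor, whereas on the diagonal supporting $H_\ell$ it restricts to $L^{\otimes 2}$; keeping these two restrictions distinct is exactly what produces the asymmetry between the two sums. No genuinely new input beyond Proposition \ref{Kgen}, Lemma \ref{zerotwo}, the K\"unneth formula, and the invariance of $\chi$ under the finite morphisms $\pi$ and $\delta$ is required.
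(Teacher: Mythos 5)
Your proposal is correct and follows essentially the same route as the paper, which simply remarks that the theorem follows from Proposition \ref{Kgen} in the same way Proposition \ref{Eulertwo} followed from Corollary \ref{Ktwofree} — namely by applying $\chi$ to the Grothendieck-group identity, decomposing $(K_0\otimes L^{\boxtimes 2})^{\sym_2}$ via Lemma \ref{zerotwo} and K\"unneth, and evaluating the diagonal terms $H_\ell$ with the twist restricting to $L^{\otimes 2}$. Your write-up just makes explicit the bookkeeping the paper leaves implicit, including the correct observation that $N(k,k)=0$ truncates the second sum at $\ell=k-1$.
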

\begin{proof}
This follows from the previous proposition the same way proposition \ref{Eulertwo} followed from corollary \ref{Ktwofree}. 
\end{proof}
Similiarly, one can generalise the formula for the Euler bicharacteristic of subsection \ref{bichar}.
\begin{theorem}
 Let $E_1,\dots,E_k$ be locally free sheaves and $L$ a line bundle on $X$ and let $n\ge 3$. Then the sheaf 
\[\mu_*(E_1^{[n]}\otimes\dots\otimes E_k^{[n]}\otimes \mathcal D_L)\cong (K_2\otimes L^{\boxtimes n})^{\sym_n}\]
is given by the following exact sequences
\begin{align*}
 0\to \ker(\phi_1^{\sym_n}(1))\otimes (L^{\boxtimes n})^{\sym_n}\to (K_0\otimes L^{\boxtimes n})^{\sym_n}\to  (T_1(1) \otimes L^{\boxtimes n})^{\sym_{\overline{\{1,3\}}}}
\to 0\,,\\
0\to (K_1\otimes L^{\boxtimes n})^{\sym_n}\to \ker(\phi_1^{\sym_n}(1))\otimes (L^{\boxtimes n})^{\sym_n}\to (T_1(3)\otimes L^{\boxtimes n})^{\sym_{[4,n]}}\to (\mathcal E^2\otimes L^{\boxtimes n})^{\sym_{[4,n]}}\to 0\,,\\
0\to (K_2\otimes L^{\boxtimes n})^{\sym_n}\to (K_1\otimes L^{\boxtimes n})^{\sym_n}\to (\mathcal H\otimes L^{\boxtimes n})^{\sym_{\overline{\{1,3\}}}}\to (\mathcal H_{|\Delta_{123}}\otimes L^{\boxtimes n})^{\sym_{[4,n]}}\to 0\,.
\end{align*}
\end{theorem}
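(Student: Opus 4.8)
The plan is to deduce all three sequences from their untwisted counterparts in Lemmas \ref{onlyone}, \ref{kone}, and \ref{ktwo} by tensoring with $L^{\boxtimes n}$, using Lemma \ref{det} to keep track of invariants. Write $\mathcal L:=(L^{\boxtimes n})^{\sym_n}$. First I would settle the isomorphism $\mu_*(E_1^{[n]}\otimes E_2^{[n]}\otimes E_3^{[n]}\otimes \mathcal D_L)\cong (K_2\otimes L^{\boxtimes n})^{\sym_n}$. By Corollary \ref{zeroconc} the object $R\mu_*(E_1^{[n]}\otimes E_2^{[n]}\otimes E_3^{[n]}\otimes\mathcal D_L)$ is concentrated in degree zero, and by Lemma \ref{det}(iii) it is isomorphic to $R\mu_*(E_1^{[n]}\otimes E_2^{[n]}\otimes E_3^{[n]})\otimes\mathcal L$. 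Since $R\mu_*(E_1^{[n]}\otimes E_2^{[n]}\otimes E_3^{[n]})\cong K_2^{\sym_n}$ by Corollaries \ref{mucor} and \ref{kvanish}, applying Lemma \ref{det}(ii) with $H=\sym_n$ gives $K_2^{\sym_n}\otimes\mathcal L\cong(K_2\otimes L^{\boxtimes n})^{\sym_n}$, which is the asserted identification.

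Next I would identify the terms of the three sequences. Every term appearing in Lemmas \ref{onlyone}--\ref{ktwo} is of the form $(\pi_*\mathcal G)^H$ for an $\sym_n$-equivariant sheaf $\mathcal G$ on $X^n$ and a subgroup $H\le\sym_n$ (namely $\sym_n$, $\sym_{\overline{\{1,3\}}}$, or $\sym_{[4,n]}$, in the sense of Remark \ref{partialquotient}). By Lemma \ref{det}(ii) one has $(\pi_*\mathcal G)^H\otimes\mathcal L\cong\bigl(\pi_*(\mathcal G\otimes L^{\boxtimes n})\bigr)^H$, so tensoring each of the three sequences with $\mathcal L$ yields exactly the three sequences in the statement, the connecting maps being the original maps tensored with $\id_{\mathcal L}$. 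It therefore only remains to verify that these tensored sequences stay exact.

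The main obstacle is precisely this exactness. Since $S^nX$ is singular and $\mathcal L$ need not be locally free, one cannot simply tensor the sequences on $S^nX$ and invoke flatness; nor can one lift to $X^n$ and tensor there, because the underlying $\sym_n$-equivariant complexes on $X^n$ fail to be exact away from $X^n_{**}$ (this failure is exactly why Lemmas \ref{onlyone}--\ref{ktwo} had to be proved directly on $S^nX$). My route around this is to re-run the proofs of Lemmas \ref{onlyone}, \ref{kone}, and \ref{ktwo} with the twist in place. Those proofs are entirely local on $X^n$ and reduce, after trivializing the $E_i$ on a suitable open cover, to the case $E_1=E_2=E_3=\reg_X$ (compare the proof of Proposition \ref{opendes}); the only global inputs are the $\sym_n$-linearizations and the signs by which transpositions act on the relevant summands.

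Since $L$ is locally trivial, $L^{\boxtimes n}$ trivializes on the same cover, so every construction of local sections and every restriction-to-diagonal computation carries over unchanged after tensoring with $L^{\boxtimes n}$. Moreover the permutation linearization of $L^{\boxtimes n}$ acts trivially on $(L^{\boxtimes n})_{|\Delta_{13}}$ and on $(L^{\boxtimes n})_{|\Delta_{123}}$, because on a diagonal the swapped tensor factors are canonically identified; hence a transposition such as $(1\,\,3)$ acts on $\F\otimes L^{\boxtimes n}$, $\E\otimes L^{\boxtimes n}$, and $\mathcal H\otimes L^{\boxtimes n}$ by the same sign as on $\F$, $\E$, and $\mathcal H$ (see Remark \ref{minus}). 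Consequently every sign computation and every invariance argument in Lemmas \ref{onlyone}--\ref{ktwo} goes through verbatim for the twisted sheaves, giving the exactness of all three sequences and completing the proof.
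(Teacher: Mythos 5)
Your proposal reaches the correct conclusion, and its first two steps (the identification $\mu_*(E_1^{[n]}\otimes E_2^{[n]}\otimes E_3^{[n]}\otimes\mathcal D_L)\cong(K_2\otimes L^{\boxtimes n})^{\sym_n}$ via Lemma \ref{det}(iii) and Corollary \ref{kvanish}, and the rewriting of each term via Lemma \ref{det}(ii)) are exactly what the paper does. Where you diverge is on exactness, and the divergence rests on a false premise: you assert that one cannot simply tensor the sequences of Lemmas \ref{onlyone}, \ref{kone}, and \ref{ktwo} on $S^nX$ because \emph{$\mathcal L$ need not be locally free}. But $(L^{\boxtimes n})^{\sym_n}$ is precisely the descent of $L^{\boxtimes n}$ to $S^nX$ --- the paper quotes \cite[Theorem 2.3]{DN} for $L^{\boxtimes n}\cong\pi^*((L^{\boxtimes n})^{\sym_n})$ --- hence an invertible sheaf on $S^nX$; indeed it must be, for $\mathcal D_L=\mu^*((L^{\boxtimes n})^{\sym_n})$ to be a line bundle. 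Tensoring an exact sequence of coherent sheaves with an invertible sheaf preserves exactness on any scheme, singular or not, so the paper's entire proof of exactness is: tensor the three sequences by $(L^{\boxtimes n})^{\sym_n}$ and apply Lemma \ref{det}(ii) to each term. Your fallback --- re-running the proofs of the three lemmas with the twist in place, using that $L^{\boxtimes n}$ trivializes $\sym_n$-equivariantly on covers of the form $U^n$ and that transpositions act trivially on $(L^{\boxtimes n})_{|\Delta_{13}}$ and $(L^{\boxtimes n})_{|\Delta_{123}}$ --- does go through, so the argument is not wrong, but it replaces a one-line flatness observation by several pages of repeated verification and should be discarded along with the stated (nonexistent) obstacle.
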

\begin{proof}
The isomorphism $\mu_*(E_1^{[n]}\otimes\dots\otimes E_k^{[n]}\otimes \mathcal D_L)\cong (K_2\otimes L^{\boxtimes n})^{\sym_n}$ follows by \ref{det} (iii) and \ref{kvanish}. We get the exactness of the sequences by tensoring the exact sequences of \ref{onlyone}, \ref{kone}, and \ref{ktwo} by $(L^{\boxtimes n})^{\sym_n}$ and using lemma \ref{det} (ii).
\end{proof}
\begin{prop}\label{Kgenthree}
Let $E_1^\bullet,E_2^\bullet,E_3^\bullet\in \D^b(X)$ and $L$ a line bundle on $X$.
Then for $n\ge 3$ there is in the Grothendieck group $\K(S^nX)$ the equality
\begin{align*}&\mu_!\bigl[(E_1^\bullet)^{[n]}\otimes^L (E_2^\bullet)^{[n]}\otimes^L (E_3^\bullet)^{[n]}\otimes \mathcal D_L\bigr]\\
=&\bigl[(K_0\otimes L^{\boxtimes n})^{\sym_n}\bigr] -3\bigl[(\mathcal F\otimes L^{\boxtimes n})^{\sym_{\overline{\{1,3\}}}} \bigr] -\bigl[
(T_1(3)\otimes L^{\boxtimes n})^{\sym_{[4,n]}}  \bigr]\\&+2\bigl[(\mathcal E\otimes L^{\boxtimes n})^{\sym_{[4,n]}}  \bigr]- \bigl[(\mathcal H\otimes L^{\boxtimes n})^{\sym_{\overline{\{1,3\}}}}  \bigr]+ 
\bigl[(\mathcal H_{123}\otimes L^{\boxtimes n})^{\sym_{[4,n]}}  \bigr]
\,.\end{align*}  
\end{prop}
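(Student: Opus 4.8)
The plan is to reduce the derived statement to the locally free case already recorded in Corollary \ref{Kthree}, following verbatim the mechanism used for the two-point version in Proposition \ref{Kgen}. Write $A(E_1^\bullet,E_2^\bullet,E_3^\bullet)$ for the left-hand side and $B(E_1^\bullet,E_2^\bullet,E_3^\bullet)$ for the right-hand side of the asserted identity, both viewed as functions of the three arguments with values in $\K(S^nX)$. The structural point is that $A$ and $B$ are additive in each argument separately: on the right, the multi-functors $K_0$, $\mathcal F$, $T_1(3)$, $\mathcal E$, $\mathcal H$, $\mathcal H_{123}$ of Definition \ref{derivedT} are assembled only from operations that are exact in each variable — flat pull-backs along the $\pr_i$, the exact functors $(\_)_I$, the finite push-forward $\pi_*$, the invariants functors $(\_)^H$ (exact over $\C$), and tensoring with the fixed locally free sheaves $S^{\ell-1}\Omega_X$, $\Omega_X$ — together with a single derived tensor product $\otimes^L$; on the left the tautological functor $(\_)^{[n]}$ is exact and $\mu_!$ is additive as a proper push-forward on $K$-theory.

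First I would choose locally free resolutions $F_i^\bullet\to E_i^\bullet$ for $i=1,2,3$. Since $(\_)^{[n]}$ is exact and, by Corollary \ref{zeroconc}, $R\mu_*$ agrees with $\mu_*$ on tensor products of tautological bundles twisted by $\mathcal D_L$, and since the derived tensor product $(F_1^\bullet)^{[n]}\otimes^L(F_2^\bullet)^{[n]}\otimes^L(F_3^\bullet)^{[n]}$ is computed as the total complex of the termwise tensor products of locally free sheaves, evaluating $A$ on the resolutions yields
\[
A(E_1^\bullet,E_2^\bullet,E_3^\bullet)=A(F_1^\bullet,F_2^\bullet,F_3^\bullet)=\sum_{q\in\Z}(-1)^q\sum_{q_1+q_2+q_3=q}A(F_1^{q_1},F_2^{q_2},F_3^{q_3})\,.
\]
By the remarks following Definition \ref{derivedT} the derived functors appearing in $B$ coincide with their non-derived versions on locally free sheaves, so the identical computation gives
\[
B(E_1^\bullet,E_2^\bullet,E_3^\bullet)=B(F_1^\bullet,F_2^\bullet,F_3^\bullet)=\sum_{q\in\Z}(-1)^q\sum_{q_1+q_2+q_3=q}B(F_1^{q_1},F_2^{q_2},F_3^{q_3})\,.
\]

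It therefore suffices to establish $A=B$ on triples of locally free sheaves, i.e. the $\mathcal D_L$-twisted version of the formula for locally free $E_1,E_2,E_3$. This base case follows from Corollary \ref{Kthree} by multiplying both sides by the class $[(L^{\boxtimes n})^{\sym_n}]$ and applying Lemma \ref{det}(iii) to the left-hand side and Lemma \ref{det}(ii) to each invariants summand on the right-hand side; note that in Lemma \ref{det}(ii) the twist is by the fixed descended bundle $(L^{\boxtimes n})^{\sym_n}$ irrespective of the subgroup $H$, so the computation applies uniformly to the $\sym_n$-, $\sym_{\overline{\{1,3\}}}$-, and $\sym_{[4,n]}$-invariant terms (equivalently, one may take the alternating sum in $\K(S^nX)$ of the three exact sequences of the preceding theorem). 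Combining the two displayed expansions with this base case gives $A=B$ for arbitrary objects of $\D^b(X)$.

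The only point needing care, and the natural place for the argument to go wrong, is the compatibility of passing to invariants with the resolution step: one must know that $(\_)^{\sym_n}$ and its partial analogues commute with forming alternating sums in the Grothendieck group and with the substitution $E_i^\bullet\rightsquigarrow F_i^\bullet$. This is precisely where working over $\C$ is used, since there taking invariants under a finite group is exact; consequently the termwise identities above are legitimate and no spectral-sequence convergence or higher-Tor obstruction intervenes.
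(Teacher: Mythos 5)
Your proposal is correct and follows essentially the same route as the paper: reduce to locally free sheaves via termwise resolutions using exactness of $(\_)^{[n]}$, Corollary \ref{zeroconc}, and the agreement of the derived multi-functors with their non-derived versions on locally free sheaves, then settle the locally free base case by twisting Corollary \ref{Kthree} with $[(L^{\boxtimes n})^{\sym_n}]$ via Lemma \ref{det}(ii) and (iii). The paper's own proof is just the one-line remark that this mirrors the derivation of Proposition \ref{Kgen} from Corollary \ref{Ktwofree}, which is exactly what you have spelled out.
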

\begin{proof}
 This follows from \ref{Kthree} the same way \ref{Kgen} followed from \ref{Ktwofree}. 
\end{proof}
We use the notation of \ref{chi}, namely
\[s^{m}\chi(L):= \chi((L^{\boxtimes m})^{\sym_m})=\chi(S^m\Ho^*(L))=\binom{\chi(L)+m-1}{m}\,.\]
\begin{theorem}
Let $E_1^\bullet,E_2^\bullet,E_3^\bullet\in \D^b(X)$ and $L$ a line bundle on a smooth projective surface $X$.
Then for $n\ge 3$ there is the formula 
\begin{align*}
&\chi((E_1^\bullet)^{[n]}\otimes^L (E_2^\bullet)^{[n]}\otimes^L (E_3^\bullet)^{[n]}\otimes \mathcal D_L)\\
=&\chi(E_1^\bullet\otimes L)\chi(E_2^\bullet\otimes L)\chi(E_3^\bullet\otimes L)s^{n-3}\chi(L) \\
 &+\bigl(\sum_I\chi(E_a^\bullet\otimes^L E_b^\bullet\otimes L)\chi(E_c^\bullet\otimes L) \bigr)\cdot s^{n-2}\chi(L) \\
&-\bigl(\sum_I\chi(E_a^\bullet\otimes^L E_b^\bullet\otimes L\otimes L)\chi(E_c^\bullet\otimes L) \bigr)\cdot s^{n-3}\chi(L) \\ 
&+\chi(E_1^\bullet\otimes^L E_2^\bullet\otimes^L E_3^\bullet\otimes L)s^{n-1}\chi(L)\\
&-3\chi(E_1^\bullet\otimes^L E_2^\bullet\otimes^L E_3^\bullet\otimes L\otimes L)s^{n-2}\chi(L)\\
&+2\chi(E_1^\bullet\otimes^L E_2^\bullet\otimes^L E_3^\bullet\otimes L\otimes L\otimes L)s^{n-3}\chi(L)\\ 
&-\chi(\Omega_X\otimes E_1^\bullet\otimes^L E_2^\bullet\otimes^L E_3^\bullet\otimes L\otimes L)s^{n-2}\chi(L) \\
&+\chi(\Omega_X\otimes E_1^\bullet\otimes^L E_2^\bullet\otimes^L E_3^\bullet\otimes L\otimes L\otimes L)s^{n-3}\chi(L)\,.
\end{align*}
Here $I$ denotes the index set
\[I=\{(a=1,b=2,c=3),(a=1,b=3,c=2),(a=2,b=3,c=1)\}\,.\]
\end{theorem}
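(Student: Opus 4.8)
The plan is to deduce the formula by applying the Euler-characteristic functional to the Grothendieck-group identity of Proposition~\ref{Kgenthree}, in the same spirit in which Corollary~\ref{Eulerthree} was obtained from Corollary~\ref{Kthree}, but now carrying the twist by $L$ through every step. First I would record that for any class $\alpha\in\K(X^{[n]})$ one has $\chi_{S^nX}(\mu_!\alpha)=\chi_{X^{[n]}}(\alpha)$: this follows from the Leray spectral sequence for $\mu$ together with the additivity of $\chi$ on distinguished triangles. Applying $\chi_{S^nX}$ to both sides of Proposition~\ref{Kgenthree} therefore turns the left-hand side into the desired $\chi\bigl((E_1^\bullet)^{[n]}\otimes^L(E_2^\bullet)^{[n]}\otimes^L(E_3^\bullet)^{[n]}\otimes\mathcal D_L\bigr)$ and reduces the theorem to evaluating $\chi$ on each of the six invariant-sheaf classes on the right.

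Each such class is of the form $(G\otimes L^{\boxtimes n})^H$, where $H$ is the relevant isotropy group and $G$ is one of the functors $K_0(a)$ (for $a\in\tilde I$, after decomposing $(K_0\otimes L^{\boxtimes n})^{\sym_n}$ into its summands), the summands of $T_1(3)$, $\mathcal F$, $\mathcal E$, $\mathcal H$, and $\mathcal H_{123}$. To evaluate its Euler characteristic I would pass, as in remark~\ref{partialquotient}, to the partial quotient $X^Q\times S^{\bar Q}X$, on which the sheaf is the pushforward along the appropriate diagonal of a box product of sheaves on $X$ with a twisted symmetric power of the free part. The Künneth decompositions of lemma~\ref{kunneth} and the computation underlying lemma~\ref{Eulersummands}, now performed with the $L$-factors included, then express $\chi$ as a product of Euler characteristics of sheaves on $X$ times the binomial factor $s^{m}\chi(L)=\binom{\chi(L)+m-1}{m}$ of definition~\ref{chi}, with $m=n-|Q|$ the number of free points; here lemma~\ref{det}(ii) is what lets one commute the invariants past the $L$-twist. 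Matching the contributions to the representatives --- $K_0(1,2,3)$ to the leading term, $K_0(1,1,1)$ to the $s^{n-1}\chi(L)$ term, $K_0(1,1,3),K_0(1,3,1),K_0(3,1,1)$ and $T_1(3)$ to the two $\sum_I$ terms, $\mathcal F$ and $\mathcal E$ to the two pure $\chi(E_1^\bullet\otimes^L E_2^\bullet\otimes^L E_3^\bullet\otimes\cdots)$ terms, and $\mathcal H,\mathcal H_{123}$ to the two $\Omega_X$-terms --- reproduces the eight lines of the asserted formula exactly as in Corollary~\ref{Eulerthree}.

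The passage to arbitrary $E_i^\bullet\in\D^b(X)$ is then handled verbatim as in the proof of Proposition~\ref{Kgen}: choose locally free resolutions $F_i^\bullet$ of the $E_i^\bullet$, note that each of the functors $K_0,T_1(3),\mathcal F,\mathcal E,\mathcal H,\mathcal H_{123}$ is computed termwise on locally free sheaves and that $\chi$ is additive, so that the general formula follows from the locally free case by summing with signs over the resolutions.

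The one point that genuinely needs care --- and which I expect to be the main obstacle --- is the bookkeeping of the powers of $L$ created by the diagonal restrictions. Whenever $G$ is supported on a pairwise diagonal (the classes $\mathcal F$ and $\mathcal H$ on $\Delta_{13}$, and the summands of $T_1(3)$ on $\Delta_{12}$) the two $L$-factors at the coinciding positions merge into $L^{\otimes 2}$, and whenever $G$ is supported on the triple diagonal $\Delta_{123}$ (the classes $\mathcal E$ and $\mathcal H_{123}$) three factors merge into $L^{\otimes 3}$; these account for precisely the extra $L\otimes L$ and $L\otimes L\otimes L$ twists in the third and fifth-through-eighth lines of the formula. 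Checking that the remaining $n-2$ respectively $n-3$ untwisted copies of $L$ reassemble into $s^{n-2}\chi(L)$ respectively $s^{n-3}\chi(L)$, and that the signs and multiplicities $-3,-1,+2,-1,+1$ carry over from Proposition~\ref{Kgenthree}, completes the identification and hence the proof.
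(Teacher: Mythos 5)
Your proposal is correct and follows essentially the same route as the paper, which simply applies $\chi$ to the Grothendieck-group identity of Proposition \ref{Kgenthree} and evaluates each invariant summand as in Corollary \ref{Eulerthree}; your bookkeeping of the merged $L$-factors ($L^{\otimes 2}$ on pairwise diagonals, $L^{\otimes 3}$ on $\Delta_{123}$) and of the symmetric factors $s^{m}\chi(L)$ matches the asserted formula term by term. The only redundancy is the final resolution argument: since Proposition \ref{Kgenthree} is already stated for objects of $\D^b(X)$, no separate passage from locally free sheaves to complexes is needed at this stage.
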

\begin{proof}
This follows from the previous proposition the same way \ref{Eulerthree} followed from \ref{Kthree}.
\end{proof}

\begin{appendix}
\section{Combinatorical notations and preliminaries}\label{combi}
\subsection{Partial diagonals}\label{pardia} 
Let $X$ be a variety and $n\in \N$. We define for $I\subset [n]$ with $|I|\ge 2$ the \textit{$I$-th partial diagonal} as the reduced closed subvariety given by
\[\Delta_I=\{(x_1,\dots,x_n)\in X^n\mid x_i=x_j\,\forall\, i,j\in I\}\,.\]
We denote by $\pr_i\colon X^n\to X$ the projection on the $i$-th factor and by $p_I\colon \Delta_I\to X$ the projection induced by $\pr_i$ for any $i\in I$. We denote the inclusion of the partial diagonals into the product by $\iota_I\colon \Delta_I\to X^n$. For a coherent sheaf $F$ on $X$ we set
\[F_I:=\iota_{I*}p_I^*F\,.\]
The functor $(\_)_I\colon \Coh(X)\to \Coh(X^n)$ is an exact functor since it is a pull-back along a projection followed by a push-forward along a closed embedding. We will sometimes drop the brackets $\{\_\}$ in the notations, e.g we will write
\[\Delta_{12}=\Delta_{1,2}=\Delta_{\{1,2\}}\quad,\quad F_{12}=F_{1,2}=F_{\{1,2\}}\,.  \]
Moreover, we denote the vanishing ideal sheaf of $\Delta_I$ in $X^n$ by $\I_I$.
If $X$ is non-singular we denote the normal bundle of the partial diagonal by $N_I:=N_{\Delta_I}=(\iota_I^*\I_I)^\vee=(\I_I/\I_I^2)^\vee$.
For $|I|\in\{0,1\}$ we set $\Delta_I=X^n$. The \textit{big diagonal} in $X^n$ is defined as 
$\mathbbm D:=\cup_{1\le i<j\le n}\Delta_{ij}$. The \textit{small diagonal} is $\Delta=\Delta_{[n]}$. 
\subsection{Signs}
Let $M$ be a finite set. There is the group homomorphism $\sgn\colon\sym_M\to \{-1,+1\}$ which is given after choosing a total order $<$ on $M$ by  
\[\sgn\sigma= (-1)^{\#\{(i,j)\in M\times M\mid i<j\,,\, \sigma(i)>\sigma(j)\}}\]
for $\sigma\in \sym_M$.
For two finite totally ordered sets $M$, $L$ of the same cardinality we define $u_{M\to L}$ as the unique strictly increasing map.
Let now $N$ be totally ordered, $m\in M\subset N$ and $\sigma\in\sym_N$. We define the signs
\begin{align*}\eps_{\sigma,M}:=\sgn(u_{\sigma(M)\to M}\circ \sigma_{|M})=(-1)^{\#\{(i,j)\in M\times M\mid i<j\,,\, \sigma(i)>\sigma(j)\}} 
\end{align*}
and $\eps_{m,M}:=(-1)^{\#\{j\in M\mid j<m\}}$.
\begin{lemma}\label{signlemma}
 \begin{enumerate}
  \item Let $L,M$ be two finite totally ordered sets. For $\sigma\in \sym_M$ we consider $\tilde \sigma:=u_{M\to L}\circ\sigma\circ u_{L\to M}\in\sym_L$. Then $\sgn \sigma=\sgn \tilde\sigma$.
\item Let $N$ be a finite set with a total order and $M\subset N$. Then 
\[\eps_{\mu,\sigma(M)}\cdot \eps_{\sigma,M}=\eps_{\mu\circ\sigma,M}\]
for all $\sigma,\mu\in \sym_N$.
\item Let $M\subset N$ be as above and $m\in M$. Then 
\[\eps_{\sigma^{-1}(m),\sigma^{-1}(M)}\cdot \eps_{\sigma,\sigma^{-1}(M)}=\eps_{m,M}\cdot \eps_{\sigma,\sigma^{-1}(M\setminus\{m\})}
 \]
holds for every $\sigma\in \sym_N$. 
 \end{enumerate}
\end{lemma}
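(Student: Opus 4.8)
The plan is to reduce all three identities to counting inversions, using that by definition $\eps_{\sigma,M}$ is the sign of the genuine permutation $u_{\sigma(M)\to M}\circ\sigma_{|M}$ of $M$, that $\sgn$ is multiplicative, and that $\sgn$ of a permutation is the parity of its number of inversions. For part (i) I would exhibit an inversion-preserving bijection. Since $u_{L\to M}$ and $u_{M\to L}$ are mutually inverse order isomorphisms, for $i<j$ in $L$ one has $\tilde\sigma(i)>\tilde\sigma(j)$ if and only if $\sigma(u_{L\to M}(i))>\sigma(u_{L\to M}(j))$, while $u_{L\to M}(i)<u_{L\to M}(j)$; hence $(i,j)\mapsto(u_{L\to M}(i),u_{L\to M}(j))$ is a bijection from the inversions of $\tilde\sigma$ onto those of $\sigma$, giving $\sgn\tilde\sigma=\sgn\sigma$.

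For part (ii) I would abbreviate $A=M$, $B=\sigma(M)$, $C=\mu(B)$. The quantities $\eps_{\sigma,M}$, $\eps_{\mu,\sigma(M)}$, $\eps_{\mu\sigma,M}$ are the signs of the permutations $u_{B\to A}\circ\sigma_{|A}$ (of $A$), $u_{C\to B}\circ\mu_{|B}$ (of $B$), and $u_{C\to A}\circ(\mu\sigma)_{|A}$ (of $A$). Factoring $(\mu\sigma)_{|A}=\mu_{|B}\circ\sigma_{|A}$ and inserting $\id_B=u_{A\to B}\circ u_{B\to A}$ yields $u_{C\to A}\circ(\mu\sigma)_{|A}=(u_{C\to A}\circ\mu_{|B}\circ u_{A\to B})\circ(u_{B\to A}\circ\sigma_{|A})$. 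The right factor has sign $\eps_{\sigma,M}$. Since a composite of order isomorphisms is again the unique order isomorphism, $u_{C\to A}=u_{B\to A}\circ u_{C\to B}$, so the left factor equals $u_{B\to A}\circ(u_{C\to B}\circ\mu_{|B})\circ u_{A\to B}$, the conjugate by $u_{A\to B}$ of the permutation $u_{C\to B}\circ\mu_{|B}$ of $B$; by part (i) its sign is $\eps_{\mu,\sigma(M)}$. Multiplicativity of $\sgn$ then gives $\eps_{\mu\sigma,M}=\eps_{\mu,\sigma(M)}\cdot\eps_{\sigma,M}$.

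For part (iii) I would set $M'=\sigma^{-1}(M)$ and $m'=\sigma^{-1}(m)$. The inversions of $\sigma$ inside $M'$ split into those inside $M'\setminus\{m'\}$ together with those involving $m'$; writing $k$ for the latter count gives $\eps_{\sigma,\sigma^{-1}(M)}=(-1)^k\,\eps_{\sigma,\sigma^{-1}(M\setminus\{m\})}$, so the claim reduces to $(-1)^k=\eps_{m,M}\cdot\eps_{\sigma^{-1}(m),\sigma^{-1}(M)}$. I would partition the elements $j'\in M'\setminus\{m'\}$ according to the signs of $j'-m'$ and of $\sigma(j')-\sigma(m')$ into four classes of cardinalities $A_1,A_2,A_3,A_4$. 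Then the exponent of $\eps_{m',M'}$ is $A_1+A_2$, that of $\eps_{m,M}$ is $A_1+A_3$ (using $M=\sigma(M')$ and $m=\sigma(m')$), and $k=A_2+A_3$; hence $\eps_{m,M}\cdot\eps_{m',M'}=(-1)^{2A_1+A_2+A_3}=(-1)^k$, as required.

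I expect the main obstacle to be the index bookkeeping in part (iii): one must simultaneously keep track of the ambient order on $M'$ and of the order pulled back through $\sigma$, and confirm that precisely the two cross-terms $A_2$ and $A_3$ survive modulo $2$. Parts (i) and (ii) are essentially formal once the order-isomorphism calculus is in place, with (ii) relying on (i) to transport the sign of the $\mu$-permutation from $\sigma(M)$ to $M$.
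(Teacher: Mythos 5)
Your proof is correct and follows essentially the same route as the paper: an inversion-preserving bijection for (i), conjugation by the order isomorphisms combined with multiplicativity of $\sgn$ and part (i) for (ii), and a count of the inversions involving $m'=\sigma^{-1}(m)$ matched against the positional counts for (iii). The four-class partition in (iii) is just a more explicit phrasing of the paper's identity equating the difference of positional counts with the signed count of cross-inversions, so nothing further is needed.
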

\begin{proof}
 The map $u_{M\to L}\times u_{M\to L}$ gives a bijection
\[
\{(i,j)\in M\times M\mid i<j\,,\, \sigma(i)>\sigma(j)\} \cong \{(i,j)\in L\times L\mid i<j\,,\, \tilde\sigma(i)>\tilde\sigma(j)\}\,,
\]
which proves (i). For (ii) we have
\begin{align*}
 \eps_{\mu,\sigma(M)}\cdot\eps_{\sigma,M}&= \sgn(u_{\mu(\sigma(M))\to \sigma(M)}\circ \mu_{|\sigma(M)})\sgn(u_{\sigma(M)\to M}\circ \sigma_{|M})\\
&\overset{(i)}= \sgn(u_{\sigma(M)\to M}u_{\mu(\sigma(M))\to \sigma(M)} \mu_{|\sigma(M)}u_{M\to \sigma(M)}) \sgn(u_{\sigma(M)\to M}\circ \sigma_{|M})\\
&= \sgn(u_{\sigma(M)\to M}u_{\mu(\sigma(M))\to \sigma(M)} \mu_{|\sigma(M)} u_{M\to \sigma(M)}u_{\sigma(M)\to M} \sigma_{|M})\\
&= \sgn\left( u_{\mu(\sigma(M))\to M} (\mu\circ\sigma)_{|M}\right)\\
&= \eps_{\mu\circ\sigma,M}\,.
\end{align*}
For $m\in M\subset N$ and $\sigma\in \sym_N$ as in (iii) we have
\begin{align*}
&|\{j\in \sigma^{-1}(M)\mid j<\sigma^{-1}(m)\}|-|\{j\in M\mid j<m\}|\\=&|\{j\in \sigma^{-1}(M)\mid j<\sigma^{-1}(m),\sigma(j)>m\}|-|\{j\in \sigma^{-1}(M)\mid j>\sigma^{-1}(m),\sigma(j)<m\}|\,.
\end{align*}
This yields 
\[\frac{\eps_{\sigma^{-1}(m),\sigma^{-1}(M)}}{\eps_{m,M}}=\frac{\eps_{\sigma,\sigma^{-1}(M)}}{\eps_{\sigma,\sigma^{-1}(M\setminus \{m\})}}
=\frac{\eps_{\sigma,\sigma^{-1}(M\setminus \{m\})}}{\eps_{\sigma,\sigma^{-1}(M)}}\,.\]
\end{proof}
\subsection{Binomial coefficients}
For $n,k\in \Z$ with $k\ge 0$ the binomial coefficient is given by
\[\binom nk=\frac 1{k!}\cdot n(n-1)\cdots(n-k+1)\,.\]
In particular for $0\le n\le k$ it is zero.
\begin{lemma}\label{binom}
For all $k,\ell\in \N$ with $k\ge \ell$ we have
\[\sum_{i=0}^{k-\ell}(-1)^i2^{k-\ell-i}\binom k{\ell+i}\binom{\ell+i-1}{\ell-1}=\sum_{j=\ell}^k\binom kj\,.\] 
\end{lemma}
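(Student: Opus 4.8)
The plan is to reindex both sides by the substitution $j=\ell+i$, turning the claimed identity into
\[\sum_{j=\ell}^{k}(-1)^{j-\ell}2^{k-j}\binom kj\binom{j-1}{\ell-1}=\sum_{j=\ell}^{k}\binom kj\,,\]
and then to prove this by induction on $k$ for fixed $\ell$. Write $S(k,\ell)$ for the left-hand side and $T(k,\ell)$ for the right-hand side. The base case $k=\ell$ is immediate: both sums reduce to a single term, namely $\binom\ell\ell\binom{\ell-1}{\ell-1}=1$ on the left and $\binom\ell\ell=1$ on the right.

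For the inductive step I would show that both $S$ and $T$ satisfy the same recursion $f(k,\ell)=2f(k-1,\ell)+\binom{k-1}{\ell-1}$. For $T$ this is a routine application of Pascal's rule $\binom kj=\binom{k-1}j+\binom{k-1}{j-1}$: the first summand reproduces $T(k-1,\ell)$ (the $j=k$ term drops out since $\binom{k-1}{k}=0$), and the second, after shifting the index, contributes $\binom{k-1}{\ell-1}+T(k-1,\ell)$.

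The analogous computation for $S$ is the heart of the argument and is where I expect the main difficulty. Applying Pascal's rule to $\binom kj$ inside $S(k,\ell)$ splits it into two sums. The part involving $\binom{k-1}j$ equals $2S(k-1,\ell)$, using the factorisation $2^{k-j}=2\cdot 2^{(k-1)-j}$. For the part involving $\binom{k-1}{j-1}$, the key step is to invoke the subset-of-a-subset identity
\[\binom{k-1}{j-1}\binom{j-1}{\ell-1}=\binom{k-1}{\ell-1}\binom{k-\ell}{j-\ell}\]
in order to pull out the $j$-independent factor $\binom{k-1}{\ell-1}$. The remaining sum, after the substitution $m=j-\ell$, is $\sum_{m=0}^{k-\ell}(-1)^m2^{k-\ell-m}\binom{k-\ell}m=(2-1)^{k-\ell}=1$ by the binomial theorem. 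Hence this part collapses to exactly $\binom{k-1}{\ell-1}$, giving $S(k,\ell)=2S(k-1,\ell)+\binom{k-1}{\ell-1}$.

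Since $S$ and $T$ satisfy the same recursion and agree at $k=\ell$, they coincide for all $k\ge\ell$, which proves the lemma. The only nonroutine ingredients are the recognition of the subset-of-a-subset factorisation and the observation that the alternating-power sum telescopes to $(2-1)^{k-\ell}=1$; everything else is bookkeeping with Pascal's rule.
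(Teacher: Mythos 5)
Your proof is correct and follows essentially the same route as the paper's: induction on $k$ via Pascal's rule, with the cross term collapsed by the subset-of-a-subset identity and the binomial expansion of $(2-1)^{k-\ell}$ (the paper writes this as $2^{k-\ell+1}(1-\tfrac12)^{k-\ell+1}$, but it is the same computation). Phrasing it as "both sides satisfy the recursion $f(k,\ell)=2f(k-1,\ell)+\binom{k-1}{\ell-1}$" is only a cosmetic repackaging of the paper's direct calculation.
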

\begin{proof}
For $k=\ell$ both sides of the equation equal $1$. Inserting $k+1$ for $k$ we get by induction
\begin{align*}
&\sum _{i=0}^{k-\ell+1}(-1)^i2^{k-\ell-i+1}\binom{k+1}{\ell+i}\binom{\ell+i-1}{\ell-1}\\
=&\sum _{i=0}^{k-\ell+1}(-1)^i2^{k-\ell-i+1}\left(\binom{k}{\ell+i}+\binom k{\ell+i-1}\right)\binom{\ell+i-1}{\ell-1}\\
=&2\sum _{i=0}^{k-\ell}(-1)^i2^{k-\ell-i}\binom{k}{\ell+i}\binom{\ell+i-1}{\ell-1}\\&+\sum _{i=0}^{k-\ell+1}(-1)^i2^{k-\ell-i+1}\binom{k}{\ell+i-1}\binom{\ell+i-1}{\ell-1}\\
=&2\sum_{j=\ell}^k\binom kj + 2^{k-\ell+1}\binom k{\ell-1}\sum_{i=0}^{k-\ell+1}(-2)^{-i}\binom{k-\ell+1}i\\
=&2\sum_{j=\ell}^k\binom kj+ 2^{k-\ell+1}\binom k{\ell-1}(1-\frac 12)^{k-\ell+1}\\
=&2\sum_{j=\ell}^k\binom kj +\binom k{\ell-1}\\
=&\sum_{j=\ell}^{k+1}\left(\binom kj+\binom k{j-1}\right)\\
=&\sum_{j=\ell}^{k+1}\binom{k+1}j\,.
\end{align*}
\end{proof}
\begin{lemma}\label{symwedge}
For $\chi,m\in \Z$ with $m\ge 0$ there is the equation 
\[(-1)^m\binom{-\chi}m=\binom{\chi+m-1}m\,.\]  
\end{lemma}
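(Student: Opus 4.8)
The plan is to prove this directly from the definition of the binomial coefficient given at the start of the subsection, since both sides are polynomial expressions in $\chi$ divided by $m!$ and the identity is purely formal. No induction or generating-function machinery is needed; the only work is bookkeeping of signs.

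First I would expand the left-hand side. By definition,
\[
\binom{-\chi}{m} = \frac{1}{m!}\,(-\chi)(-\chi-1)\cdots(-\chi-m+1) = \frac{1}{m!}\prod_{i=0}^{m-1}(-\chi-i).
\]
Each of the $m$ factors can be written as $-\chi-i = -(\chi+i)$, so pulling the sign out of every factor yields
\[
\binom{-\chi}{m} = \frac{(-1)^m}{m!}\prod_{i=0}^{m-1}(\chi+i).
\]
Multiplying through by $(-1)^m$ then gives $(-1)^m\binom{-\chi}{m} = \frac{1}{m!}\prod_{i=0}^{m-1}(\chi+i)$.

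Next I would expand the right-hand side from the same definition:
\[
\binom{\chi+m-1}{m} = \frac{1}{m!}\,(\chi+m-1)(\chi+m-2)\cdots(\chi+m-1-(m-1)) = \frac{1}{m!}\prod_{i=0}^{m-1}(\chi+m-1-i).
\]
As $i$ ranges over $0,\dots,m-1$, the index $m-1-i$ runs over the same set $0,\dots,m-1$, so this product is exactly $\prod_{i=0}^{m-1}(\chi+i)$, merely written in the opposite order. Comparing with the previous display finishes the proof.

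Since the argument is just reindexing a finite product and tracking a single $(-1)^m$, there is no genuine obstacle: the only point requiring a moment's care is confirming that the two falling products $\prod_{i=0}^{m-1}(\chi+i)$ and $\prod_{i=0}^{m-1}(\chi+m-1-i)$ coincide, which follows from the substitution $i\mapsto m-1-i$ being a bijection of $\{0,\dots,m-1\}$. The boundary case $m=0$ is automatic, as both sides reduce to the empty product $1$.
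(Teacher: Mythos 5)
Your proof is correct and follows essentially the same route as the paper: both expand the binomial coefficients via the defining product, extract a factor of $-1$ from each of the $m$ terms, and observe that the resulting products agree up to reversal of order. The only cosmetic difference is that you make the reindexing bijection $i\mapsto m-1-i$ and the $m=0$ boundary case explicit.
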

\begin{proof}
Indeed we have
\[m!\cdot \binom{-\chi}m =(-\chi)\cdots (-\chi-m+1)=(-1)^m\chi\cdots(\chi+m-1)=(-1)^mm!\binom{\chi+m-1}m\,.\] 
\end{proof}

\section{Graded vector spaces and their Euler characteristics}\label{graded}
In this section all vector spaces will be vector spaces over a field $k$ of characteristic zero.
A \textit{graded vector space $V^*$} is a vector space together with a decomposition $V^*=\oplus_{i\in \Z}V^i$ where only
finitely many $V^i$ are non-zero. A \textit{super vector space $V^\pm$} is a vector space together with a decomposition
$V^\pm =V^+\oplus V^-$. There is a functor $G\colon\GVS\to \SVS$ from the category of graded vector spaces to the category
of super vector spaces given by $V^*\mapsto V^\pm$ with
\[V^+=\bigoplus_{i \text{ even}}V^i\quad,\quad V^-=\bigoplus_{i \text{ odd}} V^i\,.\]
It has a right quasi-inverse $F$ given by $V^\pm\mapsto V^*$ with $V^0=V^+$, $V^1=V^-$, and $V^i=0$ for $i\notin \{0,1\}$.
There is also a functor $\GVS\to \D^b(\Ve)$ to the bounded derived category of vector spaces given by $V^*\mapsto V^\bullet$,
where $V^\bullet$ is the complex with trivial differentials and whose term in degree $i$ is $V^i$. Since $\Ve$ is a
semi-simple category, this is an equivalence with quasi-inverse given by $C^\bullet\mapsto C^*$ with
$C^i=\mathcal H^i(C^\bullet)$ (see \cite[II. 2.3]{GM}). Thus, we can define the tensor product of graded vector spaces
as the tensor product of the associated complexes. This means that for two graded vector spaces $V^*$ and $W^*$ the
tensor product is given by $(V\otimes W)^i=\oplus_{p+q=i} V^p\otimes W^q$. For two super vector spaces $V^\pm$ and $W^\pm$
we define their tensor product by $V^\pm\otimes W^\pm:= G(F(V^\pm)\otimes F(W^\pm))$. More concretely, we have
$(V\otimes W)^+=(V^+\otimes W^+)\oplus (V^-\otimes W^-)$ and $(V\otimes W)^-=(V^+\otimes W^-)\oplus (V^-\otimes W^+)$.
As one can check, the tensor power $(C^\bullet)^{\otimes m}$ of a complex $C^\bullet\in \D^b(\Ve)$ becomes a
$\sym_m$-equivariant complex with the action given by
\[\sigma\cdot(u_1\otimes \dots\otimes u_m):=\eps_{\sigma,p_1,\dots,p_m}(u_{\sigma^{-1}(1)}\otimes \dots\otimes
u_{\sigma^{-1}(m)})\,.
\]
Here  the $p_i$ are the degrees of the $u_i$, i.e. $u_i\in C^{p_i}$ and the sign $\eps_{\sigma,p_1,\dots,p_m}$ is defined by
setting $\eps_{\tau,p_1,\dots,p_m}=(-1)^{p_i\cdot p_{i+1}}$ for the transposition $\tau=(i\,,\, i+1)$ and requiring it to be
a homomorphism in $\sigma$. We call $\eps_{\sigma,p_1,\dots,p_m}$ the \textit{cohomological sign}. Consequently, we define
the symmetric product $S^mC^\bullet$ respectively $S_mC^\bullet$ degree-wise as the coinvariants respectively the invariants
of $(C^\bullet)^{\otimes m}$ under this action. Since $k$ is of characteristic zero, there is the natural isomorphism
$S^mC^\bullet\to S_mC^\bullet$ given by
\[u_1\cdots u_m \mapsto \frac 1{m!}\sum_{\sigma\in \sym_m}\eps_{\sigma,p_1,\dots,p_m}(u_{\sigma^{-1}(1)}\otimes \dots\otimes
 u_{\sigma^{-1}(m)})\,.
\]
Again, we define the symmetric product of graded vector spaces as the symmetric product of the associated complex and for a
super vector space $V^\pm$ we set $S^mV^\pm :=G(S^m(F(V^\pm)))$. For a bounded complex $C^\bullet\in \D^b(\Ve_f)$ of finite vector spaces its \textit{Euler characteristic}
is given by
\[
 \chi(C^\bullet):=\chi(V^*):=\sum_{i\in \Z}(-1)^ic^i=\sum_{i\in \Z}(-1)^i\dim\mathcal H^i(C^\bullet)=\sum_{i\in \Z}
 (-1)^iv^i=v^+-v^-\,.
\]
Here we use the convention of \ref{not} (vii) and denote the graded vector space associated to $C^\bullet$ by $V^*$ and the
associated super vector space by $V^\pm$. The third equality makes sure that the Euler characteristic is invariant under
isomorphisms in $\D^b(\Ve_f)$. The Euler characteristic of a finite graded vector space is defined as the Euler characteristic
of the associated complex. Let $X$ be a complete $k$-scheme. For sheaves $E,F\in \Coh(X)$ or more generally objects
$E,F\in \D^b(X)$ we set $\chi(F):=\chi(\Ho^*(X,F))$ and $\chi(E,F):=\chi(\Ext^*(E,F))$. 
\begin{lemma}\label{Euler}
 Let $V^*$ and $W^*$ be graded vector spaces and $m\in \N$. Then there are the formulas
 \[ \chi(V^*\otimes W^*):=\chi(V^*)\cdot \chi(W^*)\quad,\quad
  \chi(S^mV^*)=\binom{\chi(V^*)+m-1}{m}\,.
 \]
\end{lemma}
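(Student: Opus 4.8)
The plan is to prove the two identities in turn: first the multiplicativity of $\chi$ under $\otimes$ by a direct expansion, and then the symmetric-power formula by bootstrapping off it via a generating-function computation. For the tensor product formula I would simply unwind the definition $(V\otimes W)^i=\bigoplus_{p+q=i}V^p\otimes W^q$, so that the sum over $i$ factors into a product over $p$ and $q$:
\[
\chi(V^*\otimes W^*)=\sum_i(-1)^i\!\!\sum_{p+q=i}\!\!v^pw^q=\sum_{p,q}(-1)^{p+q}v^pw^q=\Bigl(\sum_p(-1)^pv^p\Bigr)\Bigl(\sum_q(-1)^qw^q\Bigr)=\chi(V^*)\chi(W^*).
\]

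For the symmetric power I would introduce the generating series $P(V^*):=\sum_{m\ge0}\chi(S^mV^*)t^m$ and first establish that it is multiplicative, $P(V^*\oplus W^*)=P(V^*)\,P(W^*)$. This follows from the standard decomposition $S^m(V^*\oplus W^*)\cong\bigoplus_{p+q=m}S^pV^*\otimes S^qW^*$ combined with the tensor product formula just proved. Next I would observe that $\chi(S^mV^*)$ depends only on the associated super vector space $G(V^*)$: the cohomological sign governing the $\sym_m$-action satisfies $\eps_{\tau,p,p}=(-1)^{p^2}=(-1)^p$, so it sees only the parity of $p$. Together with multiplicativity this reduces the whole computation to evaluating $P$ on a one-dimensional even piece and a one-dimensional odd piece.

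On an even line (a one-dimensional space in even degree) all cohomological signs are trivial, $S^m$ is the usual one-dimensional symmetric power with $\chi=1$, and hence $P=\sum_m t^m=(1-t)^{-1}$. On an odd line $L$ spanned by a class $u$ of odd degree, the decisive point is that $\eps_{\tau,1,1}=-1$ forces $u\otimes u=-u\otimes u$ in $S^2$, so that $u\otimes u=0$ in characteristic zero and $S^mL=0$ for all $m\ge2$; thus $\chi(S^0L)=1$, $\chi(S^1L)=-1$, giving $P=1-t$. Combining these via multiplicativity, a graded vector space $V^*$ with $v^+$ even and $v^-$ odd total dimensions satisfies $P(V^*)=(1-t)^{-v^+}(1-t)^{v^-}=(1-t)^{-(v^+-v^-)}=(1-t)^{-\chi(V^*)}$. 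Extracting the coefficient of $t^m$ from $(1-t)^{-\chi}=\sum_m\binom{-\chi}{m}(-t)^m$ and applying lemma \ref{symwedge} then yields $\chi(S^mV^*)=(-1)^m\binom{-\chi}{m}=\binom{\chi+m-1}{m}$, as claimed.

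The main obstacle is the odd-line computation: one must verify carefully that the cohomological sign convention really does make the symmetric square of an odd class vanish, so that symmetric powers of odd parts behave like exterior powers. This single sign flip is exactly what turns the naive factor $(1-t)^{-v^-}$ into the correct $(1-t)^{+v^-}$, and is therefore what produces the shifted binomial coefficient rather than an ordinary one; everything else in the argument is formal bookkeeping with generating functions.
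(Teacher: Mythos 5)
Your proof is correct, but for the symmetric-power formula it takes a genuinely different route from the paper. The paper replaces $V^\bullet$ by a quasi-isomorphic complex concentrated in a single degree --- $k^{\chi}[0]$ when $\chi\ge 0$, or $k^{-\chi}[-1]$ when $\chi<0$ --- using that $S^m$ preserves quasi-isomorphisms, and then quotes the identity $S^m(k^{r}[-1])=(\wedge^m k^{r})[-m]$ together with lemma \ref{symwedge}; this forces a case split on the sign of $\chi$ but is very short. You instead work with the generating function $P(V^*)=\sum_m\chi(S^mV^*)t^m$, prove its multiplicativity from $S^m(V\oplus W)\cong\bigoplus_{p+q=m}S^pV\otimes S^qW$ and the first formula, reduce to one-dimensional graded pieces, and compute $P=(1-t)^{-1}$ on an even line and $P=1-t$ on an odd line; the odd-line vanishing $S^mL=0$ for $m\ge 2$ is exactly the sign computation $\eps_{\tau,p,p}=(-1)^{p}$ that the paper hides inside the identity $S^m(k^r[-1])=(\wedge^mk^r)[-m]$. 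Your version avoids the case distinction on $\operatorname{sgn}(\chi)$ and makes the ``symmetric powers of odd classes are exterior powers'' phenomenon explicit rather than imported, at the cost of an extra layer of generating-function bookkeeping; both arguments ultimately land on the same coefficient extraction $(-1)^m\binom{-\chi}{m}=\binom{\chi+m-1}{m}$ from lemma \ref{symwedge}. The tensor-product half of your argument is the same computation as the paper's, merely carried out on the graded pieces rather than on the associated super vector space.
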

\begin{proof}
 Using the description of the tensor product $V^\pm\otimes W^\pm$ in the previous subsection, the first formula follows from
 the equality
 \[(v^+w^+-v^-w^-)-(v^+w^-+v^-w^+)=(v^+-v^-)(w^+-w^-)\,.
  \]
For the second formula we first consider the case that $\chi:=\chi(V^*)\ge 0$, i.e. $v^+\ge v^-$. We can construct a short
exact sequence
\[0\to k^\chi\to V^+\xrightarrow d V^-\to 0\,.\]
This gives an isomorphism $k^\chi[0]\simeq V^\bullet$, where $V^\bullet$ is the complex concentrated in degree 0 and 1 given
by $d\colon V^+\to V^-$. Since the symmetric product of complexes preserves isomorphisms in $\D^b(\Ve_f)$, we get
\[\chi(S^mV^*)=\chi(S^mV^\bullet)=\chi(S^m(k^\chi[0]))=\dim S^mk^\chi=\binom {\chi+m-1}m\,.\]
In the case that $\chi<0$ we can construct a short exact sequence
\[0\to V^+\xrightarrow d V^-\to k^{-\chi}\to 0\]
which is the same as an isomorphism $V^\bullet\simeq k^{-\chi}[-1]$. The complex $S^m(k^{-\chi}[-1])$ equals
$(\wedge^mk^{-\chi})[-m]$. Thus,
\begin{align*}
 \chi(S^mV^*)=\chi(S^mV^\bullet)=\chi(S^m(k^{-\chi}[-1]))=(-1)^m\dim(\wedge^m k^{-\chi})&=(-1)^m\binom{-\chi}m\\
 &\overset{\ref{symwedge}}=\binom{\chi+m-1}m\,.
\end{align*}
\end{proof}

\section{Danila's lemma}
The following lemma was used by Danila in \cite{Dan} in order to simplify the computation of invariants.
Let $G$ be a finite group acting transitively on a set $I$. Let $G$ also act on a scheme $X$. Let
$M$ be a $G$-sheaf on $X$ admitting a decomposition $M=\oplus_{i\in I} M_i$ such that for any $i\in I$ and $g\in G$ the linearization $\lambda$ restricted to $M_i$ is an isomorphism $\lambda_g\colon M_i\overset\cong\to g^*M_{g(i)}$. 
This means that $M\cong_G \Inf_{\Stab(i)}^G M_i$ (see e.g. \cite[Lemma 3.1]{Kru}). 
Then the $G$-linearization of $M$ restricts to a  $\Stab_G(i)$-linearization of $M_i$, which makes the projection $M\to M_i$ a $\Stab_G(i)$-equivariant morphism. 
\begin{lemma}\label{Dan}
Let $\pi\colon X\to Y$ be a $G$-invariant morphism of schemes.
Then for all $i\in I$ the projection $M\to M_i$ induces an isomorphism $(\pi_*M)^G\overset\cong\to (\pi_*M_i)^{\Stab_G(i)}$.
\end{lemma}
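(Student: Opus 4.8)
The plan is to fix $i \in I$, set $H := \Stab_G(i)$, and produce an explicit inverse to the morphism
\[(\pi_*M)^G \longrightarrow (\pi_*M_i)^H\]
induced by the projection $p_i \colon M \to M_i$. First I would record why this morphism is well defined. By hypothesis the projection $p_i$ is $\Stab_G(i)$-equivariant, hence so is $\pi_*p_i \colon \pi_*M \to \pi_*M_i$; since every $G$-invariant section is in particular $H$-invariant, $\pi_*p_i$ carries $(\pi_*M)^G$ into $(\pi_*M_i)^H$, giving the map whose bijectivity is to be shown.

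Next I would unwind the $G$-action on $\pi_*M$. As $\pi$ is $G$-invariant we have $\pi \circ g = \pi$ for every $g\in G$, so the linearization $\lambda$ makes $\pi_*M = \bigoplus_{j\in I}\pi_*M_j$ into an object with an honest left $G$-action. Because $\lambda_g$ sends the summand $M_j$ isomorphically onto $g^*M_{g(j)}$, composing with the canonical identification $\pi_*g^*M_{g(j)}\cong\pi_*M_{g(j)}$ coming from $\pi g=\pi$ yields isomorphisms
\[\theta_{g,j}\colon \pi_*M_j \xrightarrow{\ \cong\ } \pi_*M_{g(j)}\,,\]
through which the action permutes the summands by the rule $j\mapsto g(j)$. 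Being extracted from a genuine left action, these satisfy the cocycle identity $\theta_{gg',j}=\theta_{g,g'(j)}\circ\theta_{g',j}$ with $\theta_{e,j}=\id$, and a section $s=(s_j)_{j\in I}$ of $\pi_*M$ is $G$-invariant precisely when $s_{g(j)}=\theta_{g,j}(s_j)$ for all $g\in G$ and $j\in I$. Verifying these two assertions (that the $\theta_{g,j}$ are isomorphisms forming a cocycle, and that invariance reads as stated) is the only genuinely computational part of the argument.

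Granting this description, both injectivity and surjectivity are immediate. For injectivity, transitivity of $G$ on $I$ lets me write each $j$ as $j=g(i)$, whence $s_j=\theta_{g,i}(s_i)$ is determined by the single component $s_i=p_i(s)$; in particular $p_i(s)=0$ forces $s=0$. For surjectivity, given $t\in(\pi_*M_i)^H$ I would define $s_j:=\theta_{g,i}(t)$ for any $g$ with $g(i)=j$. This is independent of the choice of $g$: if $g(i)=g'(i)$ then $h:=g^{-1}g'\in H$, and the cocycle identity together with $\theta_{h,i}(t)=t$ (the $H$-invariance of $t$, using $h(i)=i$) give $\theta_{g',i}(t)=\theta_{g,i}(\theta_{h,i}(t))=\theta_{g,i}(t)$. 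The resulting tuple $s=(s_j)$ satisfies $s_{g'(j)}=\theta_{g'g,i}(t)=\theta_{g',j}(\theta_{g,i}(t))=\theta_{g',j}(s_j)$ by a further application of the cocycle identity, so $s\in(\pi_*M)^G$, and clearly $p_i(s)=s_i=\theta_{e,i}(t)=t$. Hence $p_i$ is the desired isomorphism.

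Conceptually, this is nothing but the sheaf-level instance of Frobenius reciprocity: the identification $M\cong_G \Inf_{\Stab(i)}^G M_i$ recalled before the statement is preserved by $\pi_*$ (since $\pi$ is $G$-invariant, $\pi_*$ commutes with the induction functor $\Inf$), yielding $\pi_*M\cong_G \Inf_H^G(\pi_*M_i)$, and taking $G$-invariants of an induced object returns the $H$-invariants of the inducing one, with the comparison realized by the projection $p_i$. If one prefers the hands-on route above, I expect the main obstacle to be purely notational, namely keeping the $g^*$-twists and the canonical identifications $\pi_*g^*M\cong\pi_*M$ consistent so that the maps $\theta_{g,j}$ really obey the stated cocycle relation; once that bookkeeping is in place the proof is a formal consequence of transitivity.
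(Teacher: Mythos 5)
Your proposal is correct and matches the paper's argument: the paper likewise constructs the inverse by sending $m_i\in(\pi_*M_i)^{\Stab_G(i)}$ to $\oplus_{[g]\in G/\Stab_G(i)}\,g\cdot m_i$ with $g\cdot m_i\in M_{g(i)}$, which is exactly your assignment $s_{g(i)}:=\theta_{g,i}(t)$; your verification of well-definedness on cosets and of $G$-invariance just spells out what the paper leaves implicit.
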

\begin{proof}
The inverse is given on local sections $m_i\in (\pi_*M_i)^{\Stab_G(i)}$ by $m_i\mapsto \oplus_{[g]\in G/\Stab_G(i)} g\cdot m_i$ with $g\cdot m_i\in M_{g(i)}$.
\end{proof}
\begin{remark}\label{morphismdanila}
Let $G$, $I$ and $M$ be as above, $ N=\oplus_{j\in J} N_j$ a second equivariant sheaf such that $G$ acts transitively on $J$ and such that $g\colon N_j\xrightarrow\cong g^* N_{g(j)}$
for all $j\in J$. Let $\phi\colon M\to N$ be an equivariant morphism with components  $\phi(i,j)\colon M_i\to N_j$. Then for fixed $i\in I$ and $j\in J$ the map $\phi^G$ under the isomorphisms $M^G\cong M_i^{\Stab(i)}$ and $N^G\cong N_j^{\Stab(j)}$ of lemma \ref{Dan} is given by (see also \cite[Appendix B]{Sca1})
\[\phi^G\colon M_i^{\Stab(i)}\to N_j^{\Stab(j)}\quad,\quad m\mapsto\sum_{[g]\in \Stab(i)\setminus G} \phi(g(i),j)(g\cdot m)\,.\]
\end{remark}
\begin{remark}\label{notrans}
Danila's lemma and the corollaries can also be used to simplify the computation of invariants if $G$ does not act transitively on $I$. In that case let $I_1,\dots,I_k$ be the $G$-orbits in $I$. Then $G$ acts transitively on $I_\ell$ for every $1\le\ell\le k$ and the lemma can be applied to every $M_{I_\ell}=\oplus_{i\in I_\ell}M_i$ instead of $M$. Choosing representatives $i_\ell\in I_\ell$ yields
\[M^G\cong \bigoplus_{\ell=1}^k M_{i_\ell}^{\Stab_G(i_\ell)}\,.\]  
\end{remark}
\begin{lemma}\label{tensinv}
 Let $k$ be a field of characteristic zero, $R$ a $k$-algebra, $G$ a finite group and $M$ an $R[G]$-module. Let $N$ be a $R$-module, i.e. a $R[G]$-module where $G$ is acting trivially. Then
\[(M\otimes_R^LN)^G=M^G\otimes^L_R N\,.\]
\end{lemma}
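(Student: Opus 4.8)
The plan is to reduce everything to the fact that, in characteristic zero, taking $G$-invariants is an exact functor realised by a single $R$-linear idempotent, and then to transport this through a resolution computing the derived tensor product. First I would introduce the Reynolds operator $e:=\frac{1}{|G|}\sum_{g\in G}g\in R[G]$, which makes sense because $|G|$ is invertible in the $k$-algebra $R$ (as $\operatorname{char}k=0$). One checks $he=eh=e$ for all $h\in G$, so $e$ is a central idempotent of $R[G]$. Consequently, for any $R[G]$-module $P$ one has $P^G=eP$ and a direct sum decomposition $P=eP\oplus(1-e)P$ of $R[G]$-modules; in particular the invariants functor $(\_)^G=e\cdot(\_)$ is exact.

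Next comes the key underived identity. For an $R[G]$-module $P$ and an $R$-module $N$ with trivial $G$-action, the action on $P\otimes_R N$ is $g(p\otimes n)=gp\otimes n$, so the Reynolds operator acts as $e\otimes\id_N$. Since $e$ is an $R$-linear idempotent, $P\otimes_R N=(eP\otimes_R N)\oplus((1-e)P\otimes_R N)$ and $e\otimes\id_N$ is the projection onto the first summand; hence
\[(P\otimes_R N)^G=eP\otimes_R N=P^G\otimes_R N,\]
naturally in $P$ and $N$. This is the step where the triviality of the $G$-action on $N$ is essential.

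To pass to the derived tensor product, I would choose a projective resolution $P_\bullet\to M$ in the category of $R[G]$-modules. Because $R[G]$ is free of rank $|G|$ as an $R$-module, every $R[G]$-projective module is $R$-projective, hence $R$-flat; thus $P_\bullet\otimes_R N$ represents $M\otimes_R^L N$, and it carries the diagonal $G$-action (trivial on the $N$-factor). Applying $(\_)^G$ termwise and using the identity above gives $(P_\bullet\otimes_R N)^G=P_\bullet^G\otimes_R N$ as complexes. Finally, since $(\_)^G$ is exact it preserves the quasi-isomorphism $P_\bullet\simeq M$, so $P_\bullet^G\to M^G$ is a resolution; moreover each $P_i^G=eP_i$ is an $R$-module direct summand of the $R$-flat module $P_i$, hence itself $R$-flat. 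Therefore $P_\bullet^G\otimes_R N$ represents $M^G\otimes_R^L N$, and chaining the identifications yields the natural isomorphism $(M\otimes_R^L N)^G\simeq M^G\otimes_R^L N$.

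The only point requiring genuine care is arranging a resolution that is simultaneously $G$-equivariant and flat over $R$; this is exactly what an $R[G]$-projective resolution supplies, using that $R[G]$ is $R$-free. The remaining verifications—that invariants commute with $\otimes_R N$ termwise, and that $P_\bullet^G$ is an $R$-flat resolution of $M^G$—both follow formally from the $R$-linearity and idempotency of $e$ together with the exactness of $(\_)^G$, so I expect no serious obstacle beyond keeping the two roles of the resolution ($G$-equivariance and $R$-flatness) simultaneously in play.
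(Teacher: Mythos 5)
Your argument is correct and complete. Note that the paper does not actually prove this lemma: its entire ``proof'' is the citation \cite[Lemma 1.7.1]{Sca1}, so what you have written is the argument the paper omits rather than an alternative to one it gives. The mechanism you use --- the Reynolds idempotent $e=\frac{1}{|G|}\sum_{g\in G}g$, the resulting exactness of $(\_)^G$ and the splitting $P=eP\oplus(1-e)P$, the observation that $e$ acts as $e\otimes\id_N$ when $G$ acts trivially on $N$, and the passage to the derived level via an $R[G]$-projective resolution --- is the standard one, and is essentially what underlies the cited result as well. The one point where a blind attempt could genuinely go wrong is the need for a resolution that is simultaneously $G$-equivariant and $R$-flat, and you handle it correctly: projective $R[G]$-modules are $R$-projective because $R[G]$ is $R$-free, and $P_i^G=eP_i$ stays $R$-flat as a direct summand, so both $P_\bullet\otimes_R N$ and $P_\bullet^G\otimes_R N$ compute the respective derived tensor products. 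I see no gap.
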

\begin{proof}
 See \cite[Lemma 1.7.1]{Sca1}.
\end{proof}
Also this lemma can be globalised to get an analogous result for $G$-sheaves.

\section{Normal varieties}
In this subsection let $M$ be a normal variety. For any open subvariety $\iota\colon U\hookrightarrow M$ with 
$\codim(M\setminus U,M)\ge 2$ and $F\in \Coh(X)$ a locally free sheaf $u_F\colon F\xrightarrow \cong\iota_*\iota^* F$ is an isomorphism. Here $u_F$ is the unit of the adjunction $(\iota^*,\iota_*)$, i.e. the morphism given by restriction of the sections.
\begin{lemma}\label{codimpush} Let $N$ be another normal variety and $f\colon N\to M$ be a proper morphism such that also $\codim(N\setminus f^{-1}(U),N)\ge 2$ holds.
 Then there is a natural isomorphism 
\[\iota_*\iota^*f_*E\cong f_*E\]
for every locally free sheaf $E$ on $N$.
\end{lemma}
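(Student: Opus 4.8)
The plan is to reduce to the basic fact recalled immediately before the lemma (that for a locally free sheaf on a normal variety, restriction to the complement of a codimension-$\geq 2$ closed set induces an isomorphism onto the pushforward from the open part), but applied on $N$ rather than on $M$. The obstruction to applying it directly on $M$ is that $f_*E$ need not be locally free, so the codimension-two extension property cannot be invoked for the sheaf $F=f_*E$. The idea is therefore to transport the whole question across $f$ to $N$, where the sheaf in play is the genuinely locally free sheaf $E$. First I would set $V:=f^{-1}(U)$, write $j\colon V\hookrightarrow N$ for the open immersion and $f_U\colon V\to U$ for the restriction of $f$, and record the cartesian square
\[
\begin{CD}
V @>{j}>> N \\
@V{f_U}VV @VV{f}V \\
U @>{\iota}>> M\,.
\end{CD}
\]
Since $\iota$ is an open immersion it is flat, so flat base change applies. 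Taking the zeroth cohomology sheaf of the base-change isomorphism $\iota^*Rf_*E\simeq Rf_{U*}j^*E$, and using that $\iota^*$ is exact, yields a natural isomorphism $\iota^*f_*E\cong f_{U*}(j^*E)$.

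Next I would apply $\iota_*$ and use the commutativity $\iota\circ f_U=f\circ j$ of the square to rewrite
\[
\iota_*\iota^*f_*E\cong \iota_*f_{U*}j^*E=(\iota\circ f_U)_*j^*E=(f\circ j)_*j^*E=f_*(j_*j^*E)\,.
\]
At this point the hypothesis $\codim(N\setminus V,N)\geq 2$, together with $N$ normal and $E$ locally free, lets me invoke the recalled property \emph{on $N$}: the unit $u_E\colon E\xrightarrow{\cong} j_*j^*E$ of the adjunction $(j^*,j_*)$ is an isomorphism. Pushing forward along $f$ (which preserves isomorphisms) then gives $f_*E\cong f_*(j_*j^*E)\cong\iota_*\iota^*f_*E$, which is the desired conclusion.

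Finally I would verify that the composite isomorphism so produced is the \emph{natural} one, i.e. that it coincides with the unit $u_{f_*E}$ of $(\iota^*,\iota_*)$. This is the compatibility of the flat base-change isomorphism with the units of the two adjunctions: under the identification $\iota_*\iota^*f_*E\cong f_*(j_*j^*E)$, the unit $u_{f_*E}$ corresponds to $f_*(u_E)$, which is an isomorphism by the previous step. The main obstacle is exactly the non-local-freeness of $f_*E$, which forces this detour through base change onto $N$; once the cartesian square and flat base change are set up, everything else is a formal manipulation, and the only genuinely geometric input is the codimension estimate $\codim(N\setminus V,N)\geq 2$ supplied by hypothesis.
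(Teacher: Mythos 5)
Your proof is correct and follows essentially the same route as the paper: both form the cartesian square over $U$, apply flat base change along the open immersion $\iota$ to identify $\iota_*\iota^*f_*E$ with $f_*(j_*j^*E)$, and then use the codimension-two extension property for the locally free sheaf $E$ on the normal variety $N$. Your additional remark verifying that the resulting isomorphism agrees with the unit of $(\iota^*,\iota_*)$ is a welcome extra precision but does not change the argument.
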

\begin{proof}
Due to the flat base change 
\[
 \begin{CD}
f^{-1}(U)
@>{\iota'}>>
N \\
@V{f'}VV
@VV{f}V \\
U
@>>{\iota}>
M
\end{CD}
\]
we get indeed
\[
 \iota_*\iota^*f_*E\cong \iota_*f'_*\iota'^* E\cong f_*\iota_*'\iota'^* E\cong f_* E\,.
\]
\end{proof}
\begin{lemma}\label{codimseq}
Let $\iota\colon U\to X$ be any open immersion and let \[0\to F'\to F\to F''\] be an exact sequence in $\Coh(X)$ such that $u_F$ and $u_{F''}$ are isomorphisms. 
Then $u_{F'}$ is also an isomorphism. 
\end{lemma}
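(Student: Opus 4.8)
The plan is to exploit that the unit $u$ of the adjunction $(\iota^*,\iota_*)$ is a natural transformation $\id\Rightarrow\iota_*\iota^*$, so that the inclusion $F'\to F$ and the map $F\to F''$ automatically fit into a commutative ladder relating the given sequence to its image under $\iota_*\iota^*$. First I would record the exactness properties of the functors involved: for an open immersion $\iota$ the restriction functor $\iota^*$ is exact, while its right adjoint $\iota_*$ is only left exact. Consequently $\iota_*\iota^*$ is left exact, and applying it to the left-exact sequence $0\to F'\to F\to F''$ produces again a left-exact sequence $0\to\iota_*\iota^*F'\to\iota_*\iota^*F\to\iota_*\iota^*F''$. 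Together with the original sequence and the three unit maps $u_{F'},u_F,u_{F''}$ this gives a commutative diagram with exact rows in which the vertical maps $u_F$ and $u_{F''}$ are isomorphisms by hypothesis.

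Next I would identify the left-hand terms with kernels. By exactness at $F'$ the injection $F'\to F$ realises $F'$ as $\ker(F\to F'')$, and likewise $\iota_*\iota^*F'\cong\ker(\iota_*\iota^*F\to\iota_*\iota^*F'')$. The crucial observation is that, because $u_{F''}$ is injective, commutativity of the right-hand square shows that the composite $F\to\iota_*\iota^*F\to\iota_*\iota^*F''$ has the same kernel as $F\to F''$, namely $F'$; and since $u_F$ is an isomorphism it carries this kernel isomorphically onto $\ker(\iota_*\iota^*F\to\iota_*\iota^*F'')$. Under the two identifications just made, commutativity of the left-hand square shows that $u_{F'}$ is precisely the restriction of $u_F$ to these kernels, hence an isomorphism.

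Alternatively the same conclusion follows from a direct diagram chase (the four/five lemma): injectivity of $u_{F'}$ is immediate, since by the left square $(\iota_*\iota^*F'\to\iota_*\iota^*F)\circ u_{F'}=u_F\circ(F'\to F)$ is a composite of injective maps and hence injective; surjectivity of $u_{F'}$ is the one step that genuinely uses both that $u_F$ is surjective and that $u_{F''}$ is injective. I do not expect any real obstacle here, as this is a routine homological lemma. The only subtlety worth flagging is that the given sequence is \emph{merely} left exact (there is no zero on the right), so one must be careful to invoke only the left exactness of $\iota_*\iota^*$ and never treat the rows as short exact; with that caveat the kernel identification of the first two paragraphs yields the result essentially in one line.
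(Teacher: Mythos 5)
Your argument is correct and is essentially the paper's own proof: apply the left-exact functor $\iota_*\iota^*$ to obtain a commutative ladder with exact rows in which the middle and right vertical maps are isomorphisms, and conclude that the left one is too. The paper compresses the final step into one sentence, while you spell out the kernel identification; the content is the same.
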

\begin{proof}
 The functor $\iota_*\iota^*$ is left-exact. Therefore, there is the following diagram with exact horizontal sequences:
\[
 \begin{CD}
0@>>> F'
@>>>
F @>>> F'' \\
@.
@VVV @V{\cong}VV @V\cong VV\\
0
@>>>
\iota_*\iota^* F'@>>> \iota_*\iota^* F @>>> \iota_*\iota^* F''\,. 
\end{CD}
\]
Since the last two vertical maps are isomorphisms, the first one is also.
\end{proof}
\begin{lemma}\label{normalhom}
 Let $M$ be a normal variety and $U\subset M$ an open subvariety such that $\codim(M\setminus U,X)\ge 2$. Given two locally free sheaves $F$ and $G$ on $M$ and 
a subsheaf $E\subset F$ with $E_{|U}=F_{|U}$ the maps $a\colon\Hom(F,G)\to \Hom(E,G)$ and $\hat a\colon\sHom(F,G)\to \sHom(E,G)$, given by restricting the domain of the morphisms, are isomorphism.
\end{lemma}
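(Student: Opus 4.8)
The plan is to reduce everything to the sheaf-level statement for $\hat a$, and then obtain the global statement for $a$ for free: since $\Hom(F,G)=\glob(M,\sHom(F,G))$ and $\Hom(E,G)=\glob(M,\sHom(E,G))$, and $a=\glob(M,\hat a)$, an isomorphism $\hat a$ of sheaves immediately yields that $a$ is an isomorphism. So the heart of the matter is to show that the restriction-of-domain map $\hat a\colon \sHom(F,G)\to \sHom(E,G)$ is an isomorphism of sheaves on $M$. Note that $Z:=M\setminus U$ has $\codim(Z,M)\ge 2$ and $M$ is a variety, hence integral, so $U$ is dense.

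The key input will be the extension property recalled at the start of this subsection: on the normal variety $M$, for any locally free sheaf the unit $u$ of the adjunction $(\iota^*,\iota_*)$ is an isomorphism. Since $F$ and $G$ are locally free, so is $\sHom(F,G)$, and hence $u_{\sHom(F,G)}$ is an isomorphism. First I would write down the commutative square relating $\hat a$ with $\iota_*\iota^*\hat a$ through the two units $u_{\sHom(F,G)}$ and $u_{\sHom(E,G)}$. Because $E_{|U}=F_{|U}$ and $\sHom$ commutes with restriction to the open set $U$, the morphism $\iota^*\hat a$ is the identity of $\sHom(F_{|U},G_{|U})$, so $\iota_*\iota^*\hat a$ is an isomorphism; combined with $u_{\sHom(F,G)}$ being an isomorphism, the commuting square $u_{\sHom(E,G)}\circ\hat a=(\iota_*\iota^*\hat a)\circ u_{\sHom(F,G)}$ reduces the whole statement to proving that the remaining unit $u_{\sHom(E,G)}$ is an isomorphism, since then $\hat a=u_{\sHom(E,G)}^{-1}\circ(\iota_*\iota^*\hat a)\circ u_{\sHom(F,G)}$ is a composite of isomorphisms.

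For $u_{\sHom(E,G)}$ I would treat injectivity and surjectivity separately. Surjectivity comes for free from the square: the composite $u_{\sHom(E,G)}\circ\hat a=(\iota_*\iota^*\hat a)\circ u_{\sHom(F,G)}$ is an isomorphism, and since this composite is an epimorphism its left-hand factor $u_{\sHom(E,G)}$ must be one as well. Injectivity is the only place where the hypotheses on $E$ are used: a local section of $\sHom(E,G)$ over an open $V$ is a morphism $\psi\colon E_{|V}\to G_{|V}$, and if $\psi_{|V\cap U}=0$ then for every local section $e$ of $E$ the image $\psi(e)$ is a section of the torsion-free (indeed locally free) sheaf $G$ that vanishes on the dense open $V\cap U$; as $V$ is integral this forces $\psi(e)=0$, whence $\psi=0$. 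Thus $u_{\sHom(E,G)}$ is injective, hence an isomorphism, and so is $\hat a$.

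The subtle point, and what I expect to be the main obstacle, is precisely the injectivity of $u_{\sHom(E,G)}$: since $E$ is only assumed to be a subsheaf of a locally free sheaf and need not itself be locally free (nor reflexive), one cannot apply the extension property to $\sHom(E,G)$ directly, and must instead exploit torsion-freeness of the \emph{target} $G$ together with the density of $U$. Everything else is formal once the extension property for locally free sheaves is in hand.
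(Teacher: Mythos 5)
Your proof is correct, but it is organized in the opposite direction from the paper's. The paper first reduces the sheaf statement for $\hat a$ to the group statement for $a$ (applied over every open subset $V$, using that $V$ is again normal with $V\setminus (U\cap V)$ of codimension $\ge 2$), and then constructs an explicit inverse $b$ of $a$: for $\phi\colon E\to G$ and $s\in F(V)$, $b(\phi)(s)$ is the unique section of $G(V)$ restricting to $\phi(s_{|V\cap U})$ — existence and uniqueness being exactly the codimension-two extension property for the locally free sheaf $G$, and $\phi(s_{|V\cap U})$ making sense because $F_{|U}=E_{|U}$. You instead reduce $a$ to $\hat a$ by taking global sections, and prove $\hat a$ is an isomorphism via the naturality square for the unit of $(\iota^*,\iota_*)$, isolating the one non-formal step as the injectivity of $u_{\sHom(E,G)}$, which you settle using torsion-freeness of $G$ and density of $U$. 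The two arguments rest on the same key fact; yours is more categorical and has the merit of making explicit where the hypotheses on $E$ (subsheaf of a locally free sheaf, agreeing with $F$ on $U$) actually enter — in the paper this is absorbed into the word ``unique'' and the unchecked verification that $b$ is well defined, $\reg_M$-linear, and a two-sided inverse of $a$. The paper's version is shorter and more hands-on. Both are valid.
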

\begin{proof}
It suffices to show that $a$ is an isomorphism. Since every open $V\subset M$ is again a normal variety with $V\setminus(U\cap V)$ of codimension at least 2, it will follow by considering an open affine covering that 
$\hat a$ is also an isomorphism.   
We construct the inverse $b\colon\Hom(E,G)\to \Hom(F,G)$ of $a$. For a morphism $\phi\colon E\to G$ the morphism $b(\phi)$ sends $s\in F(V)$ to the unique section in $G(V)$ which restricts to $\phi(s_{|V\cap U})\in G(V\cap U)$.   
\end{proof}

\section{Restriction of local sections to closed subvarieties}\label{subres}
\begin{lemma}\label{opencover}
 Let $X$ be a quasi-projective variety. Then $X^n$ has an open covering consisting of subsets of the form $U^n$ for $U\subset X$ open and affine. 
\end{lemma}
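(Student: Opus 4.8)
The plan is to reduce the statement to the standard fact that on a quasi-projective variety any finite collection of points is contained in a common affine open subset. Granting this, the covering is immediate: given a point $(x_1,\dots,x_n)\in X^n$, choose an affine open $U\subset X$ with $x_1,\dots,x_n\in U$; then $(x_1,\dots,x_n)\in U^n$, and $U^n$ is open in $X^n$. Letting the base point vary shows that the sets $U^n$, with $U\subset X$ affine open, cover $X^n$, which is precisely the claim. Thus the entire content is concentrated in producing, for finitely many prescribed points of $X$, a single affine open containing all of them.

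To do this I would exploit the projective embedding. Since $X$ is quasi-projective, fix a locally closed immersion $X\hookrightarrow \mathbb{P}^N$ and let $\bar X$ denote the closure of $X$, so that $X=\bar X\setminus Z$ for some closed subset $Z\subset \bar X$; as $\bar X$ is closed in $\mathbb{P}^N$, the set $Z$ is closed in $\mathbb{P}^N$ as well. Fix points $x_1,\dots,x_m\in X$; by construction none of them lies on $Z$. The goal is to find a single homogeneous polynomial $f\in \C[z_0,\dots,z_N]$ of positive degree that vanishes on $Z$ but at none of the $x_i$. Setting $U:=X\cap D_+(f)$, where $D_+(f)=\mathbb{P}^N\setminus V(f)$, one checks that $\bar X\cap D_+(f)=\bar X\setminus V(f)\subset \bar X\setminus Z=X$, so that $U=\bar X\cap D_+(f)$ is a closed subscheme of the affine variety $D_+(f)$ and therefore affine; moreover each $x_i\in U$ because $f(x_i)\neq 0$.

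It remains to construct $f$, which is the \emph{heart} of the argument. Working with the homogeneous ideal $\mathcal{I}_Z$ and its degree-$d$ piece, I note that for each $i$ the condition $f(x_i)=0$ cuts out a linear subspace of the space of degree-$d$ forms in $\mathcal{I}_Z$, and that this subspace is proper once $d$ is large: since $x_i\notin Z$, the twisted ideal sheaf $\mathcal{I}_Z(d)$ is globally generated for $d\gg 0$ (Serre's theorem), so the restriction $H^0(\mathbb{P}^N,\mathcal{I}_Z(d))\to \mathcal{I}_Z(d)\otimes k(x_i)$ is surjective and there is a degree-$d$ form in $\mathcal{I}_Z$ not vanishing at $x_i$. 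Choosing $d$ large enough to make all $m$ of these subspaces proper simultaneously, and invoking that a vector space over the infinite field $\C$ is not a finite union of proper subspaces, yields the desired $f$. The only real obstacle is exactly this separation step; everything else in the argument is formal.
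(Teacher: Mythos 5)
Your proof is correct. The paper itself gives no argument here — it simply cites \cite[Lemma 1.4.3]{Sca1} — and what you have written is the standard proof of the underlying fact (finitely many points of a quasi-projective variety lie in a common affine open), carried out carefully: the reduction to that fact, the passage to $\bar X\cap D_+(f)$, and the separation step via global generation of $\I_Z(d)$ together with the fact that a $\C$-vector space is not a finite union of proper subspaces are all sound. This is essentially the argument the citation is standing in for, so there is nothing to object to.
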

\begin{proof}
 See \cite[Lemma 1.4.3]{Sca1}.
\end{proof}

\begin{lemma}\label{closedres}
Let $X$ be a scheme, $Z_1,Z_2\subset X$ two closed subschemes and $Z_1\cap Z_2$ their scheme-theoretic intersection. Then for every locally free sheaf $F$ on $X$ the following sequence is exact 
\begin{align*}F&\to F_{|Z_1}\oplus F_{|Z_2}\to F_{|Z_1\cap Z_2}\to 0\\
             s&\mapsto\binom{s_{|Z_1}}{s_{|Z_2}}\,,\,\binom ab\mapsto a_{|Z_1\cap Z_2}- b_{|Z_2\cap Z_2}\,.  
\end{align*}
\end{lemma}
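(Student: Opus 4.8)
The plan is to reduce the statement to the case $F=\reg_X$ and then to an elementary computation with ideals. The key observation is that restriction commutes with tensoring by $F$: for a closed immersion $\iota_j\colon Z_j\hookrightarrow X$ with ideal sheaf $\I_j$, the projection formula supplies a natural isomorphism
\[
F_{\mid Z_j}=\iota_{j*}\iota_j^*F\cong F\otimes_{\reg_X}(\reg_X/\I_j)\,,
\]
and likewise $F_{\mid Z_1\cap Z_2}\cong F\otimes_{\reg_X}\bigl(\reg_X/(\I_1+\I_2)\bigr)$, since by definition the scheme-theoretic intersection $Z_1\cap Z_2$ has ideal sheaf $\I_1+\I_2$. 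Under these identifications the sequence of the lemma is precisely the sequence
\[
\reg_X\longrightarrow (\reg_X/\I_1)\oplus(\reg_X/\I_2)\longrightarrow \reg_X/(\I_1+\I_2)\longrightarrow 0
\]
tensored with $F$. Since $F$ is locally free, hence flat, tensoring by $F$ is exact, so it suffices to prove exactness of this last sequence. (In fact, right-exactness of $\otimes$ alone already preserves exactness at the two relevant spots, so no flatness is even needed.)

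The remaining exactness may be checked locally, so I would assume $X=\Spec A$ is affine and that $\I_1,\I_2$ correspond to ideals $I_1,I_2\subset A$. The sequence then reads
\[
A\longrightarrow A/I_1\oplus A/I_2\longrightarrow A/(I_1+I_2)\longrightarrow 0\,,\qquad a\mapsto(\bar a,\bar a)\,,\quad (\bar b,\bar c)\mapsto \bar b-\bar c\,.
\]
Surjectivity on the right is immediate, since already the composite $A\to A/(I_1+I_2)$ is onto. For exactness in the middle, suppose $(\bar b,\bar c)$ maps to $0$, i.e. $b-c\in I_1+I_2$. Writing $b-c=x+y$ with $x\in I_1$ and $y\in I_2$ and setting $a:=b-x=c+y$, one has $\bar a=\bar b$ in $A/I_1$ and $\bar a=\bar c$ in $A/I_2$, so $(\bar b,\bar c)$ lies in the image of $A$. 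This is the usual Mayer--Vietoris / Chinese-remainder argument.

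There is no genuine obstacle here. The only points that require a word of care are the identification $F_{\mid Z}\cong F\otimes_{\reg_X}\iota_*\reg_Z$ furnished by the projection formula, and the convention that the ideal sheaf of the scheme-theoretic intersection is the sum $\I_1+\I_2$; both are standard. Once the structural base case $F=\reg_X$ is established, the general statement follows at once from the exactness of $(\_)\otimes F$ for $F$ locally free.
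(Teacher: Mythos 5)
Your proof is correct and follows essentially the same route as the paper's: both reduce to the case $F=\reg_X$ on an affine scheme and conclude from the exactness of $A\to A/I_1\oplus A/I_2\to A/(I_1+I_2)\to 0$. The only (harmless) difference is the reduction mechanism — you use the projection formula and right-exactness of $\otimes F$, while the paper trivializes $F$ locally and argues componentwise — and you additionally write out the Mayer--Vietoris verification of middle exactness that the paper simply quotes as a known fact.
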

\begin{proof}
Since the question is local, we can assume that $F$ is a trivial vector bundle. Since in this case the restriction of sections is defined component-wise, we can assume that $F=\reg_X$ is the trivial line bundle. Furthermore we can assume that $X=\Spec A$ is affine. Now the assertion follows from the fact that for two ideals $I,J\subset A$ the sequence 
\[A\to A/I\oplus A/J\to A/(I+J)\to 0\]
is exact. 
\end{proof}
Let $I\subset [n]$ and $i\in [n]\setminus I$. Then the closed embedding $\iota\colon \Delta_{I\cup \{i\}}\hookrightarrow \Delta_I$ induces by the universal property of the $\sym_{\overline{I\cup\{i\}}}$-quotient $\Delta_{I\cup \{i\}}\times S^{\overline{I\cup\{i\}}}X$ the commutative diagram 
\[ \begin{CD}
\Delta_{I\cup\{i\}}
@>{\iota}>>
\Delta_{I} \\
@V{\pi_{\overline{I\cup\{i\}}}}VV
@V{\pi_{\bar I}}VV \\
\Delta_{I\cup\{i\}}\times S^{\overline{I\cup\{i\}}}X
@>{\bar \iota}>>
\Delta_I\times S^{\overline{I}}X.
\end{CD}
\]
We also allow the case that $I=\{j\}$ consists of only one element. Then $\Delta_I=X^n$ and $\Delta_I\times S^{\overline I}X= X^I\times S^{\overline I}X\cong X\times S^{n-1} X$.   
\begin{lemma}\label{stillclosed}
 The morphism $\bar \iota$ is again a closed embedding.
\end{lemma}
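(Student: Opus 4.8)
The plan is to reduce the statement to an affine, purely ring-theoretic assertion and then verify surjectivity of the corresponding algebra map. Being a closed embedding is local on the target, so by Lemma \ref{opencover} it suffices to work over the $\sym_n$-invariant affine opens $U^n$ with $U\subseteq X$ affine; since any finite set of points of a quasi-projective $X$ lies in a common affine open, the images of these pieces cover $\Delta_I\times S^{\bar I}X$. Hence I may assume $X=\Spec A$ is affine. Writing $m:=|\bar I|=n-|I|$ and $S^rA:=(A^{\otimes r})^{\sym_r}$, remark \ref{partialquotient} (computed on rings exactly as in the proof of Lemma \ref{kone}) identifies
$$\reg(\Delta_I\times S^{\bar I}X)=A\otimes S^mA,\qquad \reg(\Delta_{I\cup\{i\}}\times S^{\overline{I\cup\{i\}}}X)=A\otimes S^{m-1}A,$$
where the left-hand factor $A$ records the common value on $I$ (resp.\ on $I\cup\{i\}$) and the second factor records the unordered remaining points. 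Under these identifications $\bar\iota$ is induced by the ring map of $\iota\colon\Delta_{I\cup\{i\}}\hookrightarrow\Delta_I$, which multiplies the $i$-th coordinate into the common coordinate of $I$. Thus $\bar\iota$ corresponds to the algebra homomorphism $\bar\iota^\sharp\colon A\otimes S^mA\to A\otimes S^{m-1}A$ contracting one (symmetric, hence arbitrary) tensor slot into the distinguished factor $A$, and the lemma reduces to showing that $\bar\iota^\sharp$ is surjective.

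For surjectivity I would argue as follows. As $\bar\iota^\sharp$ is a ring homomorphism its image is a subring, and $A\otimes S^{m-1}A$ is generated as a ring by $A\otimes 1$ and $1\otimes S^{m-1}A$, so it suffices to hit these. The element $a\otimes 1^{\otimes m}$ maps to $a\otimes 1$, giving $A\otimes 1\subseteq\operatorname{im}\bar\iota^\sharp$. For the second subring I use the standard fact that over $\C$ the invariant ring $S^{m-1}A=(A^{\otimes(m-1)})^{\sym_{m-1}}$ is generated as an algebra by the power sums $p(a):=\sum_{j}a_{(j)}$, $a\in A$ (every orbit sum is, by a polarization/Newton-type induction, a polynomial in such power sums). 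Applying $\bar\iota^\sharp$ to the full power sum and splitting off the contracted slot yields
$$\bar\iota^\sharp\Bigl(1\otimes\textstyle\sum_{j=1}^m a_{(j)}\Bigr)=a\otimes 1+1\otimes\textstyle\sum_{j=2}^m a_{(j)}=a\otimes 1+1\otimes p(a).$$
Subtracting $\bar\iota^\sharp(a\otimes 1^{\otimes m})=a\otimes 1$ shows $1\otimes p(a)\in\operatorname{im}\bar\iota^\sharp$ for every $a\in A$, whence $1\otimes S^{m-1}A$ lies in the image. Therefore $\bar\iota^\sharp$ is surjective, so $\bar\iota$ is a closed immersion over each affine piece, and as these cover the target $\bar\iota$ is a closed embedding.

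I expect the one genuinely delicate point to be the generation of $(A^{\otimes r})^{\sym_r}$ by power sums, which is exactly where characteristic zero is used; the remaining steps (the contraction formula and the affine reduction via Lemma \ref{opencover}) are direct, if bookkeeping-heavy. A more conceptual alternative would be to note that $\bar\iota$ is finite, being the base change of the finite addition map $X\times S^{m-1}X\to S^mX$, and a monomorphism, and then invoke that a proper monomorphism is a closed immersion; but verifying the monomorphism property cleanly appears to need the same affine analysis, so I would favour the direct surjectivity argument.
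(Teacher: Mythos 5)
Your proof is correct, and it follows the same overall strategy as the paper — reduce to affine opens of the form $U^n$ via lemma \ref{opencover} and then establish surjectivity on the level of (invariant) coordinate rings — but it differs in two worthwhile respects. First, you observe that surjectivity of $\bar\iota^\sharp\colon A\otimes S^mA\to A\otimes S^{m-1}A$ on an affine cover of the target already implies that $\bar\iota$ is a closed immersion, so you can skip the paper's separate verification that $\bar\iota$ is injective on points and has closed image (which the paper deduces from properness of $\pi_{\bar I}$); those steps are in fact redundant once ring-level surjectivity is known. Second, and more substantively, where the paper delegates the surjectivity of $\reg_{\Delta_I}^{\sym_{\bar I}}\to \reg_{\Delta_{I\cup\{i\}}}^{\sym_{\overline{I\cup\{i\}}}}$ to Lemmas 5.1.1 and 5.1.2 of Scala's paper, you give a self-contained argument: the contraction formula $\bar\iota^\sharp\bigl(1\otimes p_m(a)\bigr)=a\otimes 1+1\otimes p_{m-1}(a)$ together with the classical fact that $(A^{\otimes r})^{\sym_r}$ is generated over $\C$ by the power sums $p(a)=\sum_j a_{(j)}$. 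You correctly identify that last fact as the one place where characteristic zero is used (the polarization/induction argument divides by factorials), and it is exactly the content hidden in the cited lemmas of Scala. So the proposal is a valid, slightly more streamlined and more self-contained version of the paper's proof; the only thing it buys at a cost is that the generation-by-power-sums statement must itself be proved or referenced, whereas the paper simply cites it.
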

\begin{proof}
Let $|I|=\ell$. We can make the identifications $\Delta_{I\cup\{i\}}\times S^{\overline{I\cup\{i\}}}X\cong X\times S^{n-\ell-1}$ and
$\Delta_I\times S^{\overline{I\cup\{i\}}}X\cong X\times S^{n-\ell}$. 
Under these identifications the map $\bar\iota$ on the level of points is given by $(x,\Sigma)\mapsto (x,x+\Sigma)$. Hence, it is injective. 
Since the quotient morphism $\pi_{\bar I}$ is finite and hence proper, $\im (\bar \iota)=\pi_{\bar I}(\Delta_{ I\cup\{i\}})$ is a closed subset.
It remains to show that the 
morphism of sheaves $\reg_{\Delta_I}^{\sym_{\overline I}}\to \reg_{\Delta\cup\{i\}}^{\sym_{\overline{I\cup\{i\}}}}$ is still surjective. This can be seen by  covering $X^n$ by open affines of the form $U^n$ (see lemma \ref{opencover}) and using the lemmas 5.1.1. and 5.1.2 of \cite{Sca1} in the case $F=\reg_X$ and $*=0$. 
\end{proof}
\begin{defin}
Let a finite group $G$ act on a scheme $X$. A locally free $G$-sheaf $F$ on $X$ of rank $r$ is called \textit{locally trivial} (as a $G$-sheaf) if there is a  cover over $X$ consisting of $G$-invariant open subsets $U\subset X$ such that $F_{|U}\cong_G \reg_U^{\oplus r}$.    
\end{defin}
\begin{remark}
 Let $F$ be a locally free sheaf on $X$ and $\emptyset\neq I\subset [n]$. Then $F_I$ (see section \ref{not} (xi)) is locally trivial as a $\overline{\sym_I}$-sheaf on $\Delta_I$.
\end{remark}
\begin{cor}\label{invsur}
 Let $\F$ be a locally trivial $\sym_{\bar I}$-sheaf on $\Delta_I$ and $r\colon \F\to \F_{|\Delta_{I\cup \{i\}}}$ the morphism given by restriction of sections.
Then the induced morphism $\underline r\colon \F^{\sym_{\bar I}}\to \F^{\sym_{\overline{I\cup\{i\}}}}_{|\Delta_{I\cup\{i\}}}$ is still surjective. 
\end{cor}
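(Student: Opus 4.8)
The plan is to reduce the assertion to the case of the structure sheaf $\F=\reg_{\Delta_I}$, where the surjectivity of the restriction map $\reg_{\Delta_I}^{\sym_{\bar I}}\to \reg_{\Delta_{I\cup\{i\}}}^{\sym_{\overline{I\cup\{i\}}}}$ has already been established inside the proof of lemma \ref{stillclosed} (the case $F=\reg_X$, $*=0$ of the cited lemmas of \cite{Sca1}), and then to bootstrap back to a general locally trivial $\F$ using the local triviality hypothesis together with the fact that surjectivity of a morphism of sheaves is a local property. Here $\sym_{\overline{I\cup\{i\}}}\le \sym_{\bar I}$ is the subgroup fixing $i$, the restriction $\F_{|\Delta_{I\cup\{i\}}}=\iota^*\F$ carries the $\sym_{\overline{I\cup\{i\}}}$-linearization obtained by restricting the $\sym_{\bar I}$-linearization of $\F$ along the equivariant closed embedding $\iota\colon \Delta_{I\cup\{i\}}\hookrightarrow \Delta_I$, and $\underline r$ is the morphism induced by $r$ on invariants. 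Via the closed embedding $\bar\iota$ of lemma \ref{stillclosed} we may regard $\underline r$ as a morphism of sheaves on the single variety $\Delta_I\times S^{\bar I}X$, so that surjectivity can be checked on stalks there.

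First I would pass to a $\sym_{\bar I}$-invariant open cover $\{U\}$ of $\Delta_I$ on which $\F$ is equivariantly trivial, i.e. $\F_{|U}\cong_{\sym_{\bar I}}\reg_U^{\oplus r}$ with $\sym_{\bar I}$ acting trivially on $\reg_U^{\oplus r}$; such a cover exists by hypothesis. Because $U$ is $\sym_{\bar I}$-invariant, the formation of $\sym_{\bar I}$-invariants commutes with restriction to $U$ (invariants form a kernel and restriction to an open is exact); likewise $U\cap \Delta_{I\cup\{i\}}$ is $\sym_{\overline{I\cup\{i\}}}$-invariant, so the formation of $\F_{|\Delta_{I\cup\{i\}}}^{\sym_{\overline{I\cup\{i\}}}}$ is compatible with this cover as well. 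Since the images of the $U$ under the quotient maps cover $\Delta_I\times S^{\bar I}X$, it suffices to verify surjectivity of $\underline r$ over each such $U$.

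Next I would restrict the equivariant trivialization $\theta\colon \F_{|U}\xrightarrow{\cong}\reg_U^{\oplus r}$ along $\iota$ and to the subgroup $\sym_{\overline{I\cup\{i\}}}$. As $\iota$ is $\sym_{\overline{I\cup\{i\}}}$-equivariant and the subgroup $\sym_{\overline{I\cup\{i\}}}\le \sym_{\bar I}$ still acts trivially on $\reg^{\oplus r}$, the pullback $\iota^*\theta$ is a $\sym_{\overline{I\cup\{i\}}}$-equivariant trivialization of $\F_{|\Delta_{I\cup\{i\}}}$ over $U\cap\Delta_{I\cup\{i\}}$, and it fits into a commutative square with the restriction map $r$ and the component-wise restriction map $\reg_U^{\oplus r}\to \reg_{U\cap\Delta_{I\cup\{i\}}}^{\oplus r}$ of structure sheaves, whose two vertical legs $\theta$ and $\iota^*\theta$ are equivariant isomorphisms. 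Taking invariants in this square identifies $\underline r_{|U}$ with the $r$-fold direct sum of the map $\reg_{\Delta_I}^{\sym_{\bar I}}\to \reg_{\Delta_{I\cup\{i\}}}^{\sym_{\overline{I\cup\{i\}}}}$ induced by restriction of functions. This latter map is surjective by lemma \ref{stillclosed}, and a finite direct sum of surjections is surjective; hence $\underline r_{|U}$ is surjective, and since the $U$ cover we conclude that $\underline r$ is surjective.

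The point requiring care is the compatibility of the trivialization with both the restriction to the smaller diagonal $\Delta_{I\cup\{i\}}$ and the passage to the subgroup $\sym_{\overline{I\cup\{i\}}}$; once this is recorded, the reduction to lemma \ref{stillclosed} is immediate, and the only genuinely geometric input---surjectivity for the structure sheaf---is already available from that lemma.
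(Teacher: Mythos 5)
Your argument is correct and is essentially the paper's proof: the paper packages the same reduction in one line by saying that $\underline r$ is restriction of sections of the locally free sheaf $\F^{\sym_{\bar I}}$ along the closed embedding $\bar\iota$ of lemma \ref{stillclosed}, which is exactly what your local trivialization plus the structure-sheaf surjectivity from that lemma establishes. The only cosmetic point is that on $\reg_U^{\oplus r}$ the group acts by the canonical linearization of the structure sheaf rather than literally trivially, but this does not affect the identification of $\underline r_{|U}$ with a direct sum of copies of the structure-sheaf restriction map.
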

\begin{proof}
The morphism is given by restricting sections of the locally free sheaf $\mathcal F^{\sym_{\bar I}}$ along the closed embedding $\bar \iota$ (see also remark \ref{partialquotient}). 
\end{proof}

\end{appendix}

\bibliographystyle{alpha}
\addcontentsline{toc}{chapter}{References}
\bibliography{references}
\end{document}